\documentclass[twoside,a4paper,reqno,11pt]{amsart} 
\usepackage[utf8]{inputenc}
\usepackage{amsmath}
\usepackage{amsthm}
\usepackage{hyperref}
\usepackage{float}
\usepackage{color}
\hypersetup{linktocpage}
\usepackage{bm}
\usepackage[shortlabels]{enumitem}
\usepackage{mathtools}
\usepackage{amsfonts}
\usepackage[normalem]{ulem}
\useunder{\uline}{\ul}{}
\usepackage{rotating}
\usepackage{booktabs}
\usepackage{pdflscape}
\usepackage{amssymb}
\usepackage{lipsum}
\usepackage{xcolor}
\usepackage[most]{tcolorbox}
\usepackage{subcaption}
\usepackage{array}
\usepackage{graphicx}
\usepackage[left=2cm,right=2cm,top=2cm,bottom=2cm]{geometry}
\usepackage{caption}

            \parskip 1mm 
\newtheorem{theorem}{Theorem}[section]
\newtheorem{lemma}[theorem]{Lemma}
\newtheorem{proposition}[theorem]{Proposition}
\newtheorem{corollary}[theorem]{Corollary}

{\theoremstyle{definition}
\newtheorem{remark}[theorem]{Remark}
}
\numberwithin{table}{section}

\newcommand{\ep}{\epsilon}
\newcommand{\pgl}{\mathrm{PGL}}
\newcommand{\gl}{\mathrm{GL}}
\newcommand{\emax}{\mathcal{E}_{{\rm max}}}
\newcommand{\pom}{\mathrm{P} \Omega}
\renewcommand{\a}{\alpha}
\newcommand{\F}{\mathbb{F}}
\usepackage{mathtools}
\DeclarePairedDelimiter\ceil{\lceil}{\rceil}
\DeclarePairedDelimiter\floor{\lfloor}{\rfloor}

\setlist[enumerate,1]{label={(\roman*)}}

\title{Regular orbits of quasisimple linear groups I}
\begin{document}
\author{Melissa Lee}
 \address{Department of Mathematics,
    Imperial College, London SW7 2BZ, UK}
 \email{m.lee16@imperial.ac.uk}



%

\begin{abstract}
Let $G \leq \mathrm{GL}(V)$ be a group with a unique subnormal quasisimple subgroup $E(G)$ that acts absolutely irreducibly on $V$. A base for $G$ acting on $V$ is a set of vectors with trivial pointwise stabiliser in $G$. 
In this paper we determine the minimal base size of $G$ when $E(G)/Z(E(G))$ is a finite simple group of Lie type in cross-characteristic. 
We show that $G$ has a regular orbit on $V$, with specific exceptions, for which we find the base size.
\end{abstract}
\maketitle
\section{Introduction}
Suppose $G \leq \mathrm{Sym}(\Omega)$ is a permutation group.
We say that $G$ has a \textit{regular orbit} on $\Omega$ if there exists $\omega \in \Omega$ with trivial stabiliser in $G$. 
The study of regular orbits arises in a number of contexts, particularly when $\Omega$ is a vector space $V$, and $G$ acts linearly on $V$.
For example, if $V$ is a finite vector space, $G\leq \mathrm{GL}(V)$, and all of the orbits of $G$ on $V\setminus \{0\}$ are regular, then the affine group $GV$ is a Frobenius group with Frobenius complement $G$, and such $G$ were classified by Zassenhaus \cite{MR3069653}. Regular orbits are also used heavily in the proof of the well-known $k(GV)$-conjecture \cite{MR2078936}, which states that the number of conjugacy classes $k(GV)$ of $GV$, with $|G|$ coprime to $|V|$, is at most $|V|$. One of the major cases was where $G$ is an almost quasisimple group acting irreducibly on $V$. Here, proving the existence of a regular orbit of $G$ on $V$ was sufficient to prove the $k(GV)$-conjecture. A classification of such pairs $(G,V)$ was completed by K\"{o}hler and Pahlings \cite{MR1829482}, based on work of Goodwin \cite{goodwin1,goodwin2} and Liebeck \cite{MR1407889}.

We say that $B \subset V$ is a \textit{base} for $G$ if its pointwise stabiliser in $G$ is trivial, and call the minimal size of a base for $G$ the \textit{base size} and denote it by $b(G)$. Obviously if $G$ has a regular orbit on $V$, then $b(G)=1$.
Since each element of a group is characterised by its action on a base, we have $b(G) \geq \lceil \log |G|/\log |V| \rceil$.
Notice that if $G$ has a regular orbit on an irreducible $G$-module $V$, then $GV\leq \mathrm{AGL}(V)$ has $b(GV)=2$.
In recent years there has been significant progress towards the more general aim of classifying finite primitive groups $H$ with $b(H)=2$. This includes a partial classification for diagonal type groups \cite{MR2998958}, a complete classification for primitive actions of $S_m$ and $A_m$ \cite{MR2781219, MR2214474} and sporadic groups \cite{MR2684423} and also progress for almost simple classical groups \cite{MR3219555}.  Recall that a group $H$ is \textit{almost simple} if there exists a non-abelian finite simple group $S$ such that $S\leq H\leq \mathrm{Aut}(S)$.

We say that a group $G$ is \textit{quasisimple} if it is perfect and $G/Z(G)$ is a non-abelian simple group.  We further define $G$ to be \textit{almost quasisimple} if $G$ has a unique quasisimple subnormal subgroup, which forms the layer $E(G)$ of $G$, and the quotient $G/F(G)$ of $G$ by its Fitting subgroup $F(G)$ is almost simple. 
Since the Fitting subgroup $F(G)$ of $G$  commutes with the quasisimple subgroup $E(G)$, if we have $G \leq  \mathrm{GL}(V)$ with absolutely irreducible restriction to $E(G)$, then by \cite[Lemma 2.10.1]{KL}, $F(G)$ is equal to the group $F_0(G)$ of scalar matrices in $G$. 

A large open case in the classification of primitive groups $H$ with $b(H)=2$ is where $H=GV$ is of affine type, $(|G|,|V|)>1$ and $G \leq \mathrm{GL}(V)$ is an almost quasisimple group, whose quasisimple layer $E(G)$ acts absolutely irreducibly on $V$. 
For groups of affine type, Aschbacher's Theorem \cite{asch_thm} is a useful tool in classifying groups $G$ with a regular orbit, since it determines the possiblilities for irreducible subgroups of $ \gl(V)$. The groups $G$ that we consider in this paper are members of the $\mathcal{C}_9$ class of Aschbacher's theorem.

The classification of pairs $(G,V)$ where $G$ has a regular orbit on $V$ has been completed for most groups $G$ with $\mathrm{soc}(G/F(G))$ a sporadic group, or an alternating group $Alt_n$ \cite{MR3893366,MR3500766}.  In these cases, the authors adopt a stricter definition of almost quasisimple than is given here by requiring $G/Z(G)$ to be almost simple. Therefore,  further work is required to complete a classification according to our definition of almost quasisimple when $\mathrm{soc}(G/F(G))$ is isomorphic to either $Alt_6$ or $Alt_7$, or a sporadic group with Schur multiplier of order greater than 2, and outer automorphism group of order 2.
In the case of linear algebraic groups, Guralnick and Lawther \cite{bigpaper} classified $(G,V)$ where $G$ is a simple algebraic group that has a regular orbit on the irreducible $G$-module $V$, as well as determining their generic stabilisers in these cases. Their methods rely heavily on detailed analyses of highest weight representations of these simple algebraic groups. 

This paper is the first in a series of three, which complete the analysis of base sizes of $(G,V)$, where $G$ has a unique subnormal quasisimple subgroup $E(G)$ which is a group of Lie type that acts absolutely irreducibly on $V$. The present paper deals with the case where $V$ is an absolutely irreducible module for $E(G)$ in cross characteristic. The other two papers will address representations of the groups of Lie type in defining characteristic. 

Our main result is as follows.
\begin{theorem}
\label{mainthm}
Let $V= V_d(r)$ be a vector space of dimension $d$ over $\mathbb{F}_r$. Suppose $G\leq \mathrm{GL}(V)$ and that the layer $E(G)$ of $G$ is quasisimple of Lie type in characteristic $p \nmid r$ acting absolutely irreducibly on $V$. Then one of the following holds.
\begin{enumerate}
\item $b(G) = \lceil \log |G|/\log |V| \rceil$,
\item $b(G) =  \lceil \log |G|/\log |V| \rceil+1$ and $(G, V)$ is  in Table \ref{allbad},
\item $(G,V) =( U_4(2) .2, V_5(3))$, $(2\times  U_4(2) , V_5(3))$ or $( U_4(2) .2, V_6(2))$  and $b(G)= \lceil \log |G| /\log |V| \rceil+2=4,4$ or $5$ respectively.

\end{enumerate}
\end{theorem}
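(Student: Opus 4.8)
Theorem~\ref{mainthm} is a classification statement, so the proof is an analysis that runs over all simple groups $S$ of Lie type and, for each, over its absolutely irreducible cross-characteristic representations; the list to be checked is finite because of the Landazuri--Seitz--Zalesskii lower bound on $d=\dim V$ and its refinements (Hiss--Malle, L\"ubeck, Seitz--Zalesskii). I would first record the standard reductions: since $E(G)$ acts absolutely irreducibly and $G$ normalises it, $F(G)=F_0(G)$ is the group $Z$ of scalars in $G$ (as recalled in the introduction), and $G/Z$ is almost simple with socle $S:=E(G)/Z(E(G))$; thus $E(G)\leq G$, $Z=F_0(G)$, and $S\trianglelefteq G/Z\leq\mathrm{Aut}(S)$. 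The argument then splits into a \emph{generic regime}, where $d$ is large relative to $|S|$, and a bounded collection of \emph{small cases}.

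\textbf{Generic regime.} The main tool is the orbit-counting inequality: if
\[
  Q(G,m)\;:=\;\sum_{1\neq g\in G}\left(\frac{|C_V(g)|}{|V|}\right)^{m}\;<\;1 ,
\]
then $G$ has a base of size $m$, so $b(G)\leq m$; combined with $b(G)\geq L:=\lceil\log|G|/\log|V|\rceil$ this forces $b(G)=L$ as soon as $Q(G,L)<1$, and gives a regular orbit when $L=1$. Writing $\nu(g)=d-\dim_{\F_r}C_V(g)$ and grouping the elements of $G$ by the value of $\nu(g)$, I would bound $Q(G,L)\leq\sum_{i\geq 1}n_i(G)\,r^{-iL}$, where $n_i(G)$ counts the $g\in G$ with $\nu(g)=i$. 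Into this I feed three ingredients: a uniform lower bound $\nu(g)\geq f(d)$ valid for all non-central $g$ in a faithful cross-characteristic representation, which suppresses the low-$i$ terms; upper bounds on the $n_i(G)$ obtained from conjugacy class sizes, which are small for elements with small $\nu(g)$ since such elements have large centralisers; and the lower bound on $d$ in terms of $|S|$ together with $|G|\leq|Z|\cdot|\mathrm{Aut}(S)|$. Putting these together shows $Q(G,L)<1$, hence $b(G)=L$, once the defining field of $S$ (or the Lie rank) exceeds an explicit bound depending only on the Lie type.

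\textbf{Small cases.} This leaves, for each Lie type, finitely many pairs $(S,V)$: groups of small rank over small fields, together with the low-dimensional representations listed in the tables of Hiss--Malle and L\"ubeck. For these, $\dim C_V(g)$ can be read off from Brauer character values (or from explicit matrix representations), so $Q(G,L)$ is computed exactly; when it is $<1$ we again conclude $b(G)=L$. When it is not, one must separately establish (a) the upper bound $b(G)\leq L+1$ or $L+2$ — either from the same inequality with $m=L+1$ or $L+2$, or by exhibiting an explicit base by machine — and (b) the matching lower bound $b(G)>L$ (respectively $>L+1$), i.e.\ that every $L$-tuple (resp.\ $(L{+}1)$-tuple) of vectors has non-trivial common stabiliser; this is proved case by case, typically via a counting argument comparing $|G|$ with the number of vector tuples of a given type, or via the action on $V$ of a distinguished subgroup of $G$ with small orbits, and in the smallest instances by direct computation. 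The outcome of this step is precisely Table~\ref{allbad} together with the three exceptional pairs involving $U_4(2)$.

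\textbf{Main obstacle.} I expect the principal difficulty to lie in the small-case analysis, and within it in two places. First, the borderline pairs $(S,V)$ of moderate dimension, where $Q(G,L)$ is close to $1$ and the crude uniform bound on $\nu(g)$ no longer suffices, so one needs representation-specific control of fixed-point spaces (from the actual Brauer characters, or from the geometry of the module). Second, the lower-bound arguments giving $b(G)\geq L+1$ or $L+2$, which are not uniform and must be tailored to each exceptional configuration. By contrast the generic counting argument is robust once the required bounds on $\nu(g)$, on class sizes, and on $d$ versus $|S|$ are in hand.
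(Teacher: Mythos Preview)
Your plan is correct and matches the paper's approach: a counting inequality in the generic regime reduced to finitely many small cases handled by Brauer characters and machine computation. The one specific ingredient you leave vague---the lower bound on $\nu(g)$---is supplied in the paper by the Guralnick--Saxl bound $\dim E_\nu(g)\leq \lfloor d(1-1/\alpha(g))\rfloor$, where $\alpha(g)$ is the minimal number of $\mathrm{soc}(G/F(G))$-conjugates of $\bar g$ needed to generate $\langle \mathrm{soc}(G/F(G)),\bar g\rangle$; combined with a reduction to elements of \emph{projective} prime order, this is exactly what makes the crude estimate sharp enough.
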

{\tiny
\begin{table}[h!]
\begin{tabular}{lllll} 
	\toprule
	$b(G)$  & $G$  & $d$  & $r$  &  \\ 
	\midrule
	2 
	& $3 \circ (2.L_2(4))$,  & 2 & 19 & $\dag$ \\ 
	& $6 \circ(2. L_2(4))$  & 2 & 25 &  \\
	& $c\circ (2.L_2(4).2)$,  $c\in \{1,2,4\}$  & 2 & 25 &  \\ 
	& $4\circ(2.L_2(4).2i)$ & 2 & 25 &  \\
	& $4 \circ (2.L_2(4))$  & 2 & 29 & $\dag$ \\ 
	& $20 \circ (2.L_2(4))$  & 2 & 41 & $\dag$ \\ 
	& $c \circ (2.L_2(4))$, $c\in \{12, 24\}$  & 2 & 49 & $\dag$ \\
	& $\F_{61}^{\times} \circ (2.L_2(4))$  & 2 & 61 & $\dag$ \\
	& $L_2(4)\leq G< 2\times L_2(4).2$  & 3 & 5 &  \\
	& $c \times L_2(4)$, $1\neq c\mid 8$  & 3 & $9$  &  \\
	& $\F_{11}^\times \times L_2(4)$  & 3 & 11 & $\dag$ \\
	& $L_2(4)$  & 4 & $3$  &  \\
	& $L_2(4) < G < \F_4^\times \times L_2(4).2$  & 4 & $4$  &  \\
	& $\F_5^\times \circ(2.L_2(4))$  & 4 & 5 &  \\
	& $c \times L_2(4).2$, $c\in \{2,6\}$  & 4 & 7 & $\dag$ \\
	& $3\times L_2(8)$  & 2 & 64 &  \\
	& $\F_8^\times \times L_2(8)$  & 4 & 8 &  \\
	& $\F_3^\times \times L_2(8)$  & 7 & 3 &  \\
	& $L_2(8).3$  & 7 & 3 &  \\
	& $10 \circ (2.L_2(9))$  & 2 & 81 &  \\
&	$c\circ(2. L_2(9).2_2)$, $c\in \{1,2,4,8\}$             & 2&81 \\
	& $L_2(9) \leq G <2\times L_2(9).2_2$. & 3 & 9 &  \\
	& $c \circ (3.L_2(9))$, $c\in \{2,6,18\}$  & 3 & 19 & $\dag$ \\
	& $c \circ (3.L_2(9))$, $c \mid 24$ even  & 3 & 25 &  \\
	& $c \circ (3.L_2(9).2_3)$, $c \in \{2,4,12\}$  & 3 & 25 &  \\
	& $c \circ (3.L_2(9))$, $c \in \{10,30\}$  & 3 & 31 & $\dag$ \\
	& $\F_7^\times \circ (2.L_2(9))$ &4&7& $\dag$\\
	& $L_2(9).2_1$  & 4 & 8 &  \\
	& $L_2(9).2_1\leq G\leq L_2(9).2^2$  & 4 & 9 &  \\
	& $2\times L_2(9) \leq G < \F_9^\times \times L_2(9).2^2$  & 4 & 9 &  \\
	& $L_2(9) <G\leq \F_5^\times \times L_2(9).2_1$  & 5 & 5 &  \\
	& $c\times L_2(9).2_1$, $c\in \{2,3,6\}$  & 5 & 7 & $\dag\star_1$ \\
	& $c\times L_2(9).2^2$,  $c\in\{1,2\}$ & 9 & $3$  & $\ddag$ \\
	& $L_2(11)$  & 5 & 4 &  \\
	& $c\times L_2(11) $, $c\mid 4$   & 5 & 5 &  \\
   & $L_2(11)$  & 10 & 2 &  \\
	& $\F_4^{\times}\times L_2(13)$  & 6 & 4 &  \\
	& $c \times L_2(13)$,  $c\in \{1,2\}$  & 7 & 3 &  \\
	& $L_2(13).2$  & 14 & 2 &  \\
	& $L_2(31)$  & 15 & 2 &  \\
	& $c\circ( 2.L_3(2))$, $c\in \{4,8,12\}$  & 2 & 49 &  \\
	& $4\circ (2.L_3(2).2)$  & 2 & 49 &  \\
	& $L_3(2) \leq G < 2\times L_3(2).2$  & 3 & 7 &  \\
	& $c\times L_3(2)$, $c\in \{2,4\}$  & 3 & 9 &  \\
& $2\times L_3(2)$  & 3 & 11 & $\dag$ \\
	& $L_3(2) \leq G \leq \F_3^\times \times L_3(2).2$  & 6 & 3 &  \\
	& $L_3(2)$  & 8 & 2 &  \\
	& $2.L_3(4) \leq G< \F_9^\times \circ (2.L_3(4).2^2)$  & 6 & 9 &  \\
	& $4_1.L_3(4).2_3$  & 8 & 5 &  \\
	& $L_4(2) \leq G <2 \times L_4(2).2$  & 7 & 5 &  \\
	& $L_4(2) < G \leq \F_7^{\times} \times L_4(2).2$  & 7 & 7 &  \\
	\bottomrule
\end{tabular}
\quad 
\begin{tabular}{lllll} 
	\toprule
	$b(G)$  & $G$  & $d$  & $r$  &  \\ 
	\midrule
	2 
	& $c\times  \times L_4(2).2$, $1\neq c \mid 8$  & 7 & 9 &  \\
	&$ \F_5^\times \circ (2.L_4(2).2)$  & 8 & 5 &  \\

	& $L_4(3) \leq G \leq L_4(3).2_2$  & 26 & 2 &  \\ 
	\cmidrule{2-5}
	& $c \times U_3(3)$, $c=2k \mid 80$  & 3 & 81 &  \\
	& $2\times U_3(3)$  & 6 & 5 & $\dag$ \\
	& $\F_7^\times \times U_3(3)$  & 6 & 7 &  \\
	& $c \times (U_3(3).2)$, $c\in \{2,6\}$  & 6 & 7 &  \\
		& $c\times U_3(3).2$, $c\mid 4$  & 7 & 5 & $\dag\star_2$ \\
	& $c\times U_3(3)$, $c \mid 4$  & 7 & 5 & $\dag *$ \\
	& $U_3(3)\leq G\leq U_3(3).2$ & 14 & 2 &  \\
	& $ \F_3^\times\times U_3(4)$  & 12 & 3 &  \\
	& $U_3(5)< G\leq U_3(5).S_3$,  & 20 & 2 &  \\
	& $ U_4(2) $  & 4 & 16 &  \\
	& $c \times  U_4(2) $, $c\in \{6,12\}$  & 4 & 25 &  \\
	& $2. U_4(2)  \leq G\leq 2. U_4(2) .2$  & 4 & 27 &  \\
	& $ \mathbb{F}_r^\times \circ(2. U_4(2) )$  & 4 & 31,37 & $\dag$ \\
	& $ 12\circ(2. U_4(2) )$  & 4 & 37 & $\dag$ \\
	& $c\times  U_4(2) $, $c\in \{3,9,21,63\}$  & 4 & 64 &  \\
	& $ U_4(2)  \leq G < 2\times  U_4(2) .2 $  & 5 & 9 &  \\
	& $ U_4(2)  < G \leq \F_{13}^\times\times  U_4(2) $  & 5 & 13 & $\dag$ \\
	& $c\times  U_4(2) , c\in \{6,18\}$  & 5 & 19 & $\dag$ \\
	& $c\times  U_4(2) $, $c=2k\mid 26$  & 5 & 27 &  \\
	& $c\times( U_4(2) .2) $, $c \mid 26$  & 5 & 27 &  \\
	& $ U_4(2)  \leq G \leq 2\times  U_4(2) .2$  & 6 & 7 & $\dag$ \\
& $ 3\times U_4(2) $  & 6 & 7 & $\dag$ \\
	& $ U_4(2) \leq G<\F_8^\times \times U_4(2).2$ & 6 & 8 &  \\
	& $ \F_{11}^\times\times  U_4(2) $  & 6 & 11 & $\dag$ \\
	& $ c \times  U_4(2) .2$, $1\neq c\mid (r-1)$  & 6 & 11,13 & $\dag\star_3$ \\
	& $ c \times  U_4(2) .2$,  $c\mid 15$  & 6 & 16 &  \\
	& $ U_4(2)  \leq G < \F_3^\times\times U_4(2) .2$  & 10 & 3 &  \\
	& $3_1.U_4(3)$  & 6 & 16 &  \\
	& $6_1.U_4(3)\leq G \leq 6_1.U_4(3).2_2$  & 6 & 19 & $\dag$ \\
	& $c\circ(6_1.U_4(3))$, $c \mid 24$  & 6 & 25 &  \\
	& $c\circ(6_1.U_4(3).2_2)$, $c \mid (r-1)$  & 6 & 25,49 &  \\
	& $\F_{31}^\times \circ (6_1.U_4(3))$& 6 & 31 & $\dag$ \\
	& $c \circ(6_1.U_4(3).2_2)$, $c\mid (r-1)$  & 6 & 31,37 & $\dag$ \\
	& $c \times U_5(2).2$, $1\neq c \mid 8$  & 10 & 9 &  \\
	& $\F_9^\times \times U_5(2)$  & 10 & 9 &  \\

	& $c \times U_5(2)$, $1\neq c \mid 6$  & 10 & 7 & $\dag$ \\
	& $U_6(2)$  & 21 & 3 &  \\ 
		\cmidrule{2-5}
	 & $c\times \mathrm{PSp}_6(2)$, $c\in \{1,2\}$  & 7 & 9 &  \\
	& $c \times \mathrm{PSp}_6(2)$, $c\mid (r-1)$  & 7 & 11,13 & $\dag$ \\
		& $c \times \mathrm{PSp}_6(2)$, $1\neq c \mid 16$  & 7 & 17 & $\dag$ \\
	& $c\times \mathrm{PSp}_6(2)$, $c\in \{6,18\}$  & 7 & 19 & $\dag$ \\
	& $c\circ (2.\mathrm{PSp}_6(2))$, $c\in\{1,2\}$  & 8 & 7 &  \\
		& $c\circ (2.\mathrm{PSp}_6(2))$, $c\mid 8$  & 8 & 9 &  \\
	& $c\times \mathrm{PSp}_6(2)$, $c \in \{1,2\}$  & 14 & 3 &  \\
	& $c\times \mathrm{PSp}_6(3)$, $c\in \{2,3,6\}$  & 13 & 7 &  \\
	& $2.\mathrm{PSp}_6(3) \leq G \leq \F_7^\times \circ (2.\mathrm{PSp}_6(3).2)$  & 14 & 7 &  \\ 
&&&\\
	\bottomrule
\end{tabular}
\caption{Pairs $(G,V)$ with $G$ an almost quasisimple group of Lie type for which $b(G) = \lceil \log |G|/\log |V| \rceil+1$,  Part I.}
\label{allbad}
\end{table}
\begin{table}[h!]
\ContinuedFloat
\begin{tabular}{lllll} 
	\toprule
	$b(G)$  & $G$  & $d$  & $r$  &  \\ 
	\midrule
	2	& $c\circ( 2.\pom_8^+(2))$, $c \in \{2,4\}$  & 8 & $13$  & $\dag$ \\
& $2.\pom_8^+(2).2$ & 8 & $13$  & $\dag$ \\
& $2.\pom_8^+(2).2i$ & 8 & $13$  & $\dag$ \\
	& $c\circ( 2.\pom_8^+(2).2i)$, $c\mid 16$  & 8 & $17$  & $\dag$ \\
	& $2.\pom_8^+(2) \leq G\leq \mathbb{F}_r^\times\circ( 2.\pom_8^+(2).2)$  & 8 & $17,19, 23$  & $\dag$ \\
		& $c\circ( 2.\pom_8^+(2).2i)$, $c\mid 24$   & 8 & $25$  &  \\
	& $2.\pom_8^+(2) \leq G\leq \mathbb{F}_r^\times\circ( 2.\pom_8^+(2).2)$  & 8 & $25,27$  &  \\ 
	& $c\circ( 2.\pom_8^+(2).2)$, $c \mid 28$,  & 8 & $29$  &  $\dag$\\
	& $c\circ( 2.\pom_8^+(2).2i)$, $c=4k \mid 28$,  & 8 & $29$  &$\dag$  \\ 
	& $c\circ( 2.\pom_8^+(2))$, $c \mid 30$, $c>2$  & 8 & $31$  &$\dag$ \\ 
		& $c\circ( 2.\pom_8^+(2).2)$, $c \mid 30$, $c>2$  & 8 & $31$  & $\dag$ \\ 
	\cmidrule{2-5}
	& $c\circ( 2.^2B_2(8) )$, $c \mid 4$  & 8 & 5 &  \\
	& $ \F_7^\times\circ (2.G_2(4))\leq G\leq \F_7^\times\circ (2.G_2(4).2)$  & 12 & 7 &  \\ 
	\midrule
	3 &$L_2(4).2$  & 4 & 2 &  \\
	& $\F_3^\times \times L_2(9)\leq G\leq \F_3^\times \times L_2(9).2_3$  & 4 & 3 &  \\
	& $c\times L_2(9).2_1$, $c\in \{1,2\}$  & 4 & 3 &  \\
	& $2.L_3(4)\leq G \leq 2.L_3(4).2^2$  & 6 & 3 &  \\
	\bottomrule
	\end{tabular}
	\begin{tabular}{lllll} 
	\toprule
	$b(G)$  & $G$  & $d$  & $r$  &  \\ 
	\midrule
	3& $\F_3^\times \times L_4(2).2$  & 7 & 3 &  \\ 
	\cmidrule{2-5}
	& $c \times U_3(3)$, $c\in \{2,4,8\}$  & 3 & 9 &  \\
	& $ U_4(2) $  & 4 & 4 &  \\
	& $ U_4(2) $  & 5 & 3 &  \\
	& $ U_4(2) .2 \leq G \leq \F_4^\times \times U_4(2) .2 $  & 6 & 4 &  \\
	& $3_1.U_4(3)$  & 6 & 4 &  \\
	& $ \F_3^\times\times U_5(2).2$  & 10 & 3 &  \\ 
	\cmidrule{2-5}
	& $c\times \mathrm{PSp}_6(2)$, $c\in \{1,2\}$  & 7 & 3 &  \\
	& $2.\mathrm{PSp}_6(2)$  & 8 & 3 &  \\ 
	\cmidrule{2-5}
	& $2.\pom_8^+(2) \leq G\leq \F_5^\times\circ( 2.\pom_8^+(2).2)$  & 8 & 5 &  \\ 
	\midrule
	4 & $L_2(9).2_1$  & 4 & 2 &  \\
	\cmidrule{2-5}
	& $2. U_4(2) \leq G\leq 2.U_4(2).2$ & 4 & 3 &  \\
	& $ \F_4^\times\times  U_4(2) $  & 4 & 4 &  \\
	& $ \F_3^\times\times  U_4(2) .2 $  & 5 & 3 &  \\
	& $ U_4(2) $  & 6 & 2 &  \\ 
	\cmidrule{2-5}
	& $2.\pom_8^+(2) \leq G\leq 2.\pom_8^+(2).2$  & 8 & 3 &  \\
	\bottomrule
	\end{tabular}
	\caption{Pairs $(G,V)$ with $G$ an almost quasisimple group of Lie type for which $b(G) = \lceil \log |G|/\log |V| \rceil+1$,  Part II.}
\end{table}
}

Combining the cases in Table \ref{allbad} with the cases where $b(G) = \lceil \log |G|/ \log |V| \rceil >1$, we obtain the following corollary.

{\scriptsize
\begin{table}[h!]
\begin{tabular}{lllll} 
	\toprule
	$b(G)$  & $G$  & $d$  & $r$  &  \\ 
	\midrule
	2 & $L_2(4)$  & 2 & 4 &  \\
	& $2.L_2(4) \leq G \leq \F_5^\times \circ(2. L_2(4))$  & 2 & 5 &  \\
	& $2.L_2(4) \leq G \leq \F_9^\times \circ(2.L_2(4))$  & 2 & 9 &  \\
	& $c \circ (2.L_2(4))$, $c\mid r-1$, $c\neq 1,2$  & 2 & 11, 19 & $\dag$ \\
		& $c \times L_2(4)$, $c \in \{5,15\}$  & 2 & 16 &  \\
	& $c \circ(2. L_2(4))$, $c\in \{6,12,24\}$  & 2 & 25 &  \\
	& $c \circ (2.L_2(4))$, $c\in \{4,28\}$  & 2 & 29 & $\dag$ \\
	& $c\circ (2.L_2(4))$, $c\in \{\frac{r-1}{2}, r-1\}$  & 2 & 31, 41 & $\dag$ \\
	& $\F_{61}^{\times} \circ (2.L_2(4))$  & 2 & 61 & $\dag$ \\
	& $c \circ (2.L_2(4))$, $c\in \{12, 24, 48\}$  & 2 & 49 & $\dag$ \\
	&  $c\circ(2.L_2(4).2)$, $c\mid 24$  & 2 & 25 &  \\
		& $c\circ(2.L_2(4).2i)$, $c\mid 24$, $c>2$  & 2 & 25 &  \\
	& $L_2(4) \leq G\leq \F_5^\times \times L_2(4).2$  & 3 & 5 &  \\
	& $L_2(4)< G \leq \F_9^\times \times L_2(4)$  & 3 & $9$  &  \\
	& $\F_{11}^\times \times L_2(4)$  & 3 & 11 & $\dag$ \\
	& $L_2(4)$  & 4 & 2 &  \\
	& $L_2(4)\leq G\leq \F_3^\times\times L_2(4).2$  & 4 & 3 &  \\
	& $L_2(4) < G \leq \F_4^\times \times L_2(4).2$  & 4 & $4$  &  \\
	& $\F_5^\times \circ(2.L_2(4))$  & 4 & 5 &  \\
	& $c\times L_2(4).2$, $c \in\{2,6\}$  & 4 & 7 & $\dag$ \\
	& $c\times L_2(8)$, $c\mid 7$  & 2 & 8 &  \\
	& $c\times L_2(8)$, $c=3k \mid 63$  & 2 & 64 &  \\
	& $ \F_8^\times\times L_2(8)$  & 4 & 8 &  \\
	& $ \F_3^\times\times L_2(8)$  & 7 & 3 &  \\
	& $L_2(8)\leq G\leq L_2(8).3$& 8 & 2 &  \\
	& $c\times(L_2(8).3)$, $c \in \{1,2\}$  & 7 & 3 &  \\
	& $2.L_2(9)\leq G\leq 2.L_2(9).2_2$  & 2 & 9 &  \\
	& $c \circ (2.L_2(9))$, $c\in \{10, 20,40, 80\}$  & 2 & 81 &  \\
	& $c \circ (2.L_2(9)).2_2$, $c\mid 80$  & 2 & 81 &  \\
	& $3.L_2(9)$  & 3 & 4 &  \\
	& $L_2(9) \leq G \leq \F_9^\times \times L_2(9).2_2$  & 3 & 9 &  \\
	& $\F_{16}^\times \circ(3.L_2(9))$  & 3 & 16 &  \\
	& $c \circ (3.L_2(9))$, $c\in \{2,6,18\}$  & 3 & 19 & $\dag$ \\
	& $c \circ (3.L_2(9))$, $c \in \{10,30\}$  & 3 & 31 & $\dag$ \\
	& $c \circ (3.L_2(9))$, $c \mid 24$ even  & 3 & 25 &  \\
	& $c \circ (3.L_2(9).2_3)$, $c \mid 24$ even  & 3 & 25 &  \\
	& $L_2(9)\leq G\leq L_2(9).2_3$  & 4 & 3 &  \\
	& $L_2(9) \leq G \leq \F_4^\times \times L_2(9).2_1$  & 4 & 4 &  \\
	& $2.L_2(9) \leq G \leq \F_5^\times \circ(2.L_2(9).2_1)$  & 4 & 5 &  \\
	& $\F_7^\times \circ(2.L_2(9))$  & 4 & 7 & $\dag$ \\
		& $\F_7^\times \circ(2.L_2(9).2_1)$  & 4 & 7 & $\dag$ \\
	& $c\times L_2(9).2_1$,  $c\in\{1,7\}$ & 4 & 8 &  \\
	& $L_2(9).2_1\leq G\leq L_2(9).2^2 $  & 4 & 9 &  \\
	& $2\times L_2(9) \leq G \leq \F_9^\times \times L_2(9).2^2$  & 4 & 9 &  \\
	& $L_2(9) <G\leq \F_5^\times \circ(L_2(9).2_1)$  & 5 & 5 &  \\
	& $c\times L_2(9).2_1$, $c\in \{2,3,6\}$  & 5 & 7 & $\dag \star_1$ \\
	& $c\times L_2(9).2^2$,  $c \in \{1,2\}$ & 9 & 3 & $\ddag$ \\
		& $c \times L_2(11)$, $c\mid (r-1)$  & 5 & 3,4,5 &  \\
	& $2.L_2(11)$  & 6 & 3 &  \\
	& $L_2(11)\leq G\leq L_2(11).2$  & 10 & 2 &  \\
	& $2.L_2(13)$  & 6 & 3 &  \\
	& $ \F_4^\times\times L_2(13)$  & 6 & 4 &  \\
	& $c \times L_2(13)$, $c\in \{1,2\}$  & 7 & 3 &  \\
	& $ L_2(13).2$ & 14 & 2 &  \\
	& $L_2(17)$  & 8 & 2 &  \\
	& $L_2(23)$  & 11 & 2 &  \\
	& $L_2(25) \leq G\leq L_2(25).2_2$  & 12 & 2 &  \\
	& $L_2(31)$  & 15 & 2 &  \\
	& $c\circ( 2.L_3(2))$, $c\mid 6$  & 2 & 7 &  \\
	& $c\circ( 2.L_3(2))$, $c\mid 48$, $c\neq \{2,3,6\}$  & 2 & 49 &  \\
	& $c\circ( 2.L_3(2).2)$, $c\mid 48$, $c\neq \{2,3,6\}$  & 2 & 49 &  \\
	& $c\times L_3(2)$, $c \mid 3$  & 3 & 4 &  \\
	& $L_3(2) \leq G \leq \F_7^\times\times L_3(2).2$  & 3 & 7 &  \\
	& $ \F_8^\times\times L_3(2)$  & 3 & 8 &  \\
	&&&\\
	\bottomrule
\end{tabular}
\quad 
\begin{tabular}{lllll} 
	\toprule
	$b(G)$  & $G$  & $d$  & $r$  &  \\ 
	\midrule
	2  		& $c\times L_3(2)$, $1\neq c \mid 8$  & 3 & 9 &  \\
& $c\times L_3(2)$, $c\in\{2,10\}$  & 3 & 11 & $\dag$ \\
	& $L_3(2) \leq G \leq \F_3^\times\times L_3(2).2$  & 6 & 3 &  \\
	& $L_3(2)\leq G\leq L_3(2).2$  & 8 & 2 &  \\
	& $L_3(3) \leq G \leq L_3(3).2$  & 12 & 2 &  \\
	&$4_2.L_3(4) \leq G \leq \mathbb{F}_9^\times \circ (4_2.L_3(4).2_2)$ &4&9\\
	& $2.L_3(4) \leq G\leq \F_9^\times\circ (2.L_3(4).2^2)$  & 6 & 9 &  \\
	& $6.L_3(4) \leq G \leq 6.L_3(4).2_1$  & 6 & 7 &  \\
	& $4_1.L_3(4).2_3$  & 8 & 5 &  \\
	& $L_4(2)\leq G <\F_3^{\times} \times L_4(2).2$  & 7 & 3 &  \\
	& $L_4(2) \leq G \leq \F_5^{\times} \times L_4(2).2$  & 7 & 5 &  \\
	& $L_4(2) < G \leq \F_7^{\times} \times L_4(2).2$  & 7 & 7 &  \\
	& $L_4(2).2 < G\leq \F_9^\times \times L_4(2).2$  & 7 & 9 &  \\
	& $2.L_4(2)$  & 8 & 3 &  \\
	& $\F_5^\times \circ (2.L_4(2).2)$  & 8 & 5 &  \\
	& $L_4(3) \leq G \leq L_4(3).2_2$  & 26 & 2 &  \\ 
	\cmidrule{2-5}
	& $U_3(3)$  & 3 & 9 &  \\
	& $c \times U_3(3)$, $c=2k \mid 80$  & 3 & 81 &  \\
	& $U_3(3)\leq G\leq \F_4^\times \times U_3(3).2$  & 6 & 4 &  \\
	& $U_3(3)<G\leq\F_5^\times\times U_3(3)$  & 6 & 5 & $\dag$ \\ 
	& $ \F_7^\times \times U_3(3)$  & 6 & 7 &  \\
	& $c \times U_3(3).2$, $c\in \{2,6\}$  & 6 & 7 &  \\
	& $U_3(3) \leq G \leq \F_3^\times\times U_3(3).2$  & 7 & 3 &  \\
	& $c\times U_3(3).2 $, $c\mid 4$  & 7 & $5$  & $\dag\star_2$ \\
	& $U_3(3) \leq G\leq \F_5^\times\times U_3(3)$  & 7 & $5$  & $\dag*$ \\
	& $U_3(3)\leq G\leq U_3(3).2$ & 14 & 2 &  \\
	& $ \F_3^\times\times U_3(4)$  & 12 & 3 &  \\
	& $U_3(5)<G\leq U_3(5).S_3$  & 20 & 2 &  \\
	& $c\circ(2. U_4(2) )$, $c \mid (r-1)$  & 4 & 7,13 &$\dag$  \\
	& $2. U_4(2)  \leq G \leq \F_9^\times\circ (2. U_4(2) .2)$  & 4 & 9 &  \\
		& $c\times  U_4(2) $, $c\mid 15$  & 4 & 16 &  \\
	& $c\circ(2. U_4(2) )$, $c \mid 18$, $c>2$  & 4 & 19 & $\dag$  \\
	& $c \circ (2. U_4(2)) $, $c=3k\mid 24$  & 4 & 25 &  \\
	& $2. U_4(2)  \leq G \leq \F_{27}^\times\circ (2. U_4(2) .2)$  & 4 & 27 &  \\
	& $ \mathbb{F}_r^\times \circ(2. U_4(2) )$  & 4 & 31,37 & $\dag$ \\
	& $ 12 \circ(2. U_4(2) )$  & 4 & 37 & $\dag$ \\
	& $c\times  U_4(2) $, $c\in \{3,9,21,63\}$  & 4 & 64 &  \\
	& $U_4(2)$& 5 &7&$\dag$\\
	& $ U_4(2)  < G \leq \mathbb{F}_r^\times \times  U_4(2) $  & 5 & 7,13 & $\dag$ \\
	& $ U_4(2)  \leq G \leq \F_9^\times\times U_4(2) .2$  & 5 & 9 &  \\
	& $c\times  U_4(2) , c\in \{6,18\}$  & 5 & 19 & $\dag$ \\
	& $c\times( U_4(2) )$, $c\in \{2,26\}$  & 5 & 27 &  \\
	& $c\times( U_4(2) .2)$, $c\mid 26$  & 5 & 27 &  \\
	& $c\times  U_4(2) $, $c\in \{1,3\}$  & 6 & 4 &  \\
	& $ U_4(2) \leq G \leq \F_5^\times\times  U_4(2) .2$  & 6 & 5 &  \\
	& $ U_4(2)  \leq G \leq \F_7^\times\times  U_4(2) .2$  & 6 & 7 & $\dag$ \\
	& $ U_4(2) \leq G \leq \F_8^\times\times  U_4(2) .2$  & 6 & 8 &  \\
	& $ \F_{11}^\times\times  U_4(2) $  & 6 & 11 & $\dag$ \\
	& $ U_4(2) .2 < G \leq \mathbb{F}_r^\times \times  U_4(2) .2$  & 6 & 11,13 & $\dag \star_3$ \\
	& $c \times (  U_4(2) .2)$, $c\mid 15$  & 6 & 16 &  \\
	& $ U_4(2)  \leq G \leq \F_3^\times\times U_4(2) .2$  & 10 & 3 &  \\
	& $ U_4(2)\leq G\leq U_4(2).2$ & 14 & 2 &  \\
	& $6_1.U_4(3)\leq G\leq 6_1.U_4(3).2_2$  & 6 & 7 &  \\
	& $c\circ(3_1.U_4(3))$, $c\mid 15$  & 6 & 16 &  \\
	& $c\circ(3_1.U_4(3).2_2)$, $c\mid 15$  & 6 & 16 &  \\
	& $c \circ(6_1.U_4(3))$, $c\mid (r-1)$  & 6 & 13,19 & $\dag$ \\
	& $\F_{31}^\times \circ(6_1.U_4(3))$& 6 & 31 & $\dag$ \\
	& $c \circ(6_1.U_4(3).2_2)$, $c\mid (r-1)$  & 6 & 13,19 & $\dag$ \\
	& $c \circ(6_1.U_4(3).2_2)$, $c\mid (r-1)$  & 6 & 31,37 & $\dag$ \\
	& $c\circ(6_1.U_4(3))$, $c \mid 24$  & 6 & 25 &  \\
	& $c\circ(6_1.U_4(3).2_2)$, $c \mid (r-1)$  & 6 & 25,49 &  \\
	& $U_4(3)\leq G\leq U_4(3).D_8$  & 20 & 2 &  \\
	& $U_5(2)\leq G < \F_3^\times\times U_5(2).2$  & 10 & 3 &  \\
		& $U_5(2)\leq G \leq \F_5^\times \times U_5(2).2$  & 10 & 5 &  \\
\bottomrule
\end{tabular}
\caption{Pairs $(G,V)$ in Theorem \ref{mainthm} with $b(G) >1$, Part I. }
\label{noros}
\end{table}
\begin{table}[h!]
\ContinuedFloat
\begin{tabular}{lllll} 
	\toprule
	$b(G)$  & $G$  & $d$  & $r$  &  \\ 
	\midrule
		2					& $c \times U_5(2)$, $1\neq c\mid 6$  & 10 & 7 & $\dag$ \\
& $ \F_9^\times \times U_5(2)$  & 10 & 9 &  \\	
		& $c \times U_5(2).2$, $1<c \mid 8$  & 10 & 9 &  \\
& $U_6(2) \leq G \leq \F_3^\times\times (U_6(2).S_3)$  & 21 & 3 &  \\ 
		\cmidrule{2-5}
	& $\mathrm{PSp}_4(7)$  & 24 & 2 &  \\
	& $c\times \mathrm{PSp}_6(2)$, $c\mid (r-1)$  & 7 & 5,9 &  \\
	& $c \times \mathrm{PSp}_6(2)$, $c\mid (r-1)$  & 7 & 11,13 & $\dag$ \\
	& $c\times \mathrm{PSp}_6(2)$, $1\neq c \mid 16$  & 7 & 17 & $\dag$ \\
	& $c\times \mathrm{PSp}_6(2)$, $c \times \{6,18\}$  & 7 & 19 & $\dag$ \\
	& $c\circ (2.\mathrm{PSp}_6(2))$, $c\mid (r-1)$  & 8 & 5,7,9 &  \\
	& $c\times \mathrm{PSp}_6(2)$, $c \in \{1,2\}$  & 14 & 3 &  \\
	& $c\times \mathrm{PSp}_6(3)$, $c\mid 3$  & 13 & 4 &  \\
	& $c\times \mathrm{PSp}_6(3)$, $c\in \{2,3,6\}$  & 13 & 7 &  \\
	& $2.\mathrm{PSp}_6(3) \leq G \leq \F_7^\times\circ (2.\mathrm{PSp}_6(3).2)$  & 14 & 7 &  \\
		\cmidrule{2-5}
	& $2.\pom_8^+(2) \leq G\leq \mathbb{F}_r^\times\circ( 2.\pom_8^+(2).2)$  & 8 & $7,9,25,27$  &  \\
	& $2.\pom_8^+(2) \leq G\leq \mathbb{F}_r^\times\circ( 2.\pom_8^+(2).2)$  & 8 & $11, 13, 17$  & $\dag$ \\
		& $2.\pom_8^+(2) \leq G\leq \mathbb{F}_r^\times\circ( 2.\pom_8^+(2).2)$  & 8 & $19,23$  & $\dag$ \\
	& $c\circ( 2.\pom_8^+(2).2)$, $c \mid 28$,  & 8 & $29$  &  $\dag$\\
	& $c\circ( 2.\pom_8^+(2).2i)$, $c=4k \mid 28$,  & 8 & $29$  &$\dag$  \\ 
	& $c\circ( 2.\pom_8^+(2))$, $c \mid 30$, $c>2$  & 8 & $31$  &$\dag$ \\ 
		& $c\circ( 2.\pom_8^+(2).2)$, $c \mid 30$, $c>2$  & 8 & $31$  & $\dag$ \\ 
	\cmidrule{2-5}
	& $c\circ( 2.^2B_2(8) )$, $c \mid 4$  & 8 & 5 &  \\
	&$G_2(3)\leq G\leq G_2(3).2$ & 14&2\\
	& $2.G_2(4) \leq G\leq 2.G_2(4).2$  & 12 & $3$ &  \\
	& $2.G_2(4) \leq G \leq \F_5^\times\circ (2.G_2(4).2)$  & 12 & $5$ &  \\
	& $ \F_7^\times\circ (2.G_2(4))\leq G\leq  \F_7^\times\circ (2.G_2(4).2)$  & 12 & 7 &  \\ 
	\bottomrule
	\end{tabular}
	\quad
	\begin{tabular}{lllll} 
	\toprule
	$b(G)$  & $G$  & $d$  & $r$  &  \\ 
	\midrule
		3	&$L_2(4).2$  & 4 & 2 &  \\
		& $L_3(2)$  & 3 & 2 &  \\
	& $L_2(9)$  & 4 & 2 &  \\
	& $\F_3^\times \times L_2(9)\leq G \leq \F_3^\times \times L_2(9).2_3$  & 4 & 3 &  \\
	& $c \times L_2(9).2_1$, $c\in \{1,2\}$  & 4 & 3 &  \\
		& $2.L_3(4)\leq G \leq 2.L_3(4).2^2$  & 6 & 3 &  \\
	& $\F_3^\times \times L_4(2).2$  & 7 & 3 &  \\ 
	\cmidrule{2-5}
	& $c \times U_3(3)$, $c\in \{2,4,8\}$  & 3 & 9 &  \\
		& $U_3(3)\leq G\leq U_3(3).2$  & 6 & 2 &  \\
	& $ U_4(2) $  & 4 & 4 &  \\
	& $ U_4(2) $  & 5 & 3 &  \\
	& $ U_4(2) .2 \leq G \leq \F_4^\times \times U_4(2) .2 $  & 6 & 4 &  \\
	& $3_1.U_4(3)\leq G\leq 3_1.U_4(3).2_2$  & 6 & 4 &  \\
	& $ \F_3^\times\times U_5(2).2$  & 10 & 3 &  \\ 
	\cmidrule{2-5}
	& $c\times \mathrm{PSp}_6(2)$, $c\in \{1,2\}$  & 7 & 3 &  \\
	& $2.\mathrm{PSp}_6(2)$  & 8 & 3 &  \\ 
	\cmidrule{2-5}
	& $2.\pom_8^+(2) \leq G\leq \F_5^\times\circ( 2.\pom_8^+(2).2)$  & 8 & 5 &  \\ 
	\midrule
	4 & $L_2(9).2_1$  & 4 & 2 &  \\ 
	\cmidrule{2-5}
	& $2. U_4(2) \leq G\leq 2. U_4(2).2$  & 4 & 3 &  \\
	& $ \F_4^\times\times  U_4(2) $  & 4 & 4 &  \\
	& $ U_4(2) < G \leq \F_3^\times\times  U_4(2) .2 $  & 5 & 3 &  \\
	& $ U_4(2) $  & 6 & 2 &  \\
	\cmidrule{2-5}
	& $2.\pom_8^+(2) \leq G\leq 2.\pom_8^+(2).2$  & 8 & 3 &  \\ 
	\midrule
	5 & $ U_4(2) .2$  & 6 & 2 &  \\
		\bottomrule
	\end{tabular}
\caption{Pairs $(G,V)$ in Theorem \ref{mainthm} with $b(G) >1$, Part II. }
\end{table}
}

\begin{corollary}
\label{maincoroll}
Let $V= V_d(r)$ be as above. Suppose $G\leq \mathrm{GL}(V)$ and that the layer $E(G)$ of $G$ is quasisimple of Lie type in characteristic $p \nmid r$ acting absolutely irreducibly on $V$. Then either $G$ has a regular orbit on $V$, or $(G,V)$ appears in Table \ref{noros}.
\end{corollary}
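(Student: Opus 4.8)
The plan is to deduce the corollary from Theorem \ref{mainthm} together with a finite enumeration. Recall that $G$ has a regular orbit on $V$ exactly when $b(G)=1$, and that, since $|G|>1$, we have $\ceil{\log|G|/\log|V|}=1$ if and only if $|G|\le|V|$; when $|G|>|V|$ the general lower bound $b(G)\ge\ceil{\log|G|/\log|V|}$ already forces $b(G)\ge 2$. So Theorem \ref{mainthm} shows that $G$ fails to have a regular orbit precisely when at least one of the following holds: $|G|>|V|$; $(G,V)$ appears in Table \ref{allbad} (where $b(G)=\ceil{\log|G|/\log|V|}+1\ge 2$); or $(G,V)$ is one of the three pairs of part (iii). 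Since each of those three pairs satisfies $|G|>|V|$, it remains only to list all pairs with $|G|>|V|$ and to verify that, merged with the rows of Table \ref{allbad}, they give exactly Table \ref{noros}.

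To produce the list of pairs with $|G|>|V|$, I would first bound which $(G,V)$ can occur. Put $S=E(G)/Z(E(G))$. Since $E(G)$ acts absolutely irreducibly, $F(G)=F_0(G)$ is the group of scalars in $G$ by \cite[Lemma 2.10.1]{KL}, as recalled above, so $G/Z(G)$ embeds in $\mathrm{Aut}(S)$ and hence $|G|\le(r-1)\,|S|\,|\mathrm{Out}(S)|$. On the other hand $d=\dim V$ is at least the minimal degree of a faithful absolutely irreducible representation of $E(G)$ over a field of characteristic coprime to $p$, and by the Landazuri--Seitz--Zalesskii bounds this minimal degree grows at least linearly with the untwisted Lie rank of $S$ and with the order of its defining field. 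As $|V|=r^d\ge 2^d$ therefore grows faster than the fixed polynomial bound on $|G|$, the inequality $|V|\ge|G|$ holds for all but finitely many $(S,d,r)$: only a bounded list of simple groups $S$ survives, each with only finitely many admissible pairs $(d,r)$.

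The remaining step is to settle these finitely many cases by hand. For each surviving $S$ one enumerates the faithful absolutely irreducible cross-characteristic representations of small degree of the relevant quasisimple covers of $S$ --- using the Atlas, the classification of low-dimensional cross-characteristic representations (Hiss--Malle, L\"ubeck), and direct computation where needed --- and for each resulting module $V_d(r)$ and each intermediate group $E(G)\le G\le N_{\gl(V)}(E(G))$ (accounting for the scalars $\F_r^\times$ and for the field, graph and diagonal automorphisms in $\mathrm{Out}(S)$) one compares $|G|$ with $r^d$. Keeping the pairs with $|G|>|V|$, evaluating $b(G)=\ceil{\log|G|/\log|V|}$ via Theorem \ref{mainthm}(i) when $(G,V)\notin$ Table \ref{allbad} (and $b(G)=\ceil{\log|G|/\log|V|}+1$ otherwise), and adjoining the remaining rows of Table \ref{allbad}, one recovers precisely Table \ref{noros}; that is exactly the asserted set of pairs with no regular orbit.

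The hard part is the last step: ensuring the enumeration is exhaustive. One must control simultaneously all of the small quasisimple Lie-type groups that clear the Landazuri--Seitz--Zalesskii threshold, every intermediate group $G$ between the layer and its normaliser in $\gl(V)$, and every prime power $r$ coprime to $p$ over which the module is defined, and one must do exact arithmetic near the threshold $|G|\approx|V|$, where the value of $\ceil{\log|G|/\log|V|}$ is sensitive to small changes. None of this is conceptually difficult once the representation-theoretic data is assembled, but it is the place where completeness could fail, so it would need to be carried out systematically against the known classifications and cross-checked with the analysis underlying Theorem \ref{mainthm}.
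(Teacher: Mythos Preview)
Your proposal is correct and follows essentially the same route as the paper: the corollary is obtained from Theorem~\ref{mainthm} by noting that $b(G)\ge 2$ exactly when $(G,V)$ lies in Table~\ref{allbad}, or in part~(iii), or satisfies $\lceil\log|G|/\log|V|\rceil>1$, and then merging these lists into Table~\ref{noros}. The paper states this in one line (``Combining the cases in Table~\ref{allbad} with the cases where $b(G)=\lceil\log|G|/\log|V|\rceil>1$'') and gives no separate argument, because the enumeration you describe as the ``hard part'' is not carried out independently: the pairs with $|G|>|V|$, and the values of $\lceil\log|G|/\log|V|\rceil$, are already computed case by case throughout the proof of Theorem~\ref{mainthm} in Sections~4--8, so Table~\ref{noros} is simply read off from that analysis rather than rederived via a Landazuri--Seitz--Zalesskii bound plus fresh enumeration.
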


\begin{remark}
\label{gcd}
\begin{enumerate}
\item The almost quasisimple groups $G$ in Tables \ref{allbad} and \ref{noros} where $|F(G)|>|Z(E(G))|$ have the extension by $F(G)$ denoted by direct product notation $\times$ if $|Z(E(G))|=1$ and central  product notation $\circ$ otherwise.

 \item  The entries in Tables \ref{allbad} and \ref{noros} marked with $\dag$ can be computed from \cite{MR1829482}, which provides the largest group for each module where there is no regular orbit. We construct the relevant modules in GAP \cite{GAP4} and determine completely where there are regular orbits.  However, the authors of \cite{MR1829482} omit the proof for the 8-dimensional modules $V=V_8(q)$  of $2.\mathrm{P}\Omega^+_8(2).2$ in their paper. For $q$ prime,  these omitted modules are analysed by L\"ubeck \cite{lubeck2021orbits}, who also shows that there is no regular orbit of $c \circ (2.\mathrm{P}\Omega^+_8(2))$ for $c>2$ and $q=31$, correcting \cite{MR1829482}. For the prime powers $q=121, 169$, the existence of a regular orbit was instead confirmed using Magma \cite{Magma}.
 This deals entirely with the case where $(|G|, |V|)=1$, and the remainder of this paper is devoted to the case where $(|G|, |V|)>1$.
 \item We use $2.\pom_8^+(2).2$ Tables  \ref{allbad} and \ref{noros} to denote the isoclinic variant isomorphic to the Weyl group of $E_8$, and use $2.\pom_8^+(2).2i$ to denote the other group of this shape.  Isoclinic variants of other groups are also considered in this paper, but it is only here that the base sizes vary between isoclinic variants.
\item  In the cases $L_2(4) \cong L_2(5)$, $L_2(9) \cong \mathrm{PSp}_4(2)'$, $L_3(2) \cong L_2(7)$, $\mathrm{PSp}_4(3) \cong U_4(2)$, $L_2(8) \cong {}^2G_2(3)'$ and $U_3(3) \cong G_2(2)'$ where there is an exceptional isomorphism between two groups of different characteristics, only one of the groups appears in Tables \ref{allbad}, and \ref{noros}, and we complete the analysis in all characteristics dividing the order of the group. 
\item We require some additional information for the asterisked entries in Tables \ref{allbad} and \ref{noros}. From the Brauer character table in \cite{modatlas}, $U_3(3)$ has three absolutely irreducible 7-dimensional modules over $\F_5$. Only one of these may be extended to $U_3(3).2$. The first asterisked line in Table \ref{noros} refers to this module, while the asterisked line in Table \ref{allbad}, and the second such line in Table \ref{noros} refers to one of the two other modules.
		\item The rows marked with $\star_1$ also require further explanation. There are four irreducible modules $V_5(7)$ for $G = c\times L_2(9).2_1$ with $c \mid 6$. There is never a regular orbit of $G$ on $V$ when $c=2,6$ and also no regular orbit for $c=3$ if the corresponding Brauer character value on class 2B is either  3 or $-1$. In these cases, $b(G)=2$. There is a regular orbit of $G$ on $V$ in the remaining cases. 
		\item In the rows marked with $\star_2$, there is no regular orbit of $G=U_3(3).2$ on $V_7(5)$ if the corresponding Brauer character value on class 2B is equal to 1 and instead $b(G)=2$.
		\item In the rows marked with $\star_3$, if $r=11$ then $U_4(2).2$ has no regular orbit on $V=V_6(r)$ if and only if the corresponding Brauer character has value 4 on the class 2C. Moreover, if $r=13$, then for $c \mid 12$ there is a regular orbit of $c\times U_4(2).2$ on $V$, if and only if either $c=1$, or $c=3$ and the corresponding Brauer character has value $-4$ on class 2C.
		\item In the rows of Tables \ref{allbad} and \ref{noros} marked with $\ddag$, we find that $L_2(9).2^2$ has a regular orbit on three of the four absolutely irreducible  $\F_3L_2(9)$-modules $V=V_9(3)$. The module with $b(G)=2$ has Brauer character values 3 and 1 on classes $2B$ and $2D$ respectively. 
\item The aforementioned papers \cite{MR3893366,MR3500766,goodwin1,goodwin2,MR1829482} all assume that $V$ (as in Theorem \ref{mainthm}) is a faithful $n$-dimensional $\F_{r_0}G$-module, with $r_0$ prime such that $E(G)$ acts irreducibly, but not necessarily absolutely irreducibly, on $V$. We can reconcile this with our study of absolutely irreducible modules for $E(G)$ over arbitrary finite fields, following \cite[\S3]{goodwin1}. Let $k = End_{\F_{r_0}G}(V)$, $K = End_{\F_{r_0}E(G)}(V)$, $t = |K:k|$ and $d = \dim_K(V)$. Then $E(G) \le \mathrm{GL}_d(K)$ is absolutely irreducible, and $G \le GL_d(K)\langle \phi \rangle$, where $\phi$ is a field automorphism of order $t$. In Theorem 1.1 we have adopted the slightly stronger hypothesis that $G \le \mathrm{GL}_d(K)$, so that $t=1$.
\end{enumerate}

\end{remark}

The following result gives some consequences of Table \ref{noros}.

\begin{corollary}
	\label{cross_char_corol}
	Let $V= V_d(r)$ be as above. Suppose $G\leq \mathrm{GL}(V)$ and that the layer $E(G)$ of $G$ is quasisimple of Lie type in characteristic $p \nmid r$ acting absolutely irreducibly on $V$. Then the following statements hold.
	\begin{enumerate}
		\item $b(G)\leq 4$, except for $(G,V)=(U_4(2).2, V_6(2))$, where $b(G)=5$.
		\item If either $d>24$ or $|V|>31^8$ then $G$ has a regular orbit on $V$.
	\end{enumerate}
\end{corollary}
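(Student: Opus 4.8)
The plan is to derive both parts directly from Table \ref{noros}. By Corollary \ref{maincoroll}, a pair $(G,V)$ satisfying the hypotheses of Theorem \ref{mainthm} has a regular orbit, i.e.\ $b(G)=1$, unless $(G,V)$ occurs (under the conventions of Remark \ref{gcd}) as one of the rows of Table \ref{noros}, and each such row records the value of $b(G)$ together with $d$ and $r$. Note that a single row usually permits $G$ to range over an interval of groups --- from a fixed quasisimple group up to a fixed group of shape $\F_r^\times\circ(\cdots)$ or $(\cdots).2$ --- but $d$, $r$, and hence $|V|=r^d$, are constant along the row.

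For part (i), any pair absent from Table \ref{noros} has $b(G)=1\le 4$, so it remains to inspect the rows that do occur. Reading the first column of the table, the only values appearing are $2,3,4,5$, and $5$ occurs for exactly one row, namely $G=U_4(2).2$ acting on $V_6(2)$; this agrees with Theorem \ref{mainthm}(iii), two of whose three pairs with $b(G)=\lceil\log|G|/\log|V|\rceil+2$ have $b(G)=4$ and only $(U_4(2).2,V_6(2))$ has $b(G)=5$. Hence $b(G)\le 4$ with this single exception.

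For part (ii), the pairs with no regular orbit are exactly the rows of Table \ref{noros}, so it suffices to confirm that no row has $d$ or $|V|$ exceeding the thresholds in (ii); then any $(G,V)$ that does exceed a threshold cannot appear in the table, and so $G$ has a regular orbit. This is a finite inspection of the $d$- and $r$-columns. The largest field sizes come from the rows with $E(G)/Z(E(G))\cong\pom_8^+(2)$ acting on $V_8(r)$ with $r\le 31$, where $|V|=31^8$ is attained --- so that threshold is sharp --- and from the rows with $E(G)/Z(E(G))\cong\mathrm{PSp}_6(3)$ acting on $V_{14}(7)$; for every remaining row one verifies $r^d\le 31^8$ directly, and the dimensions are read off similarly. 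Because $d$ and $r$ are constant along each row, these comparisons hold uniformly over every $G$ the row allows.

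All the substantive content of this corollary is already contained in Theorem \ref{mainthm} (equivalently Corollary \ref{maincoroll}); granting that, both parts reduce to bookkeeping over a finite table, so I do not expect a genuine obstacle. The only point requiring a little care is a correct reading of Remark \ref{gcd}: the $\times$ versus $\circ$ notation and the isoclinic-variant conventions of Remark \ref{gcd}(i),(iii) when interpreting a row's range of $G$, and the exceptional isomorphisms of Remark \ref{gcd}(iv), so that no pair is missed merely because it is recorded under another name.
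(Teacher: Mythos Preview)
Your approach is exactly the paper's: the corollary is stated immediately after Table~\ref{noros} with the remark that it ``gives some consequences'' of that table, and no further argument is offered. Reducing both parts to a finite inspection of the $b(G)$-, $d$- and $r$-columns is the right (and only) thing to do.

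There is, however, one row you have glossed over which causes trouble for part~(ii) as literally stated: the entry $L_4(3)\le G\le L_4(3).2_2$ acting on $V_{26}(2)$, with $b(G)=2$. Here $d=26>24$, yet $G$ has no regular orbit. Your sentence ``the dimensions are read off similarly'' does not survive this row. This looks like a slip in the paper's stated threshold (presumably $d>26$ was intended, since $26$ is the maximum dimension occurring in Table~\ref{noros}) rather than a flaw in your method---but the point is that if you actually carry out the bookkeeping you describe, it fails here, so the proposal as written does not establish part~(ii). You should flag this row explicitly rather than absorb it into ``read off similarly''.
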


Note that part (i) of Corollary \ref{cross_char_corol} is an improvement to \cite{ourpaper}, where the authors show that $G$ as in Theorem \ref{mainthm} has $b(G)\leq 6$.

 The remainder of the paper is organised as follows. In Section \ref{prelims}, we give some preliminary results from the literature concerning properties of groups of Lie type, such as minimal degrees of irreducible representations, generation by conjugates and bounds on the number of certain prime order elements.
 In Section \ref{techniques}, we outline the techniques that we will use to prove Theorem \ref{mainthm}, and present some key propositions which provide the mechanism for the vast majority of the proof.
 We then proceed with the proof of Theorem \ref{mainthm} in Sections 4--8 by analysing, in order, the linear, unitary, symplectic, orthogonal and exceptional groups.
 
\section{Preliminaries}
\label{prelims}
From now on, we will let $G$ be an almost quasisimple group, and $V=V_d(r)$ be an irreducible $\mathbb{F}_rG$-module, that is also absolutely irreducible under the action of $E(G)$. By Remark \ref{gcd}, we will also assume that $(|G|,|V|)>1$. 
For $g \in G$, let $\emax^V(g)$ (or just $\emax(g)$) denote the dimension of the largest eigenspace of $g$ on $\overline{V} = V \otimes \overline{\mathbb{F}_r}$. For a prime $r_0$ dividing $|G|$, we will denote by $g_{r_0}$ an element of order $r_0$, and by $g_{r_0'}$ an element of prime order not $r_0$. For a set of primes $P$, we will denote $g_{P}$ an element whose order lies in $P$, and will use $g_{P'}$ to denote an element of prime order whose order does not lie in $P$. We say that $x \in G$ is of \textit{projective prime order} if $xF(G)$ has prime order in $G/F(G)$.

\subsection{Minimal degrees of cross-characteristic representations}
The degrees of irreducible cross-characteristic representations of quasisimple groups of Lie type are not known in general. However, we are able to rely on some partial results. Hiss and Malle \cite{HM} give a complete list of irreducible representations of quasisimple groups of degree at most 250. This suffices for our examination of the exceptional groups and small classical groups. For large classical groups, we instead utilise results giving the minimal degree of a representation in cross-characteristic, and lower bounds on the degree of the next smallest representation. We outline some of these results below.

\begin{proposition}[{\cite[Tables I, II]{tiep2001low}}]
\label{mindegprop}
Suppose $H =H(q)$, $q=p^e$, is a quasisimple group of Lie type acting irreducibly on the $\mathbb{F}_rH$-module $V$ with $p\nmid r$. Then $\dim V \geq d_1(H)$, where $d_1(H)$ is given in Table \ref{mindeg}.
\end{proposition}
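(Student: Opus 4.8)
This proposition is quoted from Tiep \cite{tiep2001low} (Tables~I and~II), so the ``proof'' amounts to transcribing those entries into Table~\ref{mindeg} and checking that they apply in the slightly more general setting we need, namely over an arbitrary finite field $\mathbb{F}_r$ with $r$ coprime to $p$ rather than over an algebraically closed field. This last point is routine: the minimal faithful dimension of a quasisimple group in a fixed characteristic does not change under extension of the coefficient field, so a lower bound valid over $\overline{\mathbb{F}_r}$ is automatically valid over $\mathbb{F}_r$. The plan is then to organise the verification by Lie type.

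For the large classical groups $L_n(q)$, $U_n(q)$, $\mathrm{PSp}_{2n}(q)$, $\pom_n^\pm(q)$, the table entries are the Landazuri--Seitz bounds together with their successive improvements (due to Seitz--Zalesskii and Tiep--Zalesskii), all collected in \cite{tiep2001low}. The common mechanism I would recall is the following. Let $V$ be a faithful irreducible $\mathbb{F}_r H$-module with $p \nmid r$, and fix an end-node maximal parabolic $P = QL$ of $H$ with unipotent radical $Q$. Since $Q$ is a $p$-group and $\mathrm{char}\,\mathbb{F}_r \neq p$, the restriction $V|_Q$ is semisimple; the Levi factor $L$ permutes the nontrivial characters of $Q$ occurring in $V$, and a lower bound for $\dim V$ is obtained by multiplying the length of a shortest such $L$-orbit by the dimension of the smallest $L$-constituent supported on it. Optimising the choice of $P$ reproduces the values $d_1(H)$. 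For the exceptional groups, and for the classical groups of small rank where these generic estimates are weak, the entries are instead read off from the explicit decomposition-matrix data of Hiss--Malle \cite{HM} (all degrees up to $250$), the modular Atlas \cite{modatlas}, and the known small representations (Weil, spin, and so on).

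The main obstacle here is not any single argument but the bookkeeping needed to present a uniform statement: reconciling the conventions for projective versus quasisimple groups, keeping track of the diagonal and graph automorphisms of the underlying simple group, and making sure that every small case excluded by the generic bounds is covered by explicit data — in particular the groups related by exceptional isomorphisms such as $L_2(4)\cong L_2(5)$, $L_2(9)\cong \mathrm{PSp}_4(2)'$, $L_3(2)\cong L_2(7)$ and $U_4(2)\cong \mathrm{PSp}_4(3)$. Since all of this has already been carried out in \cite{tiep2001low}, we simply record the resulting bounds in Table~\ref{mindeg} and refer the reader there for the details.
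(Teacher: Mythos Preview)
Your proposal is correct and matches the paper's treatment: the proposition is stated with the citation \cite[Tables~I,~II]{tiep2001low} and no proof is given, exactly because it is a transcription of known bounds from the literature. Your additional sketch of the Landazuri--Seitz mechanism and the remark on passing from $\overline{\mathbb{F}_r}$ to $\mathbb{F}_r$ are accurate but go beyond what the paper itself supplies.
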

\begingroup
\renewcommand*{\arraystretch}{1.5}
{\tiny
\begin{table}[h!]
\begin{tabular}{cccc}
\toprule
 &  & \multicolumn{2}{c}{Exceptions} \\
 \cmidrule{3-4}
$H/Z(H)$ & $d_1(H)$ & $(n,q)$ & $d_1(H) $ \\
\midrule
$L_2(q)$ & $(q-1)/(2,q-1)$ & $(2,4)$ & 2 \\
 &  & $(2,9)$ & 3 \\
 \midrule
$L_n(q)$, $n\geq 3$ & $\frac{q^n-q}{q-1}-1$ & $(3,2)$ & 2 \\
 &  & (3,4) & 4 \\
 &  & $(4,2)$ & 6 \\
 &  & (4,3) & 26 \\
 \midrule
$U_n(q)$, $n\geq 3$ & $\lfloor \frac{q^n-1}{q+1} \rfloor$ & $(4,2)$ & 4 \\
 &  & (4,3) & 6 \\
 \midrule
$\mathrm{PSp}_{2n}(q)$, $n\geq 2$ , $2 \nmid q$& $\tfrac{1}{2}(q^n-1)$ &  &  \\
$\mathrm{PSp}_{2n}(q)$, $n\geq 2$,  $2 \mid q$ & $\frac{(q^n-1)(q^n-q)}{2(q+1)}$ & $(2,2)$ & 4 \\
\midrule
$\mathrm{P}\Omega_{2n}^+(q)$, $n\geq 4$, $q > 3$ & $\frac{(q^n-1)(q^{n-1}+q)}{q^2-1}-2$ &  &  \\
$\mathrm{P}\Omega_{2n}^+(q)$, $n\geq 4$, $q \leq 3$ & $\frac{(q^n-1)(q^{n-1}-1)}{q^2-1}$ & $(4,2)$ & 8 \\
\midrule
$\mathrm{P}\Omega_{2n}^-(q)$, $n\geq 4$ & $\frac{(q^n+1)(q^{n-1}-q)}{q^2-1}-1$ &  &  \\
\midrule
$\mathrm{P}\Omega_{2n+1}(q)$,$n\geq 3$, $q > 3$ & $\frac{q^{2n}-1}{q^2-1}-2$ &  &  \\
$\mathrm{P}\Omega_{2n+1}(q)$, $n\geq 3$, $q = 3$ & $\frac{(q^n-1)(q^n-q)}{q^2-1}$ & $(3,3)$ & 27 \\
\bottomrule
\end{tabular}
\quad %
\begin{tabular}{cccc}
\toprule
 &  & \multicolumn{2}{c}{Exceptions} \\
 \cmidrule{3-4}
$H/Z(H)$ & $d_1(H)$ & $q$ & $d_1(H)$ \\
\midrule
$^3D_4(q)$ & $q^5-q^3+q-1$ &  &  \\
\midrule
$E_6(q)$ & $(q^5+q)(q^6+q^3+1)-1$ &  &  \\
\midrule
$E_7(q)$ & $q^{17}-q^{15}$ &  &  \\
\midrule
$E_8(q)$ & $q^{29}-q^{27}$ &  &  \\
\midrule
$F_4(q)$, $q$ odd & $q^8+q^4-2$ &  &  \\
$F_4(q)$, $q$ even & $\tfrac{1}{2}(q^3-1)(q^8-q^7)$ & 2 & 52 \\
\midrule
$G_2(q)$, $q \equiv 0 (3)$ & $q^4+q^2$ & 3 & 14 \\
$G_2(q)$, $q \equiv 1 (3)$ & $q^3$ & 4 & 12 \\
$G_2(q)'$, $q \equiv 2 (3)$ & $q^3-1$ &  &  \\
\midrule
$^2B_2(q)$ & $(q-1) \sqrt{q/2}$ & 8 & 8 \\
\midrule
$^2G_2(q)'$ & $q(q-1)$ &  &  \\
\midrule
$^2E_6(q)$ & $(q^5+q)(q^6-q^3+1)-2$ & 2 & 1938 \\
\midrule
$^2F_4(q)'$ & $(q^5-q^4)\sqrt{q/2}$ & 2 & 26\\
&&&\\
\bottomrule
\end{tabular}
\caption{Lower bounds $d_1(G)$ on representation degrees in cross-characteristic $r$.}
\label{mindeg}
\end{table}
}
\endgroup
For an integer $n$ and a prime $p$, we denote the $p'$-part of $n$ by $n_{p'}$. That is, if $p^k$ is the largest power of $p$ dividing $n$, then $n_{p'}=n/p^k$. For linear groups, the following result will also be useful.

\begin{proposition}[{\cite[Theorem 4.1]{tiep2001low}}]
 \label{rep2sl}
 Let $G=\mathrm{SL}_n(q)$ with $n\geq 5$, and let $r=r_0^f$ be a prime power coprime to $q$. Set $\epsilon_{n,q,r}=1$ if $r_0$ divides $(q^n-1)/(q-1)$ and $\epsilon_{n,q,r}=0$ otherwise. If $V$ is a non-trivial irreducible $\mathbb{F}_rG$-module of dimension less than
 \[
 \begin{cases}
 217 & (n,q)=(6,2)\\
 6292 & (n,q)=(6,3)\\
 (q^{n-1}-1)(\frac{q^{n-2}-q}{q-1}-\epsilon_{n-2,q,r}) & \textrm{otherwise},
 \end{cases}
 \]
 then $V$ either has dimension $(q^n-q)/(q-1)-\epsilon_{n,q,r}$ or is one of $(q-1)_{r_0'}$ modules with dimension $(q^n-1)/(q-1)$.
 \end{proposition}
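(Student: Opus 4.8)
\noindent\emph{Proof strategy.} The plan is to prove Proposition~\ref{rep2sl} by induction on $n$, via restriction to a maximal parabolic subgroup together with Clifford theory over its unipotent radical; this is the standard route to a list of the smallest few cross-characteristic irreducibles. Write $G=\mathrm{SL}_n(q)$ and let $P=Q\rtimes L$ be the stabiliser in $G$ of a line of the natural module, so that $Q$ is elementary abelian of order $q^{n-1}$, the Levi factor $L$ is isomorphic to $\mathrm{GL}_{n-1}(q)$, and $L$ is transitive on the $q^{n-1}-1$ nonzero elements of $Q$. Since $r_0\nmid q$ we have $r_0\ne p$, so $\mathbb{F}_rQ$ is semisimple and $V|_Q$ is a direct sum of linear characters $\lambda\in\widehat Q$; accordingly write $V=V^{Q}\oplus V'$, where $V^{Q}$ is the $Q$-fixed subspace (an $L$-submodule) and $V'$ is the sum of the nontrivial $\lambda$-homogeneous components. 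Because $L$ is transitive on the $q^{n-1}-1$ nontrivial characters of $Q$, all nontrivial $\lambda$-homogeneous components share a common dimension $m$, and $\dim V'=(q^{n-1}-1)\,m$.

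First I would note that, since $V$ is nontrivial and $G$ is generated by the $G$-conjugates of $Q$, the radical $Q$ cannot act trivially on $V$; hence $V^{Q}\subsetneq V$ and $m\ge 1$. The aim is then to show that either $\dim V$ attains the bound in the statement or $V$ is one of the modules listed there. For the nontrivial part $V'$: the stabiliser of a character $\lambda$ in $P$ has the shape $Q\rtimes L_\lambda$, where $L_\lambda\le\mathrm{GL}_{n-1}(q)$ is the stabiliser of a nonzero vector and so has a Levi subgroup isomorphic to $\mathrm{GL}_{n-2}(q)$; on the $\lambda$-homogeneous component $Q$ acts by scalars, so that component is, after restriction, a module for $\mathrm{SL}_{n-2}(q)$, and the inductive hypothesis forces $m$ to be either very small or at least $\frac{q^{n-2}-q}{q-1}-\epsilon_{n-2,q,r}$. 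In the latter case $\dim V\ge\dim V'\ge(q^{n-1}-1)\bigl(\frac{q^{n-2}-q}{q-1}-\epsilon_{n-2,q,r}\bigr)$, which is the desired conclusion. In the former case one must additionally bound $\dim V^{Q}$ --- either by a direct argument, by applying the inductive hypothesis to the $\mathrm{SL}_{n-1}(q)$-module $V^{Q}$, or by running the same analysis over the opposite parabolic --- and thereby deduce that $V$ is built from the permutation module of $\mathrm{GL}_n(q)$ on the $(q^n-1)/(q-1)$ points of $\mathbb{P}^{n-1}(q)$. From that permutation module one reads off the two families in the statement: its nontrivial irreducible constituent, of dimension $(q^n-q)/(q-1)$ when $r_0\nmid(q^n-1)/(q-1)$ and $(q^n-q)/(q-1)-1$ otherwise (the drop by $1$ reflecting that the trivial and all-ones submodules coincide in that case, which is what $\epsilon_{n,q,r}$ encodes); and the modules of dimension $(q^n-1)/(q-1)$ obtained by tensoring with the $q-1$ linear characters of $\mathrm{GL}_n(q)/\mathrm{SL}_n(q)$ and keeping those that remain inequivalent and irreducible over $\mathbb{F}_r$, of which there are $(q-1)_{r_0'}$. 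The base case $n=5$ is handled directly (with $\mathrm{SL}_3(q)$ in place of $\mathrm{SL}_{n-2}(q)$, its own exceptions being taken into account), and the genuine exceptions $(n,q)=(6,2),(6,3)$, where the generic bound fails, are verified against the known character tables and decomposition matrices.

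I expect the principal difficulty to lie not in this orbit-counting skeleton but in the bookkeeping it leaves behind. Three points demand care. First, when $m$ is small one must identify, via their Brauer characters, exactly which composition factors can appear and verify that no irreducible module of dimension strictly between $(q^n-1)/(q-1)$ and the stated bound escapes the analysis; this absorbs most of the case work. Second, one must establish the precise count $(q-1)_{r_0'}$ of intermediate modules, which requires knowing which of the $q-1$ twists by $\mathrm{GL}_n(q)/\mathrm{SL}_n(q)$ remain distinct irreducible $\mathbb{F}_rG$-modules after reduction modulo $r_0$. Third, one must control the cross-characteristic degeneracies, namely the cases where $r_0$ divides $(q^k-1)/(q-1)$ for small $k$: these alter the submodule structure of the permutation modules involved and are exactly what the $\epsilon$-corrections are meant to absorb, so keeping the induction coherent across them --- together with disposing of the low-rank base cases by explicit computation --- is where the real work lies.
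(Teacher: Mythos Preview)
The paper does not prove this proposition at all: it is quoted verbatim from \cite[Theorem 4.1]{tiep2001low} and used as a black box (in Proposition~\ref{l_n>=5}, to bound the second-smallest cross-characteristic degree for $L_n(2)$ with $n\ge 5$). There is therefore no ``paper's own proof'' to compare against.

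That said, your sketch is broadly the correct strategy and is essentially the method of the cited Guralnick--Tiep paper: restrict to the line-stabiliser parabolic $P=Q\rtimes L$ with $Q\cong\mathbb{F}_q^{\,n-1}$ and $L\cong\mathrm{GL}_{n-1}(q)$, decompose $V|_Q$ into $V^Q\oplus V'$, use transitivity of $L$ on nontrivial characters of $Q$ to get $\dim V'=(q^{n-1}-1)m$, and then induct on $n$ via the $\mathrm{SL}_{n-2}(q)$ sitting in the stabiliser of a nontrivial character. Your identification of the small modules with constituents of the permutation module on projective points, and your reading of $\epsilon_{n,q,r}$ and of the count $(q-1)_{r_0'}$, are also correct. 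The main place your outline is too quick is the ``$m$ small'' branch: in the actual proof one must analyse not just $V^Q$ as an $\mathrm{SL}_{n-1}(q)$-module but the full $P$-module structure (in particular how $V^Q$ and $V'$ glue), and one typically also restricts to the \emph{hyperplane} parabolic and plays the two restrictions off against each other to pin down $V$ exactly; without this, ruling out stray irreducibles of intermediate dimension is not straightforward. The exceptional bounds for $(n,q)=(6,2),(6,3)$ genuinely require separate computation, as you say.
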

\subsection{Properties of prime order elements in groups of Lie type}
\label{poe}
For $G_0$ a simple group, and non-identity $x \in {\rm Aut}(G_0)$, define $\a(x)$ to be the minimal number of $G_0$-conjugates of $x$ needed to generate the group $\langle G_0, x\rangle$. For an element $g$ of an almost quasisimple group $G$, define $\a(g)=\a(gF(G))$, and also define
\[
\a(G) = {\rm max}\,\{ \a(x)\,\mid\,1\ne x \in G/F(G) \} .
\]
We now give some results which compute bounds on $\alpha(G)$ for $G$ a group of Lie type.
\begin{proposition}[{\cite[Lemma 3.1]{gs}}]
\label{L2lemma} 
Let $H$ be an almost simple  group with $\mathrm{soc}(H) \cong L_2(q)$, and let $x\in H$ have prime order $r$. Then $\alpha(x) \leq 3$ unless:
\begin{enumerate}
\item $x$ is an involutory field automorphism and $\alpha(x) \leq 4$, except $\alpha(x)=5$ for $q=9$, or 
\item $q=5$, $x$ is an involutory diagonal automorphism, and $\alpha(x)=4$.
\end{enumerate}
Moreover, if the order of $x$ is odd, then $\alpha(x) = 2$, unless $q=9$, $r=3$ and $\alpha(x) = 3$.
\end{proposition}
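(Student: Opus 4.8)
The plan is to work through the conjugacy classes of prime-order elements $x$ in $H$ according to their image in $\mathrm{Out}(L_2(q))$, using the well-understood subgroup structure of $L_2(q)$ and its almost simple extensions (as in Dickson's classification of subgroups of $\mathrm{PSL}_2(q)$). First I would reduce to the case $H = \langle L_2(q), x\rangle$, since $\alpha(x)$ only depends on this subgroup. The element $x$ then falls into one of four types: (a) $x$ is a diagonal automorphism (so $H \le \mathrm{PGL}_2(q)$, in particular $x$ is an involution when it has prime order and $q$ is odd), (b) $x$ is an inner element of prime order $r$ — either $r = p$ (unipotent), or $r \mid (q-1)/d$ (split torus), or $r \mid (q+1)/d$ (nonsplit torus), with $d = (2,q-1)$ — (c) $x$ is (modulo inner-diagonal) a field automorphism of prime order, or (d) $x$ induces a product of a field automorphism with a diagonal or inner element. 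The bulk of the work is cases (b) and (c).

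For case (b), the key step is to show $\langle L_2(q), x\rangle = L_2(q)$ is generated by two conjugates of $x$ (and by three when $q = 9$, $r = 3$): one takes $\langle x, x^g\rangle$ for suitable $g$ and argues via Dickson's subgroup list that a proper subgroup containing two noncommuting conjugates of a prime-order element must be dihedral, Borel, or one of $A_4, S_4, A_5$, and then rules these out by order/structure considerations except in the genuinely exceptional small configuration $q=9$. For unipotent $x$ of order $p$, two conjugate root subgroups in opposite Borels already generate $\mathrm{SL}_2$, giving $\alpha(x) = 2$; for semisimple $x$ one picks conjugates not lying in a common dihedral group. The odd-order claim at the end follows from exactly this analysis: $\alpha(x) = 2$ in all odd-order cases except the $A_4$-related obstruction when $q = 9$. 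For case (c), if $x$ is a field automorphism of order $r$ then $C_{L_2(q)}(x) \cong L_2(q^{1/r})$ (up to index $d$), and I would bound $\alpha(x)$ by analysing how fast conjugates $x^{g_1}, x^{g_2}, \dots$ escape the subfield subgroups and Borel/dihedral subgroups they could lie in; generically three or four conjugates suffice, and the sharp value $\alpha(x) = 5$ at $q = 9$ (involutory field automorphism, so the subfield subgroup is $L_2(3) \cong A_4$, an unusually large obstruction relative to $|L_2(9)| = 360$) is checked by direct computation in $L_2(9).2 \cong S_6$ or with GAP. Case (d) is handled by combining the field-automorphism bound with the diagonal/inner contribution and is never worse than case (c).

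The main obstacle I anticipate is the precise determination of $\alpha(x)$ for involutory field automorphisms — part (i) — where the answer is $4$ in general but jumps to $5$ for $q = 9$, and more broadly pinning down the exact (not merely asymptotic) constants. This requires carefully controlling, for each number $k$ of conjugates, whether $\langle x^{g_1}, \dots, x^{g_k}\rangle$ can still be trapped inside a maximal overgroup of $x$ in $H$ — a maximal subgroup of $L_2(q).2$ of the form (subfield subgroup extended), a Borel extended, or a small exceptional subgroup — which is essentially a finite but delicate case check that for $q = 9$ is cleanest to verify computationally. The diagonal involution case $q = 5$ with $\alpha(x) = 4$ in part (ii) is similar but small enough to do by hand in $\mathrm{PGL}_2(5) \cong S_5$. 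Everything else reduces to the Dickson subgroup list plus order estimates showing the relevant proper subgroups are too small to contain enough conjugates.
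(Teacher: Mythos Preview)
The paper does not prove this proposition at all: it is quoted verbatim as \cite[Lemma 3.1]{gs} (Guralnick--Saxl) and used as a black box, so there is no ``paper's own proof'' to compare against. Your sketch is a plausible reconstruction of how one would establish such a result from scratch --- the reduction to $H=\langle L_2(q),x\rangle$, the Dickson subgroup list for inner elements, and the subfield-subgroup analysis for field automorphisms are exactly the right ingredients --- but for the purposes of this paper the proposition is simply imported from the literature and no argument is needed here.
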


\begin{proposition}[{\cite[Theorems 4.1, 5.1]{gs}}]\label{alphas} Let $G$ be an almost simple classical group over $\F_q$ with socle $\mathrm{soc}(G) = G_0$ and natural module of dimension $n\geq 3$, taking $n\geq 7$ for $G$ an orthogonal group. Then for $x\in G\setminus\{1\}$, one of the following holds:
\begin{enumerate}
\item$\a(x) \leq n$,
\item$G_0 = PSp_n(q)$, $q$ even,and $\a(x) =n+1$,
\item $G_0 = L_3(q)$, $x$ is an involutory graph-field automorphism and $\alpha(x)\leq 4$,
\item$G_0= L_4^\ep(q)$ with $q\geq 3$, and $x$ is an involutory graph automorphism and $\alpha(x)\leq 6$,
\item $G_0 = PSp_4(q)$, $q\neq 3$, $x$ is an involution and $\a(x) \leq 5$, or
\item $G_0 = U_3(3)$, $x$ is an inner involution with $\alpha(x) = 4$, 
\item  $G_0 = L_4^\ep(2)$.
\end{enumerate}
Furthermore, if $G_0$ is a simple exceptional group of Lie type of untwisted Lie rank $l$, then $\a(x) \leq l+3$, except $a(x) \leq 8$ for $G_0 = F_4(q)$. Moreover $\alpha(x)\leq 3$ for $G_0 =\, ^2G_2(q)$ or $^2B_2(q)$.
\end{proposition}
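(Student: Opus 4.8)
This proposition collects results of Guralnick and Saxl \cite{gs}, and the plan is to follow their strategy: run a probabilistic generation argument driven by fixed-point-ratio estimates, and supplement it with explicit Steinberg-relation constructions for the elements of smallest support together with a finite machine check for the remaining small configurations. As in the proof of Proposition~\ref{L2lemma}, which is the $n=2$ analogue, standard reductions let us assume that $x$ has prime order (outer automorphisms of composite order need a little extra care but are handled in the same spirit), and we may assume $\langle G_0,x\rangle \neq G_0$ or $x\in G_0$ according as $x$ is inner or outer.

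The engine is as follows. A set $S=\{x^{g_1},\dots,x^{g_k}\}$ of $G_0$-conjugates of $x$ generates $\langle G_0,x\rangle$ exactly when $S$ is contained in no maximal subgroup $M$ of $\langle G_0,x\rangle$; choosing the $g_i$ independently and uniformly, the probability of failure is at most $\sum_{[M]}\mathrm{fpr}(x,\langle G_0,x\rangle/M)^{k}$, the sum over conjugacy classes of maximal subgroups (only those meeting $x^{G_0}$ contributing). Hence $\alpha(x)\le k$ once this sum is $<1$. For $x$ of prime order in a classical group with natural module of dimension $n$, the known fixed-point-ratio bounds for classical groups (Liebeck--Shalev, Burness, etc.) give $\mathrm{fpr}(x,\langle G_0,x\rangle/M)$ of size roughly $q^{-c\,\nu(x)}$, where $\nu(x)$ records the ``support'' of $x$ (for instance $\mathrm{codim}\,C_{\overline V}(x)$), together with a polynomial bound on the number of classes of maximal subgroups; feeding these in shows $k$ may be taken to be an absolute constant, well below $n$, \emph{unless} $\nu(x)$ is as small as possible (so $x$ is a scalar times a transvection or reflection, or one of the smallest outer automorphisms) or $n$ and $q$ are both small.

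For the minimal-support elements the probabilistic bound is too weak and one argues directly. If $x$ is a transvection of $\mathrm{SL}_n(q)$ one shows that $n$ suitably chosen root elements (say $x_{12}(1),x_{23}(1),\dots,x_{n-1,n}(1)$ plus one transvection bridging $\langle e_n\rangle$ back to $\langle e_1\rangle$) generate $\mathrm{SL}_n(q)$, while generically $n-1$ conjugates do not, giving $\alpha(x)\le n$; the analogous computations with long and short root elements and with reflections handle the unitary, symplectic and orthogonal cases, the symplectic transvections in even characteristic producing the bound $n+1$ of (ii). For outer $x$ one must exhibit enough root subgroups of $G_0$ inside the group generated by conjugates of $x$, using the Bruhat decomposition and the Steinberg relations, and it is precisely the low-rank instances that yield the exceptional constants $4$ for graph-field automorphisms of $L_3(q)$, $6$ for graph automorphisms of $L_4^\epsilon(q)$, and $5$ for involutions of $\mathrm{PSp}_4(q)$; the residual small groups $\mathrm{PSp}_4(q)$, $U_3(3)$, $L_4^\epsilon(2)$ and the smallest members of each family, where the asymptotic estimates do not bite, are settled by enumerating conjugates in \texttt{GAP} \cite{GAP4} or \texttt{Magma} \cite{Magma}. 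The exceptional groups are treated identically, now using the Liebeck--Seitz classification of their maximal subgroups and the fixed-point-ratio estimates of Lawther--Liebeck--Seitz; the parabolic structure of $F_4(q)$ is what forces the weaker bound $8$, while the very short maximal-subgroup lattices of $^2B_2(q)$ and $^2G_2(q)$ give $\alpha(x)\le 3$ at once.

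The main obstacle is thus the extremal element, not the generic one: for transvections, reflections and the smallest outer automorphisms the fixed-point ratios lie within a bounded factor of $1$, so no probabilistic input can pin down the sharp value, and one is driven into delicate, type-by-type generation arguments with explicit group elements — which is also exactly where every exceptional constant in the statement ($n$, $n+1$, $4$, $5$, $6$, $8$) originates. Carrying these out uniformly across all types and ranks, and clearing the bounded-but-not-tiny list of small groups outside the range of the estimates, is where the real work lies.
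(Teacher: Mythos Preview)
The paper does not prove this proposition at all: it is stated with a citation to \cite[Theorems 4.1, 5.1]{gs} and no proof is given, so there is nothing in the paper to compare against. Your sketch is a reasonable high-level account of the Guralnick--Saxl strategy itself (probabilistic generation via fixed-point-ratio bounds for generic elements, explicit root-subgroup constructions for minimal-support elements such as transvections and reflections, and machine checks for the small residual cases), so in that sense it is consistent with the source being cited; but for the purposes of this paper a bare citation is all that is required.
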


We also have these more precise bounds for symplectic and orthogonal groups.
\begin{proposition}[{\cite[Theorem 4.3]{gs}}]
\label{sympalphas}
Suppose $G_0=\mathrm{PSp}_{2m}(q)$, and $x \in \mathrm{Aut}(G_0)$. Then either:
\begin{enumerate}
\item $\alpha(x) \leq m+3$,
\item $x$ is a transvection and $\alpha(x) = 2m$ or $2m+1$ if $q$ is odd or even respectively, or 
\item $(m,q)=(2,3)$ and $x$ is an involution with $\alpha(x) = 6$.
\end{enumerate}
\end{proposition}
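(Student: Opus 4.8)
The plan is to follow the strategy of Guralnick and Saxl: fix the natural module $V = \F_q^{2m}$ with $q = p^e$, reduce to $x$ of prime order modulo inner automorphisms, and split into cases according to the position of $x$ in $\mathrm{Aut}(G_0)$. Since $\mathrm{Out}(G_0)$ is generated by a diagonal automorphism of order $\gcd(2,q-1)$ and a field automorphism of order $e$ (together with an exceptional graph automorphism when $(2m,p)=(4,2)$), I would treat separately: (a) $x$ inner-diagonal, i.e.\ projecting into $\mathrm{PGSp}_{2m}(q)$; (b) $x$ a field or graph-field automorphism; (c) the exceptional graph automorphisms of $\mathrm{Sp}_4(q)$, $q$ even, which I would dispatch using the $B_2$--$C_2$ duality and the known maximal subgroup structure.

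For (a) the basic invariant is $\nu(x) := \dim[\overline V, x]$, where $\overline V = V\otimes\overline{\F_q}$. If $\langle x^{g_1},\dots,x^{g_k}\rangle = H$ then $[\overline V,H]\subseteq\sum_i[\overline V,x^{g_i}]$ has dimension at most $k\,\nu(x)$, and since $G_0$ acts irreducibly and nontrivially on $\overline V$ one gets the lower bound $\alpha(x)\geq 2m/\nu(x)$. For the upper bound I would argue inductively, choosing $g_1,\dots,g_j$ so that $H_j=\langle x,x^{g_1},\dots,x^{g_j}\rangle$ stabilises a nondegenerate subspace $W_j$ of dimension about $2j\,\nu(x)$ and acts irreducibly on it as a subsymplectic group; then apply the classification of subgroups of $\mathrm{Sp}(W)$ generated by $x$-conjugates (Aschbacher's theorem plus the reducible, imprimitive and tensor-decomposable analyses) to show that each proper Aschbacher overgroup is destroyed by one further conjugate, so that $G_0$ is reached after $\lceil 2m/\nu(x)\rceil+O(1)$ steps. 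When $x$ is not a transvection $\nu(x)\geq 2$, and a careful bookkeeping of the constant gives $\alpha(x)\leq m+3$.

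The sharp transvection case $\nu(x)=1$ is the first main obstacle. The lower bound above gives $\alpha(x)\geq 2m$, attained for $q$ odd by $2m$ transvections whose directions form a symplectic basis; but for $q$ even McLaughlin's classification of irreducible linear groups generated by transvections intervenes, since $2m$ transvections can only generate (at best) an orthogonal group $\mathrm{O}_{2m}^{\pm}(q)<\mathrm{Sp}_{2m}(q)$, and exactly one further transvection escapes it, so $\alpha(x)=2m+1$. The matching lower bound $\alpha(x)\geq 2m+1$ comes from showing, again via McLaughlin, that $2m$ transvections generating an irreducible group must preserve a quadratic form. For (b) the key point is that $\phi^{-1}\phi^g=\phi^{-1}g^{-1}\phi g=(g^\phi)^{-1}g\in G_0$ for every $g\in G_0$, so $\langle\phi,\phi^{g_1},\dots,\phi^{g_k}\rangle$ contains the group generated by $k$ such twisted commutators; since these generate $G_0$ in a bounded number of steps (controlled by $C_{G_0}(\phi)=\mathrm{Sp}_{2m}(q_0)$ with $q=q_0^r$, $r$ prime), one gets $\alpha(\phi)\leq m+3$, in fact much less.

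Finally, the genuinely exceptional entry $(m,q)=(2,3)$, where an involution $x$ of $\mathrm{PSp}_4(3)$ has $\alpha(x)=6>m+3$, I would verify directly: $\mathrm{PSp}_4(3)\cong\mathrm{PSU}_4(2)$ is small, its maximal subgroups are known, and the relevant involution class has the property that five conjugates always lie in a subgroup of shape $\mathrm{Sp}_2(3)\wr S_2$ while six generate --- this is a configuration where the field $\F_3$ is too small for the generic escape argument. Besides the even-characteristic transvection analysis, the principal difficulties I anticipate are pinning the additive constant down to exactly $m+3$ in the inductive argument, and handling the remaining small-rank and small-$q$ configurations where minimal-degree bounds and generation results degrade and must be checked by hand or by computer.
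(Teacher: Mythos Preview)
The paper does not prove this proposition at all: it is stated with the attribution \cite[Theorem~4.3]{gs} and used as a black box, with no proof or sketch provided. So there is nothing in the paper to compare your argument against.

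Your outline is a reasonable summary of the Guralnick--Saxl strategy, and the broad shape (reduction to prime order, the invariant $\nu(x)$, the $2m/\nu(x)$ lower bound, inductive build-up through nondegenerate subspaces, McLaughlin for transvections in even characteristic, and ad hoc treatment of $\mathrm{PSp}_4(3)$) is correct. If you intend this as an independent proof rather than a citation, the place where real work remains is exactly where you flag it: controlling the additive constant to get $m+3$ rather than $m+O(1)$ requires the detailed case analysis in \cite{gs}, not just the outline you give here.
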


\begin{proposition}[{\cite[Theorem 4.4]{gs}}]
\label{gsorthog}
Suppose $G_0=\mathrm{P}\Omega^\ep_n(q)$ with $n=2m$ where $m\geq 4$,or $n=2m+1$ with $m\geq 3$ and $q$ odd. Then if $x \in \mathrm{Aut}(G_0)$, either 
\begin{enumerate}
\item $\alpha(x) \leq m+3$,
\item $q$ is odd, $x$ is a reflection and $\alpha(x) = n$, or 
\item $q$ is even and $x$ is a transvection with $\alpha(x) = n$.
\end{enumerate}
\end{proposition}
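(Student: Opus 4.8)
The plan is to prove this by the support-counting method of Guralnick and Saxl that also underlies Proposition~\ref{alphas}: first force a subgroup generated by enough conjugates of $x$ to act on the natural module with trivial fixed space, then use Aschbacher's theorem together with the classification of irreducible subgroups of classical groups containing an element of small support to conclude that this subgroup contains $G_0$. First I would reduce to the case that $x$ has prime order $r$, since $\langle G_0,x\rangle$ is generated by $G_0$ and a suitable prime-order power of $x$, and conjugates of that power lie in the group generated by conjugates of $x$. Write $V$ for the natural $G_0$-module over $\overline{\F_q}$, of dimension $n$, and for inner-diagonal $x$ set $s(x)=n-\dim C_V(x)$; then $s(x)=1$ happens exactly when $q$ is odd and $x$ is a reflection, or $q$ is even and $x$ is a transvection, which are precisely the two candidate exceptions.

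For those two cases I would show $\a(x)=n$ directly. For the lower bound, such an $x$ fixes a hyperplane of $V$ pointwise, so any $k$ of its conjugates jointly fix a subspace of codimension at most $k$; since $G_0$ has no nonzero fixed vector on $V$ for $n\ge 7$, generation forces $k\ge n$, so $\a(x)\ge n$. For the upper bound I would exhibit $n$ conjugate reflections (resp.\ transvections) with pairwise non-perpendicular axes whose product acts irreducibly on $V$; the subgroup they generate is then irreducible and contains a reflection (resp.\ transvection), hence by Cartan--Dieudonn\'e-type arguments equals $\langle G_0,x\rangle$, giving $\a(x)=n$.

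For every remaining prime-order $x$ I would establish $\a(x)\le m+3$. If $x$ is inner-diagonal then $s(x)\ge 2$, so one can choose $\lceil n/s(x)\rceil\le\lceil n/2\rceil$ conjugates of $x$ whose supports cover $V$; the group $H$ they generate then has $C_V(H)=0$, ruling out $H$ lying in a $1$-space stabiliser, and by exploiting overlaps this count can be held at $m$ for $n=2m$ and at $m+1$ for $n=2m+1$. I would then adjoin a bounded number of generic further conjugates---at most three, and fewer in odd dimension---making $H$ irreducible and primitive and, via fixed-point-ratio estimates of the shape $|x^{G_0}\cap M|/|x^{G_0}|\le q^{-s(x)/2}$ over the maximal subgroups $M$, excluding the remaining Aschbacher classes $\mathcal{C}_2,\dots,\mathcal{C}_8$ and $\mathcal{C}_9$ (for the last using the representation-degree bounds of Section~\ref{prelims}), so that $H\supseteq G_0$. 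If $x$ is instead a field, graph, or graph-field automorphism, then $C_{G_0}(x)$ is a classical group of smaller dimension or over a proper subfield, the relevant fixed-point ratios are much smaller, and a direct count or induction on $n$ gives $\a(x)\le m+3$ with room to spare.

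The hard part will be pinning the constant to exactly $m+3$: the plain covering argument delivers roughly $\lceil n/2\rceil+3$, which overshoots by one in odd dimension, so the covering step and the ``push out of every maximal subgroup'' step must be interleaved rather than run separately, and each maximal-subgroup family---above all the $\mathcal{C}_9$ subgroups, which require the classification of low-dimensional and low-rank cross-characteristic representations---must be handled with care. The low-rank base cases $\pom^\pm_8(q)$ and $\pom_9(q)$, together with small fields in general, will have to be checked by ad hoc or computational means.
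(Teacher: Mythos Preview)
The paper does not prove this statement at all: Proposition~\ref{gsorthog} is simply quoted verbatim from \cite[Theorem~4.4]{gs} (Guralnick--Saxl) and is used as a black box throughout Sections~7 and~8. There is therefore no ``paper's own proof'' to compare your proposal against.

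Your sketch is a plausible outline of the general strategy that Guralnick and Saxl employ in \cite{gs}, but a proof is neither required nor appropriate here. If you are writing up this paper, the correct thing to do is exactly what the authors did: state the result with a citation and move on. If instead your goal is to understand or reconstruct the Guralnick--Saxl argument for its own sake, that is a separate project; your outline has the right shape (support-counting to kill $C_V(H)$, then Aschbacher-class elimination), but be aware that the actual paper \cite{gs} handles the constant $m+3$ and the small-rank base cases with some care, and your acknowledged worry about overshooting by one in odd dimension is real and is resolved there by a more delicate interleaving than you describe.
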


The remaining lemmas in this section count certain types of elements of prime order in groups of Lie type. Let $i_p(G)$ denote the number of elements of order $p$ in the group $G$. We adopt the definition of Frobenius endomorphism found in \cite[Definition 2.1.9]{GLS}.
\begin{proposition}[{\cite[Proposition 1.3]{MR1922740}}]
\label{invols}
Let $\bar{G}$ be a simple algebraic group over $\bar{\mathbb{F}}_p$ with associated root system $\Phi$. Let $\sigma$ be a Frobenius endomorphism of $\bar{G}$ such that $G_0=\bar{G}{^\sigma}'$ is a finite simple group of Lie type over $\mathbb{F}_q$.  Assume that $G_0$ is not of type $^2F_4$, $^2G_2$ or $^2B_2$. Then
\begin{enumerate}
\item $i_2(\mathrm{Aut}(G_0)) <2(q^{N_2} + q^{N_2-1})$, where $N_2 = \dim \bar{G} - \frac{1}{2}|\Phi|$, and 
\item $i_3(\mathrm{Aut}(G_0)) <2(q^{N_3} + q^{N_3-1})$, where $N_3 = \dim \bar{G} - \frac{1}{3}|\Phi|$.
\end{enumerate}
\end{proposition}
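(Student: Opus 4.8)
The plan is to stratify the elements of order $r$ (for $r\in\{2,3\}$) in $\mathrm{Aut}(G_0)$ according to the coset of the inner-diagonal subgroup $\mathrm{Inndiag}(G_0)$ containing them, and to bound their number coset by coset. By the standard structure of $\mathrm{Aut}(G_0)$ (see \cite{GLS}) there are only finitely many — in fact $O(\log q)$ — cosets of $\mathrm{Inndiag}(G_0)$ meeting the set of order-$r$ elements, and each is of inner-diagonal, field, graph, or graph-field type. I would identify $\mathrm{Inndiag}(G_0)$ with $\bar{G}^{\sigma}$ for $\bar{G}$ of adjoint type, so that $|\bar{G}^{\sigma}| = q^{|\Phi|/2}\prod_{i=1}^{l}(q^{d_i}-\varepsilon_i)$ with $l=\mathrm{rank}(\bar{G})$, $\varepsilon_i\in\{\pm1\}$ and $\tfrac12|\Phi|+\sum_i d_i=\dim\bar{G}$; thus $|\bar{G}^{\sigma}|$ agrees with $q^{\dim\bar{G}}$ up to a factor $1+O(q^{-1})$, and it is this factor (together with the same estimate applied to the relevant centraliser orders) that is responsible both for the constant $2$ and for the lower-order term $q^{N_r-1}$ in the conclusion. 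The Suzuki and Ree families ${}^2B_2(q)$, ${}^2G_2(q)$, ${}^2F_4(q)$ are excluded because for these $\sigma$ is not an ordinary Frobenius, $|\bar{G}^{\sigma}|$ is not a polynomial in $q$ of the above shape with leading exponent $\tfrac12|\Phi|$, and the uniform estimates below do not apply; these groups are small and can be handled directly.

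\textbf{The inner-diagonal contribution} is the heart of the matter: one must bound $i_r(\bar{G}^{\sigma})=\sum_C|C|$, the sum over $\bar{G}^{\sigma}$-classes $C$ of elements of order $r$. If $r\neq p$ the elements are semisimple, so each lies in a maximal torus and hence has connected reductive centraliser of rank $l$; since a maximal torus contains at most $r^l$ elements of order dividing $r$, there are (after accounting for the $\sigma$- and Weyl-action) only boundedly many classes $C$. If $r=p\in\{2,3\}$ the elements are unipotent of order $p$, and again there are only boundedly many such classes, listed in the classification of unipotent classes. The key input in either case is that every element $x$ of order $r$ satisfies
\[
\dim C_{\bar{G}}(x)\ \ge\ \tfrac{1}{r}|\Phi|,\qquad\text{equivalently}\qquad \dim x^{\bar{G}}\ \le\ \dim\bar{G}-\tfrac{1}{r}|\Phi| = N_r .
\]
For semisimple $x$ this is the combinatorial assertion that the fixed subsystem $\Phi_x=\{\alpha\in\Phi:\alpha(x)=1\}$ has size at least $\tfrac1r|\Phi|-l$, which I would establish uniformly by analysing the residues $\langle\alpha,\lambda\rangle\bmod r$ attached to $x$; for unipotent $x$ it is read off the tables of unipotent class dimensions. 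Given this, $|C|=|\bar{G}^{\sigma}|/|C_{\bar{G}^{\sigma}}(x)|$ is at most $q^{N_r}(1+O(q^{-1}))$, and summing over the boundedly many classes — a sum dominated by its single largest term, since the ``unbalanced'' classes have much larger centralisers and far smaller size — yields $i_r(\bar{G}^{\sigma})<q^{N_r}+q^{N_r-1}$, with plenty of room unless $l$ is very small.

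\textbf{The outer cosets} are handled via Lang--Steinberg. If $x$ has order $r$ and lies in a field, graph, or graph-field coset $\bar{G}^{\sigma}\tau$ with $\tau$ of order $r$ modulo $\mathrm{Inndiag}(G_0)$, then $x$ is $\bar{G}^{\sigma}$-conjugate to one of only finitely many standard representatives, whose centraliser in $\bar{G}^{\sigma}$ is a group of Lie type either of the same type as $\bar{G}$ but over $\mathbb{F}_{q^{1/r}}$ (field and graph-field cases) or of strictly smaller rank over $\mathbb{F}_q$ (graph case — e.g.\ $F_4(q)$ inside $E_6(q)$, or symplectic/orthogonal-type fixed subgroups of graph automorphisms of $A_l$ and $D_l$). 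In all cases $\dim C_{\bar{G}}(\tau)\ge \tfrac1r\dim\bar{G}$, so the number of order-$r$ elements in the coset is at most $q^{\,\dim\bar{G}(1-1/r)}(1+O(q^{-1}))$; since $\dim\bar{G}=|\Phi|+l>|\Phi|$ we have $\dim\bar{G}(1-1/r)<\dim\bar{G}-\tfrac1r|\Phi|=N_r$, so this is below $q^{N_r}$, and summing over the $O(\log q)$ relevant cosets keeps the total outer contribution below $q^{N_r}+q^{N_r-1}$. Adding the two contributions gives $i_r(\mathrm{Aut}(G_0))<2(q^{N_r}+q^{N_r-1})$.

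\textbf{The main difficulty} is obtaining the sharp constant $2$ and the lower-order term, rather than a crude $O(q^{N_r})$ bound: this requires keeping exact control of the factors $q^{d_i}\pm1$ in $|\bar{G}^{\sigma}|$ and in the centraliser orders, and — more substantively — a uniform proof of $\dim C_{\bar{G}}(x)\ge\tfrac1r|\Phi|$ for every element of order $r$, where the extremal semisimple elements of type $A_l$ and the order-$p$ unipotent classes in characteristic $2$ and $3$ force a type-by-type verification. The small-rank families ($L_2(q)$, $L_3(q)$, and so on) and the small exceptional groups are the tight cases and would need to be checked individually.
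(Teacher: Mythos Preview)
The paper does not prove this proposition at all: it is quoted verbatim as \cite[Proposition 1.3]{MR1922740} and used as a black box, with no argument supplied. There is therefore nothing in the present paper to compare your proposal against.

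That said, your outline is the standard strategy for such results and is the one used in the cited source: stratify $\mathrm{Aut}(G_0)$ by cosets of $\mathrm{Inndiag}(G_0)$, bound the inner-diagonal contribution via the lower bound $\dim C_{\bar G}(x)\ge \tfrac{1}{r}|\Phi|$ for every element $x$ of order $r\in\{2,3\}$ (this is precisely the content of \cite[Theorem~1]{MR1922740}, and is indeed a case-by-case verification, with the unipotent case in characteristic $2$ or $3$ handled via the classification of unipotent classes), and bound the outer cosets by Lang--Steinberg together with the observation that the fixed-point subgroup of a field, graph or graph-field automorphism has dimension at least $\tfrac{1}{r}\dim\bar G$. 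Your identification of where the constant $2$ and the lower-order term $q^{N_r-1}$ come from is also correct. The one place where your sketch is slightly loose is the claim that the sum over classes is ``dominated by its single largest term'': in practice the cited proof keeps explicit track of all classes and checks the resulting inequality directly, since for small $q$ and small rank several classes contribute comparably and the $O(q^{-1})$ error terms need to be controlled, not absorbed.
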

 
We define \textit{graph} and \textit{graph-field} automorphisms of groups of Lie type following \cite[Definition 2.5.13]{GLS}.

 \begin{lemma}
 	For $A=\mathrm{Aut}(L^\epsilon_n(q))$, with $n\geq 3$, let $\gamma(A)$ denote the number of involutory graph automorphisms in $A$. Then
 \label{graph}
%

 \begin{enumerate}
	\item  \label{lgraph}
	$\gamma(\mathrm{Aut}(L_n(q))) \leq 2 q^{(n^2+n)/2-1}$,

	\item\label{ugraph} 
	$\gamma(\mathrm{Aut}(U_n(q))) \leq 4 q^{(n^2+n)/2-1}$, and 
	\item for $q$ a square, the number of graph--field automorphisms in $\mathrm{Aut}(L_n(q))$ is less than $2q^{(n^2-1)/2}$.

\end{enumerate}
 \end{lemma}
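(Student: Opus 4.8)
The plan is to reduce each item to counting elements of the inner-diagonal group $\mathrm{Inndiag}(L_n^\epsilon(q))$ (that is, $\mathrm{PGL}_n(q)$ or $\mathrm{PGU}_n(q)$) whose product with a fixed involutory graph (respectively graph-field) automorphism is again an involution. For part (i), every graph automorphism of $L_n(q)$ has the form $x\gamma$ with $x\in\mathrm{PGL}_n(q)$, where $\gamma$ is the inverse-transpose automorphism. Since $(x\gamma)^2 = x\cdot{}^{\gamma}x$ and ${}^{\gamma}x$ is represented by $\hat x^{-T}$ for a lift $\hat x\in\mathrm{GL}_n(q)$, the element $x\gamma$ is an involution precisely when $\hat x^T = \lambda\hat x$ for some $\lambda\in\F_q^\times$; transposing forces $\lambda^2 = 1$, so $\hat x$ is symmetric or alternating, and distinct lifts of the same $x$ share this property. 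Hence the number of involutory graph automorphisms of $L_n(q)$ is $\tfrac{1}{q-1}$ times the number of invertible symmetric plus alternating $n\times n$ matrices, which is at most $(q^{n(n+1)/2} + q^{n(n-1)/2})/(q-1)$ (bounding by the totals $q^{n(n+1)/2}$ and $q^{n(n-1)/2}$). For $q$ odd this is $\le 2q^{n(n+1)/2-1}$ by the elementary inequality $1 + 2q^{n-1}\le q^{n}$, valid for $q\ge 3$; for $q$ even there is no alternating contribution, the number equals $\tfrac{1}{q-1}$ times the number of invertible symmetric matrices, and this is strictly less than $q^{n(n+1)/2} = 2q^{n(n+1)/2-1}$. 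This gives (i) with no case analysis.

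For (iii), write $q = q_0^2$ and let $\phi_{q_0}$ denote the order-$2$ field automorphism. An involutory graph-field automorphism of $L_n(q)$ has the form $x\gamma\phi_{q_0}$ with $x\in\mathrm{PGL}_n(q)$; since $(\gamma\phi_{q_0})^2 = \phi_q$ acts trivially on $L_n(q)$, the map $\sigma := \gamma\phi_{q_0}$ is a Steinberg endomorphism of $\mathrm{PGL}_n$ with $\mathbf{G}^{\sigma} = \mathrm{PGU}_n(q_0)$ and $\mathbf{G}^{\sigma^2} = \mathrm{PGL}_n(q)$. The involutory graph-field automorphisms correspond to the $x\in\mathrm{PGL}_n(q)$ with $x\cdot{}^{\sigma}x = 1$; by a standard Lang–Steinberg argument (connectedness of $\mathrm{PGL}_n$) these form a single $\mathrm{PGL}_n(q)$-orbit under twisted conjugation, with stabiliser $\mathbf{G}^{\sigma}$, so there are exactly $|\mathrm{PGL}_n(q)|/|\mathrm{PGU}_n(q_0)|$ of them. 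Now $|\mathrm{PGL}_n(q)| < q^{n^2-1} = q_0^{2(n^2-1)}$ and $|\mathrm{PGU}_n(q_0)| = q_0^{n(n-1)/2}\prod_{i=2}^{n}(q_0^i - (-1)^i) > \bigl(\prod_{i\ge 2}(1-q_0^{-i})\bigr)q_0^{n^2-1} > \tfrac12 q_0^{n^2-1}$, yielding $< 2q_0^{n^2-1} = 2q^{(n^2-1)/2}$, which is (iii). For (ii), the involutory graph automorphism $\gamma$ of $U_n(q)$ acts on $\mathrm{PGU}_n(q)$ as field conjugation $g\mapsto\bar g$, and as in (i) one finds $x\gamma$ is an involution iff a lift $\hat x\in\mathrm{GU}_n(q)$ satisfies $\hat x\,\overline{\hat x} = \pm I$. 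These $x\gamma$ fall into a union of $\mathrm{PGU}_n(q)$-conjugacy classes whose representatives and centralisers are described in \cite[\S4.5]{GLS}: the centralisers are (images of) the fixed-point groups $\mathrm{SO}_n^{\pm}(q)$, together with $\mathrm{Sp}_n(q)$ when $n$ is even, with at most two orthogonal classes and at most one symplectic class. Summing $|\mathrm{PGU}_n(q)|/|C|$ over these classes, using $|\mathrm{PGU}_n(q)| < \bigl(\prod_{i\ge1}(1+q^{-i})\bigr)q^{n^2-1}$ and the usual $q$-power lower bounds for $|C|$, yields the bound $4q^{n(n+1)/2-1}$.

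The shape of the argument is routine; the real work is twofold. First, one must pin down precisely the $\mathrm{Inndiag}$-conjugacy classes of involutory graph (and graph-field) automorphisms and their centralisers — delicate in characteristic $2$, where orthogonal and symplectic groups behave differently and ``symmetric'' absorbs ``alternating'', and in the twisted (unitary) case, where one must track whether the relevant involution is of graph or of field type and count the orthogonal classes correctly. Second, one must verify that the explicit constants $2$, $4$, $2$ survive for all $q$: parts (i) and (iii) are handled cleanly by the estimates above, but in (ii) the crude $q$-power bounds are close to tight for $q = 2$ (and small $n$), so I expect to need the exact orders of $\mathrm{PGU}_n(q)$, $\mathrm{SO}_n^{\pm}(q)$ and $\mathrm{Sp}_n(q)$ in place of the crude bounds, or to dispose of a short list of small $(n,q)$ — notably $U_4(2)$, $U_5(2)$ and $U_6(2)$ — directly. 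That bookkeeping, not the structure of the proof, is where the content lies.
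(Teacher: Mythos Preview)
Your approach is correct and, for parts (i) and (iii), is essentially what the paper does. Your direct matrix-counting argument for (i) --- reducing to invertible symmetric or alternating matrices modulo scalars --- is arguably cleaner than the paper's version, which instead sums the sizes of the $\mathrm{PGL}_n(q)$-classes of graph involutions using their centralisers $\mathrm{PGSp}_n(q)$ and $\mathrm{PGO}^\pm_n(q)$ from \cite[Table~B.3]{bg}; both routes amount to the same thing. Part (iii) in the paper is exactly your $|\mathrm{PGL}_n(q)|/|\mathrm{PGU}_n(q^{1/2})|$ computation.

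For part (ii), however, the paper has a shortcut that spares you the bookkeeping you anticipate. Rather than re-deriving the unitary conjugacy classes and their centralisers and summing afresh, the paper simply observes (from \cite[Table~B.4]{bg}) that for each $n,q$ the involutory graph automorphisms of $U_n(q)$ fall into classes with \emph{the same centraliser orders} as in the linear case. Hence the unitary count is $\sum |\mathrm{PGU}_n(q)|/|C|$ over the same list of $|C|$'s, and since $|\mathrm{PGU}_n(q)|/|\mathrm{PGL}_n(q)|<2$ (both lie between $\tfrac12 q^{n^2-1}$ and $q^{n^2-1}$), one gets $\gamma(\mathrm{Aut}(U_n(q)))<2\,\gamma(\mathrm{Aut}(L_n(q)))\le 4q^{(n^2+n)/2-1}$ immediately from (i). This sidesteps the tight constants and the small-$(n,q)$ checks that your direct approach would require. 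A minor point: your stated centralisers $\mathrm{SO}_n^\pm(q)$, $\mathrm{Sp}_n(q)$ are not quite right --- in $\mathrm{PGU}_n(q)$ (as in $\mathrm{PGL}_n(q)$) the centralisers are the conformal groups $\mathrm{PGO}_n^\pm(q)$, $\mathrm{PGSp}_n(q)$, and the condition on $\hat x$ is $\hat x\,\overline{\hat x}\in\F_q^\times\!\cdot I$ (with the scalar normalisable to $1$ via the norm map), not $\pm I$.
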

 \begin{proof}
First let $H=\mathrm{PGL}_n(q)$. We adopt the notation of \cite[\S 3.2.5]{bg}. When $n$ is even and $q$ is odd there are three distinct $H$-conjugacy classes of graph automorphisms  \cite[Table B.3]{bg} satisfying 
  \begin{align*}
|\gamma_1^H| &= \frac{|\mathrm{PGL}_n(q)|}{|\mathrm{PGSp}_n(q)|} = q^{\frac{n^2}{4} - \frac{n}{2}} \prod _{i=1}^{n/2-1}(q^{2i+1}-1)< q^{\frac{n^2-n}{2}-1}, \\
|\gamma_2^H| &= \frac{|\mathrm{PGL}_n(q)|}{|\mathrm{PGO}^+_n(q)|} =\frac{1}{2} q^{\frac{n^2}{4}} (q^{n/2}+1) \prod_{i=2}^{n/2} (q^{2i-1}-1)< \frac{1}{2}q^{\frac{n^2}{2} -1} (q^{n/2}+1),\\
|(\gamma_2')^H| &= \frac{|\mathrm{PGL}_n(q)|}{|\mathrm{PGO}^-_n(q)|} =\frac{1}{2} q^{\frac{n^2}{4}} (q^{n/2}-1) \prod_{i=2}^{n/2} (q^{2i-1}-1)< \frac{1}{2}q^{\frac{n^2}{2} -1} (q^{n/2}-1),\\
\end{align*}

where $\mathrm{PGSp}_n(q)$ and $\mathrm{PGO}^\pm_n(q)$ are the conformal symplectic and orthogonal groups respectively.
We have $|\gamma_1^H| + |\gamma_2^H| +  |(\gamma_2')^H| < q^{\frac{n^2-n}{2}-1}+ \frac{1}{2}q^{\frac{n^2}{2} -1} (q^{n/2}+1)+ \frac{1}{2}q^{\frac{n^2}{2} -1} (q^{n/2}-1)< 2q^{\frac{n^2+n}{2}-1}$ as required. If instead $n$ is even and $q$ is even, then there are two $\mathrm{PGL}_n(q)$-conjugacy classes of graph automorphisms, namely $\gamma_1^H$ (as above) and $\gamma_3^H$. The total number of graph automorphisms in this case is
\[
|\gamma_1^H| + |\gamma_3^H|<  q^{\frac{n^2-n}{2}-1} + \frac{|\mathrm{PGL}_n(q)|}{q^{n^2/4} \prod _{i=1}^{n/2-1} (q^{2i}-1)} =q^{\frac{n^2-n}{2}-1} +  q^{\frac{n^2}{4}-\frac{n}{2}}(q^n-1)\prod_{i=2}^{n/2} (q^{2i-1}-1)< 2 q^{\frac{n^2+n}{2}-1} .
\]
If $n$ is odd, then there is only one $\mathrm{PGL}_n(q)$-conjugacy class of graph automorphisms, satisfying
\[
|\gamma_1^H| \leq \frac{|\mathrm{PGL}_n(q)|}{|\mathrm{Sp}_{n-1}(q)|} = q^{\frac{n^2-1}{4}} \prod _{i=2}^{(n+1)/2}(q^{2i-1}-1)< q^{\frac{n^2+n}{2}-1}. \\
\]
Now let $H = \mathrm{PGU}_n(q)$. By \cite[Table B.4]{bg}, for a given $n$ and $q$, the number and centraliser orders of involutory graph automorphisms lying in $\mathrm{Aut}(U_n(q))$ are the same as those in $\pgl_n(q)$. Since $\frac{1}{2} q^{n^2-1}< |\pgl^\epsilon_n(q)| < q^{n^2-1}$ (see \cite[Proposition 3.9(ii)]{tim2} for example), we have $|\mathrm{PGU}_n(q)|/|\pgl_n(q)| < 2$, and the result for part \ref{ugraph} follows.
Finally, if $q$ is a square, there is one $\mathrm{PGL}_n(q)$-conjugacy class of graph--field automorphisms and the size of this class is
\[
\frac{|\mathrm{PGL}_n(q)|}{|\mathrm{PGU}_n(q^{1/2})|}< 2q^{\frac{n^2-1}{2}}.
\]
 \end{proof}


\subsection{Weil representations of symplectic and unitary groups}
\label{weilbackground}
We give some background on Weil representations of $\mathrm{Sp}_{2m}(q)$, $q$ odd, and $\mathrm{SU}_m(q)$, beginning with the former. Suppose $q=p^e$ is odd, and let $r_0 \nmid q$ be a prime. Let $S=q^{1+2m}$ be a special group with exponent $p$ and $Z(S) = S'$ of order $q$. For every non-trivial irreducible character $\chi$ of $Z(S)$, there is a unique irreducible $q^m$-dimensional representation of $S$ over $\overline{\mathbb{F}}_{r_{0}}$ where $Z(S)$ acts via $\chi$ \cite[\S 5]{MR1947325}. Now, $\mathrm{Sp}_{2m}(q)$ acts as a group of automorphisms of $S$, so preserves the character of the $q^m$-dimensional representation. Let $H = S:\mathrm{Sp}_{2m}(q)$. We may construct an irreducible $\overline{\mathbb{F}}_{r_{0}}H$-module $V_0$ which restricts to $S$ as the $q^m$-dimensional representation.

The restriction of $V_0$ to $\mathrm{Sp}_{2m}(q)$ is called a \textit{Weil module} for $\mathrm{Sp}_{2m}(q)$, with corresponding \textit{Weil character}. The non-trivial irreducible constituents of this Weil representation correspond to the minimal degree representations of $\mathrm{Sp}_{2m}(q)$, $q$ odd,  in characteristic $r_0$, unless $(m,q)=(2,2)$ (cf. Table \ref{mindeg}). In our work, we will additionally use the terms \textit{Weil representation} and \textit{Weil module} to refer to one of these (absolutely) irreducible constituents. If $r_0 \neq 2$, there are four inequivalent such Weil modules of $\mathrm{Sp}_{2m}(q)$ in characteristic $r_0$ -- two of dimension $(q^m-1)/2$, and two of dimension $(q^m+1)/2$. If instead $r_0=2$, then there are two Weil modules up to equivalence, both of dimension $(q^m-1)/2$ \cite{tiep2001low}.

The Weil modules for a unitary group $\mathrm{SU}_m(q) \leq \mathrm{Sp}_{2m}(q)$ for $q$ odd are the constituents of the restriction of $V_0$ to $\mathrm{SU}_m(q)$. 
 These Weil modules for $\mathrm{SU}_m(q)$ are of dimension $(q^m-(-1)^m)/(q+1)$ or $(q^m+q(-1)^m)/(q+1)$, with the number of modules of each dimension dependent on $r_0$, $m$ and $q$ \cite{MR1813499}. If $m\geq 4$ and $(m,q) \neq (4,3)$, then these irreducible Weil representations are the minimal degree representations of $\mathrm{SU}_m(q)$, $q$ odd,  in characteristic $r_0$.  
 
For unitary groups, we may additionally define Weil modules for $q$ even \cite[\S 4]{MR1449955}.
In this case, $r_0$ is odd, and there is an irreducible embedding of the symplectic type group $4\circ 2^{1+2m}$ into $\mathrm{GL}_{2^m}(\overline{\mathbb{F}}_{r_0})$, with normaliser $\overline{\mathbb{F}}^\times_{r_0}\circ 2^{1+2m}.\mathrm{Sp}_{2m}(q)$. This is a non-split extension. However, $4\circ 2^{1+2m}.\mathrm{SU}_{m}(q)$ is a split extension, and the constituents of the restriction of $V_0 = V_{2^m}(\overline{\mathbb{F}}_{r_0})$ to $\mathrm{SU}_{m}(q)$  give a set of \textit{Weil modules} for $\mathrm{SU}_m(q)$. Similarly to $q$ odd, if $m\geq 4$ and $(m,q) \neq (4,2)$, then these irreducible Weil representations are the minimal degree representations of $\mathrm{SU}_m(q)$ in characteristic $r_0$.

As in the case of the symplectic groups, we will also sometimes use the term \textit{Weil module} to refer to the $q^m$-dimensional restriction of $V_0$ to $\mathrm{SU}_m(q)$.

We can often infer the values of the Brauer character of a $q^m$ dimensional Weil representation over $\overline{\mathbb{F}}_r$ from those of the ordinary (complex) character. This allows us to place upper bounds on the eigenspace dimensions of group elements on the irreducible constituents. The following results give us the means to compute these ordinary character values.

\begin{proposition}[{\cite[Theorem 4.8]{MR332945}}]
\label{speqn}
Suppose $q$ is an odd prime power. Then $\mathrm{Sp}_{2m}(q)$ has irreducible ordinary Weil characters $\chi_1$, $\chi_2$ of degrees $(q^m \pm 1)/2$. Moreover, $|\chi_1(g)+\chi_2(g)|^2 = |C_W(g)|$ for all $g\in \mathrm{Sp}_{2m}(q)$, where $W$ is the natural module for $\mathrm{Sp}_{2m}(q)$ over $\mathbb{F}_q$.
\end{proposition}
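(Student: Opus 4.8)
The plan is to realise $\chi_1,\chi_2$ as the two irreducible constituents of the $q^m$-dimensional Weil representation of $\mathrm{Sp}_{2m}(q)$ attached to the finite Heisenberg group, and to read off both assertions from the construction. Write $W=\F_q^{2m}$ with its symplectic form $\langle\cdot,\cdot\rangle$, and let $S$ be the Heisenberg group with underlying set $W\times\F_q$ and product $(v,t)(v',t')=(v+v',\,t+t'+\tfrac{1}{2}\langle v,v'\rangle)$; since $q$ is odd this is a group of order $q^{2m+1}$ and exponent $p$, with centre $Z(S)=\{0\}\times\F_q\cong\F_q$. Fix a nontrivial character $\psi$ of $Z(S)$. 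By the usual count of irreducible characters of $S$ (the $q^{2m}$ linear characters factoring through $S/Z(S)\cong W$, plus, for each of the $q-1$ nontrivial central characters, a single irreducible character of degree $q^{m}$), there is a unique irreducible $\mathbb{C}S$-module $V_\psi$ of dimension $q^m$ on which $Z(S)$ acts via $\psi$; write $\rho_\psi\colon S\to\mathrm{GL}(V_\psi)$ for the corresponding homomorphism. The group $\mathrm{Sp}_{2m}(q)$ acts on $S$ by $g\cdot(v,t)=(gv,t)$ (an automorphism, as $g$ preserves $\langle\cdot,\cdot\rangle$), fixing $Z(S)$ pointwise, so it stabilises the isomorphism class of $V_\psi$; hence $V_\psi$ extends to a module for $H=S\rtimes\mathrm{Sp}_{2m}(q)$, say via $\Omega\colon H\to\mathrm{GL}(V_\psi)$, and $\omega:=\Omega|_{\mathrm{Sp}_{2m}(q)}$ is the Weil representation, of degree $q^m$.

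Next I would prove $|\omega(g)|^{2}=|C_W(g)|$ for all $g$. Consider the conjugation action of $\mathrm{Sp}_{2m}(q)$ on $\mathrm{End}(V_\psi)\cong V_\psi\otimes V_\psi^{*}$, whose character is $\omega\overline{\omega}$. Since $V_\psi$ is irreducible, $\rho_\psi(S)$ spans $\mathrm{End}(V_\psi)$, and as $\rho_\psi((0,t))=\psi(t)\,\mathrm{id}$ the set $\{\rho_\psi((v,0)):v\in W\}$ already spans; comparing dimensions, $|W|=q^{2m}=\dim\mathrm{End}(V_\psi)$, so it is a basis. The key point is that the section $v\mapsto(v,0)$ is $\mathrm{Sp}_{2m}(q)$-equivariant: $g(v,0)g^{-1}=(gv,0)$ holds exactly in $H$. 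Therefore $\Omega(g)\,\rho_\psi((v,0))\,\Omega(g)^{-1}=\rho_\psi((gv,0))$, so $\omega(g)$ permutes this basis of $\mathrm{End}(V_\psi)$ exactly as $g$ permutes $W$. Hence $\omega\overline{\omega}$ is the permutation character of $\mathrm{Sp}_{2m}(q)$ on $W$, and evaluating at $g$ gives $|\omega(g)|^{2}=(\omega\overline{\omega})(g)=|\{v\in W:gv=v\}|=|C_W(g)|$.

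To split $\omega$ and pin down the degrees, I would compute $\langle\omega,\omega\rangle=\langle\omega\overline{\omega},\mathbf{1}\rangle$, which is the number of $\mathrm{Sp}_{2m}(q)$-orbits on $W$; since $\mathrm{Sp}_{2m}(q)$ is transitive on $W\setminus\{0\}$ this equals $2$. Thus $\omega=\chi_1+\chi_2$ with $\chi_1,\chi_2$ distinct irreducible characters and $\chi_1(1)+\chi_2(1)=q^m$. Evaluating the fixed-point formula at the central involution $-I\in\mathrm{Sp}_{2m}(q)$ gives $|\omega(-I)|^{2}=|C_W(-I)|=1$ (as $q$ is odd, only $0$ is fixed by $-I$), while $\chi_i(-I)=\pm\chi_i(1)$ because $-I$ acts as a scalar of order dividing $2$. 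Combined with $\chi_1(1)+\chi_2(1)=q^m$, the equality $|\chi_1(-I)+\chi_2(-I)|=1$ forces $|\chi_1(1)-\chi_2(1)|=1$, whence $\{\chi_1(1),\chi_2(1)\}=\{(q^m-1)/2,\,(q^m+1)/2\}$. Finally, for all $g$ we get $|\chi_1(g)+\chi_2(g)|^{2}=|\omega(g)|^{2}=|C_W(g)|$, which is the stated identity.

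The genuinely nontrivial input — and the step I expect to be the main obstacle — is the existence of the extension $\Omega$ of $V_\psi$ to $H$ as an honest \emph{linear} representation: a priori Clifford theory only furnishes a projective extension, with obstruction a class in $H^{2}(\mathrm{Sp}_{2m}(q),\mathbb{C}^{\times})$, i.e. the finite metaplectic cover. For $q$ odd this obstruction vanishes, which is classical but not formal; the cleanest verification is to write the Weyl (oscillator) operators explicitly on a Schr\"odinger model $\mathbb{C}[\F_q^{m}]$ for a generating set of $\mathrm{Sp}_{2m}(q)$ and check that they satisfy the defining relations exactly — the relevant Gauss sums cancel precisely because $q$ is odd, which is where all the characteristic-$2$ pathology is sidestepped. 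One should also confirm that the handful of small groups with exceptional Schur multiplier cause no trouble. Everything after the extension is the soft character-theoretic bookkeeping above.
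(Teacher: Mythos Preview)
The paper does not give a proof of this proposition: it is quoted verbatim as \cite[Theorem 4.8]{MR332945} and used as a black box throughout Section~6. There is therefore nothing in the paper to compare your argument against.

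That said, your sketch is essentially the standard proof one finds in the literature (Howe, G\'erardin, Isaacs), and the steps are sound. The identification of $\omega\overline{\omega}$ with the permutation character on $W$ via the basis $\{\rho_\psi((v,0)):v\in W\}$ of $\mathrm{End}(V_\psi)$ is exactly the right mechanism, and your degree computation via $|\omega(-I)|=1$ is clean. You are also right that the only genuinely delicate point is the splitting of the metaplectic extension over $\mathrm{Sp}_{2m}(q)$ for $q$ odd; this is where the cited reference does real work, and your proposed verification via explicit Schr\"odinger-model operators is one of the standard routes.
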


\begin{proposition}[{\cite[Corollary 4.9.2]{GERARDIN197754}}]
\label{weilunitary}
For $g\in \mathrm{SU}_n(q)$, the ordinary Weil character of degree $q^n$ of $\mathrm{SU}_n(q)$ has value $(-1)^n(-q)^{\dim C_W(g)}$, where $W$ is the natural module for $\mathrm{SU}_n(q)$.
\end{proposition}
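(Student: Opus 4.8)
Proposition~\ref{weilunitary} is quoted from Gérardin, so in the paper one simply cites it; here is the shape of a direct argument. A useful first observation is that the \emph{modulus} of $\tau_W(g)$ --- writing $\tau_W$ for the $q^{n}$-dimensional Weil character and $W$ for the natural $\mathbb{F}_{q^{2}}$-module --- is already pinned down. Indeed $\mathrm{SU}_n(q)\le\mathrm{Sp}_{2n}(q)$ via the $\mathbb{F}_q$-form of $W$, and $\tau_W$ is the restriction of the $q^{n}$-dimensional symplectic Weil character $\chi_1+\chi_2$ of Proposition~\ref{speqn}, so $|\tau_W(g)|^{2}=|C_{W_{\mathbb{F}_q}}(g)|=q^{2\dim C_W(g)}$. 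Hence $\tau_W(g)=\zeta\,q^{\dim C_W(g)}$ for a root of unity $\zeta$, and the whole content of the statement is the claim $\zeta=(-1)^{n+\dim C_W(g)}$.

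To get the sign I would work with the Heisenberg model of the Weil module recalled in \S\ref{weilbackground}. Let $N=N(W)$ be the Heisenberg group of the Hermitian space: as a set $W\times\mathbb{F}_q$, with centre $Z=\{0\}\times\mathbb{F}_q$ and commutator recording the ``imaginary part'' $\mathrm{Im}\,h\colon W\times W\to\mathbb{F}_q$ of $h$ (for $q$ even one uses instead the group $4\circ 2^{1+2n}$ of \S\ref{weilbackground}). For a fixed non-trivial character $\psi$ of $Z$, Stone--von Neumann gives the unique $q^{n}$-dimensional $N$-module $\rho_\psi$ with central character $\psi$; since $\mathrm{GU}_n(q)$ acts on $N$ fixing $Z$ pointwise, $\rho_\psi$ extends to $N\rtimes\mathrm{GU}_n(q)$, and its restriction to $\mathrm{SU}_n(q)$ is the Weil module $V_0$ (consistent with \S\ref{weilbackground}, since $N$ lies inside the group $S=q^{1+2n}$ there). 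One then proves multiplicativity: for an orthogonal decomposition $W=W_1\perp W_2$ into non-degenerate subspaces, $N(W)$ is the central product of $N(W_1)$ and $N(W_2)$ over $Z$, so $\rho_\psi^{W}\cong\rho_\psi^{W_1}\otimes\rho_\psi^{W_2}$ as $\mathrm{GU}(W_1)\times\mathrm{GU}(W_2)$-modules and $\tau_W(g_1,g_2)=\tau_{W_1}(g_1)\tau_{W_2}(g_2)$. Applying this to the $g$-stable orthogonal splitting $W=C\perp C^{\perp}$, where $C$ is the generalised $1$-eigenspace of $g$ (genuinely non-degenerate, since no eigenvalue $\nu\neq 1$ of a unitary element satisfies $\nu^{q}=1$, so $C$ pairs non-trivially under $h$ only with itself), reduces the problem to two cases: $g$ unipotent, and $g-1$ invertible.

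The remaining input is a character computation in a Schrödinger-type model, where (for $g-1$ invertible) $\tau(g)$ reduces, up to a normalising factor from the change of polarisation, to a sum $\sum_{v\in W}\psi\!\left(\mathrm{Im}\,h\big((g-1)^{-1}v,v\big)\right)$; the decisive evaluation is the norm Gauss sum
\[
\sum_{x\in\mathbb{F}_{q^{2}}}\psi\!\big(\mathrm{Tr}_{\mathbb{F}_{q^{2}}/\mathbb{F}_q}(a\,x^{q+1})\big)=-q\qquad(a\in\mathbb{F}_q^{\times}),
\]
which is immediate from the norm map $\mathbb{F}_{q^{2}}^{\times}\to\mathbb{F}_q^{\times}$ being $(q+1)$-to-one, and which is exactly where the base ``$-q$'' of the statement comes from. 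Substituting gives $\tau_{C^{\perp}}(g|_{C^{\perp}})=(-1)^{\dim C^{\perp}}$, while a parallel (or degeneration) argument on the unipotent part gives $\tau_C(g|_C)=(-1)^{\dim C}(-q)^{\dim C_W(g)}$; multiplying and normalising by $\tau_W(1)=q^{n}$ yields $\tau_W(g)=(-1)^{n}(-q)^{\dim C_W(g)}$. The main obstacle throughout is precisely this sign bookkeeping: one must keep careful track of the Weil index / quadratic Gauss sums over $\mathbb{F}_{q^{2}}$ and of the unipotent contribution (the unitary case is comparatively tame because $\dim_{\mathbb{F}_q}W=2n$ is even, so no genuine square roots of $q$ survive), and it is to sidestep this computation that the paper simply invokes Gérardin.
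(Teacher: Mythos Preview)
The paper itself gives no proof of this statement: it is quoted verbatim as \cite[Corollary 4.9.2]{GERARDIN197754} and then used as a black box. You recognise this at the outset, so there is no discrepancy to report.

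Your sketch of how a direct proof would go is broadly in the spirit of G\'erardin's argument: build the Weil representation from the Heisenberg group of the Hermitian space via Stone--von~Neumann, use multiplicativity under orthogonal decompositions to reduce to the cases $g$ unipotent and $g-1$ invertible, and evaluate the resulting sum by the norm Gauss sum $\sum_{x\in\mathbb{F}_{q^2}}\psi(a\,x^{q+1})=-q$. A couple of small points if you want to tighten it: the trace in your displayed Gauss sum is superfluous (indeed harmful in even characteristic), since $a\,x^{q+1}$ already lies in $\mathbb{F}_q$; and the phrase ``normalising by $\tau_W(1)=q^n$'' at the end is misleading---there is no normalisation, the product $\tau_{C}(g|_C)\,\tau_{C^{\perp}}(g|_{C^{\perp}})$ is already $\tau_W(g)$. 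The unipotent case, which you wave at, is genuinely the more delicate half and is where G\'erardin does most of the work; your ``parallel (or degeneration) argument'' would need to be made precise, but since the paper never asks for this and simply cites the result, that is not a defect in the present context.
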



\begin{proposition}[{\cite[Propositions 3.3.2, 3.4.3]{bg}}]
	\label{itoeig}
	Suppose $g$ is a semisimple element of odd prime order $r_0$ in $\mathrm{Sp}_{n}(q)$ or $\mathrm{SU}_{n}(q)$, and let $i=\delta(r_0,q)$. 
	Let 
	\[
	t = \begin{cases} 
	1/2 & g\in \mathrm{SU}_n(q),  \, i \equiv 2 \, (4) \textrm{ and } i>2,\\
	1 & i \textrm{ even otherwise},\\
	2 & $i$ \textrm{ odd}.\\
	\end{cases}
	\]
Then unless $g \in \mathrm{SU}_{n}(q)$ and $i=2$, we have
	\[
	\dim C_W(g) \in \left\{n-tij \mid 1 \leq j \leq \left\lfloor\frac{n}{ti}\right\rfloor \right\}.
	\]
\end{proposition}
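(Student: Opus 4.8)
Write $q = p^e$; since $g$ is semisimple of order $r_0$ we have $r_0 \neq p$, and $g$ is diagonalisable over $\overline{\mathbb{F}}_q$. The plan is to reduce the statement to a count of eigenvalue multiplicities subject to two symmetries. Put $\overline{W} = W \otimes \overline{\mathbb{F}}_q$, so $\overline{W} = \bigoplus_{\zeta^{r_0} = 1} \overline{W}_\zeta$ with $\overline{W}_\zeta = \ker(g - \zeta)$; fixing a primitive $r_0$th root of unity and labelling the eigenvalue of exponent $a$ by $a \in \mathbb{Z}/r_0\mathbb{Z}$, we have $\dim C_W(g) = \dim \overline{W}_0$. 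Hence it is enough to prove that $N := \sum_{a \neq 0}\dim \overline{W}_a$ is divisible by $ti$: as $g \neq 1$ we have $0 < N \leq n$, so $N = tij$ for some $1 \leq j \leq \lfloor n/(ti)\rfloor$ and the claim follows.

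Next I would isolate two symmetries of the multiset $a \mapsto \dim \overline{W}_a$. (i) The Galois symmetry: since $g$ has entries in its field of definition ($\mathbb{F}_q$ for $\mathrm{Sp}_n(q)$, $\mathbb{F}_{q^2}$ for $\mathrm{SU}_n(q)$), applying the generating Frobenius gives $\dim \overline{W}_a = \dim \overline{W}_{fa}$ with $f = q$ (symplectic) or $f = q^2$ (unitary). (ii) The form symmetry: $g$ preserves a nondegenerate symplectic form in the first case and a nondegenerate $\mathbb{F}_{q^2}/\mathbb{F}_q$-Hermitian form $B$ in the second, and the standard analysis of $g$-invariant forms (as in \cite[Ch.~3]{bg}, going back to Wall's description of classical conjugacy classes) shows that $B$ pairs $\overline{W}_a$ with $\overline{W}_{sa}$, so $\dim \overline{W}_a = \dim \overline{W}_{sa}$, with $s = -1$ (symplectic) and $s = -q$ (Hermitian). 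Here $r_0$ is odd, so $-1$ is not an eigenvalue of $g$, and $\overline{W}_0 = C_W(g)$ is the only self-paired eigenspace.

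Now set $T = \langle f, s\rangle \leq (\mathbb{Z}/r_0\mathbb{Z})^\times$. By (i) and (ii) the support $\{a \neq 0 : \overline{W}_a \neq 0\}$ is a union of $T$-orbits on which $\dim \overline{W}_a$ is constant; as these orbits are cosets of $T$ in $(\mathbb{Z}/r_0\mathbb{Z})^\times$, each has size $|T|$, so $N = |T|\sum_O d_O$, where the sum runs over the $T$-orbits $O$ meeting the support and $d_O$ is the common value of $\dim \overline{W}_a$ on $O$; in particular $|T|$ divides $N$. It remains to identify $|T|$ with $ti$. For $\mathrm{Sp}_n(q)$, $(\mathbb{Z}/r_0\mathbb{Z})^\times$ is cyclic with unique involution $-1$, so $-1 \in \langle q\rangle$ exactly when $i = \mathrm{ord}_{r_0}(q)$ is even; hence $|T| = i$ if $i$ is even and $|T| = 2i$ if $i$ is odd, matching $t = 1$ and $t = 2$. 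For $\mathrm{SU}_n(q)$, $(-q)^2 = q^2$ gives $T = \langle -q\rangle$, so $|T| = \mathrm{ord}_{r_0}(-q)$; an elementary computation from $i = \mathrm{ord}_{r_0}(q)$ yields $\mathrm{ord}_{r_0}(-q) = 2i$ when $i$ is odd, $= i$ when $4 \mid i$, and $= i/2$ when $i \equiv 2 \pmod 4$ and $i > 2$, matching $t = 2, 1, \tfrac12$. Finally, when $g \in \mathrm{SU}_n(q)$ and $i = 2$ we have $q \equiv -1 \pmod{r_0}$, so $s \equiv 1$ and the Hermitian form pairs each $\overline{W}_a$ with itself: then $|T| = 1$ and the multiplicities are constrained only by $\det g = 1$. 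For instance, if $n \geq r_0$ the semisimple element with a fixed eigenvalue $\neq 1$ of multiplicity $r_0$ and eigenvalue $1$ of multiplicity $n - r_0$ lies in $\mathrm{SU}_n(q)$ and has $n - \dim C_W(g) = r_0$, which is odd; so the conclusion genuinely fails, which is why that case is excluded.

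The conceptual input here is modest. The two steps needing care are: extending the Hermitian form to $\overline{W}$ and reading off the multiplier $s = -q$ correctly (and keeping $\mathbb{F}_q$ versus $\mathbb{F}_{q^2}$ straight throughout the unitary case); and the small piece of number theory computing $\mathrm{ord}_{r_0}(-q)$ from $\mathrm{ord}_{r_0}(q)$ in the three residue classes of $i$ modulo $4$. I expect the first to be the only genuine subtlety, together with recognising — as the argument makes transparent — that the case $g \in \mathrm{SU}_n(q)$, $i = 2$ breaks down precisely because the form symmetry $s$ collapses to the identity.
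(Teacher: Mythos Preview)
Your argument is correct, and in fact the paper does not prove this proposition at all: it simply records the statement as a consequence of \cite[Propositions~3.3.2, 3.4.3]{bg}, which classify the semisimple conjugacy classes in $\mathrm{Sp}_n(q)$ and $\mathrm{SU}_n(q)$ via the decomposition of $W$ into an orthogonal sum of nondegenerate $g$-invariant subspaces. From that classification the divisibility of $n - \dim C_W(g)$ by $ti$ is read off from the dimensions of the irreducible nontrivial summands. Your route is more direct and entirely self-contained: you bypass the full conjugacy-class machinery and extract exactly the two symmetries (Galois and form) that force the divisibility. This is a genuine simplification for the purpose at hand.

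One technical point is worth tightening. In the unitary case you cannot literally extend the Hermitian form $B$ to $\overline{W}$ as a sesquilinear form, because the conjugation on $\mathbb{F}_{q^2}$ does not extend compatibly to $\overline{\mathbb{F}}_q$. The clean way to obtain the form symmetry is to observe that the Hermitian condition gives $g^{-1}$ conjugate to $g^{(q)}$ over $\mathbb{F}_{q^2}$, whence the eigenvalue multiplicities satisfy $m_\zeta = m_{\zeta^{-q^{-1}}}$; in exponent notation the multiplier is $s = -q^{-1}$ rather than $-q$. This does not affect your conclusion, since $-q^{-1}$ and $-q$ are inverse to one another in $(\mathbb{Z}/r_0\mathbb{Z})^\times$ and so generate the same cyclic subgroup $T$. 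With that adjustment the argument is watertight, and your computation of $|T| = ti$ in each residue class of $i$ modulo $4$ is correct. Your explanation of why the case $i=2$ in $\mathrm{SU}_n(q)$ must be excluded (the form symmetry degenerates to the identity) is also exactly right.
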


 For large symplectic and  unitary groups it will also be useful to bound the degree of the smallest representation that is not a Weil representation.
 \begin{proposition}[{\cite{MR1813499}}]
 \label{d2unitary}
 Suppose $G=\mathrm{SU}_n(q)$ with $n\geq 4$ and $(n,q)\neq (4,2), (4,3)$. Let $\mathbb{F}$ be an algebraically closed field of characteristic $r\neq p$. Suppose that $V$ is a non-trivial absolutely irreducible $\mathbb{F}G$-module of degree less than 
 \[
 \begin{cases}
 (q^2+1)(q^2-q+1)/(2,q-1)-1 & \textrm{if } n=4,\\
(q^{n-2}-1)(q-1) \lfloor(q^{n-2}-1)/(q+1)\rfloor & \textrm{if } n\geq 5\\
 \end{cases}
 \]
 Then $V$ is a Weil module of degree $(q^n-(-1)^n)/(q+1)$ or $(q^n+q(-1)^n)/(q+1)$.
 \end{proposition}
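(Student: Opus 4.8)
The statement is a \emph{second-smallest degree} companion to the minimal-degree bound $d_1(\mathrm{SU}_n(q))$ recorded in Table \ref{mindeg}, and the natural strategy mirrors that behind Proposition \ref{mindegprop}: first establish the analogous gap theorem for ordinary (characteristic-zero) irreducible characters of $G=\mathrm{SU}_n(q)$, then transfer it to cross characteristic. For the transfer one uses that any non-trivial absolutely irreducible $\mathbb{F}G$-module $V$ in characteristic $r\neq p$ is a composition factor of the reduction mod $r$ of some ordinary irreducible character, combined with a basic-set/unitriangularity result for the $r$-decomposition matrix of unitary groups (work of Geck--Hiss and of Dipper--James for $\mathrm{GU}_n$, pushed down to $\mathrm{SU}_n$) guaranteeing that, below the degree bound in the statement, only ordinary characters that are themselves below that bound can contribute a small composition factor; one also checks that each ordinary Weil character reduces mod $r$ to the expected Weil module(s). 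So the crux is the ordinary statement: every non-trivial ordinary irreducible character of $G$ of degree less than the displayed bound is a Weil character of degree $(q^n-(-1)^n)/(q+1)$ or $(q^n+q(-1)^n)/(q+1)$.

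For the ordinary characters I would apply Lusztig's Jordan decomposition: irreducible characters of $G$ correspond to pairs $(s,\psi)$ with $s$ semisimple in the dual group $G^\ast$ (again $n$-dimensional of unitary type) and $\psi$ a unipotent character of $C_{G^\ast}(s)$, with $\chi_{s,\psi}(1)=[G^\ast:C_{G^\ast}(s)]_{p'}\,\psi(1)$. Writing $C_{G^\ast}(s)$, up to the usual $\mathrm{GU}$/$\mathrm{SU}$/$\mathrm{PGU}$ adjustments, as a product of groups $\mathrm{GU}_{n_i}(q^{a_i})$ determined by the eigenvalue multiplicities of $s$ on the natural module, one bounds $[G^\ast:C_{G^\ast}(s)]_{p'}$ from below in terms of how far $s$ is from central. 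A short case analysis then shows that unless $s$ is central (giving unipotent characters) or lies in the single family of ``one nontrivial eigenvalue'' classes, this index already exceeds the bound. The unipotent characters are handled separately: reading off degrees from Lusztig's symbols for type ${}^2A_{n-1}$, the smallest non-trivial unipotent degree of $\mathrm{SU}_n(q)$ lies well above the Weil degree once $n\geq 5$ (and for $n=4$ gives precisely the extra characters responsible for the slightly different bound $(q^2+1)(q^2-q+1)/(2,q-1)-1$ in the first case). This leaves exactly the trivial and the Weil characters, and a direct computation of the relevant centraliser indices and Clifford restrictions from $\mathrm{GU}_n$ recovers the two Weil degrees.

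Several low-rank situations would need to be treated by hand rather than by the generic estimates: $n=4$ with small $q$, and the genuine exceptions $(n,q)=(4,2),(4,3)$, where one appeals to the Atlas/Hiss--Malle list of low-degree representations and to Lübeck's tables; these are also the source of the excluded pairs in the hypothesis. Care is also needed when passing between $G$ and its dual, and when using Clifford theory to descend characters from $\mathrm{GU}_n(q)$ to $\mathrm{SU}_n(q)$, since the centre and the index-$(q+1)$ quotient can split or fuse the relevant characters.

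The step I expect to be the main obstacle is the cross-characteristic transfer, namely ruling out that an irreducible $\mathbb{F}G$-module of degree below the bound arises as a composition factor of the reduction mod $r$ of an ordinary character of \emph{large} degree. The ordinary gap theorem alone does not give this; it genuinely requires block-theoretic input (the Fong--Srinivasan/Jordan decomposition of $r$-blocks for unitary groups together with compatibility of basic sets with character degrees) to control the decomposition matrix in the relevant range. A more self-contained alternative would be to induct on $n$ by restricting $V$ to a maximal parabolic $P=U{:}L$ with Levi $L\cong \mathrm{GL}_1(q^2)\times\mathrm{SU}_{n-2}(q)$: Clifford theory for the $G$-action on the $r$-modular characters of the extraspecial-type group $U$ produces the Weil modules directly, while any non-Weil $V$ would restrict to $L$ with a composition factor violating the inductive hypothesis via a Landazuri--Seitz-type estimate --- but calibrating such an induction to yield the exact constants in the statement is itself delicate.
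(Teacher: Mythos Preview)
This proposition is not proved in the paper: it is stated with the citation \cite{MR1813499} and used as a black-box input, with no argument given. So there is no ``paper's own proof'' to compare your proposal against. Your sketch is a reasonable outline of the strategy one expects in the cited reference (Jordan decomposition of characters to handle the ordinary case, then a decomposition-matrix argument to pass to positive characteristic), and you correctly identify the genuine difficulty in the cross-characteristic transfer. But since the paper simply imports the result, any assessment of whether your approach matches the original would require consulting \cite{MR1813499} directly rather than this paper.
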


 \begin{proposition}[{\cite{MR1947325} }]
 \label{d2symp}
 Suppose $G=\mathrm{Sp}_{2m}(q)$ with $n\geq 2$, $q$ odd and $(m,q) \neq (2,3)$. Let $\mathbb{F}$ be an algebraically closed field of characteristic $r\neq p$.  If $V$ is a non-trivial absolutely irreducible $\mathbb{F}G$-module of dimension less than $(q^m-1)(q^m-q)/2(q+1)$, then $V$ is a Weil module of dimension $(q^m \pm 1)/2$.
 \end{proposition}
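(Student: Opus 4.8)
This is a ``second-gap'' statement: below the threshold $d_2:=(q^m-1)(q^m-q)/(2(q+1))$ the only irreducible cross-characteristic $\mathbb{F}G$-modules should be the four Weil modules of dimensions $(q^m\pm1)/2$ from Section~\ref{weilbackground}. My plan is to separate an \emph{ordinary}-character input from a \emph{modular} argument carried by induction on $m$, and to invoke Proposition~\ref{mindegprop} to fix the dimension at the end. For the ordinary analogue I would first prove that the only $\chi\in\mathrm{Irr}(\mathrm{Sp}_{2m}(q))$ with $\chi(1)<d_2$ are the trivial character and the four irreducible Weil characters of degrees $(q^m\pm1)/2$ (Proposition~\ref{speqn}). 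This is a Deligne--Lusztig degree count: by the Jordan decomposition of characters every such $\chi$ lies in a Lusztig series $\mathcal{E}(G,s)$ for a semisimple $s$ in the dual group $G^{*}=\mathrm{SO}_{2m+1}(q)$, with $\chi(1)=[G^{*}:C_{G^{*}}(s)]_{p'}\,\psi(1)$ for a unipotent character $\psi$ of $C_{G^{*}}(s)$; bounding $[G^{*}:C_{G^{*}}(s)]_{p'}$ below by the isogeny type of $C_{G^{*}}(s)$ and combining this with the known minimal nontrivial degrees of unipotent characters of $C_m(q)$ and of the smaller classical factors of $C_{G^{*}}(s)$ forces $\chi(1)<d_2$ to occur only for $s$ central or in one near-central class, whence exactly the listed characters. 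The small ranks $m=2,3$ are instead read off from the character tables of $\mathrm{Sp}_4(q)$ and $\mathrm{Sp}_6(q)$; the hypothesis $(m,q)\ne(2,3)$ is genuinely needed, since $\mathrm{PSp}_4(3)\cong\mathrm{PSU}_4(2)$ has non-Weil irreducibles of dimension just above the Weil ones.

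For the modular statement, let $V$ be an irreducible $\mathbb{F}G$-module with $\dim V<d_2$; by Proposition~\ref{mindegprop} we have $\dim V\ge d_1(\mathrm{Sp}_{2m}(q))=(q^m-1)/2$, so it remains to exclude the range $(q^m+1)/2<\dim V<d_2$ and to identify $V$. I would restrict $V$ to the stabiliser $P_1=Q\rtimes L$ of an isotropic $1$-space, where $Q$ is special of order $q^{1+2(m-1)}$ with $|Z(Q)|=q$ and $L\cong\mathrm{GL}_1(q)\times\mathrm{Sp}_{2m-2}(q)$, and also to the subsystem subgroup $\mathrm{Sp}_2(q)\times\mathrm{Sp}_{2m-2}(q)$. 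Since the $G$-conjugates of $Z(Q)$ generate $G$, faithfulness forces $V|_Q$ to contain one of the $q^{m-1}$-dimensional irreducibles of $Q$; tracking how $L$ permutes the $Z(Q)$-isotypic components of $V$, and applying the inductive classification for $\mathrm{Sp}_{2m-2}(q)$ to control the action of that factor on each component, one shows that $\dim V<d_2$ leaves room only for $\dim V\in\{(q^m-1)/2,(q^m+1)/2\}$ with $V|_{P_1}$ having exactly the restriction pattern of the oscillator module. The base cases $m=2,3$ and the finitely many small pairs $(q,r)$ where the inductive estimates are too weak are settled directly using \cite{HM} and, where needed, a computer. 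Comparing with the Heisenberg/oscillator construction of Section~\ref{weilbackground} then identifies $V$ as a reduction modulo $r$ of an ordinary Weil character, i.e.\ a Weil module in the sense of that section, and by \cite{tiep2001low} such a reduction is irreducible of dimension precisely $(q^m\pm1)/2$.

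I expect the modular step to be the main obstacle. For small primes $r$ the decomposition matrices of $\mathrm{Sp}_{2m}(q)$ are unknown, so one cannot simply transport the ordinary classification across via decomposition numbers: a priori there could be a ``new'' irreducible of dimension in $\bigl((q^m+1)/2,\,d_2\bigr)$ occurring only in reductions of large ordinary characters, and it is precisely the parabolic-restriction-with-induction argument that rules this out. The delicate part there is to bound, uniformly in $m$, $q$ and $r$, the number and dimensions of the $Z(Q)$-isotypic components such a hypothetical module could carry. Within the ordinary step the enumeration over all semisimple classes $s$ and all unipotent characters of $C_{G^{*}}(s)$ is routine but lengthy; this, together with the modular induction, is the content of \cite{MR1947325}.
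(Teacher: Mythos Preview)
The paper does not give its own proof of this proposition: it is quoted as a black box from \cite{MR1947325} and used only as an input (to bound the dimension of the second-smallest cross-characteristic irreducible when applying Proposition~\ref{tools}\ref{crude}). There is therefore nothing in the paper to compare your proposal against.

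That said, your sketch is a fair summary of the strategy actually used in \cite{MR1947325}: an ordinary-character classification via Deligne--Lusztig degree bounds over Lusztig series, combined with a modular step carried out by restriction to the parabolic $P_1$ with Heisenberg normal subgroup $Q$ and induction on $m$, with small cases handled directly. Your identification of the delicate point --- controlling the $Z(Q)$-isotypic decomposition of a hypothetical non-Weil module uniformly in $m$, $q$, $r$ --- is accurate; this is exactly where the bulk of the work in \cite{MR1947325} lies. For the purposes of the present paper no such argument is needed: the result is simply cited.
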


 We also have the following useful result about the restriction of Weil modules for unitary groups to certain geometric subgroups.
 
 \begin{proposition}[{\cite[Corollary 3.4]{GERARDIN197754}}]
 \label{su_weil_tensor}
The $q^m$-dimensional Weil module for $G=\mathrm{SU}_m(q)$ restricts to the subgroup $\mathrm{SU}_k(q) \times \mathrm{SU}_{m-k}(q)$ as the tensor product of a $q^k$-dimensional Weil module for $\mathrm{SU}_k(q)$, and a $q^{m-k}$-dimensional Weil module for $\mathrm{SU}_{m-k}(q)$.
 \end{proposition}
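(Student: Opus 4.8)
The plan is to exploit the construction of the $q^m$-dimensional Weil module through the extraspecial (or, for $q$ odd, special) group, as recalled in Section \ref{weilbackground}, and to track how that construction behaves under the natural geometric decomposition $W = W_k \perp W_{m-k}$ of the Hermitian space $W$. Write $V_0$ for the $q^m$-dimensional module: it arises as the restriction to $\mathrm{SU}_m(q)$ of a module for a group of the shape $R.\mathrm{SU}_m(q)$, where $R$ is the relevant $p$-group of symplectic type ($R = q^{1+2m}$ for $q$ odd, $R = 4\circ 2^{1+2m}$ for $q$ even) acting on $W$ regarded as a symplectic $\mathbb{F}_p$-space, and $V_0$ is the unique irreducible module on which $Z(R)$ acts by the chosen fixed nontrivial character. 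First I would record that the orthogonal splitting $W = W_k \perp W_{m-k}$ induces a central product decomposition $R = R_k \circ R_{m-k}$ of the symplectic-type group, with $R_k$ supported on $W_k$ and $R_{m-k}$ on $W_{m-k}$, the two factors sharing the common center $Z(R)$. The subgroup $\mathrm{SU}_k(q)\times \mathrm{SU}_{m-k}(q)$ normalizes this decomposition, with the first factor centralizing $R_{m-k}$ and the second centralizing $R_k$.

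The key representation-theoretic step is then the standard fact about irreducible modules of a central product: if $R = R_k \circ R_{m-k}$ with amalgamated center $Z$, and $N$ is an irreducible module for $R$ on which $Z$ acts by a character $\chi$, then $N \cong N_k \otimes N_{m-k}$, where $N_k$ (resp. $N_{m-k}$) is the unique irreducible $R_k$-module (resp. $R_{m-k}$-module) on which $Z$ acts by $\chi$. Applying this to $V_0$ gives $V_0|_{R_k \circ R_{m-k}} \cong U_k \otimes U_{m-k}$, where $U_k$ has dimension $q^k$ and $U_{m-k}$ has dimension $q^{m-k}$, and these are precisely the modules underlying the Weil construction for $\mathrm{SU}_k(q)$ and $\mathrm{SU}_{m-k}(q)$. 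It then remains to check that the action of $\mathrm{SU}_k(q)\times \mathrm{SU}_{m-k}(q)$ respects this tensor factorization: since $\mathrm{SU}_k(q)$ centralizes $R_{m-k}$ it must act on $V_0 = U_k \otimes U_{m-k}$ as $(\text{something on }U_k)\otimes \mathrm{id}$ by Schur's lemma applied to the $R_{m-k}$-isotypic structure, and similarly for $\mathrm{SU}_{m-k}(q)$; the actions on the two factors are exactly the extensions used to build the Weil modules, because $R_k.\mathrm{SU}_k(q)$ is the split extension appearing in Section \ref{weilbackground}. This identifies $V_0|_{\mathrm{SU}_k(q)\times\mathrm{SU}_{m-k}(q)}$ with a tensor product of $q^k$- and $q^{m-k}$-dimensional Weil modules.

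The main obstacle I anticipate is bookkeeping around the center and the scalar ambiguity rather than anything conceptually deep: one must be careful that the character $\chi$ of $Z(R)$ chosen in the global construction restricts to the character used in each local Weil construction, and, in the $q$ even case, that the $4\circ 2^{1+2m}$-versus-$2^{1+2m}$ distinction and the non-split/split subtleties flagged in the text are handled consistently across the two factors. A clean way to finesse this is to invoke the cited result of Gérardin \cite{GERARDIN197754} directly, since Corollary 3.4 there is stated at the level of the Weil representations themselves; so in practice I would phrase the argument as: the decomposition of the ambient symplectic-type module under $R_k \circ R_{m-k}$ is the tensor product $U_k\otimes U_{m-k}$, this is $\mathrm{SU}_k(q)\times\mathrm{SU}_{m-k}(q)$-equivariant for the reasons above, and the factors are the Weil modules by definition. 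I would not expect to need any delicate computation of Brauer characters here, since the statement is purely about module structure over an algebraically closed field and reduces to the central-product lemma plus Schur's lemma.
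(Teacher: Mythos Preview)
The paper does not give its own proof of this proposition; it is stated as a direct citation of \cite[Corollary 3.4]{GERARDIN197754}, so there is no in-paper argument to compare against. Your sketch is the standard argument and is correct: the orthogonal splitting $W = W_k \perp W_{m-k}$ induces a central product $R = R_k \circ R_{m-k}$ of the symplectic-type group, the unique irreducible with the fixed central character factors as $U_k \otimes U_{m-k}$, and Schur's lemma forces $\mathrm{SU}_k(q)\times\mathrm{SU}_{m-k}(q)$ to act through the tensor factors, recovering the Weil modules on each side. Your caveat about matching the central character across the two factors is the only genuine bookkeeping point, and you have identified it correctly.
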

\section{Techniques}
\label{techniques}
Let $G$ be an almost quasisimple group with $E(G)$ a quasisimple group of Lie type acting absolutely irreducibly on $V_d(r)$ in cross characteristic. Recall that we also assume $(r, |G|)>1$, since the co-prime case was completed by \cite{MR1829482}.
Our method of proof for Theorem \ref{mainthm} relies heavily on the observation that if $G$ has no regular orbit on $V$, then $V$ is the union of the fixed points spaces $C_V(g)$ for $g\in G\setminus\{ 1\}$. Since conjugates in $G$ have fixed point spaces of the same dimension, we have 
\begin{equation}
\label{ogeqn}
|V| \leq \sum_{x\in \mathcal{X}} |x^G||C_V(x)|, 
\end{equation}
where $\mathcal{X}$ is a set of non-identity conjugacy class representatives of $G$.

We now present the main tools used to prove Theorem \ref{mainthm}. For $G$ an almost quasisimple group, let $G_s$ denote the set of elements of $G$ of projective prime order $s$, and $G_{s'}$ denote the set of elements of $G$ with projective prime order coprime to $s$. 

\begin{proposition}
	\label{tools}
	Let $G\leq  \mathrm{GL}(V)$ be an almost quasisimple group, acting on the $d$-dimensional module $V=V_d(r)$ over $\mathbb{F}_r$, where $r = r_0^e$ for $r_0$ prime. Suppose also that the restriction of $V$ to $E(G)$ is absolutely irreducible. Set $H=G/F(G)$ and let $\mathcal{P}$ be a complete set of conjugacy class representatives of elements of projective prime order in $G$. For $x \in G$, let $\bar{x} = xF(G) \in H$, and denote the order of $\bar{x}$ by $o(\bar{x})$. Then each of the following inequalities hold.
	\begin{enumerate}
		\item \label{alphabound} For $\nu \in \overline{\mathbb{F}}_r$, the $\nu$-eigenspace $E_\nu(x)$ of $x \in G$ on $\overline{V}$  satisfies 
		\[ 
		\dim_{\overline{\mathbb{F}}_r}(E_\nu(x)) \leq \left\lfloor\dim_{\overline{\mathbb{F}}_r}(\overline{V}) \left( 1-\frac{1}{\a(x)}\right)\right\rfloor.
		\] 
	\end{enumerate}
	Further, if $G$ has no regular orbit on $V$, then:
	\begin{enumerate} \setcounter{enumi}{1}
		\item \label{eigsp1} \[
		|V| \leq \sum_{x\in \mathcal{P}} \sum_{\kappa \in \mathbb{F}_r^\times} \frac{1}{o(\bar{x})-1} |\bar{x}^H| |C_V(\kappa x)|.
		\]
		\item \label{eigsp2}
		\[
		|V| \leq \sum_{x\in \mathcal{P} \cap G_{r_0'}} \frac{o(\bar{x} )}{o(\bar{x} )-1} |\bar{x} ^H|\mathrm{max}\{|C_V(\kappa x)| \mid \kappa \in \mathbb{F}_r^{\times}\}+ \sum_{x\in \mathcal{P} \cap G_{r_0}} \frac{1}{o(\bar{x} )-1}  |\bar{x}^H| |C_V(x)|.
		\]
		\item \label{qsgood} \[
		|V| \leq 2\sum_{x\in \mathcal{P} \cap G_{r_0'}} |\bar{x} ^H|\mathrm{max}\{|C_V(\kappa x)| \mid \kappa \in \mathbb{F}_r^{\times}\} + \sum_{x\in \mathcal{P} \cap G_{r_0}} \frac{1}{o(\bar{x})-1}  |\bar{x}^H| |C_V(x)|.
		\]
		\item \label{crude} \begin{equation}
			\label{crudeeqn}
			|V| = r^d \leq 2\sum_{x\in \mathcal{P} \cap G_{r_0'}} |\bar{x}^H| r^{\lfloor(1-1/\alpha(x))d\rfloor} + \sum_{x\in \mathcal{P} \cap G_{r_0}} \frac{1}{o(\bar{x})-1}  |\bar{x} ^H|r^{\lfloor(1-1/\alpha(x))d\rfloor},
		\end{equation}
		
		and for fixed $r$, if this inequality fails for a given $d$, then it fails for all $d_1 \geq d$.
	\end{enumerate}
\end{proposition}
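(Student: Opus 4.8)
The plan is to prove \ref{alphabound} first by a short linear-algebra argument, then to derive \ref{eigsp1} and \ref{eigsp2} from one covering of $V\setminus\{0\}$ by fixed-point spaces, and finally to obtain \ref{qsgood} and \ref{crude} as successive relaxations of \ref{eigsp2}, combined with \ref{alphabound}.

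For \ref{alphabound}, write $n=\dim_{\overline{\F}_r}\overline V\ge 2$ and $a=\alpha(x)$. Since $G_0:=\mathrm{soc}(H)\cong E(G)/Z(E(G))$ is generated by $a$ of its conjugates of $\bar x$, lifting gives $h_1,\dots,h_a\in G$ such that $\langle x^{h_1},\dots,x^{h_a}\rangle$ maps onto a subgroup of $H$ containing $G_0$; hence $N:=\langle x^{h_1},\dots,x^{h_a}\rangle F(G)$ contains the preimage $E(G)F(G)$ of $G_0$ and so acts absolutely irreducibly on $\overline V$. If some $E_\nu(x)$ had $\dim E_\nu(x)>n(1-1/a)$, then $\dim E_\nu(x^{h_i})=\dim E_\nu(x)$ for each $i$ (conjugate matrices), so $\bigcap_i E_\nu(x^{h_i})$ would have codimension $<a\cdot(n/a)=n$ and hence contain a nonzero $v$; but then $\langle v\rangle$ would be invariant under every $x^{h_i}$ and under the scalars $F(G)$, hence $N$-invariant, contradicting irreducibility as $n\ge 2$. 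Thus $\dim E_\nu(x)\le\lfloor n(1-1/a)\rfloor$.

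For \ref{eigsp1} and \ref{eigsp2}, assume $G$ has no regular orbit. For $0\ne v\in V$, the group $\mathrm{Stab}_G(v)$ is nontrivial and meets $F(G)$ trivially (a nonidentity scalar fixes no nonzero vector), so it contains a subgroup $P$ of prime order $s$; each nonidentity $y\in P$ then has prime order $s$, maps to an element of prime order in $H$, fixes $v$, and satisfies $C_V(y)=C_V(P)$. So $V=\bigcup_P C_V(P)$ over prime-order subgroups $P\le G$ with $P\cap F(G)=1$, and averaging over the nonidentity elements of each $P$ and reindexing by prime-order elements would give
\[
|V|\ \le\ \sum_P |C_V(P)|\ =\ \sum_P\frac{1}{|P|-1}\sum_{1\ne y\in P}|C_V(y)|\ =\ \sum_{\substack{y\in G\setminus F(G)\\ \text{of prime order}}}\frac{1}{o(\bar y)-1}\,|C_V(y)|.
\]
The remaining step is to regroup this by the $H$-class of $\bar y$: each such $y$ equals $\kappa x^h$ for some $x\in\mathcal P$, some $h\in G$ and some scalar $\kappa\in F(G)\subseteq\F_r^\times$, so $|C_V(y)|=|C_V(\kappa x)|$, and each of the $|\bar x^H|$ members of $\bar x^H$ has at most $\gcd(o(\bar x),|F(G)|)\le o(\bar x)$ preimages in $G$ of prime order $o(\bar x)$ — exactly one unless $o(\bar x)\mid|F(G)|$. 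Estimating the scalar contributions by the full sum over $\F_r^\times$ would give \ref{eigsp1}; estimating them instead by $o(\bar x)\cdot\max_\kappa|C_V(\kappa x)|$, and noting that when $x$ has order $r_0$ it is unipotent so that only $\kappa=1$ contributes, would give \ref{eigsp2}. Then \ref{qsgood} is \ref{eigsp2} with $o(\bar x)/(o(\bar x)-1)\le 2$, and \ref{crude} follows from \ref{qsgood} on using $|C_V(\kappa x)|=|E_{\kappa^{-1}}(x)\cap V|\le r^{\emax(x)}\le r^{\lfloor(1-1/\alpha(x))d\rfloor}$ from \ref{alphabound}. For the monotonicity statement I would write the right side of \eqref{crudeeqn} as $\sum_x c_x\,r^{\lfloor(1-1/\alpha(x))d\rfloor}$ with $c_x$ independent of $d$; since $0<1-1/\alpha(x)<1$, the exponent increases by at most $1$ when $d$ does, so the right side grows by a factor at most $r$ while the left side grows by exactly $r$, and failure at $d$ propagates to all $d_1\ge d$ by induction.

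I expect the main obstacle to be the bookkeeping in \ref{eigsp1}/\ref{eigsp2}: converting the $G$-level covering into a sum over $H$-conjugacy classes while tracking the scalar subgroup $F(G)$, in particular the case $o(\bar x)\mid|F(G)|$, where a single class $\bar x^H$ can have several prime-order preimages in $G$ — this is exactly what the factor $|\bar x^H|$, the sum over $\kappa\in\F_r^\times$, and the coefficient $o(\bar x)/(o(\bar x)-1)$ are designed to absorb.
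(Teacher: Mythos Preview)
Your proposal is correct and follows essentially the same approach as the paper. You give the standard explicit argument for \ref{alphabound} (which the paper merely cites to \cite{MR1188385}), and your route to \ref{eigsp1}--\ref{eigsp2} via prime-order subgroups $P\le G$ with $P\cap F(G)=1$ is a slightly cleaner reorganisation of the paper's covering by cosets $xF(G)$ with $\bar x$ of prime order, but the underlying idea --- averaging over the $o(\bar x)-1$ nonidentity elements of a cyclic group --- is identical; the derivations of \ref{qsgood}, \ref{crude}, and the monotonicity claim then match the paper's.
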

\begin{proof}
	Part \ref{alphabound} follows from \cite[p. 454]{MR1188385}. We will prove the other parts of the proposition simultaneously.
	Let $x \in G$ be of projective prime order. Then there exists a prime $a$ such that $x^a = z \in F(G)$. Now, we may choose $\hat{z} \in \overline{\mathbb{F}}_r^\times$ so that $\hat{z}^a=z^{-1}$, and so $g = \hat{z}x \in \overline{\mathbb{F}}_r^\times.G$ has prime order $a$. Then $g$ has at most $a$ eigenspaces on $\overline{V}$, and therefore $x$ has at most $a$ eigenspaces on $V$. Recall that $H=G/F(G)$, and let $\mathcal{H}$ be a set of coset representatives of elements of prime order in $H$.
	Suppose that $G$ has no regular orbit on $V$. Then
	\begin{equation}
		V = \bigcup_{x \in \mathcal{H}} \bigcup_{\kappa \in F(G)} C_V(\kappa x).
		\label{unioneqn}
	\end{equation}
From \eqref{unioneqn}, noting that $C_V(\kappa x)$ is precisely the $\kappa^{-1}$ eigenspace of $x$, and that $C_V(g) \subseteq C_V(g^i)$ for all $g \in G$ and every positive integer $i$, we have
	\[
	|V|  \leq \sum_{x\in \mathcal{H}} \sum_{\kappa \in \mathbb{F}_r^{\times}} \frac{1}{o(\bar{x})-1} |C_V(\kappa x)|= \sum_{x\in \mathcal{P}} \sum_{\kappa \in \mathbb{F}_r^\times} \frac{1}{o(\bar{x} )-1} |\bar{x} ^H| |C_V(\kappa x)|.
	\]
	
	Therefore,
	\[|V| \leq \sum_{x\in \mathcal{P} \cap G_{r_0'}} \frac{o(\bar{x} )}{o(\bar{x})-1} |\bar{x} ^H|\mathrm{max}\{|C_V(\kappa x)| \mid \kappa \in \mathbb{F}_r^{\times}\}+ \sum_{x\in \mathcal{P} \cap G_{r_0}} \frac{1}{o(\bar{x})-1}  |\bar{x} ^H| |C_V(x)|,
\]
	since unipotent elements in $G$ have only one eigenspace.
	We have now established \ref{eigsp1} and \ref{eigsp2}. Further, \ref{qsgood} follows from \ref{eigsp2} since $o(\bar{x}) \geq 2$, and \ref{crude} follows from \ref{qsgood} and \ref{alphabound}. 
	
	Finally, we prove that if we fix $r$ and \ref{crude} fails for a given $d$, then it fails for every $d_1\geq d$. Write $d_1=d+k$. Comparing \ref{crude} for $d$ and $d_1$, we see that the proof reduces to showing that for $x\in \mathcal{P}$ we have $\lfloor (1-1/\alpha(x))d \rfloor +k\geq    \lfloor (1-1/\alpha(x))(d+k) \rfloor$. We observe that $\lfloor (1-1/\alpha(x))d \rfloor +k = \lfloor (1-1/\alpha(x))d+k\rfloor$ and the result follows. 	
\end{proof}

The next two results present additional methods of bounding $\dim C_V(x)$ for $x$ of projective prime order in $G$. 

\begin{proposition}
\label{compfactors}
Suppose that $G$ is a finite group and $V$ is an $\mathbb{F}_rG$-module. Let $V_1, \dots V_k$ be the composition factors of $V$ under the action of $G$. Then for an element $x\in G$,
\[
\dim C_V(x) \leq \sum_{i=1}^k \dim C_{V_i}(x).
\]
\end{proposition}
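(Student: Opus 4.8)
The plan is to induct on the composition length $k$ of $V$. The base case $k=1$ is trivial, since then $V$ itself is irreducible and the inequality reads $\dim C_V(x) \le \dim C_V(x)$. For the inductive step, suppose $k \ge 2$ and pick a submodule $W \le V$ that is itself a composition factor, say $W = V_1$ after reindexing; then $V/W$ has composition factors $V_2, \dots, V_k$. Applying the inductive hypothesis to $V/W$ gives $\dim C_{V/W}(x) \le \sum_{i=2}^{k} \dim C_{V_i}(x)$, so it suffices to prove the single short-exact-sequence estimate
\[
\dim C_V(x) \le \dim C_W(x) + \dim C_{V/W}(x).
\]

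The key step is this last inequality, which follows from a standard left-exactness argument for fixed points. Consider the short exact sequence of $\mathbb{F}_r\langle x\rangle$-modules $0 \to W \to V \to V/W \to 0$. Taking $\langle x \rangle$-invariants (equivalently, applying $\mathrm{Hom}_{\mathbb{F}_r\langle x\rangle}(\mathbb{F}_r, -)$, where $\mathbb{F}_r$ is the trivial module) is left exact, so we obtain an exact sequence $0 \to C_W(x) \to C_V(x) \to C_{V/W}(x)$. Concretely: the inclusion $W \hookrightarrow V$ carries $C_W(x)$ isomorphically onto $C_V(x) \cap W$, and the quotient map $V \to V/W$ sends $C_V(x)$ into $C_{V/W}(x)$ with kernel exactly $C_V(x) \cap W$. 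Hence $C_V(x)/(C_V(x)\cap W)$ embeds in $C_{V/W}(x)$, and the rank-nullity identity over $\mathbb{F}_r$ gives
\[
\dim C_V(x) = \dim\big(C_V(x)\cap W\big) + \dim\big(C_V(x)/(C_V(x)\cap W)\big) \le \dim C_W(x) + \dim C_{V/W}(x).
\]
Combining this with the inductive hypothesis for $V/W$ completes the proof.

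I do not anticipate a genuine obstacle here: the result is elementary linear algebra plus induction. The only point requiring the mildest care is that the map $C_V(x) \to C_{V/W}(x)$ induced by $V \twoheadrightarrow V/W$ need not be surjective (there is a connecting map into $H^1(\langle x\rangle, W)$, which is where the inequality rather than equality comes from), so one should phrase the argument as an embedding of the quotient $C_V(x)/(C_V(x)\cap W)$ into $C_{V/W}(x)$ rather than claiming an isomorphism. Everything else — that fixed-point subspaces are $\mathbb{F}_r$-subspaces, that intersection with $W$ behaves well, that dimensions add along the relevant exact sequence — is routine. One could alternatively avoid induction entirely by working directly with a full composition series $0 = U_0 < U_1 < \dots < U_k = V$ and telescoping the inequality $\dim C_{U_j}(x) \le \dim C_{U_{j-1}}(x) + \dim C_{U_j/U_{j-1}}(x)$ over $j = 1, \dots, k$, which is perhaps cleaner to write and makes the appearance of the composition factors $V_j \cong U_j/U_{j-1}$ transparent.
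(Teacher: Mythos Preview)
Your proof is correct and takes essentially the same approach as the paper, which simply states that the result follows from an elementary induction argument on the number of composition factors of $V$. Your write-up supplies the details the paper omits, including the key left-exactness of fixed points on a short exact sequence.
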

\begin{proof}
The proof follows from an elementary induction argument on the number of composition factors of $V$.
\end{proof}
When considering the restriction of a module $V$ for $G$ to a subgroup $H\leq G$, we will often denote the composition factors of $V$ by their dimensions and separate composition factors by the symbol $/$. We also sometimes use the notation $d^k$ to denote $k$ copies of a composition factor of dimension $d$. See the proof of Proposition \ref{prop_u102}, for example.

For $g \in G$, let $\emax^V(g)$ (or just $\emax(g)$) denote the dimension of the largest eigenspace of $g$ on $\overline{V} = V \otimes \overline{\mathbb{F}_r}$.

\begin{proposition}[{\cite[Lemma 3.7]{MR1639620}}]
\label{tensorcodim}
Let $V_1$ and $V_2$ be vector spaces over $\mathbb{F}_r$ of dimension $d_1$ and $d_2$ respectively. Then if $g = g_1\otimes g_2 \in \mathrm{GL}(V_1) \otimes \mathrm{GL}(V_2)$ is an element of projective prime order that acts on $V=V_1\otimes V_2$, then 
\[
\emax^V(g) \leq \mathrm{min} \left\{ d_2\emax^{\overline{V_1}}(g_1), d_1\emax^{\overline{V_2}}(g_2) \right\}.
\]
where $\overline{V_i} = V_i \otimes \overline{\mathbb{F}_r}$.
\end{proposition}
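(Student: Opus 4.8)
The plan is to work over $\overline{\mathbb{F}_r}$ and exploit the hypothesis that $g$ has projective prime order, say $g^a = \zeta I$ with $a$ prime and $\zeta \in \overline{\mathbb{F}_r}^{\times}$. This pins $g$ down to one of two shapes. If $a \neq r_0$ then $x^a - \zeta$ is separable, so $g$ is semisimple, and comparing the eigenvalues of $g_1 \otimes g_2$ with those of $g$ shows that each $g_i$ is diagonalisable too. If $a = r_0$ then $g$ is a scalar times a unipotent element, and the same eigenvalue comparison shows each $g_i$ has a single eigenvalue, hence is a scalar times a unipotent element. Since $\emax^V(\cdot)$ is unchanged by scaling an operator, and since replacing $(g_1, g_2)$ by $(\alpha g_1, \alpha^{-1} g_2)$ alters neither $g$ nor any $\emax$, in the second case I may assume $g$, $g_1$, $g_2$ are all unipotent.

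In the semisimple case I would decompose $\overline{V_1} = \bigoplus_\lambda W_\lambda$ and $\overline{V_2} = \bigoplus_\mu U_\mu$ into eigenspaces of $g_1$ and $g_2$. Then $\overline{V} = \bigoplus_{\lambda,\mu} W_\lambda \otimes U_\mu$, and the $\nu$-eigenspace of $g$ is exactly $\bigoplus_{\lambda\mu = \nu} W_\lambda \otimes U_\mu$. For fixed $\nu$, each eigenvalue $\mu$ of $g_2$ lies in at most one such summand (forcing $\lambda = \nu/\mu$), so this space has dimension at most $\sum_\mu \emax^{\overline{V_1}}(g_1)\dim U_\mu = d_2\,\emax^{\overline{V_1}}(g_1)$, and symmetrically at most $d_1\,\emax^{\overline{V_2}}(g_2)$; maximising over $\nu$ gives the bound.

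In the unipotent case I would set $N_i = g_i - I$, a nilpotent operator on $\overline{V_i}$ with $\dim\ker N_i = \emax^{\overline{V_i}}(g_i)$, so that $\emax^V(g) = \dim\ker(g-I) = \dim\ker(N_1 \otimes I + I \otimes N_2 + N_1 \otimes N_2)$. Using that $I + N_2$ is invertible and commutes with $N_1 \otimes I$, this kernel has the same dimension as $\ker(N_1 \otimes I + I \otimes M_2)$ for a suitable nilpotent $M_2$ with $\ker M_2 = \ker N_2$. Decomposing $\overline{V_1}$ and $\overline{V_2}$ into Jordan blocks for $N_1$ and $M_2$ of sizes $a_1, a_2, \dots$ and $b_1, b_2, \dots$, the operator becomes block diagonal with blocks $J_{a_p} \otimes I + I \otimes J_{b_q}$, and the Jordan form of a tensor product of two nilpotent single Jordan blocks (equivalently Clebsch--Gordan for $\mathfrak{sl}_2$) gives each such block a kernel of dimension $\min(a_p, b_q)$. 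Hence $\emax^V(g) = \sum_{p,q}\min(a_p,b_q) \leq \bigl(\sum_p 1\bigr)\bigl(\sum_q b_q\bigr) = \dim\ker(N_1)\cdot d_2$, and symmetrically $\leq d_1\dim\ker(N_2)$, as claimed.

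The hard part will be the unipotent case: carrying out the reduction to $\ker(N_1 \otimes I + I \otimes M_2)$ and justifying that $J_a \otimes I + I \otimes J_b$ has kernel of dimension $\min(a,b)$ in every characteristic (a short anti-diagonal count on the coefficients of a putative kernel vector suffices if one does not wish to cite it). The semisimple case and all the scalar-juggling are routine.
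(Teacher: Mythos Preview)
Your proof is correct. The paper's own proof is a one-liner: it simply cites \cite[Lemma~3.7]{MR1639620} for elements of genuine prime order and adds the observation that any element of projective prime order is $\lambda x$ for some scalar $\lambda\in\overline{\mathbb{F}_r}$ and some $x$ of prime order in $\mathrm{GL}(V_1)\otimes\mathrm{GL}(V_2)$, so that $\emax$ is unchanged.

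You instead prove the cited lemma from scratch, splitting into the semisimple and unipotent cases and handling the projective-to-prime reduction via the same scalar observation. Your semisimple argument is the standard eigenspace decomposition, and your unipotent argument (factoring out $I\otimes(I+N_2)$ to reduce to a commuting sum $N_1\otimes I + I\otimes M_2$, then using the Jordan block fact $\dim\ker(J_a\otimes I + I\otimes J_b)=\min(a,b)$) is correct in all characteristics; an efficient way to see that kernel dimension is to identify the block with multiplication by $x+y$ on $k[x,y]/(x^a,y^b)$ and compute $\dim\operatorname{coker}=\dim k[x]/(x^{\min(a,b)})$. So what you gain over the paper is a self-contained argument with no black-box citation; what the paper gains is brevity.
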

\begin{proof}
This follows from {\cite[Lemma 3.7]{MR1639620}} with the observation that any element of projective prime order is of the form $\lambda x$ for $\lambda \in  \overline{\mathbb{F}_r}$ and $x$ an element of prime order in $\mathrm{GL}(V_1) \otimes \mathrm{GL}(V_2)$.
\end{proof}

We now give a brief discussion of Brauer characters. In order to define a Brauer character in characteristic $r_0>0$, we first require a mapping $\varphi$ from the multiplicative group $\overline{\F}_{r_0}$ to the set of roots of unity with order coprime to $r_0$ in $\mathbb{C}$.  Given an irreducible representation $\psi$ for a group $G$ over $\overline{\F}_{r_0}$, we can then define the Brauer character $\rho_\psi$ corresponding to $\psi$ to be the class function mapping each conjugacy class of $r_0'$-order elements of $G$ to the sum of the images of their eigenvalues under $\varphi$.  This gives us a way of recovering the eigenvalues of an element $\psi(g)\in \psi(G)$ of $r_0'$-order.  For example, we have the following proposition.
 \begin{proposition}
	\label{intchar}
	Suppose $G$ is a group acting irreducibly on the $\overline{\mathbb{F}_r}G$-module $V=V_d(\overline{\mathbb{F}}_r)$ with corresponding Brauer character $\rho$. Suppose $x\in G$ is of prime order $p\nmid r$. Let $E_x$ denote the set of eigenvalues of $x$ (including multiplicities),  $\overline{\Omega}$ denote the set of primitive $p$th roots of unity in $\overline{\mathbb{F}}_r$, and $\Omega  = \overline{\Omega} \cup \{1\}$. Let $c\in \mathbb{N}$. Then:
	\begin{enumerate}
		\item If $\rho(x) = c$, then $E_x= (1^c, \Omega^{(d-c)/p})$.
		\item If $\rho(x) = -c$, then $E_x=( \overline{\Omega}^c,\Omega^{(d-cp+c) /p})=(\overline{\Omega}^{(d+c)/p},1^{(d-cp+c) /p})$.
	\end{enumerate}
\end{proposition}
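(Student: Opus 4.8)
The plan is to work entirely with the eigenvalue multiset $E_x$ of $x$ acting on $\overline{V}$, exploiting the fact that $x^p = 1$ and $p \nmid r$, so that $x$ is diagonalisable over $\overline{\mathbb{F}}_r$ with eigenvalues among $\Omega = \overline{\Omega} \cup \{1\}$, where $\overline{\Omega}$ is the set of primitive $p$th roots of unity. First I would record that, since $\varphi$ is a group isomorphism from the group of $r_0'$-roots of unity in $\overline{\mathbb{F}}_r$ onto the group of complex roots of unity of order prime to $r_0$, and since it sends the set of primitive $p$th roots of unity in $\overline{\mathbb{F}}_r$ bijectively to the primitive $p$th roots of unity in $\mathbb{C}$, the Brauer character value $\rho(x)$ equals the ordinary sum $\sum_{\zeta \in E_x'} \zeta$ of the corresponding complex eigenvalues, where $E_x'$ is the image of $E_x$ under $\varphi$. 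So the problem reduces to a purely arithmetic statement about sums of complex $p$th roots of unity.

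Next I would set up notation: write $a$ for the multiplicity of the eigenvalue $1$ in $E_x$, and for each $\omega \in \overline{\Omega}$ write $m_\omega$ for the multiplicity of $\omega$. Then $a + \sum_{\omega} m_\omega = d$ and $\rho(x) = a + \sum_\omega m_\omega \omega$ (identifying with the complex picture). The key algebraic input is that the minimal polynomial of a primitive $p$th root of unity over $\mathbb{Q}$ is the $p$th cyclotomic polynomial $1 + t + \cdots + t^{p-1}$, so the only $\mathbb{Q}$-linear relations among $1$ and the primitive $p$th roots of unity are multiples of $1 + \sum_{\omega \in \overline{\Omega}} \omega = 0$. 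Concretely: if $a + \sum_\omega m_\omega \omega = c$ with $a, m_\omega \in \mathbb{Z}$ and $c \in \mathbb{Z}$, then $(a - c) + \sum_\omega m_\omega \omega = 0$, which forces $m_\omega = a - c =: \lambda$ to be constant over all $\omega \in \overline{\Omega}$.

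From this the two cases fall out by bookkeeping. In case (i), $\rho(x) = c \geq 0$: we get $m_\omega = a - c = \lambda$ for all $\omega$, and non-negativity of multiplicities together with $d = a + (p-1)\lambda = c + p\lambda$ forces (after checking $\lambda = (d-c)/p \geq 0$ and $a = c + \lambda$... wait, let me recompute: $a - c = \lambda$ gives $a = c + \lambda$, and $d = a + (p-1)\lambda = c + \lambda + (p-1)\lambda = c + p\lambda$, so $\lambda = (d-c)/p$ and $a = c + (d-c)/p$; hmm, but the claim is $E_x = (1^c, \Omega^{(d-c)/p})$, meaning $a = c + (d-c)/p$ copies of $1$ split as $c$ "bare" ones plus $(d-c)/p$ inside the $\Omega$-blocks — which matches since each $\Omega$-block contributes one $1$). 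So $E_x$ consists of $(d-c)/p$ copies of the full multiset $\Omega$ together with $c$ extra copies of $1$, i.e. $E_x = (1^c, \Omega^{(d-c)/p})$. In case (ii), $\rho(x) = -c$ with $c \geq 1$: now $a - c' $ must be constant where... apply the relation with the integer $-c$: $(a+c) + \sum m_\omega \omega = 0$, so $m_\omega = a + c =: \lambda$ for all $\omega$, and $d = a + (p-1)\lambda$; solving with $\lambda = a + c$ gives $d = a + (p-1)(a+c)$, so $a = (d - (p-1)c)/p = (d - cp + c)/p$ and $\lambda = a + c = (d+c)/p$. Thus $E_x$ has $(d+c)/p$ copies of each primitive $p$th root and $(d-cp+c)/p$ copies of $1$, which one rewrites as $(\overline{\Omega}^{(d+c)/p}, 1^{(d-cp+c)/p})$, and also (redistributing the $1$'s) as $(\overline{\Omega}^c, \Omega^{(d-cp+c)/p})$. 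I would also note in passing that integrality of these expressions ($p \mid d-c$ in case (i), $p \mid d+c$ in case (ii)) is automatic from the relation rather than an extra hypothesis, and that the non-negativity constraints are exactly what pins down the unique solution.

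The only real obstacle is making the linear-independence step clean: one must justify that $1$ together with \emph{all} primitive $p$th roots of unity satisfies no nontrivial integer relation other than the cyclotomic one. This is standard — it is equivalent to $[\mathbb{Q}(\zeta_p):\mathbb{Q}] = p-1$ with $\{1,\zeta_p,\dots,\zeta_p^{p-2}\}$ a basis, so that $\{1\}\cup\overline{\Omega}$ spans a $(p-1)$-dimensional space with one-dimensional relation module generated by $\sum_{\zeta\in\Omega}\zeta=0$ — so I would just cite it. Everything else is the routine arithmetic sketched above, and I would present it compactly rather than belabour the two symmetric computations.
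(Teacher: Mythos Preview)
Your proposal is correct and rests on the same key fact as the paper's proof, namely the $\mathbb{Z}$-linear independence of the primitive $p$th roots of unity (equivalently, that the only relation among $\Omega$ is the cyclotomic one). The paper argues slightly more tersely---it simply checks that the displayed multisets sum to $\rho(x)$ and then invokes this linear independence to conclude uniqueness---whereas you solve for the multiplicities directly; this is a stylistic difference rather than a genuinely different route.
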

\begin{proof}
	Notice that the sets $E_x$ in parts i) and ii) sum to $\rho(x)$ in each case, so it remains to show that $E_x$ is uniquely determined.
	The primitive $p$th roots of unity are linearly independent over $\mathbb{Z}$, which implies uniqueness. 
\end{proof}

It is possible to restrict an irreducible representation $\psi$ for $G$ to a finite subfield $\F_{r_0^k} \subset\overline{\mathbb{F}_{r_0}}$ as long as $\F_{r_0^k}$ contains all of the preimages of the Brauer character values corresponding to $\psi$ under $\varphi$.  In particular, non-integral Brauer character values (which we term \textit{irrationalities}), may determine the subfields of $\overline{\mathbb{F}_{r_0}}$ over which the representation is realisable.  We adopt the notation for irrationalities found in \cite[\S 10]{ATLAS}, and in a slight abuse of terminology, we sometimes say that a finite field does or does not contain a certain irrationality, while we really mean that the finite field contains the preimage of the irrationality under $\varphi$.
%
%

Given a family of almost quasisimple groups of Lie type $G$, our proof of Theorem \ref{mainthm} proceeds as follows. 
We first determine an upper bound on $\alpha(G)$ and a lower bound on $d_1(G)$ for groups $G$ in the family, and note that from Proposition \ref{tools}\ref{crude},  if $G$ has no regular orbit on $V=V_d(r)$, then 
\[
r^d- 2\sum_{x\in \mathcal{P}} |x^H| r^{\lfloor(1-1/\alpha(x))d\rfloor}\leq 0.
\]
The left hand side is increasing with respect to $\dim V$, so by substituting in $d_1(G)$ and $\alpha(G)$, we may easily prove that most groups in the family satisfy Theorem \ref{mainthm}, leaving us with a finite collection of groups $G$ and irreducible modules $V = V_d(r)$ to consider.
We consider these cases individually. We start by checking whether $|G|>|V|$, since this implies the non-existence of a regular orbit. 
If this is the case, we can usually determine the base size either computationally in GAP \cite{GAP4} or Magma \cite{Magma}, or by using the following lemma.

\begin{lemma}
\label{fieldext}
Suppose $V$ is a $d$-dimensional vector space over $\mathbb{F}_r$, and let $G\leq \mathrm{GL}(V)$. If $G$ has a base of size $c$ on $V \otimes \mathbb{F}_{r^i}$, then $G$ has a base of size at most $ci$ on $V$.
\end{lemma}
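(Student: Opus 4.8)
The plan is to reduce a base on $V$ to a base on $V \otimes \mathbb{F}_{r^i}$ by viewing the larger space as an $\mathbb{F}_r$-vector space and decomposing vectors over a basis of $\mathbb{F}_{r^i}$ over $\mathbb{F}_r$. Write $W = V \otimes_{\mathbb{F}_r} \mathbb{F}_{r^i}$, so that as an $\mathbb{F}_r$-vector space $W$ is isomorphic to $V^{\oplus i}$, and the action of $G$ on $W$ (coming from its action on $V$ together with the trivial action on the scalars $\mathbb{F}_{r^i}$) is just the diagonal action on $V^{\oplus i}$ when we forget the $\mathbb{F}_{r^i}$-structure. Concretely, fix an $\mathbb{F}_r$-basis $\theta_1, \dots, \theta_i$ of $\mathbb{F}_{r^i}$; then every $w \in W$ can be written uniquely as $w = \sum_{j=1}^i v_j \otimes \theta_j$ with $v_j \in V$, and $g \in G$ acts by $g \cdot w = \sum_{j=1}^i (g v_j) \otimes \theta_j$. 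Hence the pointwise stabiliser in $G$ of $\{w\}$ equals the pointwise stabiliser of $\{v_1, \dots, v_i\} \subseteq V$.

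First I would set up this notation and record the identification of the $G$-action on $W$ (as an $\mathbb{F}_r$-module) with the diagonal action on $V^{\oplus i}$. Next, let $\{w_1, \dots, w_c\} \subseteq W$ be a base for $G$ on $W$ of size $c = b(G)$ on $W$; for each $k$ expand $w_k = \sum_{j=1}^i v_{k,j} \otimes \theta_j$ with $v_{k,j} \in V$. Then I would argue that the set $B = \{ v_{k,j} : 1 \le k \le c,\ 1 \le j \le i \} \subseteq V$ is a base for $G$ on $V$: if $g \in G$ fixes every element of $B$, then $g$ fixes each $w_k = \sum_j v_{k,j} \otimes \theta_j$, so $g$ lies in the pointwise stabiliser of $\{w_1, \dots, w_c\}$, which is trivial by hypothesis; conversely the identity fixes $B$. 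Finally, since $|B| \le ci$, we conclude $b(G) \le ci$ on $V$, which is the claim.

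There is no real obstacle here; the only point requiring minor care is the precise description of how $G$ acts on $V \otimes \mathbb{F}_{r^i}$ versus on the underlying $\mathbb{F}_r$-space, i.e. making explicit that $G$ acts $\mathbb{F}_{r^i}$-linearly and trivially on the scalars, so that choosing an $\mathbb{F}_r$-basis of $\mathbb{F}_{r^i}$ genuinely exhibits $W$ as $V^{\oplus i}$ with the diagonal $G$-action. Once that is stated, the stabiliser computation is immediate from uniqueness of the coordinate expansion.

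\begin{proof}
Write $W = V \otimes_{\mathbb{F}_r} \mathbb{F}_{r^i}$. Fix an $\mathbb{F}_r$-basis $\theta_1, \dots, \theta_i$ of $\mathbb{F}_{r^i}$, so that every $w \in W$ has a unique expression $w = \sum_{j=1}^i v_j \otimes \theta_j$ with $v_j \in V$. The group $G$ acts $\mathbb{F}_{r^i}$-linearly on $W$ via its action on the first tensor factor and trivially on $\mathbb{F}_{r^i}$; in particular $g \cdot w = \sum_{j=1}^i (g v_j) \otimes \theta_j$ for all $g \in G$. By uniqueness of the coefficients $v_j$, an element $g \in G$ fixes $w$ if and only if $g$ fixes each of $v_1, \dots, v_i$.

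Now let $c = b(G)$ on $W$ and choose a base $\{w_1, \dots, w_c\} \subseteq W$ for $G$, writing $w_k = \sum_{j=1}^i v_{k,j} \otimes \theta_j$ with $v_{k,j} \in V$. Set $B = \{ v_{k,j} : 1 \le k \le c,\ 1 \le j \le i \} \subseteq V$, so $|B| \le ci$. If $g \in G$ fixes every element of $B$, then by the previous paragraph $g$ fixes each $w_k$, and hence $g$ lies in the pointwise stabiliser of $\{w_1, \dots, w_c\}$ in $G$, which is trivial. Thus $B$ is a base for $G$ on $V$, and therefore $G$ has a base of size at most $ci$ on $V$.
\end{proof}
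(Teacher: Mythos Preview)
Your proof is correct and follows essentially the same approach as the paper: expand each base vector in $V\otimes\mathbb{F}_{r^i}$ over an $\mathbb{F}_r$-basis of $\mathbb{F}_{r^i}$, and observe that the resulting at most $ci$ coefficient vectors in $V$ form a base for $G$. The paper's proof is slightly terser but the argument is identical.
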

\begin{proof}
Let $\{v_1, \dots v_c\}$ be a base for $G$ acting on $V \otimes \mathbb{F}_{r^i}$. Write each $v_j$ as $v_j = \sum_{k=1}^i \eta_k w_{j,k}$, where $w_{j,k} \in V$ and $\eta_1, \dots \eta_i$ is a basis for $\mathbb{F}_{r^i}$ over $\mathbb{F}_r$. Then $\{w_{j,k}\mid 1\leq j \leq c, 1\leq k\leq i\}$ is a base for $G$, since any non-trivial $g\in G$ stabilising the set pointwise must also stabilise $\{v_1, \dots v_c\}$.
\end{proof}

If instead $|G|<|V|$, we aim to apply Proposition \ref{tools} to prove the existence of a regular orbit. In order to apply Proposition \ref{tools}, we usually need to compute the sizes of conjugacy classes of elements of prime order in $G/F(G)$, and the eigenspace dimensions of their preimages in $G$ on $V$. We determine conjugacy class orders using GAP \cite{GAP4} or \cite{bg}.
Eigenspace dimensions of projective prime order elements are determined (or at least bounded from above) using one of a number of methods. We may use Proposition \ref{tools}\ref{alphabound} along with results from Section \ref{poe}, or refine upper bounds of $\alpha(x)$ for $x\in G$ in GAP. If the Brauer character of $V$ is available in \cite{modatlas}, we may use it to determine eigenspace dimensions for semisimple elements. We may also seek to restrict $V$ to a subgroup of $G$ and apply Proposition \ref{compfactors}.
If we are not able to use these methods to successfully complete the proof with an application of Proposition \ref{tools}, we resort to computational methods. We use the generators of some irreducible modules in the online ATLAS \cite{onlineATLAS} and construct them in GAP, or compute generators explictly in Magma \cite{Magma}. Once we have constructed $V$, we then either determine eigenspace dimensions of projective prime order elements to enable us to apply Proposition \ref{tools}, or determine computationally whether $G$ has a regular orbit on $V$.
We are grateful to Eamonn O'Brien  and J\"urgen M\"uller for their assistance with certain computationally intensive calculations for groups $G$ with $E(G)/Z(E(G))$ equal to one of: $L_2(q)$ with $11\leq q\leq 81$, $L_3(4)$, $L_4(3)$, $U_3(3)$, $U_3(7)$, $U_4(3)$, $U_5(2)$, $U_5(3)$, $U_6(2)$, $\mathrm{PSp}_4(9)$, $\mathrm{PSp}_6(2)$, $\mathrm{PSp}_6(3)$, $\pom^+_8(2)$, $G_2(4)$ or ${}^3D_4(2)$.

\section{Proof of Theorem \ref{mainthm}: Linear groups}
In this section, we set out to prove the following theorem.
\begin{theorem}
\label{linearprop}
Suppose $G$ is an almost quasisimple group with $E(G)/Z(E(G)) \cong L_n(q)$, with $n\geq 2$.
Let $V=V_d(r)$, $(r,q)=1$ be a module for $G$, with absolutely irreducible restriction to $E(G)$. Also suppose that $(r, |G|)>1$. Then either:
\begin{enumerate}
\item $b(G) = \lceil \log |G|/\log |V| \rceil$, or 
\item $b(G) = \lceil \log |G|/\log |V| \rceil+1$ and $(G,V)$ appears in Table \ref{allbad}.
\end{enumerate}
\end{theorem}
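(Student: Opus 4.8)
The plan is to follow the strategy outlined in Section~\ref{techniques} applied to the family of almost quasisimple groups $G$ with $E(G)/Z(E(G)) \cong L_n(q)$. The analysis naturally splits according to the rank: the case $n=2$, the cases $n=3,4$ where exceptional behaviour of $\alpha$ occurs (graph automorphisms, small fields), and the case $n \geq 5$ where the clean bounds $\alpha(G) \leq n$ (Proposition~\ref{alphas}) and the strong representation-degree bounds of Propositions~\ref{mindegprop} and~\ref{rep2sl} are available. First I would treat $n \geq 5$: here $\alpha(x) \leq n$ for all $1 \neq x \in G/F(G)$ except the listed exceptions (involutory graph automorphisms on $L_4^\epsilon(q)$, and $L_4^\epsilon(2)$), so Proposition~\ref{tools}\ref{crude} gives
\[
r^d \leq 2\sum_{x\in\mathcal{P}\cap G_{r_0'}} |x^H| r^{\lfloor(1-1/\alpha(x))d\rfloor} + \sum_{x\in\mathcal{P}\cap G_{r_0}} \frac{1}{o(\bar x)-1}|x^H| r^{\lfloor(1-1/\alpha(x))d\rfloor}.
\]
Using $d \geq d_1(L_n(q))$ from Table~\ref{mindeg}, bounding $\sum_{x\in\mathcal{P}}|x^H|$ from above by $|H| < |{\rm PGL}_n(q)|\cdot 2\log_2 q < q^{n^2}\log_2 q$ (with the graph-automorphism count controlled by Lemma~\ref{graph} when needed), and noting that $\lfloor(1-1/n)d\rfloor \leq d - d/n$, the left side dominates once $q^n/(q-1)$ is large enough; since the inequality is monotone in $d$ by Proposition~\ref{tools}\ref{crude}, this disposes of all but finitely many $(n,q)$. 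For those, and for all the small cases, I would use Proposition~\ref{rep2sl} to pin down $\dim V$ to the minimal or near-minimal representations and repeat the estimate more carefully.

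For $n=3$ and $n=4$ I would do the same crude estimate but now feed in the sharper bounds $\alpha(x) \leq 4$ (graph-field, $L_3$), $\alpha(x) \leq 6$ ($L_4^\epsilon(q)$, $q \geq 3$, graph automorphism) from Proposition~\ref{alphas}, together with the element counts of Proposition~\ref{invols} and Lemma~\ref{graph} to control the dominant terms (involutions and graph automorphisms). This again leaves a finite list of $(G,V)$; the groups $L_3(2), L_3(4), L_4(2), L_4(3)$ and their modules of small dimension (from Table~\ref{mindeg} and \cite{HM}) must be handled individually. For $n=2$, $\alpha(x) \leq 3$ with the explicit exceptions in Proposition~\ref{L2lemma} (field automorphisms, and the diagonal involution for $q=5$); here the representation degree can be as small as $(q-1)/2$, so the crude bound only eliminates large $q$, and I expect most of the $L_2(q)$ entries in Table~\ref{allbad} to survive to the case-by-case stage.

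The case-by-case stage is where the real work lies, and I expect it to be the main obstacle. For each surviving pair $(G,V)$ one first checks whether $|G| > |V|$ (no regular orbit, and then $b(G)$ is computed directly in GAP/Magma or bounded via Lemma~\ref{fieldext}); otherwise one aims to prove a regular orbit exists by verifying the relevant inequality in Proposition~\ref{tools}\ref{eigsp1}--\ref{qsgood}. This requires, for each class of projective-prime-order elements, a genuine upper bound on $\emax(x) = \dim C_V(\kappa x)$ rather than the crude $\lfloor(1-1/\alpha(x))d\rfloor$: I would obtain these from the Brauer character in \cite{modatlas} via Proposition~\ref{intchar} for semisimple elements, from restriction to subgroups via Proposition~\ref{compfactors}, from tensor decompositions via Proposition~\ref{tensorcodim} when $V$ is a tensor product, or by refining $\alpha(x)$ computationally. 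When even these sharpened eigenspace bounds fail to make the sum in Proposition~\ref{tools} smaller than $|V|$ — which will happen precisely for the genuine Table~\ref{allbad} entries — one must fall back on explicit computation: construct $V$ from generators in the online ATLAS \cite{onlineATLAS} or Magma, and either tally fixed-point spaces exactly to confirm no regular orbit, then compute $b(G)$, or exhibit a regular vector. The bookkeeping over the many irreducible modules (including distinguishing isoclinic variants, tracking which modules extend to $G.2$, and handling the irrationality/field-of-realisation subtleties flagged in Remark~\ref{gcd}) is the delicate part, but each individual verification is routine given the tools assembled in Sections~\ref{prelims} and~\ref{techniques}.
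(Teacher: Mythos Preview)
Your proposal is correct and follows essentially the same approach as the paper: a rank-by-rank reduction via Proposition~\ref{tools}\ref{crude} (with the $\alpha$-bounds of Propositions~\ref{L2lemma} and~\ref{alphas} and the degree bounds of Propositions~\ref{mindegprop} and~\ref{rep2sl}) to a finite list, followed by a case-by-case analysis using Brauer characters, restriction to subgroups, and explicit GAP/Magma computations. The paper organises the residual casework into separate propositions for $L_2(q)$ (with the exceptional-isomorphism cases $L_2(4)\cong L_2(5)$, $L_2(9)$, $L_4(2)$ treated together as alternating groups), $L_3(q)$, $L_4(q)$ with $q\geq 3$, and $L_n(q)$ with $n\geq 5$, exactly as you anticipate; the only point you slightly misstate is that for $n\geq 5$ the bound $\alpha(x)\leq n$ holds without exception (the graph-automorphism exceptions in Proposition~\ref{alphas} are confined to $n=3,4$).
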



Let $G$ be an almost quasisimple group with $E(G)/Z(E(G)) \cong L_n(q)$. 

\begin{proposition}
	Theorem \ref{linearprop} holds if $E(G)/Z(E(G)) \cong L_2(q)$ with $q> 7$ and $q\neq 9$.
\end{proposition}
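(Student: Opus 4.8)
The plan is to apply the machinery of Proposition~\ref{tools}\ref{crude} together with the $\alpha$-bound from Proposition~\ref{L2lemma} and the minimal-degree bound $d_1(L_2(q)) = (q-1)/(2,q-1)$ from Proposition~\ref{mindegprop}. First I would record that for $G$ with socle $L_2(q)$, apart from a handful of small exceptions one has $\alpha(x)\le 3$ for every $x$ of projective prime order, with $\alpha(x)\le 4$ for involutory field automorphisms (and $\alpha(x)=5$ only when $q=9$, which is excluded), and $\alpha(x)=4$ for involutory diagonal automorphisms only when $q=5$ (also excluded). So here $\alpha(G)\le 4$, and the only elements contributing a $\lfloor(1-1/4)d\rfloor=\lfloor 3d/4\rfloor$ term are involutory graph/field automorphisms; all other classes contribute at most $\lfloor 2d/3\rfloor$ (and unipotent elements $r_0$-elements only one eigenspace, handled by the second sum).

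The key estimate is then to bound the conjugacy class sizes $|\bar x^H|$ in $H = G/F(G) \le \mathrm{P}\Gamma\mathrm{L}_2(q)$: the number of involutions is $O(q^2)$, the number of field automorphisms of each prime order $\ell\mid e$ (writing $q=p^e$) is $O(q^{2})$ at worst but in fact $O(q^{1+1/\ell})$, and every class of semisimple or unipotent elements has size at most $|\mathrm{PGL}_2(q)|\cdot|{\rm Out}|< q^{3}\log q$ or so. Substituting $d=d_1(L_2(q))=(q-1)/(2,q-1)$ into the inequality
\[
r^d \le 2\sum_{x\in\mathcal P\cap G_{r_0'}} |\bar x^H|\, r^{\lfloor(1-1/\alpha(x))d\rfloor} + \sum_{x\in\mathcal P\cap G_{r_0}} \frac{1}{o(\bar x)-1}|\bar x^H|\, r^{\lfloor(1-1/\alpha(x))d\rfloor},
\]
the right-hand side is at most (roughly) $C q^3 \log q \cdot r^{\lfloor 3d/4\rfloor}$ while the left-hand side is $r^d$ with $d\ge (q-1)/2$ growing linearly in $q$; since $r\ge 2$, for $q$ large the inequality fails, forcing a regular orbit. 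By the monotonicity clause of Proposition~\ref{tools}\ref{crude}, once the crude inequality fails at $d=d_1(q)$ it fails for all larger $d$, so it suffices to check it at the minimal degree. This reduces the problem, for each fixed small $q\in\{8,11,13,16,\dots\}$, to a finite list of pairs $(r,d)$ with $r^{d_1(q)}$ below the threshold — equivalently $r$ bounded — which are then disposed of individually: if $|G|>|V|$ there is no regular orbit and $b(G)$ is read off (these are the $\dag$-marked and computational entries going into Table~\ref{allbad}), and otherwise one sharpens the eigenspace bounds using the Brauer character from \cite{modatlas} (Proposition~\ref{intchar}), restriction to subgroups (Proposition~\ref{compfactors}), or direct computation in GAP/Magma, and applies Proposition~\ref{tools}\ref{eigsp2} to confirm the regular orbit.

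The main obstacle is the bookkeeping in the intermediate range: $d_1(q)$ grows only linearly in $q$ while the class-size sum grows like $q^3$, so the crude inequality does not immediately close for moderately sized $q$ (say $q$ up to a few hundred), and one must either push the estimates further — e.g. noting $\lfloor 3d/4\rfloor \le 3d/4$ and comparing $r^{d/4}$ against $Cq^3\log q$, which already works for $q$ beyond a small explicit bound — or handle the residual finite set of $q$ by the refined tools. A secondary subtlety is the presence of several isoclinic variants and of groups $G$ with $F(G)$ strictly larger than $Z(E(G))$ (the $c\circ(2.L_2(q))$ entries), where one must be careful that scaling by $\kappa\in\mathbb F_r^\times$ in Proposition~\ref{tools}\ref{eigsp1}–\ref{eigsp2} is exploited correctly; but this is exactly what those parts of Proposition~\ref{tools} are designed for, so no new ideas are needed beyond careful application.
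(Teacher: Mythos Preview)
Your proposal is correct and follows essentially the same route as the paper: bound $\alpha(x)$ via Proposition~\ref{L2lemma}, plug $d_1(L_2(q))=(q-1)/(2,q-1)$ into the crude inequality of Proposition~\ref{tools}\ref{crude}, reduce to a finite list of $(q,d,r)$, and then finish those with Brauer characters and GAP/Magma.

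One refinement in the paper that you should incorporate explicitly: Proposition~\ref{L2lemma} gives not just $\alpha(x)\le 3$ but in fact $\alpha(x)=2$ for elements of \emph{odd} prime order (since $q\ne 9$). The paper exploits this by splitting the right-hand side into three terms,
\[
r^d \le 4(q^2+q)\,r^{\lfloor 2d/3\rfloor} + 2q^{1/2}(q+1)\,r^{\lfloor 3d/4\rfloor} + 2|\mathrm{P}\Gamma\mathrm{L}_2(q)|\,r^{\lfloor d/2\rfloor},
\]
so that the bulk of the group order sits against the harmless $r^{\lfloor d/2\rfloor}$ factor, the $r^{\lfloor 2d/3\rfloor}$ coefficient is only $O(q^2)$ (inner involutions), and the $r^{\lfloor 3d/4\rfloor}$ coefficient is only $O(q^{3/2})$ (involutory field automorphisms). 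This is what brings the cutoff down to $q\le 113$ (odd) and $q\le 32$ (even), rather than the ``few hundred'' you were worried about. Your weaker bookkeeping with $Cq^3\log q\cdot r^{\lfloor 3d/4\rfloor}$ would still work in principle but leaves many more residual cases. A minor point you omit: because $L_2(8)\cong{}^2G_2(3)'$, one must also treat the characteristic~2 representations of $L_2(8)$, which the paper handles alongside the cross-characteristic ones.
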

\begin{proof}
Note that $q \neq 2,3$ since these groups are soluble,  and $q\neq 4,5,9$, since these groups are isomorphic to alternating groups and dealt with in Proposition \ref{alt_grp_prop}. We also assume that $q\neq 7$, since we deal with $L_2(7) \cong L_3(2)$ in Proposition \ref{L3_prop}. Therefore, from now on, assume $q > 7$ and $q\neq 9$.
	
	 In addition to cross-characteristic representations, we must also consider representations of $L_2(8)$ in characteristic 2, due to the exceptional isomorphism $L_2(8) \cong  {}^2G_2(3)'$.

	 Now \cite[Table 2]{HM} provided a list of the dimensions of cross-characteristic absolutely irreducible representations of $E(G)$, along with conditions on $r$ and any field irrationalities that occur in the corresponding (Brauer) character. 
	 The number of involutions in $G/F(G)$ is at most $2(q^2+q)$ by Proposition \ref{invols}, and the number of field automorphisms of order 2 is at most $q^{1/2}(q+1)$ by \cite[Table B.3]{bg}.
	Moreover if $x\in G$ is of projective prime order with image $\overline{x}\in G/F(G)$, then by Proposition \ref{L2lemma}, $\alpha(x)=2$ if $\bar{x}$ has odd order, $\alpha(x)\leq 4$ if $\overline{x}$ is an involutory field automorphism, and $\alpha(x)\leq 3$ otherwise.

	In order to find the base size of $G$ on $V$, we will use a variant of  Proposition \ref{tools}\ref{crude}. Namely, note that if Proposition \ref{tools}\ref{crude} holds, then, using the notation of Proposition \ref{tools}, 
	\[
	|V|< 2\sum_{x\in \mathcal{P} } |\bar{x}^H| r^{\lfloor(1-1/\alpha(x))d\rfloor}
	\]
	also holds. Therefore, if $G$ has no regular orbit on $V=V_d(r)$ then we have
	\[
	r^{d} \leq  4(q^2+q)r^{\floor{2d/3}}+2q^{1/2}(q+1)r^{\floor{3d/4}} +2 |\mathrm{P}\Gamma L_2(q)|r^{\floor{d/2}},
	\]
	since the number of odd prime order elements in $G/F(G) \leq \mathrm{P}\Gamma \mathrm{L}_2(q)$ is bounded above by $| \mathrm{P}\Gamma  \mathrm{L}_2(q)|$. By Proposition \ref{mindegprop}, $d_1(E(G)) = (q-1)/(2,q-1)$ and when we substitute this in to the inequality, we see that it is false for all $q$ except $q\leq 113$ for $q$ odd and $q\leq 32$ for $q$ even.

	If $83 \leq q\leq 113$, then $q$ is prime and examining \cite[Table 2]{HM}, we observe that we only need to consider $(d,r) = (\frac{q\pm 1}{2},2)$.
	 Note that if $q$ is prime, then there are no involutory field automorphisms in $G/F(G)$. Let $i_P(H)$ denote the number of prime order elements in a finite group $H$, and $r_0=\mathrm{char}(\F_r)$. If $G$ has no regular orbit on $V$ then by Proposition \ref{tools}\ref{crude},
	\begin{equation}
	\label{L2crude}
	r^d \leq i_2(\mathrm{Aut}( L_{2}(q)))r^{\floor{2d/3}}+ 2(i_P(\mathrm{Aut}( L_2(q))-i_2(\mathrm{Aut}( L_{2}(q))))r^{\floor{d/2}},
	\end{equation}
	where we calculate $i_{2}(\mathrm{Aut}( L_2(q)))$ and $i_P(\mathrm{Aut}( L_2(q)))$ in GAP.
	This is false for $q$ with $83 \leq q\leq 113$ and $r=2$.
	
	So now we consider $q\leq 81$. Since we want to consider $r$ in all cross-characteristics dividing the order of the group, we use a modified version of Proposition \ref{tools}\ref{crude} to deduce that if $G$ has no regular orbit on $V$, then 
	\[
	r^d \leq 2i_{r_0}(\mathrm{Aut}( L_{2}(q)))r^{\floor{c}}+ 2i_P(\mathrm{Aut}( L_2(q)))r^{\floor{d/2}},
	\]
	where $c=3d/4$ if $q$ is a square, and $c=2d/3$ otherwise. For each $q\leq 81$, this inequality is false for all but a finite number of $d$ (obtained from \cite[Table 2]{HM}) and $r$.

	With this in mind, we give a list of the remaining groups with $E(G)/Z(E(G)) \cong L_2(q)$ we must consider in Table \ref{l2qtable}. Here we also list the largest value of $r$ we need to check for each $d$-dimensional module $V$, taking into account that $(r,|G|)>1$ and any irrationalities in the corresponding Brauer characters. The entries given as ``--'' in Table \ref{l2qtable} mean that we have no further modules to check of this dimension.
	
	 We also apply \eqref{L2crude} to show that if $E(G)/Z(E(G)) \cong L_2(8)$ and $V$ is an absolutely irreducible $\mathbb{F}_{2^k}E(G)$-module for some $k$, then $G$ has a regular orbit on $V$ unless possibly $d=2$ and $r=8,84,512$, or $(d,r)$ is one of $(4,8)$ or $(8,2)$. 
	
	\begin{table}[h!]
		\centering \begin{tabular}{cccc}
			\toprule
			& \multicolumn{3}{c}{Upper bound for $r$}\\ 
			\cmidrule{2-4}
			$q$ & $d = (q-1)/2$ & $d = (q+1)/2$ & $d \geq q-1$ \\ 
			\midrule
			11 & 16 & 16 & 4 \\  
			13 & 16 & 8 & 4 \\
			17 & 9 & 4 & 3 \\  
			19 & 9 & 5 & 3 \\  
			23 & 4 & 3 & 2 \\  
			25 & 9 & 3 & 3 \\  
			27 & 4 & -- & 2 \\  
			29 & 4 & 3 & 2 \\  
			31 & 4 & 3 & 2 \\  
			37 & 3 & 3 & -- \\  
			41 & 3 & 3 & -- \\  
			43 & 3 & -- & -- \\ 
			47 & 2 & -- & -- \\  
			49 & 4 & 3 & 2 \\ 
			71&2& -- &--\\
			73&2& -- &--\\
			81 & 2 & -- & -- \\ 
			\bottomrule 
		\end{tabular} 
		\centering \begin{tabular}{cccc}
			\toprule
			& \multicolumn{3}{c}{Upper bound for $r$} \\ 
			\cmidrule{2-4}
			$q$ & $d = q-1$ & $d = q$ & $d =q+1$ \\ 
			\midrule
			8 & 5 & 5 & 3\\
			16 & 3 & 3 & 3\\
			\bottomrule 
		\end{tabular}
		\caption{Remaining cases for $L_2(q)$ in cross-characteristic. \label{l2qtable}}
	\end{table}

	We include all of the information needed to analyse the remaining modules in Table \ref{l2q-case-analysis}. The first column of Table \ref{l2q-case-analysis} gives the layer $E(G)$ of $G$. The second column gives the list of primes (apart from 2 and 3) which divide the order of $E(G)$, given in ascending order. The third column gives the dimension $d$ and the field size $r$ of the irreducible module $V=V_d(r)$ being considered. The remaining columns give the eigenspace dimensions on $V$ of elements of projective prime order in $G$. 
	
	The eigenspace dimensions for the semisimple elements are sourced from the corresponding Brauer character, found either in \cite{modatlas} or constructed in GAP. We write the eigenspace dimensions using index notation so that, for example, we denote a set of eigenspaces with dimensions 16,12,12 as $(16, 12^2)$. We will usually write eigenspace dimensions in descending order. 
	
	We now explain our notation for the dimensions of fixed point spaces of elements $g$ of projective prime order dividing $r$ in Table \ref{l2q-case-analysis}. The italicised entries denote upper bounds for $\dim C_V(g)$, obtained using Proposition \ref{L2lemma}. The remaining entries give the true value(s) of $\dim C_V(g)$, which we derive from a construction of the module in GAP or Magma. 

	We now apply \ref{eigsp1} from Proposition \ref{tools} to each module in Table \ref{l2q-case-analysis}. For example, suppose $E(G)=L_2(81)$ and $V=V_{40}(2)$, and let $H=G/F(G)$. We have $H \leq \mathrm{Aut}(L_2(81))=A$, and 
	\[
	i_2(A) = 7299, \quad i_3(A)=6560, \quad i_5(A)=13284, \quad i_{41}(A)=129600.
	\]
	The group $A$ does not itself have a 40-dimensional irreducible module over $\F_2$, but $L_2(81).4$, the group generated by $L_2(81)$ and its field automorphisms, does. Since we are only interested in prime order elements, we can work with $L_2(81).2$, which contains involutory field automorphisms. 
	There are three classes of involutions in $L_2(81).2$: one class of size 3321 lying in $L_2(81)$, and two classes of size 369 outside $L_2(81)$, which both consist of field automorphisms. The latter two classes are fused under the action of $\mathrm{PGL}_2(81)$. We construct $V$ in Magma \cite{Magma} and find that as described in Table \ref{l2q-case-analysis}, the inner involutions (those in class 2A) have a fixed point space of dimension 20, while the involutory field automorphisms have fixed point spaces of dimension 24.
	
	If $G$ has no regular orbit on $V$, then by Proposition \ref{tools} \ref{eigsp1}, together with the eigenspace dimensions computed in Table \ref{l2q-case-analysis}, we have
	\begin{align*}
	r^{40} &\leq 2\times 369r^{24}+|2A|r^{20}+\frac{1}{2}|3A|(2r^{15}+r^{10})+\frac{1}{2}|3B|(r^{16}+2r^{12})+\frac{1}{4}i_5(H)(5r^8)+\frac{1}{40}i_{41}(H)(40r) \\
	& \leq 2\times 369r^{24}+3321r^{20} +\frac{1}{2}6560(r^{16}+2r^{12})+\frac{1}{4}13284(5r^8)+\frac{1}{40}129600(40r) .
	\end{align*}
	But this inequality does not hold when $r=2$, so $G$ has a regular orbit on $V$.
	
Continuing in this way and applying \ref{eigsp1} from Proposition \ref{tools} to each module in Table \ref{l2q-case-analysis}, we see that in all cases, $G$ has a regular orbit on $V$.
	
	There are some modules for which this technique fails i.e., we cannot successfully apply any of the inequalities in Proposition \ref{tools} to show that $G$ has a regular orbit on $V$. These remaining cases are set out in Table \ref{l2q-ros}, where we compute the base size directly in Magma. Note that there is some intricacy involved with the computation of $\lceil \log |G|/ \log |V| \rceil$ for each row in Table \ref{l2q-ros}. Namely, the given values of $\lceil \log |G|/ \log |V| \rceil$  are computed only for the groups $G$ for which each module  $V$ exists, which we determine in each case from the Brauer character tables in \cite{modatlas}. For example, if $E(G)/Z(E(G)) \cong L_2(13)$ and $V=V_7(3)$, we have $\lceil \log |G|/ \log |V| \rceil=1$ for the two possible groups $G$, namely $L_2(13)$ and $\F_3^\times \times L_2(13)$. On the other hand,  $\lceil \log |2\times \pgl_2(13)|/\log|V|\rceil =2$, but $V$ is not a module for $\F_3^\times \times \pgl_2(13)$, so we do not record this in Table \ref{l2q-ros}.
	{\small
		\begin{table}[!htbp]
			\centering \begin{tabular}{lllllllll}
				\toprule
				&                 &                        & \multicolumn{5}{c}{Projective prime order}                                  \\
				\cmidrule{4-8}
				$E(G)$    & $(p_3,p_4,p_5)$ & $(d,r)$                & 2                 & 3            & $p_3$        & $p_4$        & $p_5$      \\
				\midrule
				$L_2(81)$   & $(5,41,-)$     & $(40,2)$               & 20 \tiny{(2A)} & $(15^2, 10)$ \tiny{(3A)} & $(8^5)$      & $(1^{40})$   & --         \\
				&&& 24 \tiny{(o/w)}&$(16, 12^2)$ \tiny{(3B)}&&&\\
				\midrule
				$L_2(73)$   & $(37,73,-)$      & $(36,2)$               & 18               & $(12^3)$     & $(1^{36})$   &$(1^{36})$    & --         \\				
				\midrule
				$L_2(71)$   & $(5,7,71)$      & $(35,2)$               & 18               & $(12^2,11)$     & $(7^5)$   &$(5^7)$    & $(1^{35})$       \\				
\midrule				
				$L_2(49)$   & $(5,7,-)$      & $(48,2)$               & \textit{36 }               & $(16^3)$     & $(10^4,8)$   & $(7^6,6)$    & --         \\
				&      & $(48,2)$               & \textit{36}                & $(16^3)$     & $(10^3, 9^2)$   & $(7^6,6)$    & --         \\
				&                 & $(25,3)$               & $(16,9)$ \tiny{(2C)}          & \textit{12 }          & $(5^5)$      & $(4^6,1)$ 	\tiny{(7A)}   & --         \\		
				&                 &                &  $(13,12)$ \tiny{(o/w) } &    &   & $(7,3^6)$ \tiny{(7B)}    &         \\			
				&                 & $(24,4)$         &     $15$ \tiny{(2D)}    & $(8^3)$         & $(5^4,4)$    & $(6,3^6)$ \tiny{(7A)}, & --         \\
								&                 &               &   $12$ \tiny{(o/w) }   & &  &  $(4^6)$ \tiny{(7B)}   &       \\
				$2.L_2(49)$ &                 & $(24,3)$               & $(12^2)$          & \textit{12 }          & $(5^4,4)$    & $(6,3^6)$ \tiny{(7A)}, & --         \\
				&                 &  &      & &  &  $(4^6)$ \tiny{(7B)}   &       \\
				\midrule
				$L_2(47)$   & $(23,47,-)$    & $(23,2)$               & 12                & $(8^2,7)$    & $(1^{23})$   & $(1^{23})$   & --         \\
				\midrule
				$L_2(43)$   & $(7,11,43)$     & $(21,9)$               & $(11,10)$         & \textit{10  }       & $(3^7)$      & $(2^{10},1)$ & $(1^{21})$ \\
				\midrule
				$L_2(41)$   & $(5,7,41)$      & $(21,9)$               & $(11,10)$         & \textit{10 }          & $(5,4^4)$    & $(3^7)$      & $(1^{21})$ \\
				&                 & $(20,2)$               & 10                & $(7^2,6)$            & $(4^5)$          & $(3^6,2)$            & $(1^{20})$         \\
				$2.L_2(41)$   &                 & $(20,3)$               & $(10^2)$          &\textit{ 10}           & $(4^5)$      & $(3^6,2)$    & $(1^{20})$ \\
				\midrule
				$L_2(37)$   & $(19,37,-)$    & $(19,3)$               & $(10,9)$          & \textit{9}            & $(1^{19})$   & $(1^{19})$   & --         \\
				$2.L_2(37)$   &                 & $(18,3)$               & $(9^2)$           & \textit{9}            & $(1^{18})$   & $(1^{18})$   & --         \\
				\midrule
				$L_2(31)$   & $(5,31,-)$     & $(32,2)$               & \textit{21}                & $(11^2,10)$           & $(8,6^4)$          & $(2,1^{30})$           & --         \\
				&                & $(15,4)$               & 8                 & $(5^3)$      & $(3^5)$      & $(1^{15})$   & --         \\
				\midrule
				$L_2(29)$   & $(5,7,29)$      & $(28,2)$               & \textit{18}                & $(10,9^2)$           & $(6^3,5^2)$           & $(4^7)$           & $(1^{28})$         \\
				&                 & $(15,9)$               & $(8,7)$           &\textit{ 7 }           & $(3^5)$      & $(3,2^6)$    & $(1^{15})$ \\
				&                 & $(14,4)$               & 7                 & $(5^2,4)$    & $(3^4,2)$    & $(2^7)$      & $(1^{14})$ \\
				$2.L_2(29)$   &                 & $(14,3)$               & $(7^2)$           & \textit{7  }          & $(3^4,2)$    & $(2^7)$      & $(1^{14})$ \\
				\midrule
				$L_2(27)$   & $(7,13,-)$     & $(13,4)$               & 7                 & $(5,4^2)$ \tiny{(3C/D)}  & $(2^6,1)$    & $(1^{13})$   & --         \\
				&   &            &           &  $(6,4,3)$ \tiny{(o/w)}    &    &   &     \\
								\midrule
$L_2(25)$   &    $(5,13,-)$             


& $(12,r)$, & 8 \tiny{(2D)}   &   $(4^3)$       &    $(3^4)$ \tiny{(5A)}, &       $(1^{12})$  &            --\\
&           & $r=4,8$       &  6 \tiny{(o/w)}     &    $(3^4)$ &    $(4,2^4)$ \tiny{(5B)}     &            --\\
					$2.L_2(25)$   &                 & $(12,r)$, & $(6^2)$           & 4               &  $(3^4)$, \tiny{(5A)} &       $(1^{12})$  &            --\\
				&                 & $r=3,9$ &  &  &  $(4,2^4)$ \tiny{(5B)} &       &            --\\
				\midrule
			$L_2(23)$   & $(11,23,-)$    & $(22,2)$               & 12 \tiny{(2A)}                & $(8,7^2)$    & $(2^{11})$   & $(1^{22})$   & --         \\
&   &   & 11 \tiny{(2B)}             &   & &   &   \\
			&                 & $(11,4)$               & 6                 & $(4^2,3)$    & $(1^{11})$   & $(1^{11})$   & --         \\
				\midrule
$L_2(19)$   & $(5,19,-)$     & $(20,2)$               & 10 \tiny{(2A)}                & $(7^2,6)$    & $(4^5)$      & $(2,1^{17})$ & --         \\
			&&  & 11  \tiny{(2B)}    &   & &  &        \\
			&                 & $(19,3)$               & $(10,9)$                & \textit{9  }          & $(4^4,3)$           & $(1^{19})$            & --         \\
				\bottomrule
			\end{tabular}
			\caption{Upper bounds for $\dim C_V(g)$ for projective prime order elements $g \in G$ with $E(G)/Z(E(G)) =L_2(q)$, Part I.}
			\label{l2q-case-analysis}
		\end{table}
	}
\clearpage
	{\small
	\begin{table}[h!]
		\ContinuedFloat
		\centering \begin{tabular}{llllllll}
			\toprule
			&                 &                        & \multicolumn{5}{c}{Projective prime order}                                  \\
			\cmidrule{4-8}
			$E(G)$    & $(p_3,p_4,p_5)$ & $(d,r)$                & 2                 & 3            & $p_3$        & $p_4$        & $p_5$      \\
			\midrule

$L_2(19)$   & $(5,19,-)$          & $(9,4)$                & 5                 & $(3^3)$      & $(2^4,1)$    & $(1^9)$      & --         \\
	
				&                 & $(9,5)$                & $(5,4)$           & $(3^3)$      & \textit{4}            & $(1^9)$      & --         \\
			&                 & $(9, 9)$               & $(5,4)$           & \textit{4  }          & $(2^4,1)$    & $(1^9)$      & --         \\
			\midrule
										$2.L_2(19)$   && $(18,3)$ & $(9^2)$ & 6 & $(4^4,2)$ & $(1^{18})$ & -- \\
			&                 & $(10,5)$               & $(5^2)$           & $(4, 3^2)$   & \textit{5 }           & $(1^{10})$   & --         \\
			&                 & $(10, 9)$              & $(5^2)$           & \textit{5}            & $(2^5)$      & $(1^{10})$   & --         \\ 
	\midrule
					$L_2(17)$   & $(17,-,-)$    & $(18,3)$               & $(10,8)$ (2A)         & 6            & $(2,1^{16})$ & --           & --         \\
			& &   & $(9^2)$ (2B)         & &  &  &   \\
			&                 & $(16,3)$               & $(8^2)$ (2A),        & 6            & $(1^{16})$   & --           & --         \\
			&                 & &  $(9,7)$ (2B)          & &   &          &        \\
			&                 & $(9,9)$                & $(5,4)$           & 3            & $(1^9)$      & --           & --         \\
			 &                 & $(8,4)$                & 4                 & $(3^2,2)$    & $(1^8)$      & --           & --         \\
			 \midrule
			$2.L_2(17)$  	 
			&                 & $(8,9)$                & $(4^2)$           & 3            & $(1^8)$      & --           & --         \\
			\midrule
							$L_2(16)$   & $(5,17,-)$     & $(16,3)$               & $(8^2)$ (2A),           &\textit{ 8}            & $(4,3^4)$    & $(1^{16})$   & --         \\
							&  & & $(10,6)$ (2B)          &    &   &&       \\
							\midrule
							$L_2(13)$   &    $(7,13,-)$      & $(13,3)$               & (7,6)                 & 5            & $(2^6,1)$            & $(1^{13})$            & --         \\
							$2.L_2(13)$   &    & $(14,3)$               & $(7^2)$               & \textit{7 }           & $(2^7)$           & $(2,1^{12})$            & --         \\
							\midrule
							$L_2(11)$   & $(5,11,-)$     & $(12,4)$               & \textit{8 }                & $(4^3)$      & $(3^2,2^3)$  & $(2,1^{10})$ & --         \\
						   &                 & $(10,4)$               & 6 (2A),    & $(4, 3^2)$   & $(2^5)$      & $(1^{10})$   & --        \\
							
							&       &   & 5 (2B)    &  &  &   &   \\
							&                 & $(5,9)$                & $(3,2)$           & \textit{2 }           & $(1^5)$    & $(1^5)$      & --         \\
							&                 & $(5,16)$               & 3                 & $(2^2,1)$    & $(1^5)$      & $(1^5)$      & --         \\
							
							$2.L_2(11)$ &                 & $(10,3)$               & $(5^2)$           & \textit{5}            & $(2^5)$      & $(1^{10})$   & --         \\
							&                 & $(6,9)$                & $(3,3)$           & 2            & $(2,1^4)$      & $(1^6)$      & --         \\
			\bottomrule
		\end{tabular}
		\caption{Upper bounds for $\dim C_V(g)$ for projective prime order elements $g \in G$ with $E(G)/Z(E(G)) = L_2(q)$, Part II.}
	\end{table}
}

The asterisked cases in Table \ref{l2q-ros} require some extra explanation.
\begin{enumerate}[(a)]
	\item  If $E(G)/Z(E(G)) \cong L_2(13)$ and $V=V_6(4)$, then $G$ has a regular orbit on $V$ unless $G = \mathbb{F}_4^\times \times L_2(13)$ where $b(G)=2$. Moreover, $L_2(13)$ has a regular orbit on $V_{14}(2)$, while $L_2(13).2$ does not.
	\item If $E(G)/Z(E(G))\cong L_2(11)$, and $V=V_{10}(2)$ or $V_5(4)$, then $\lceil \log|G|/\log|V|\rceil=1$ if and and only if $G=L_2(11)$ and is 2 otherwise.
	\item If instead $E(G)/Z(E(G)) \cong L_2(8)$ and $V=V_7(3)$ or $V_4(8)$,
	then $b(G)=2$ unless $G=L_2(8)$, where $b(G)=1$. Moreover, in the former case, $\lceil \log|G|/\log|V|\rceil\leq 2$ with equality if and only if $G=\F_3^\times \times L_2(8).3$.
	Finally, if $V=V_2(64)$, so, examining the Brauer character table, we deduce that  $G = c\times L_2(8)$ for some $c\mid 63$, then $\lceil \log|G|/\log|V| \rceil>1$ if and only if $c \geq 9$, and moreover $b(G)=2$ if $3\mid c$ and $b(G)=1$ otherwise.\qedhere
\end{enumerate}
\end{proof}
		\begin{table}[h!]
	\centering \begin{tabular}{@{}llll@{}}
		\toprule
		$E(G)/Z(E(G))$ & $(d,r)$ & $\lceil \frac{\log|G|}{\log|V|} \rceil$ & $b(G)$ \\ \midrule
		$L_2(49)$ &$(24,2)$ &1&1\\
		$L_2(31)$ & $(15,2)$ & 1 & 2   \\
		$L_2(25)$ & $(13,3)$ & 1 & 1     \\
		& $(12,2)$ & 2 & 2   \\
		$L_2(23)$ & $(11,3)$ & 1 & 1    \\
		& $(12,3)$ & 1 & 1    \\
		& $(11,2)$ & 2 & 2   \\
		$L_2(17)$ & $(16,2)$ & 1 & 1  \\
		& $(8,2)$ & 2 & 2   \\
		$L_2(13)$ & $(7,3)$ & 1 & $2^*$  \\
		& $(6,3^k)$, $k\geq 2$ & 1 & 1  \\
		& $(6,3)$ & 2 & 2   \\
		& $(14,2)$ & 1 & 2  \\
		& $(6,16)$ & 1 & 1   \\
		\bottomrule
	\end{tabular}
	\quad 
	\centering \begin{tabular}{@{}llll@{}}
		\toprule
		$E(G)/Z(E(G))$ & $(d,r)$ & $\lceil \frac{\log|G|}{\log|V|} \rceil$ & $b(G)$ \\ \midrule
		$L_2(13)$
		& $(6,4)$ & 1 & 2* \\
		$L_2(11)$ & $(10,3)$ & 1 & 1 \\
		& $(10,2)$ & $1^*$ & 2  \\
		& $(6,3)$ & 1 & 2   \\
		& $(5,5)$ & 1 & 2   \\
		& $(5,4)$ & $1^*$ & 2   \\
		& $(5,3)$ & 2 & 2  \\
		& $(6,5)$ & 1 & 1  \\
		$L_2(8)$  & $(7,3)$ & $1^*$ & 2*   \\
		& $(8,2)$ & 2 & 2\\ 
		& $(4,8)$ & 1 & 2*\\
		& $(2,512)$ & 1& 1\\ 
		& $(2,64)$ & 1 & 2*\\ 
		& $(2,8)$ & 2 & 2\\ 
		\bottomrule
	\end{tabular}
	\caption{Base sizes for some irreducible modules for $G$ with $E(G)/Z(E(G))\cong \mathrm{L}_2(q)$.}
	\label{l2q-ros}
\end{table}

\begin{proposition}
	\label{L3_prop}
	Theorem \ref{linearprop} holds for $G$ with $E(G)/Z(E(G)) \cong L_3(q)$.
\end{proposition}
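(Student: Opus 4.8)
The plan is to follow the general strategy of Section~\ref{techniques}: obtain an upper bound on $\alpha(x)$ for projective prime order $x \in G$ and a lower bound on $d_1(E(G))$, use the crude estimate of Proposition~\ref{tools}\ref{crude} to reduce to a finite list of pairs $(G,V)$, and then dispatch these individually. Because of the exceptional isomorphism $L_3(2)\cong L_2(7)$ we must here also treat the absolutely irreducible representations of $L_3(2)$ in the defining characteristic $2$ (which are cross-characteristic for $L_2(7)$ and were deferred to this proposition from the $L_2$ analysis), and we should expect $L_3(4)$ to require separate, more careful treatment since its Schur multiplier has order $48$. By Proposition~\ref{mindegprop}, $d_1(L_3(q)) = q^2+q-1$, with the exceptions $d_1=2$ for $q=2$ and $d_1=4$ for $q=4$. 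By Proposition~\ref{alphas}, $\alpha(x)\le 3$ for every projective prime order $x \in G$, except that $\alpha(x)\le 4$ when $\overline{x}$ is an involutory graph--field automorphism of $L_3(q)$ (which forces $q$ to be a square); the exceptional case $L_4^\ep(2)$ of that proposition does not occur here.

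For the element counts we use $\dim\overline{G}=8$ and $|\Phi|=6$, so that Proposition~\ref{invols} gives $i_2(\mathrm{Aut}(L_3(q)))<2(q^5+q^4)$ and $i_3(\mathrm{Aut}(L_3(q)))<2(q^6+q^5)$; Lemma~\ref{graph}\ref{lgraph} bounds the involutory graph automorphisms by $2q^5$, Lemma~\ref{graph}(iii) bounds the graph--field automorphisms by $2q^4$ when $q$ is a square, and the total number of projective prime order elements of $G$ is at most $|\pgaml_3(q)|$. As in the $L_2$ case, if $G$ has no regular orbit on $V=V_d(r)$ then
\[
r^{d} < 4q^4\, r^{\lfloor 3d/4\rfloor} + 2\,|\pgaml_3(q)|\, r^{\lfloor 2d/3\rfloor},
\]
and substituting $d=d_1(L_3(q))$ shows this fails for all but finitely many $q$. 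For each remaining $q$, consulting \cite{HM} (and recording the relevant Brauer-character irrationalities to bound $r$) leaves only finitely many pairs $(d,r)$, and a sharper substitution into the same inequality --- keeping $i_2$, $i_3$ and the graph counts separate, using that there are no field automorphisms when $q$ is prime, and letting $r$ range over cross-characteristics dividing $|G|$ --- trims this to a short explicit collection.

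For each surviving $(G,V)$ I would first check whether $|G|>|V|$, in which case there is no regular orbit and $b(G)$ is computed either from Lemma~\ref{fieldext} or directly in GAP~\cite{GAP4} or Magma~\cite{Magma}. Otherwise the goal is to apply one of the finer inequalities of Proposition~\ref{tools}, for which the eigenspace dimensions of projective prime order elements are needed: these come from the Brauer characters in \cite{modatlas} for semisimple classes, from restricting $V$ to a suitable subgroup together with Proposition~\ref{compfactors}, from refining $\alpha(x)$ computationally, or --- failing all of these --- from an explicit construction of $V$ (via \cite{onlineATLAS}, GAP, or Magma) followed by either a last application of Proposition~\ref{tools} or a direct regular-orbit search. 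The conclusions are then tabulated in the style of Tables~\ref{l2q-case-analysis} and~\ref{l2q-ros}, with the pairs giving $b(G)=\lceil\log|G|/\log|V|\rceil+1$ matching the $L_3$ rows of Table~\ref{allbad}.

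The main obstacle is the $L_3(4)$ case. Its Schur multiplier of order $48$ together with its outer automorphism group produce a long list of almost quasisimple groups $c\circ(m.L_3(4).A)$ --- involving graph, field and graph--field automorphisms, as well as isoclinic variants that must be distinguished --- possessing absolutely irreducible modules of small dimension (notably $d=6$ over $\F_3$ and $\F_9$, and $d=8$ over $\F_5$), and for several of these the only workable approach is fairly heavy computation, which is where the acknowledged computational assistance is used. Beyond $L_3(4)$, the only other point needing care is $L_3(2)\cong L_2(7)$: the defining characteristic $2$ must be included even though it lies outside the literal cross-characteristic hypothesis of Theorem~\ref{linearprop}, so that all of $r_0\in\{2,3,7\}$ get checked.
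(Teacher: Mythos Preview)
Your proposal is correct and follows essentially the same route as the paper: bound $\alpha(x)\le 3$ (with $\le 4$ for involutory graph--field automorphisms), apply the crude inequality of Proposition~\ref{tools}\ref{crude} with $d_1=q^2+q-1$ to reduce to small $q$, and then treat the survivors case by case, singling out $L_3(4)$ for its large Schur multiplier and $L_3(2)\cong L_2(7)$ for its defining-characteristic representations. The only cosmetic difference is that the paper bounds the non--graph-field classes by $|\pgaml_3(q).2|$ rather than $|\pgaml_3(q)|$ (to include graph automorphisms), and carries out the reductions for $q=7,5,4,3,2$ explicitly with tables in place of your sketch.
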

\begin{proof}
	To begin with, let us assume $q\neq 2,4$.
By Proposition \ref{alphas}, if $x \in G$ has projective prime order, then $\alpha(x) \leq 3$ unless $x$ is an involutory graph-field automorphism where $\alpha(x)\leq 4$. Therefore, if $G$ has no regular orbit on $V = V_d(r)$, then by Proposition \ref{tools}\ref{crude} and Lemma \ref{graph}\ref{lgraph},
\[
	r^d\leq 2(|\mathrm{P}\Gamma\mathrm{L}_3(q).2|+2q^4)r^{\floor{2d/3}}+ 4q^4r^{\floor{3d/4}},
	\]
	since the number of non-graph prime order elements in $G/F(G)$ is bounded above by $|\mathrm{P}\Gamma\mathrm{L}_3(q)|$. If we substitute in $d_1(E(G))=q^2+q-1$ from Proposition \ref{mindegprop}, then the inequality is false except for when $(q,r) = (7,2)$, $q=5$ and $r\leq 3$, and $q=3$ with $r\leq 11$, so $G$ has a regular orbit on all choices of $V$ in all other cases.
	
	We now examine each of the remaining cases, along with the cases with $q=2,4$ in detail.

Suppose that $E(G)/Z(E(G)) \cong L_3(7)$ and $r=2$. From \cite{HM}, the smallest non-trivial absolutely irreducible representation of $G$ when $r=2$ has degree 56. The number of prime order elements in $\mathrm{Aut}(L_3(7))$ is 853481. 
 Therefore, by Proposition \ref{tools}\ref{crude}, if $G$ has no regular orbit on $V$, then
\[
2^{d} \leq 2 \times 853481\times 2^{\floor{2d/3}}.
\]
This is false for all $V$ except for those with $56\leq d\leq 60$ so by \cite{HM}, $V=V_{56}(2)$ or $V_{57}(2)$.
	Therefore, $G$ has a regular orbit on $V$ except perhaps when $d=56,57$.
In the latter cases, examining the corresponding Brauer character in \cite{modatlas}, we determine that no element of $G$ of odd projective prime order has eigenspace of dimension greater than 24, i.e., $\emax(g_{2'}) \leq 24$. We also compute that $i_2(\mathrm{Aut}(L_3(7))) = 19551$. So if $G$ has no regular orbit on $V=V_{d}(2)$ with $d\in \{56,57\}$, then by Proposition \ref{tools}\ref{qsgood},
\[
2^{d} \leq 2\times 853481\times 2^{24}+ 19551\times 2^{\floor{2d/3}}
\]
This is false for both values of $d$, so $G$ has a regular orbit on $V$ in both cases.
\par 
Now let $E(G)/Z(E(G)) \cong L_3(5)$ and $r\leq 3$. Since $G$ has no graph-field automorphisms, $\alpha(x)\leq 3$ for all projective prime order $x\in G$. We also compute that the number of prime order elements in $G/F(G)$ is at most 154999.  If $G$ has no regular orbit on $V = V_d(r)$, then $r^d\leq 2\times 154999 r^{\floor{2d/3}}$ which implies that $d \leq 54,33$ for $r=2$ and 3 respectively. From \cite{HM} we deduce that we only need to examine one  representation of degree 30 for $r=2$, while if $r=3$ there is one representation of degree 30 and another of degree 31 to consider. First suppose $r=3$. The Brauer character for the 30-dimensional representation of $L_3(5).2$ contains the irrationality $r_5 \notin \mathbb{F}_3$ so we only need to consider this representation for $L_3(5)$.
There are at most 15500 elements of order 3 in $G/F(G)$, and at most 3875 involutions.
Examining the Brauer characters of the 30- and 31-dimensional representations, we find that $\emax(g_{\{2,3\}'})\leq 10$ and 11 respectively. In addition,  $\emax(g_2) \leq 18$ and 19 respectively. Therefore, if $G$ has no regular orbit on the 30-dimensional module, then by Proposition \ref{tools}\ref{qsgood},
\[
r^{30} \leq 2 \times 154999 r^{10}+2\times 3875r^{18}+15500r^{\floor{2d/3}},
\]
which is false for $r=3$. A similar inequality gives the result for $d=31$.

Now suppose that $r=2$ and $V = V_{30}(2)$ is an absolutely irreducible $\mathbb{F}_2E(G)$-module. We determine in Magma that $\emax(g_2) \leq 18$. Examining the Brauer character in \cite{modatlas}, we see that elements in classes 3A and 5A have eigenspaces of dimension at most 10, and no other element of prime order has an eigenspace of dimension greater than 6. Therefore, if $G$ has no regular orbit on $V$, by Proposition \ref{tools}\ref{qsgood},
\[
r^{30} \leq 2\times 154999r^{6}+ 2(|3A|+|5A|)r^{10}+3875r^{18}.
\]
This is false for $r =2$, so $G$ has a regular orbit on $V$.
\par
Suppose that $E(G)/Z(E(G)) \cong L_3(4)$.
By \cite[Table B.3]{bg}, the number of graph-field involutory automorphisms in $L_3(4).D_{12}$ is $|\mathrm{PGL}(3,4)|/|\mathrm{PGU}(3,2)| = 280$, and we compute in GAP that the number of prime order elements in $G/F(G)$ is at most $20619$.
Therefore, if $G$ has no regular orbit on the irreducible module $V$ then by Proposition \ref{tools}\ref{crude},
\[
r^d \leq 2\times 20619 r^{\floor{2d/3}}+ 2\times 280r^{\floor{3d/4}}
\]
Therefore, either $G$ has a regular orbit on $V$, or $V$ appears in Table \ref{L34list}.
\begin{table}[h!]
\centering
\begin{tabular}{ccccc}
\toprule
$d$ & $E(G)$ & Characteristic & Irrationalities & Notes \\ 
\midrule
4 & $4_2.L_3(4)$ & 3 & $i_1,r_7$ & $r=9^k$ \\ 
6 & $6.L_3(4)$ & $\neq 2,3$ & $z_3$ & $r=25^k$ or $7^k$ \\ 
6 & $2.L_3(4)$ & 3 &  &  $r=3^k$\\ 
8 & $4_1.L_3(4)$ & $\neq 2,5$ & $i_1,b_5$ & $r=9$ \\ 
8 & $4_1.L_3(4)$ & 5 & $i_1$ &$r=5,25$  \\ 
10 & $2.L_3(4)$ & $\neq2,7$ & $b_7$ & $r=9$ \\ 
10 & $2.L_3(4)$ & 7 &  & $r=7$ \\ 
15 & $L_3(4)$ & 3 &  & $r=3$ \\ 
15 & $3.L_3(4)$ & $\neq 2,3$ & $z_3$ & $r=7$ \\ 
19 & $L_3(4)$ & 3,7 &  & $r=3$ \\ 
\bottomrule
\end{tabular} 
\caption{List of remaining cases for $E(G)/Z(E(G)) \cong L_3(4)$.\label{L34list}}
\end{table}

We will investigate the remaining modules in decreasing order of dimension. 

We summarise our analysis across Tables \ref{l34-case-analysis} and \ref{l34-ros}. In Table \ref{l34-case-analysis}, the eigenspaces for elements of projective prime order coprime to $r$ are derived from the Brauer character table, while those of projective prime order dividing $r$ are derived from Proposition \ref{tools}\ref{alphabound}, unless they are italicised, in which case the eigenspaces are computed using GAP. Using Proposition \ref{tools} \ref{eigsp1}, we see that if $V$ lies in Table \ref{L34list}, either $V$ also appears in Table \ref{l34-ros}, where the base size $b(G)$ is determined computationally from a construction in GAP, or $G$ appears in Table \ref{l34-case-analysis} and $G$ has a regular orbit on $V$ by Proposition \ref{tools}\ref{eigsp1}.In the latter case, if $(d,r)=(19,3)$, we must also consider elements in classes 3B and 3C, which have sizes 672 and 1920 respectively, and fixed point space dimensions 7 and 9 respectively. 
	
	{\footnotesize
		
		\begin{table}[]
 \begin{tabular}{@{}lcccccccccc@{}}
	\toprule
	$E(G)$ & $(d,r)$ & 2A & 3A & 5A & 5B & 7A & 7B & 2B & 2C & 2D \\ \midrule
	$L_3(4)$ & $(19,3)$ & $(11,8)$ & \textit{7} & $(4^4, 3)$ & $(4^4, 3)$ & $(3^6,1)$ & $(3^6,1)$ & $(10,9)$ & $(12,7)$ & $(10,9)$ \\
	$3.L_3(4)$ & $(15,7)$ & $(8,7)$ & $(5^3)$ & $(3^5)$ & $(3^5)$ & 10 & 10 & $(9,6)$ & -- & -- \\
	$L_3(4)$ & $(15,3)$ & $(8,7)$ & \textit{5} & $(3^5)$ & $(3^5)$ & $(3,2^6)$ & $(3,2^6)$ & $(9,6)$ & $(9,6)$ & $(9,6)$ \\
	$2.L_3(4)$ & $(10,7)$ & $(6,4)$ & $(4,3^2)$ & $(2^5)$ & $(2^5)$ & \textit{2} & \textit{2} & $(5^2)$ & $(7,3)$ & $(6,4)$ \\
	& $(10,9)$ & $(6,4)$ & \textit{4} & $(2^5)$ & $(2^5)$ & $(2^3,1^4)$ & $(2^3,1^4)$ & -- & $(7,3)$ & -- \\
	$4_1.L_3(4)$ & $(8,9)$ & $(4^2)$ & \textit{3} & $(2^3,1^2)$ & $(2^3,1^2)$ & $(2,1^6)$ & $(2,1^6)$ & -- & -- & (5,3) \\
	& $(8,25)$ & $(4^2)$ & $(3^2,2)$ & 5 & 5 & $(2,1^6)$ & $(2,1^6)$ & -- & -- & (5,3) \\
	$2.L_3(4)$ & $(6,3^k)$ & $(4,2)$ & 2 & $(2,1^4)$ & $(2,1^4)$ & $(1^6)$ & $(1^6)$ & $(3^2)$ & $(3^2)$ & $(3^2)$ \\
	$6.L_3(4)$ & $(6,5^k)$ & $(4,2)$ & $(3^2)$ & 2 & 2 & $(1^6)$ & $(1^6)$ & $(3^2)$ & -- & -- \\
	$6.L_3(4)$ & $(6,7^k)$ & $(4,2)$ & $(3^2)$ & $(2,1^4)$ & $(2,1^4)$ & 1 & 1 & $(3^2)$ & -- & -- \\
	$4_2.L_3(4)$ & $(4,9^k)$ & $(2^2)$ & 2 & $(1^4)$ & $(1^4)$ & $(1^4)$ & $(1^4)$ & -- & $(3,1)$ & -- \\
	\midrule
	& \#  & 315 & 2240 & 4032 & 4032 & 2880 & 2880 & 280 & 360 & 1008 \\ \bottomrule
\end{tabular}
			\caption{Eigenspace dimensions of projectively prime order elements in $G$ with $E(G)/Z(E(G)) \cong L_3(4)$. \label{l34-case-analysis}}
		\end{table}
	}

\begin{table}[h!]
\begin{tabular}{@{}cccc@{}}
\toprule
$G$                                & $(d,r)$  & $\lceil \log |G| /\log|V| \rceil$ & $b(G)$                                        \\ \midrule
$2\times L_3(4).2^2$               & $(15,3)$ & 1                                 & 1       \\
$4_1.L_3(4)$                   & $(8,5)$  & 1                                 & 1       \\
$4_1.L_3(4).2_3$                   & $(8,5)$  & 1                                 & 2       \\
$2.L_3(4)\leq G \leq F_r^\times \circ (2.L_3(4).2^2)$                     & $(6,3)$  & 2                                 & 3       \\
                                   & $(6,9)$  & 1                                 & 2      \\
$6.L_3(4) \leq G\leq 6.L_3(4).2_1$                     & $(6,7)$  & 2                                 & 2       \\
$4_2.L_3(4) \leq G \leq \mathbb{F}_9^\times \circ (4_2.L_3(4).2_2)$  & $(4,9)$  & 2                                 & 2   \\
$4_2.L_3(4) \leq G \leq 20\times \mathbb{F}_{81}^\times \circ (4_2L_3(4).2_2)$ & $(4,81)$ & 1                                 & 1  \\ \bottomrule
\end{tabular}
\caption{Some base size results for groups $G$ with $E(G)/Z(E(G)) \cong L_3(4)$. \label{l34-ros}}
\end{table}

\par
Now let $E(G)/Z(E(G)) \cong L_3(3)$ and $r\in \{2,4,8\}$. We compute that the number of prime order elements in $\mathrm{Aut}(L_3(3))$ is 2807.  Therefore, if $G$ has no regular orbit on $V$ then by Proposition \ref{tools}\ref{crude}, we have $r^d \leq 2\times 2807 r^{\floor{2d/3}}$. Substituting in the degrees of the absolutely irreducible representations of $E(G)$ from \cite{modatlas}, we see that $G$ has a regular orbit on all $V$ except possibly when $d\leq 12,18,36$ for $r=8,4$ and $2$ respectively. From  \cite{HM}
	 we deduce that the modules we need to consider either have $d=12$, or $(d,r)=(26,2)$. If $(d,r)=(26,2)$, then using GAP, we find that there is a regular orbit.  
We may also construct the 12-dimensional module in GAP. Note that $G$ has no regular orbit on the 12-dimensional module when $r=2$, since $|V|<|L_3(3)|$. When $r=2$, we find that there is a base of size two and when $r=4,8$, there is a regular orbit.

Finally, let $E(G)/Z(E(G)) \cong L_3(2)$.
 Since $L_3(2) \cong L_2(7)$, we will consider representations in characteristics 2, 3, 7. By Proposition \ref{mindegprop}, we have $\alpha(x) \leq 3$ for non-trivial $x \in G/F(G)$ by Proposition \ref{alphas}, and we compute that there are 153  prime order elements in $\mathrm{Aut}(L_3(2))$. If $G$ has no regular orbit on $V=V_d(r)$ then by Proposition \ref{tools}\ref{crude},
\[
r^d \leq 2\times 153 r^{\floor{2d/3}}.
\]
	Substituting in $d=2$, we see that $G$ has a regular orbit on $V$ unless possibly when $r\leq 297$. 

We construct all of the modules $V$ that are left to consider in GAP and determine their base sizes computationally. We summarise our findings in Table \ref{l32-case-analysis}. The first column of Table \ref{l32-case-analysis} gives the layer $E(G)$ and the second column gives the dimension of the module $V$ being considered. The third column gives information about the finite fields where $V$ is realised. Here 
\[
\nu = \begin{cases}
1 & \textrm{if } G/F(G) \textrm{ is simple},\\
2 & \textrm{otherwise.}
\end{cases}
\] 
The remaining columns give the base sizes of groups $G$ with the specified $E(G)$ on $V=V_d(r)$. If the entry is asterisked, then $G$ has a regular orbit on $V=V_d(r)$ if $G=L_3(2)$, and $b(G)=2$ otherwise. The entry marked with a dagger represents that $G$ has a regular orbit on $V=V_2(49)$ if $F(G)$ has order 2 or 6, and $b(G)=2$ otherwise. There are some cases included in the table where $b(G) = \lceil \log|G|/\log |V| \rceil +1$, namely $G=L_3(2), L_3(2).2$ or $2 \times L_3(2)$ and $V=V_3(7)$; $G=L_3(2)$ and $V=V_8(2)$;$G=c\times L_3(2)$ with $c\in \{2,4\}$ and $V=V_3(9)$; $L_3(2)\leq G\leq \F_3^\times \times L_3(2).2$ with $V=V_6(3)$;$G$ quasisimple with $|F(G)|=4,8,12$, or $G/F(G) = L_3(2).2$, $|F(G)|=4$, and $V = V_2(49)$. All of these are included in Table \ref{allbad}.
\end{proof}

\begin{table}[h!]
\begin{tabular}{@{}cccccc@{}}
\toprule
$E(G)$ & $d$ & Field & $b(G)=1$ & $b(G)=2$ & $b(G)=3$ \\ \midrule
$L_3(2)$ & 3 & $r=2^k$ & $r=8^*,r\geq 16$ & $r=4$ & $r=2$ \\
 &  & $r=9^k$ & $r\geq 9^*$ &  &  \\
 &  & $r=7^k$ & $r\geq 49$ & $r=7$ &  \\
 & 5 & $r=7^k$ & $r\geq 7$ &  &  \\
 & 6 & $r=3^k$ & $r\geq 9$ & $r=3$ &  \\
 & 7 & $r=3^k$ & $r\geq 3$ &  &  \\
 &  & $r=7^k$ & $r\geq 7$ &  &  \\
 & 8 & $r=2^k$ & $r\geq 4$ & $r=2$ &  \\
$2.L_3(2)$ & 2 & $r=7^{\nu k}$ & $r=49^\dag$ & $r=7$ &  \\
 & 4 & $r=9^k$ & $r\geq 9$ &  &  \\
 &  & $r=7^{\nu k}$ & $r\geq 7$ &  &  \\
 & 6 & $r=9^{\nu k}$ & $r\geq 9$ &  &  \\
 &  & $r=7^{\nu k}$ & $r\geq 7$ &  &  \\
\bottomrule
\end{tabular}
		\caption{Base sizes of groups $G$ with $E(G)/Z(E(G)) \cong L_3(2)$ on $V=V_d(r)$ with $k\geq 1$.}
\label{l32-case-analysis}
\end{table}
\begin{proposition}
	Theorem \ref{linearprop} holds if $E(G)/Z(E(G)) \cong L_4(q)$ with $q\geq 3$.
\end{proposition}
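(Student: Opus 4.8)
The plan is to follow the same scheme as in the $L_2(q)$ and $L_3(q)$ cases. First I would record the structural data for $G$ with $E(G)/Z(E(G))\cong L_4(q)$, $q\geq 3$. By Proposition~\ref{alphas}, every element $x$ of projective prime order has $\alpha(x)\leq 4$, with the sole exception of involutory graph automorphisms, for which $\alpha(x)\leq 6$ (case (iv)); graph--field automorphisms occur only when $q$ is a square and still satisfy $\alpha(x)\leq 4$. By Lemma~\ref{graph}\ref{lgraph} the number of involutory graph automorphisms in $G/F(G)$ is at most $2q^{9}$, and for $q$ a square the number of graph--field involutions is at most $2q^{15/2}$ by Lemma~\ref{graph}(iii); the number of all remaining prime order elements is at most $|\mathrm{P}\Gamma\mathrm{L}_4(q).2|$. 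Finally, $d_1(E(G))=q^3+q^2+q-1$ by Proposition~\ref{mindegprop}, with the single exception $d_1=26$ when $q=3$.

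Feeding these into Proposition~\ref{tools}\ref{crude}, if $G$ has no regular orbit on $V=V_d(r)$ then
\[
r^{d}\leq 2\bigl(|\mathrm{P}\Gamma\mathrm{L}_4(q).2|+2q^{9}+2q^{15/2}\bigr)\,r^{\lfloor 3d/4\rfloor}+4q^{9}\,r^{\lfloor 5d/6\rfloor}.
\]
Since the left side grows fastest in $d$, it suffices to test $d=d_1(E(G))$. For $q\geq 5$ the gap $d-\lfloor 3d/4\rfloor=\lceil d/4\rceil$ is at least $39$, so the left side dominates and the inequality fails; substituting the actual group orders shows it also fails for $q=4$ (where $d_1=83$). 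Hence for $q\geq 4$, $G$ has a regular orbit on every admissible $V$. This leaves $q=3$, where $d_1=26$ and the crude estimate survives for a range of small $(d,r)$; these are enumerated from \cite{HM}, keeping only the characteristics $r_0\in\{2,5,13\}$ dividing $|L_4(3)|$ (note $2.L_4(3)=\mathrm{SL}_4(3)$ contributes faithful modules only in characteristics $5$ and $13$, the centre acting trivially in characteristic $2$).

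For the surviving $q=3$ cases I would proceed module by module in decreasing dimension. For every module except the $26$-dimensional ones in characteristic $2$, we have $|V|\gg|G|$, and a regular orbit follows from Proposition~\ref{tools}\ref{eigsp1} (or \ref{eigsp2}) together with the eigenspace dimensions: those of semisimple elements are read off the relevant Brauer character in \cite{modatlas} (or its GAP construction), and those of unipotent elements from Proposition~\ref{tools}\ref{alphabound}. For $V=V_{26}(2)$, however, once $G$ contains the relevant outer part the union of the fixed-point spaces genuinely covers $V$ (indeed $|V|<|G|$ for the larger such $G$), so there is no regular orbit; here I would construct $V$ explicitly in GAP and verify computationally that $b(G)=2$ for $L_4(3)\leq G\leq L_4(3).2_2$ (the $26$-dimensional module not extending to the full $L_4(3).2^2$), recording this pair in Tables~\ref{allbad} and~\ref{noros}. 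The same computational method settles any other small module for which the inequalities of Proposition~\ref{tools} are inconclusive.

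The main obstacle is precisely this $26$-dimensional characteristic-$2$ module and its extensions by outer automorphisms: it lies in the borderline regime $|V|\approx|G|$ where the crude counting is too weak and the Brauer-character eigenspace data do not force a regular orbit, so a fairly heavy direct computation is unavoidable — which is why the assistance of O'Brien and M\"uller is acknowledged for $L_4(3)$. A secondary, milder point is the tightness of the crude bound at the minimal-degree modules of $L_4(4)$ in characteristic $3$, where one may prefer to fall back on the sharper eigenspace estimates of Proposition~\ref{tools} to be safe.
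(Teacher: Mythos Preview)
Your proposal is essentially the paper's proof: the same $\alpha$-bounds from Proposition~\ref{alphas} (with graph involutions singled out), the same crude inequality from Proposition~\ref{tools}\ref{crude} and Lemma~\ref{graph}, reduction to $q=3$, enumeration of the surviving modules from \cite{HM}, eigenspace data from Brauer characters and GAP constructions, and a direct machine computation for the $26$-dimensional $\mathbb{F}_2$-module. Two small inaccuracies worth correcting: for $V=V_{26}(2)$ one actually has $|V|>|G|$ for every admissible $G$ (so $\lceil\log|G|/\log|V|\rceil=1$ and this case lands in Table~\ref{allbad} with $b(G)=2$), and characteristic $13$ never enters since the crude bound already forces $r\leq 11$; your worry about $L_4(4)$ at $r=3$ is also unnecessary, as the crude inequality with $d_1=83$ genuinely fails there.
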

\begin{proof}
By Proposition \ref{alphas}, for $1\neq x \in G$ we have $\alpha(x)\leq 4$ unless $x$ is an involutory graph automorphism where $\alpha(x) \leq 6$. If $G$ has no regular orbit on $V=V_d(r)$ then by Proposition \ref{tools}\ref{crude} and Lemma \ref{graph}\ref{lgraph},
\[
r^{d} \leq 2|\mathrm{P}\Gamma \mathrm{L}_4(q)|r^{\floor{3d/4}}+4q^{9}r^{\floor{5d/6}}
\]
Substituting in $d=d_1(E(G)) = \frac{q^4-q}{q-1}-1$ for $q>3$ and $d=26$ for $q=3$ (cf. Proposition \ref{mindegprop}), we see that $G$ has a regular orbit on all choices of $V$ unless possibly when $q=3$ and $r\leq 11$.
\par
So suppose $(n,q)=(4,3)$. 
We first determine which modules we must consider. We compute that the number of prime order elements in $\mathrm{Aut}(L_4(3))$ is 2279303, and the number of involutions in $G/F(G)$ is at most 27639. If $G$ has no regular orbit on the $d$-dimensional module $V$, then by Proposition \ref{tools}\ref{crude} we have $r^d \leq 2\times 2279303r^{\floor{3d/4}}+ 2\times 27639 r^{\floor{5d/6}}$, implying that $d \leq 96,48,40,30$ for $r=2,4,5$ and 8 respectively. Note that since $7,11$ do not divide $2|\Gamma\mathrm{L}_4(3)|$, we exclude these possibilities.
	From \cite{HM}, we deduce that the only modules left to consider are those in Table \ref{l43-case-analysis}, and also $V=V_{26}(2)$ or $V_{38}(2)$, where we use Magma to show that there is no regular orbit of $G$ on $V$ if $d=26$, but there is for $d=38$.
In Table \ref{l43-case-analysis}, we also give sharp upper bounds on the eigenspace dimensions of projective prime order elements. If the bound is italicised, then the bound was determined using a construction of the relevant matrix group in GAP,  and otherwise the bounds were determined from the Brauer character tables in \cite[p. 164--171]{modatlas}. We have asterisked an entry in the row for $V=V_{26}(2^k)$ in Table \ref{l43-case-analysis} because involutions in classes 2D, 2E have fixed point spaces of dimension 20 on $V$, while involutions in classes 2B, 2F have fixed point spaces of dimensions 14, 16 respectively.
	Therefore, applying \ref{eigsp1} from Proposition \ref{tools}, we find that $G$ has a regular orbit on $V$ for all cases in Table \ref{l43-case-analysis}.
	{
		\small
	\begin{table}[h!]
		\centering \begin{tabular}{@{}llllp{1.7cm}llp{1.7cm}ll@{}}
\toprule
$E(G)$ & $(d,r)$ & 2A & 2C & Other $g_2$ & 3A & 3B & Other $g_3$ & $o(x)=5$ & $o(x)=13$ \\ \midrule
$2.L_4(3)$ & $(40,5)$ & 20 & 26 & 20 & 22 & 16 & 16 & \textit{8} & 4 \\
$L_4(3)$ & $(38,5)$ & 20 & 25 & 23 & 20 & 14 & 14 & \textit{8} & 3 \\
$L_4(3)$ & $(26,5)$ & 16 & -- & 20 & 9 & 14 & 9 & \textit{6} & 2 \\
$L_4(3)$ & $(26,2^k)$ & \textit{16} & -- & \textit{20}$^*$ & 9 & 14 & 9 & 6 & 2 \\
& $(38,2^k)$ & 20 & \textit{25} & \textit{24} & 20 & 14 & 14 & 8 & 3 \\
\midrule
\#elements &  & 2106 & 1080 & 24453 & 1040 & 3120 & 78000 & 303264 & 1866240 \\ \bottomrule
		\end{tabular}
		\caption{Upper bounds on $\emax$ for element of projective prime order in $G$ where  $E(G)/Z(E(G)) \cong L_4(3)$.}
		\label{l43-case-analysis}
	\end{table}
}
\end{proof}
\begin{proposition}
	\label{l_n>=5}
	Theorem \ref{linearprop} holds if $E(G)/Z(E(G)) \cong L_n(q)$ with $n\geq 5$.
\end{proposition}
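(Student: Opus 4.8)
The plan is to follow the same scheme used above for $L_2$, $L_3$ and $L_4$, exploiting that for $n\geq 5$ all the generic invariants become uniform. By Proposition~\ref{alphas}(i) none of the exceptional cases (iii)--(vii) there can occur (they all force $n\leq 4$), so $\alpha(\bar x)\leq n$ for every $1\neq\bar x\in G/F(G)$, and hence $\alpha(G)\leq n$; and Proposition~\ref{mindegprop} gives $d_1(E(G))=\tfrac{q^n-q}{q-1}-1$. Writing $q=p^e$, we have $|\mathrm{Aut}(L_n(q))|=2e\,|\pgl_n(q)|<2e\,q^{n^2-1}$ by \cite[Proposition~3.9(ii)]{tim2}, so the number of elements of projective prime order in $G/F(G)$ is at most $2e\,q^{n^2-1}$; Lemma~\ref{graph} gives sharper counts for graph and graph--field automorphisms, but since $n\geq 5$ these too satisfy $\alpha(\bar x)\leq n$ and there is no need to treat them separately. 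Note also that throughout this range $|G|\leq (r-1)\cdot 2e\,q^{n^2-1}<r^{d_1}\leq|V|$, so it suffices to produce a regular orbit.

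Next I would feed these bounds into Proposition~\ref{tools}\ref{crude}: if $G$ has no regular orbit on $V=V_d(r)$ then $r^{d}\leq 4e\,q^{n^2-1}\,r^{\lfloor(1-1/n)d\rfloor}$, whence $2^{d/n}\leq r^{d/n}\leq 4e\,q^{n^2-1}$. Combining this with Proposition~\ref{rep2sl}, which forces any cross-characteristic irreducible of dimension below a threshold of order $q^{2n-4}$ to be one of the two minimal modules (of dimension $\tfrac{q^n-q}{q-1}-\epsilon_{n,q,r}$ or $\tfrac{q^n-1}{q-1}$), reduces everything to: (a) the two minimal modules, for arbitrary $(n,q)$; and (b) a bounded family of intermediate-dimensional modules surviving only for a short explicit list of small pairs --- essentially $q=2$ with $5\leq n\leq 11$, $q=3$ with $n\in\{5,6\}$, and the special $(6,2),(6,3)$ entries of Proposition~\ref{rep2sl}. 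Every module in (b) has dimension at most $250$, so it appears in the tables of \cite{HM}, and each such $V$ is then disposed of individually by feeding eigenvalue data --- from the Brauer characters of \cite{modatlas}, or from the description of $V$ as a constituent of a permutation module on subspaces of $\mathbb{F}_q^{\,n}$, or from an explicit construction in GAP \cite{GAP4} --- into Proposition~\ref{tools}\ref{eigsp1}.

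For the minimal modules in (a), the crude bound already yields a regular orbit as soon as $2^{d_1/n}>4e\,q^{n^2-1}$, i.e. whenever $q^{n-1}>n\bigl(2+\log_2 e+(n^2-1)\log_2 q\bigr)$, and a routine check shows this holds for all $(n,q)$ with $n\geq 5$ except exactly the pairs listed above. For those finitely many pairs the generic bound $\emax(g)\leq\lfloor(1-1/n)d\rfloor$ is too weak, so I would instead use that $V$ is a section of the permutation module of $L_n(q)$ on the $\tfrac{q^n-1}{q-1}$ points (equivalently hyperplanes) of $\mathbb{P}^{n-1}(q)$, so that for $g$ of projective prime order $\dim C_V(g)$ is at most the number of $g$-orbits on points, up to the $\pm1$ or $\pm2$ coming from the trivial sub and quotient. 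That quantity is maximised by a transvection, for which it is roughly $\tfrac34 d$ (precisely of the order $\tfrac{q^{n-1}-1}{q-1}+\tfrac{(q-1)q^{n-2}}{p}$), and is strictly smaller for every other class. Inserting these refined eigenspace bounds together with the class sizes --- the number of transvections, the numbers of semisimple elements of each prime order (Proposition~\ref{invols}), and the numbers of graph, field and graph--field automorphisms (Lemma~\ref{graph}, Proposition~\ref{invols}) --- into Proposition~\ref{tools}\ref{qsgood} should give a regular orbit in every remaining case, with any genuinely tight instances (notably some $L_5(2)$ and $L_6(2)$ modules) confirmed directly in GAP \cite{GAP4} or Magma \cite{Magma}.

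The hard part is precisely this last step. For the small pairs with $q\in\{2,3\}$ the group $G$ is comparatively large relative to $d$, the uniform $\alpha$-bound is inadequate, and one is forced to use the explicit permutation-module structure of $V$ and carry out a class-by-class eigenspace analysis; within that, the delicate classes are the transvections, whose fixed spaces on $V$ can be as large as about three quarters of $\dim V$, so their count must be weighed carefully against $|V|$ in Proposition~\ref{tools}\ref{qsgood}.
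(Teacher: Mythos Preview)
Your proposal is correct and follows essentially the same scheme as the paper: apply Proposition~\ref{tools}\ref{crude} with $\alpha(G)\leq n$ and $d\geq d_1(E(G))$ to reduce to finitely many pairs $(n,q)$, then dispose of the survivors with sharper eigenspace bounds fed into Proposition~\ref{tools}\ref{qsgood} or \ref{eigsp1}, resorting to GAP/Magma where necessary. The only methodological difference is in the source of the refined eigenspace bounds on the minimal modules: the paper reads them off Brauer characters (or computes them in GAP), whereas you propose to extract them from the orbit-counting description of $V$ as a section of the point permutation module --- for instance your transvection bound $3\cdot2^{n-2}-1$ recovers exactly the paper's Brauer-character value (e.g.\ $94$ for $L_7(2)$ on $V_{126}$); the two approaches give the same numbers, and your residual list of $(n,q)$ is merely more conservative than the paper's (which stops at $n\leq 8$ for $q=2$ and $n=5$ for $q=3$).
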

\begin{proof}
We begin by applying Proposition \ref{tools}\ref{crude}, using the bounds $\alpha(x)\leq n$ from Proposition \ref{alphas} and $d_1(G) \geq \frac{q^n-q}{q-1} -1$ from Proposition \ref{mindegprop}. 
Therefore, if $G$ has no regular orbit on $V$ then 
\[
r^d \leq 2\times |\mathrm{P}\Gamma \mathrm{L}_n(q).2|r^{\floor{\frac{n-1}{n}d}}.
\]
We find that $G$ has a regular orbit on $V$ except possibly when $(n,q,d,r)$ lies in Table \ref{n>5lcases}.
\begin{table}[h!]
\label{sl5}

\begin{tabular}{cccc}
\toprule
$n$ & $q$ & $d\leq$ & $r\leq$ \\ \midrule
8   & 2   & 320     & $3$     \\
7   & 2   & 217     & $5$     \\
6   & 2   & 132     & 9       \\
5   & 3   & 195     & $2$     \\
5   & 2   & 75      & 17      \\ \bottomrule
\end{tabular}
\caption{Remaining cases to consider when $n\geq 5$. \label{n>5lcases}}
\end{table}

We consult \cite{HM} to compile the list of modules that require further attention, also making use of Proposition \ref{rep2sl} for $(n,q)=(8,2)$. We summarise our analysis of these remaining cases in Table \ref{n5l-case-analysis}. The bold entries denote those upper bounds obtained from the corresponding Brauer characters, while the italicised entries are computed from an explicit construction in GAP. The remaining entries are obtained by applying Propositions \ref{tools}\ref{alphabound} and \ref{alphas}. Applying Proposition \ref{tools}\ref{eigsp2} to each module in Table \ref{n5l-case-analysis} shows that $G$ has a regular orbit on $V$ in all cases. The only cases not addressed in Table \ref{n5l-case-analysis}, are where $G=L_5(2)$ or $\mathbb{F}_3^\times \times L_5(2)$ and $V=V_{30}(3)$. These cases were investigated computationally using a construction in Magma and there is a regular orbit in both cases.

\begin{table}[h!]
\begin{tabular}{@{}cccccc@{}}
\toprule
$E(G)/Z(E(G))$            & $V$                  & $o(x)=2$    & $o(x)=3$    & $o(x) \mid r$ & Remaining prime order elements \\ \midrule
$L_8(2)$ & $V_{253}(3)$         & 221         & 221         & 221           & \textbf{61}                    \\
$L_7(2)$ & $V_{126}(3)$         & \textbf{94} & \textit{62}         & \textit{62}           & \textbf{42}                    \\
$L_7(2)$ & $V_{126}(5)$         & \textbf{94} & \textbf{62} & 108           &\textbf{42}                 \\
$L_6(2)$ & $V_{61}(r)$, $r=3,9$ & \textbf{46} & \textit{29 }        & \textit{29}            & \textbf{13}                    \\
$L_6(2)$ & $V_{61}(7)$          & \textbf{46} & \textbf{30} & 50            & \textbf{14}                    \\
$L_6(2)$ & $V_{62}(5)$          & \textbf{46} & \textbf{30} & 51            & \textbf{14}                    \\
$L_5(3)$ & $V_{120}(2)$         & 96       & \textbf{66} & 96   & \textbf{66}                    \\
$L_5(2)$ & $V_{30}(5)$          & \textbf{22} & \textbf{14} & \textit{6}    & \textbf{6}                     \\
$L_5(2)$ & $V_{30}(7)$          & \textbf{22} & \textbf{14} & \textit{6}    & \textbf{6}                     \\
$L_5(2)$ & $V_{30}(9)$          & \textbf{22} & \textit{14} & \textit{14}   & \textbf{6}                     \\ \bottomrule
\end{tabular}
\caption{Upper bounds on $\emax$ for elements of projective prime order elements in linear groups, $n\geq 5$.}
\label{n5l-case-analysis}
\end{table}
\end{proof}

In order to complete the proof of Theorem \ref{linearprop}, we need to consider the cases where $E(G)/Z(E(G))$ is also isomorphic to an alternating group. 
The classification of pairs $(G,V)$ where $G$ has a regular orbit on $V$ has been completed in \cite{MR3500766} for the groups $G$ with $\mathrm{soc}(G/F(G))$ an alternating group $Alt_n$ with $n\neq 6$. This also covers the groups $G$ such that $\mathrm{soc}(G/F(G)) \cong Alt_6$, and $G/Z(G)$ is almost simple. The other almost quasisimple groups with $\mathrm{soc}(G/F(G)) \cong Alt_6$ are not considered because the authors of \cite{MR3500766} adopt a more restrictive definition of almost quasisimple than is given here. Note that the authors of \cite{MR3500766} also do not determine the base size for groups where there is no regular orbit.  For completeness, we also consider representations in defining characteristic for these groups.

\begin{proposition}
	\label{alt_grp_prop}
	Theorem \ref{linearprop} holds for almost quasisimple $G$ with $E(G)/Z(E(G)) \cong L_n(q)$ with $(n,q)=(2,4), (2,5), (2,9)$ or $(4,2)$.
\end{proposition}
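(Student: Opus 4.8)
The plan is to exploit the exceptional isomorphisms $L_2(4)\cong L_2(5)\cong\mathrm{Alt}_5$, $L_2(9)\cong\mathrm{Alt}_6$ and $L_4(2)\cong\mathrm{Alt}_8$, so that the groups $G$ in the statement are exactly those with $\mathrm{soc}(G/F(G))$ one of $\mathrm{Alt}_5$, $\mathrm{Alt}_6$, $\mathrm{Alt}_8$. Following Remark~\ref{gcd}(iv), for each of these groups we must treat modules $V=V_d(r)$ in \emph{every} characteristic $r_0$ dividing $|E(G)|$; in particular this includes the defining characteristics ($2$ and $5$ for $\mathrm{Alt}_5$, $3$ for $\mathrm{Alt}_6$, $2$ for $\mathrm{Alt}_8$), which are not covered by the cross-characteristic arguments in the rest of this section nor by \cite{MR3500766}.

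First I would invoke the classification of \cite{MR3500766}: for $G$ with $\mathrm{soc}(G/F(G))\in\{\mathrm{Alt}_5,\mathrm{Alt}_8\}$, and for $G$ with $\mathrm{soc}(G/F(G))\cong\mathrm{Alt}_6$ and $G/Z(G)$ almost simple, it is already known on which faithful cross-characteristic modules $G$ has a regular orbit. What remains is: (a) to pin down $b(G)$ in the finitely many cases from that classification where there is no regular orbit; (b) to treat the almost quasisimple groups with $\mathrm{soc}(G/F(G))\cong\mathrm{Alt}_6$ that lie outside the stricter definition of \cite{MR3500766} --- namely the central products $c\circ(2.\mathrm{Alt}_6)$, $c\circ(3.\mathrm{Alt}_6)$, $c\circ(6.\mathrm{Alt}_6)$ and their extensions by the outer automorphisms $2_1,2_2,2_3$, using that $2.\mathrm{Alt}_6\cong\mathrm{SL}_2(9)$ and that $3.\mathrm{Alt}_6$ has a $3$-dimensional representation; and (c) the defining-characteristic modules. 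For (c), and to reduce (a) and (b) to a finite problem, I would run the standard reduction: for the $L_2$-type socles use $\alpha(x)\le 3$, with $\alpha(x)\le 4$ (and $\le 5$ when $q=9$) for involutory field automorphisms, from Proposition~\ref{L2lemma}; for $L_4(2)\cong\mathrm{Alt}_8$, since Proposition~\ref{alphas}(vii) excludes this group from the clean bounds, I would bound $\alpha(x)$ directly --- by $n=7$ away from the exceptional involution classes, and computationally otherwise --- and feed these bounds into Proposition~\ref{tools}\ref{crude}. Substituting the minimal degrees from Proposition~\ref{mindegprop} (with the relevant exceptions $d_1(L_2(4))=2$, $d_1(L_2(9))=3$, $d_1(L_4(2))=6$) reduces each family to a short list of pairs $(d,r)$, which one reads off from \cite{HM} and \cite{modatlas}.

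For the remaining finite list of modules --- which is substantial, as these are small groups with many faithful irreducibles over small fields and their extensions --- I would construct $V$ explicitly in GAP \cite{GAP4} (via \cite{onlineATLAS}) or Magma \cite{Magma}. When $|G|<|V|$ I would bound the eigenspace dimensions $\emax(g)$ of projective-prime-order elements $g$ (from Brauer characters in \cite{modatlas}, from Proposition~\ref{tools}\ref{alphabound}, or from the explicit construction) and apply Proposition~\ref{tools}\ref{eigsp1} or \ref{eigsp2} to exhibit a regular orbit; when $|G|>|V|$, or when those inequalities are inconclusive, I would compute $b(G)$ directly. Finally I would check that every pair $(G,V)$ produced this way with $b(G)=\lceil\log|G|/\log|V|\rceil+1$ appears in Table~\ref{allbad} (for instance the many $L_2(4)$, $L_2(9)$ and $L_4(2)$ rows there, including those flagged $\ddag$ for $L_2(9).2^2$ on $V_9(3)$), noting that the part~(iii) exceptions of Theorem~\ref{mainthm} involve only unitary and symplectic groups and so cannot occur here; the Proposition then follows.

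The principal obstacle is volume and bookkeeping rather than a single hard idea: one must correctly enumerate all faithful absolutely irreducible modules in every relevant characteristic (including all admissible field extensions), keeping careful track of the Schur-multiplier covers $2.\mathrm{Alt}_6$, $3.\mathrm{Alt}_6$, $6.\mathrm{Alt}_6$, $2.\mathrm{Alt}_8$ and of the scalar and outer-automorphism extensions, and then determine the base size in each case and reconcile the group-by-group output with the unified statement of Table~\ref{allbad}. The $\mathrm{Alt}_8\cong L_4(2)$ case is the most delicate, both because Proposition~\ref{alphas} supplies no clean $\alpha$-bound and because its modules are the largest to handle computationally.
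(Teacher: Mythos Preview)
Your proposal is correct and follows essentially the same route as the paper's proof: invoke \cite{MR3500766} for $\mathrm{Alt}_5$, $\mathrm{Alt}_8$ and the ``standard'' $\mathrm{Alt}_6$ cases, treat separately the extra almost quasisimple $\mathrm{Alt}_6$ groups excluded by that paper's stricter definition, compute eigenspace dimensions of projective-prime-order elements (from Brauer characters and GAP constructions), apply Proposition~\ref{tools}\ref{eigsp1}, and finish the residual cases by explicit GAP/Magma computation --- including the $\ddag$ phenomenon for $L_2(9).2^2$ on $V_9(3)$.

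One minor simplification the paper makes relative to your plan: it does \emph{not} use $\alpha$-bounds or Proposition~\ref{tools}\ref{crude} at all in this proposition. Because \cite{MR3500766} already treats alternating groups in every characteristic (including the ``defining'' ones for the Lie-type avatars), the finite list of candidate modules for $\mathrm{Alt}_5$ and $\mathrm{Alt}_8$ is read off directly from there, and for the additional $\mathrm{Alt}_6$ covers the modules are simply enumerated. In particular your worry about $L_4(2)$ being excluded from Proposition~\ref{alphas} never arises: the paper bypasses the $\alpha$-machinery entirely here and works with exact eigenspace dimensions tabulated in Tables~\ref{A5_tab}, \ref{A6_tab}, \ref{A8_tab}.
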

\begin{proof}
	Recall that we have $L_2(4) \cong L_2(5) \cong Alt_5$, $L_2(9)\cong Alt_6$ and $L_4(2)\cong Alt_8$.
	By \cite[Theorem 1.1]{MR3500766}, the modules (up to algebraic conjugacy) where there may be no regular orbit of $G$ on $V$ are given in Tables \ref{A5_tab} and \ref{A8_tab} for $G$ with $E(G)/Z(E(G)) \cong L_2(4)$, and $L_4(2)$ respectively. Also included in Tables  \ref{A5_tab} and  \ref{A8_tab} are the eigenspace dimensions of elements $x\in G$ of projective prime order. These values are computed from the relevant Brauer characters in \cite{modatlas} for $x$ of projective prime order coprime to $r$, and from a construction of the relevant matrix group in GAP for $x$ of  projective order $r$. Applying Proposition \ref{tools}\ref{eigsp1}, we see that in each case $G$ has a regular orbit on $V=V_d(r_0^k)$, when $k\geq f$ as given in the tables. In the remaining cases where we have not shown that there is a regular orbit using this technique, we construct the modules in GAP \cite{GAP4} and find a minimal base. The groups that were found to have a regular orbit using this method are listed in Table \ref{RO_GAP_alt}. The groups $G$ with $b(G) \geq 2$ appear in Table \ref{noros}.
	
	We now turn our attention to groups $G$ with $E(G)/Z(E(G)) \cong L_2(9)$. There are further modules to check, in addition to those listed in \cite[Table 1]{MR3500766}. This is due to our definition of almost quasisimple requiring $G/F(G)$ to be almost simple, while \cite{MR3500766} instead requires $G/Z(G)$ to be almost simple.
	The modules that we need to check are listed in Table \ref{A6_tab}, along with the eigenspace dimensions of elements of projective prime order, computed in the same manner as in Tables  \ref{A5_tab} and  \ref{A8_tab}. Again applying Proposition \ref{tools}\ref{eigsp1}, we find that in each case $G$ has a regular orbit on $V=V_d(r_0^k)$, when $k\geq c$ as given in the table. As with the other groups, we construct the remaining groups $G$ and modules $V$ in GAP and determine that those listed in Table \ref{RO_GAP_alt} have a regular orbit of $G$ on $V$, while those listed in Table \ref{noros} do not. The only exception to this is $G=  L_2(9).2^{2}$ acting on $\F_3^9$, which is listed in both tables. Here there are four inequivalent modules for $G$. Now, via construction in GAP, we compute that $G$ has a regular orbit on three of these modules, while there is no regular orbit of $G$ on the remaining module. This module is characterised by its corresponding Brauer character values of 3,1 on conjugacy classes 2B and 2D respectively. 
	This completes the proof.

		\begin{table}[h!]
		\centering \begin{tabular}{@{}ll@{}}
			\toprule
			$G$                                                  & $(d,r)$  \\ \midrule
			$L_2(4)$                                             & $(4,4)$  \\
			$L_2(4)$                                             & $(3,9)$  \\
			$L_2(4)\leq G \leq \F_9^\times \times L_2(4).2$      & $(4,9)$  \\
			$c\circ(2.L_2(4))$, $c\mid 80$                       & $(2,81)$ \\
			$L_2(4)\leq G\leq L_2(4).2$                       & $(6,2)$  \\
			$L_2(4)\leq G\leq  \F_{25}^\times \times L_2(4).2$   & $(3,25)$ \\
			$c \circ (2.L_2(4))$, $c\in \{1,2,4,8\}$                & $(2,25)$ \\
			$2.L_2(4)$                                           & $(4,5)$  \\
			\midrule
			$c\circ (2. L_2(9))$, $c\in \{1,2,4,8,16\}$              & $(2,81)$ \\
			$L_2(9) \leq G\leq \F_{81}^\times \times L_2(9).2_2$ & $(3,81)$ \\
			$3. L_2(9)$                                      & $(3,16)$ \\
			\bottomrule
		\end{tabular}
		\quad
		\centering \begin{tabular}{@{}ll@{}}
			\toprule
			$G$                                                  & $(d,r)$  \\ \midrule
			$c \times L_2(9)$, $c\in \{1,7\}$                    & $(4,8)$  \\
			$L_2(9) \leq G\leq \F_{16}^\times \times L_2(9).2_1$ & $(4,16)$ \\
			$L_2(9)$, $L_2(9).2_2$, $L_2(9).2_3$                 & $(4,9)$  \\
			$L_2(9) \leq G<\F_3^\times \times L_2(9).2^{2*}$ & $(9,3)$  \\
			$L_2(9) \leq G\leq \F_5^\times \times L_2(9).2^{2}$  & $(8,5)$  \\
			$3.L_2(9)$, $3.L_2(9).2_3$                           & $(3,25)$ \\
			$c \times L_4(2)$, $c\mid 31$                        & $(4,32)$ \\
			$c \times L_4(2)$, $c\mid 63$                        & $(4,64)$ \\
			$L_4(2)$                                             & $(4,16)$ \\
			$c\times L_4(2)$, $c\mid 15$                         & $(6,16)$ \\
			$c\times L_4(2)$, $c\mid 8$                          & $(7,9)$  \\
			$L_4(2)\leq G\leq \F_{27}^\times \times L_4(2).2$    & $(7,27)$ \\
		\bottomrule
		\end{tabular}
		\caption{Groups $G$ where the existence of a regular orbit on $V_d(r)$ was confirmed by construction in GAP \cite{GAP4}. \label{RO_GAP_alt}}
	\end{table}

	\begin{table}[h!]
		\centering \begin{tabular}{@{}llllllll@{}}
			\toprule
			&  & \multicolumn{5}{c}{Eigenspace dimensions} &  \\
			\cmidrule{3-7}
			$E(G)$ & $(d,r_0^k)$ & 2A & 2B & 3A & 5A & 5B & $f$ \\
			\midrule
			\# &  & 15 & 10 & 20 & 12 & 12 &  \\
			\midrule
			$L_2(4)$ & $(2,4^k)$ & 1 & 1 & $(1^2)$ & $(1^2)$ & $(1^2)$ & 3 \\
			& $(3,5^k)$ & $(2,1)$ & $(2,1)$ & $(1^3)$ & 1 & 1 & 3 \\
			& $(3,9^k)$ & $(2,1)$ & -- & 1 & $(1^3)$ & $(1^3)$ & 2 \\
			& $(4,2^k)$ & 2 & 2 & $(2,1^2)$ & $(1^4)$ & $(1^4)$ & 3 \\
			& $(4,3^k)$ & $(2^2)$ & $(3,1)$ & 2 & $(1^4)$ & $(1^4)$ & 3 \\
			$2.L_2(4)$ & $(2,5^k)$ & $(1^2)$ & $(1^2)$ & $(1^2)$ & 1 & 1 & 3 \\
			& $(2,9^k)$ & $(1^2)$ & -- & 1 & $(1^2)$ & $(1^2)$ & 2\\
			& $(4,5^k)$ & $(2^2)$ & $(2^2)$ & $(2,1^2)$ & 1 & 1 & 2 \\
			\bottomrule
		\end{tabular}
		\caption{The eigenspace dimensions of elements of projective prime order in groups $G$ with $E(G)/Z(E(G)) \cong L_2(4)\cong L_2(5) \cong Alt_5$.\label{A5_tab}}
	\end{table}

	\begin{table}[h!]
		\begin{tabular}{@{}llllllllllll@{}}
			\toprule
			&  & \multicolumn{9}{c}{Eigenspace dimensions} &  \\
			\cmidrule{3-11}
			$E(G)$ & $(d,r)$ & 2A & 2B & 3A & 3B & 5A & 7A & 7B & 2C & 2D & $f$ \\
			\midrule
			\# &  & 105 & 210 & 112 & 1120 & 1344 & 2880 & 2880 & 28 & 420 &  \\
			\midrule
			$L_4(2)$ 	& $(4,2^k)$ & 3 & 2 & $(2^2)$ & $(2,1^2)$ & $(1^4)$ & $(1^4)$ & $(1^4)$ & -- & -- & 7\\
			& $(14,2^k)$ & 8 & 8 & $(6,4^2)$ & $(4,5^2)$ & $(2,3^4)$ & $(2^7)$ & $(2^7)$ & 10 & 8 & 2 \\
			$2.L_4(2)$ & $(8,3^k)$ & $(4^2)$ & $(4^2)$ & 4 & 4 & $(2^4)$ & $(2,1^6)$ & $(2,1^6)$ & $(4^2)$ & $(4^2)$ & 2 \\
			& $(8,5^k)$ & $(4^2)$ & $(4^2)$ & $(4^2)$ & $(4,2^2)$ & 2 & $(2,1^6)$ & $(2,1^6)$ & $(4^2)$ & $(4^2)$ & 2 \\
			\bottomrule
		\end{tabular}
		\caption{The eigenspace dimensions of elements of projective prime order in groups $G$ with $E(G)/Z(E(G)) \cong L_4(2) \cong Alt_8$.\label{A8_tab}}
	\end{table}

	\begin{table}[h!]
		\begin{tabular}{@{}lllllllllll@{}}
			\toprule
			&  & \multicolumn{8}{c}{Eigenspace dimensions} &  \\
			\cmidrule{3-10}
			$E(G)$ & $(d,r)$ & 2A & 3A & 3B & 5A & 5B & 2B & 2C & 2D & $f$ \\
			\midrule
			\# & & 45&40&40 &72& 72 & 15&15&36\\
			\midrule
			$L_2(9)$
			& $(3,9^k)$ & (1,2) & 1 & 1 & $(1^3)$ & $(1^3)$ & -- & -- & $(1,2)$ & 3 \\
			& $(4,2^k)$ & 2 & $(2,1^2)$ & $(2^2)$ & $(1^4)$ & $(1^4)$ & 2 & 3 & -- & 5 \\
			& $(4,3^k)$ & $(2^2)$ & 2 & 2 & $(1^4)$ & $(1^4)$ & $(3,1)$ & $(3,1)$ & $(2^2)$ & 4 \\
			& $(5,5^k)$ & $(3,2)$ & $(3,1^2)$ & $(1,2^2)$ & 1 & 1 & (4,1) & (2,3) & -- & 2 \\
			& $(8,4^k)$ & 4 & $(3^2,2)$ & $(3^2,2)$ & $(2^3,1^2)$ & $(2^3,1^2)$ & -- & -- & 5 & 1 \\
			& $(9,3^k)$ & $(5,4)$ & 3 & 3 & $(2^4,1)$ & $(2^4,1)$ & $(6,3)$ & $(6,3)$ & $(5,4)$ & 2 \\
			$2.L_2(9)$ & $(2,9^k)$ & $(1^2)$ & 1 & 1 & $(1^2)$ & $(1^2)$ & -- & -- & $(1^2)$ & 3 \\
			& $(4,5^k)$ & $(2^2)$ & $(2^2)$ & $(2,1^2)$ & 1 & 1 & $(2^2)$ & $(2^2)$ & -- & 2 \\
			
			$3.L_2(9)$ & $(3,4^k)$ & 2 & $(1^3)$ & $(1^3)$ & $(1^3)$ & $(1^3)$ & -- & -- & -- & 3 \\
			& $(3,25^k)$ & $(1,2)$ & $(1^3)$ & $(1^3)$ & 1 & 1 & -- & -- & -- & 2\\
			& $(9,2^k)$ & 5 & $(3^3)$ & $(3^3)$ & $(2^4,1)$ & $(2^4,1)$ & -- & -- & -- & 1 \\
			\bottomrule
		\end{tabular}
		\caption{The eigenspace dimensions of elements of projective prime order in groups $G$ with $E(G)/Z(E(G)) \cong L_2(9) \cong Alt_6$.\label{A6_tab}}
	\end{table}
\end{proof}
\section{Proof of Theorem \ref{mainthm}: Unitary groups}
\label{u_grps}
In this section we set out to prove the following result.

\begin{theorem}
	\label{umain}
	Suppose $G$ is an almost quasisimple group with $E(G)/Z(E(G)) \cong U_n(q)$ where $n\geq 3$ and $(n,q) \neq (3,2)$. Let $V=V_d(r)$, $(r,q)=1$ be a module for $G$, with absolutely irreducible restriction to $E(G)$. Also suppose that $(r, |G|)>1$. Then one of the following holds.
	\begin{enumerate}
		\item $b(G) = \lceil \log |G|/\log |V| \rceil$.
		\item $b(G)= \lceil \log |G|/\log |V| \rceil+1$ and $(G,V)$ lies in Table \ref{allbad}.
\end{enumerate}
\end{theorem}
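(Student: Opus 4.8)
The plan is to follow the same template used for the linear groups in Section 4, treating the unitary groups $U_n(q)$ family by family according to the rank $n$, and in each case combining the generic bound of Proposition~\ref{tools}\ref{crude} with the representation-theoretic input from Section~\ref{prelims}. Concretely: for a fixed $n$, I would first record an upper bound for $\alpha(x)$ for $x\in G$ of projective prime order, using Proposition~\ref{alphas} (so $\alpha(x)\le n$ with the stated small exceptions such as the involutory graph automorphisms of $U_4^\epsilon(q)$ where $\alpha(x)\le 6$, and $U_3(3)$ where $\alpha(x)=4$), together with the count of involutory graph automorphisms from Lemma~\ref{graph}\ref{ugraph} and the bound $i_2(\mathrm{Aut}(U_n(q)))<2(q^{N_2}+q^{N_2-1})$ from Proposition~\ref{invols}. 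Feeding these into Proposition~\ref{tools}\ref{crude} and substituting the minimal degree $d_1(G)=\lfloor (q^n-1)/(q+1)\rfloor$ from Proposition~\ref{mindegprop} (with the small exceptions $(4,2),(4,3)$), I expect the inequality
\[
r^d \le 2\sum_{x\in\mathcal P\cap G_{r_0'}}|\bar x^H|\,r^{\lfloor(1-1/\alpha(x))d\rfloor} + \sum_{x\in\mathcal P\cap G_{r_0}}\frac{1}{o(\bar x)-1}|\bar x^H|\,r^{\lfloor(1-1/\alpha(x))d\rfloor}
\]
to fail for all but finitely many $(n,q,d,r)$, reducing to a manageable list. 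I would split off $U_n(q)$ with $n\ge 5$ (handled essentially uniformly, using Proposition~\ref{d2unitary} to pin down all small modules as Weil modules), then $n=4$, $n=3$, and finally the small cases that are also classical/alternating coincidences ($U_4(2)\cong\mathrm{PSp}_4(3)$, $U_3(3)\cong G_2(2)'$) need cross-referencing with Remark~\ref{gcd}(iv).

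For the surviving finite list, the strategy is to first check whether $|G|>|V|$: if so there is no regular orbit, and I determine $b(G)$ either computationally (GAP/Magma, with the acknowledged help for $U_3(3),U_3(7),U_4(3),U_5(2),U_5(3),U_6(2)$) or via Lemma~\ref{fieldext}. If $|G|<|V|$, I try to verify a regular orbit exists by applying the sharper inequalities Proposition~\ref{tools}\ref{eigsp1}--\ref{qsgood}: this requires eigenspace dimension data for projective-prime-order elements, which I would obtain from the Brauer character tables in \cite{modatlas} for semisimple elements, from Proposition~\ref{tools}\ref{alphabound} combined with refined $\alpha$-bounds for unipotent elements, and crucially from the Weil-character machinery — Propositions~\ref{weilunitary}, \ref{itoeig} and \ref{su_weil_tensor} — to control $\dim C_V(g)$ on the Weil modules $(q^n\pm\cdots)/(q+1)$, which are exactly the modules that will dominate the list. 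For modules $V$ that are tensor-decomposable under a subgroup I would invoke Proposition~\ref{tensorcodim}, and for restrictions to geometric subgroups Proposition~\ref{compfactors}. As with the linear case, the output is a table (feeding into Table~\ref{allbad}) recording $G$, $(d,r)$, $\lceil\log|G|/\log|V|\rceil$ and $b(G)$ for each exceptional pair, and a statement that all other pairs satisfy conclusion (i).

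The main obstacle I anticipate is twofold. First, the Weil modules of $U_n(q)$ come in several inequivalent versions (the number depending on $r_0,m,q$), and their Brauer characters are only \emph{inferred} from the ordinary characters via Proposition~\ref{weilunitary}; pinning down eigenspace dimensions tightly enough that Proposition~\ref{tools}\ref{eigsp2} or \ref{qsgood} actually closes — rather than just \ref{crude} — will be delicate, especially for the borderline dimensions where $|G|$ is only slightly smaller than $|V|$. Proposition~\ref{itoeig} gives the possible values of $\dim C_W(g)$ on the natural module, but translating these through the Weil correspondence and then choosing the \emph{worst case} over all eigenvalues $\kappa$ and all inequivalent Weil modules is where the real bookkeeping lies. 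Second, the low-rank groups $U_3(q)$ (for small $q$) and $U_4(2),U_4(3),U_5(2),U_6(2)$ have many small irreducible modules and several exceptional $\alpha$-values, so these will not yield to the generic argument and will need individual computational treatment — this is exactly the part flagged as computationally intensive in the acknowledgements, and it is where I'd expect to spend most of the effort rather than on any single conceptual difficulty.
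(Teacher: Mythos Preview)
Your proposal is correct and follows essentially the same approach as the paper: the paper likewise splits by rank (treating $U_3(q)$, $U_4(q)$, and then $n\ge5$ in turn), applies Proposition~\ref{tools}\ref{crude} with the $\alpha$-bounds of Proposition~\ref{alphas} and the minimal degree $d_1$ of Proposition~\ref{mindegprop} to reduce to a finite list, identifies the surviving modules for $n\ge5$ as Weil modules via Proposition~\ref{d2unitary}, and then handles these using exactly the Weil-character machinery (Propositions~\ref{weilunitary}, \ref{itoeig}, \ref{su_weil_tensor}) and restriction arguments (Propositions~\ref{compfactors}, \ref{tensorcodim}) you describe, with the small-rank cases $U_3(3),U_4(2),U_4(3),U_5(2),U_6(2)$ resolved largely by explicit computation. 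You have correctly anticipated both the overall structure and the genuine bottlenecks.
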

We exclude the cases where $E(G)/Z(E(G)) \cong U_3(2)$ since it is soluble.
\begin{proposition}
Theorem  \ref{umain} holds when $E(G)/Z(E(G)) \cong U_3(q)$ with $q\geq 3$.
\end{proposition}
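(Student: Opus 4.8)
The plan is to follow the template already established for the linear groups in Sections 4, applying Proposition~\ref{tools} with the specialised bounds available for $U_3(q)$. First I would record the relevant data: by Proposition~\ref{alphas}, for $1 \neq x \in G$ we have $\alpha(x) \leq 3$, with the exceptions $\alpha(x) \leq 4$ when $G_0 = U_3(3)$ and $x$ is an inner involution, and $\alpha(x) \leq 4$ when $x$ is an involutory graph-field automorphism of $L_3(q)$ — but here we are in the unitary case, so the graph-field exception does not arise and the only genuine exception is $U_3(3)$. For the minimal degree, Proposition~\ref{mindegprop} gives $d_1(U_3(q)) = \lfloor (q^3-1)/(q+1) \rfloor = q^2 - q$ (for $q \geq 3$), and I would also invoke the Weil representation background from Section~\ref{weilbackground}: the $q^3$-dimensional Weil module and its irreducible constituents of dimensions $(q^3+1)/(q+1)$ and $(q^3-q)/(q+1)$, together with Proposition~\ref{weilunitary} to control eigenspace dimensions of semisimple elements. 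The number of involutory graph automorphisms in $\mathrm{Aut}(U_3(q))$ is at most $4q^{5}$ by Lemma~\ref{graph}\ref{ugraph}, and the number of remaining prime order elements in $G/F(G)$ is bounded above by $|\mathrm{P}\Gamma\mathrm{U}_3(q).2|$ (or more crudely $|\mathrm{Aut}(U_3(q))|$).

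Next I would run the crude inequality Proposition~\ref{tools}\ref{crude}: if $G$ has no regular orbit on $V = V_d(r)$ then
\[
r^d \leq 2|\mathrm{P}\Gamma\mathrm{U}_3(q).2| r^{\lfloor 2d/3\rfloor} + 8q^5 r^{\lfloor 3d/4 \rfloor}
\]
(with an extra $U_3(3)$-specific term $r^{\lfloor 3d/4\rfloor}$ for the inner involutions if $q=3$). Substituting $d = d_1(U_3(q)) = q^2-q$ and using that the left-hand side is increasing in $d$ (the last clause of Proposition~\ref{tools}\ref{crude}), this should fail for all but finitely many pairs $(q,r)$ — I would expect survivors only for very small $q$ (say $q \leq 7$ or so) and bounded $r$. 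For each surviving $(q,r)$ I would consult \cite{HM} (Hiss–Malle) for the list of cross-characteristic irreducible modules of the relevant dimension, noting irrationalities in the Brauer characters (e.g. from \cite{modatlas}) that restrict which fields $\F_r$ actually carry the module, thereby cutting the list down further.

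For the remaining finite list of modules I would proceed case by case, exactly as in the $L_3$ analysis: first check whether $|G| > |V|$, in which case there is no regular orbit and I determine $b(G)$ directly (computationally in GAP/Magma, or via Lemma~\ref{fieldext} to transfer from a field extension). Otherwise I would assemble a table of eigenspace dimensions $\dim C_V(x)$ for $x$ of projective prime order — semisimple ones from the Brauer character tables in \cite{modatlas} (using Proposition~\ref{intchar} and, for Weil modules, Proposition~\ref{weilunitary}), unipotent ones from explicit matrix constructions in GAP or from Proposition~\ref{tools}\ref{alphabound} combined with the $\alpha$-bounds — and then apply the sharper estimate Proposition~\ref{tools}\ref{eigsp1} or \ref{eigsp2}. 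This will dispatch most cases with a regular orbit; the leftover modules (where the inequality is inconclusive) get handled by direct computation of the base size, and those with $b(G) = \lceil \log|G|/\log|V|\rceil + 1$ are recorded in Table~\ref{allbad} (these are the $U_3(3)$ and $U_3(4)$ rows, plus possibly a $U_3(5)$ row). I expect the main obstacle to be the $U_3(3)$ case: it has the anomalous $\alpha \leq 4$ for inner involutions, $U_3(3) \cong G_2(2)'$ so representations in characteristic $2$ must also be treated, it has the smallest minimal degree among the family so many modules survive the crude bound, and it has the delicate feature (flagged in Remark~\ref{gcd}) that among the three $7$-dimensional $\F_5$-modules only one extends to $U_3(3).2$ and the base-size behaviour depends on the Brauer character value on class $2B$ — so that case needs careful module-by-module bookkeeping rather than a uniform argument.
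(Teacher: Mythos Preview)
Your proposal is correct and follows essentially the same approach as the paper: apply the crude inequality from Proposition~\ref{tools}\ref{crude} with $\alpha(x)\leq 3$ (except $\alpha(x)\leq 4$ for inner involutions when $q=3$) and $d_1=q^2-q$ to reduce to a finite list of small $q$, then for each surviving $(q,d,r)$ use Brauer character data, Weil-module information, and GAP/Magma constructions to finish. One small over-caution: for $U_3(q)$ with $q\neq 3$ there is no $\alpha\leq 4$ exception at all (the graph-field clause in Proposition~\ref{alphas} is only for $L_3(q)$), so the $8q^5 r^{\lfloor 3d/4\rfloor}$ term in your crude inequality is superfluous, though harmless; the paper simply uses $r^d\leq 2|\mathrm{P}\Gamma\mathrm{U}_3(q)|r^{\lfloor 2d/3\rfloor}$ for $q\neq 3$.
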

\begin{proof}
We begin by determining the values of $d,q,r$ we must consider. By Proposition \ref{alphas}, $\alpha(x)\leq 3$, unless $q=3$ and $x$ is an inner involution with $\alpha(x)=4$. So by Proposition \ref{tools}\ref{crude}, if $G$ has no regular orbit on $V$, and $q\neq 3$, then
\[
r^d \leq 2|\mathrm{P}\Gamma \mathrm{U}_3(q)| r^{\floor{2d/3}}.
\]
Substituting in $d_1(E(G)) = \floor{(q^3-1)/(q+1)}$, we see that this is false for $q\geq 11$ and $r\geq 2$, so there is a regular orbit of $G$ on all irreducible modules $V$ when $q\geq 11$ by Proposition \ref{tools}\ref{crude}. When we replace $|\mathrm{P}\Gamma \mathrm{U}_3(q)|$ by the number of prime order elements in $\mathrm{P}\Gamma \mathrm{U}_3(q)$, we also eliminate $q=8,9$, and the remaining cases to consider are given in Table \ref{u3q-cases}, obtained by examining \cite{HM}.
 Note that for $q=3$, we use $\alpha(x) \leq 4$, as in Proposition \ref{alphas}.
\begin{table}[h!]
\begin{tabular}{@{}llll@{}}
\toprule
$q$ & $d$ & $r$ & $\mathrm{char}(\mathbb{F}_r)$ \\ \midrule
7 & 42 & 2 & 2 \\
5 & 20 & $\leq 4$ & 2,3 \\
 & 21 & 3 & 3 \\
 & 28 & $\leq 3$ & 2,3 \\
4 & 12 & $\leq 13$ & 3,5,13 \\
3 & 3 & $r \leq 729$ & 3\\
 & 6 & $\leq 64$ & 2,3,7 \\
 & 7 & $\leq 49$ & 3,7 \\
 & 14 & $\leq 8$ & 2,7 \\
 & 27 & 3 & 3 \\
 & 32 & 2 & 2 \\ \bottomrule
\end{tabular}
\caption{Remaining cases for $E(G)/Z(E(G)) \cong U_3(q)$. \label{u3q-cases}}
\end{table}
\par
Suppose $E(G)/Z(E(G)) \cong U_3(7)$.
From Table \ref{u3q-cases}, we are left to consider one 42-dimensional module $V$, and from the Brauer character in \cite{modatlas}, $\emax(g_{2'}) \leq 14$ on the corresponding module. From a construction of $V$ in GAP, we determine that involutions in classes 2A and 2B have eigenspaces of dimension 24 and 21 respectively. The number of prime order elements in $\mathrm{Aut}(U_3(7))$ is 2098571. Therefore, if $G$ has no regular orbit on $V$ then by Proposition \ref{tools}\ref{qsgood},
$r^{42} \leq 2\times 2098571r^{14}+ |2A|r^{24}+|2B|r^{21} $, which is false for $r=2$.
\par
Now let $E(G)/Z(E(G)) \cong U_3(5)$.
The number of prime order elements in $\mathrm{Aut}(U_3(5))$ is 71849, and we proceed by examining each of the relevant cases in Table \ref{u3q-cases}. We begin with the modules in characteristic 3.
First suppose $d=28$. There are at most 16550 elements of order 3 in $G/F(G)$, and we infer from the Brauer character in \cite{modatlas} that $\emax(g_{3'}) \leq 16$ on both of the 28-dimensional modules. Therefore, if $G$ has no regular orbit on $V$,
	\[
	r^{28} \leq 2(71849-16550)r^{16} + \frac{1}{2}16550r^{\floor{2\times 28/3}},
	\]
which is false for $r=3$.
If instead $V=V_{20}(3)$, then we compute that $\emax(g_2) \leq 12$ and $\emax(g_{\{2,3\}'}) \leq 5$. The number of involutions in $\mathrm{Aut}(U_3(5))$ is 3675. In addition,  we use Magma to compute that $C_V(g_3)\leq 8$. Therefore, if $G$ has no regular orbit on $V$,
	\[
	r^{20} \leq  2\times 71849 r^5 + 3675 (r^{12} + r^8) + (|3A|+|3B|+|3C|) r^{8} 
	\]
	which is false for $r=3$.
	
Now suppose $V$ is a 21-dimensional Weil module. Again, we find from the Brauer character in \cite{modatlas} that $\emax(g_2) \leq 13$ and $\emax(g_{\{2,3\}'}) \leq 5$.
Therefore, if $G$ has no regular orbit on $V$,
\[
r^{21} \leq  2\times 71849 r^5 + 3674 (r^{13} + r^8) + (|3A|+|3C|) r^{10} + |3B| r^{14}
\]
This is also false for $r=3$. We now consider the modules in characteristic 2.
So from Table \ref{u3q-cases}, $d=20$ or 28. Let $V=V_{20}(r)$. From the Brauer character we compute that $\emax(g_3) \leq 8$ and $\emax(g_{\{2,3\}'}) \leq 5$. Therefore, if $G$ has no regular orbit on $V$,
\[
r^{20} \leq 2 \times (71849-3674-16550)r^{5}+ 2\times 16550r^8 + 3674r^{\floor{2\times 20/3}}.
\]
This is false for $r=4$. When $r=2$, we find in GAP that there is a single regular orbit under $U_3(5)$ and there is no regular orbit for any $G$ with $U_3(5)<G \leq U_3(5).S_3$ and $b(G)=2$ here by Lemma \ref{fieldext}. 
Now suppose $V = V_{28}(2)$. In GAP we compute that $\emax(g_2)\leq  16$. Let $H=G/F(G)$. We deduce bounds on eigenspace dimensions from the Brauer character and find that if $G$ has no regular orbit on $V$,
\[
r^{28} \leq 2i_7(H)r^{4} + 2i_5(H)r^8+2|3A|r^{10}+i_2(H)r^{16}
\]
and this is false for $r=2$, so $G$ has a regular orbit on $V$.
\par
Suppose now that $q=4$.
	From Table \ref{u3q-cases}, we see that we only need to consider 12-dimensional modules for $r\leq 13$. When $r=3$, then $G/F(G) \cong U_3(4)$ since the Brauer character for $U_3(4).2$ contains the irrationality $i\notin \F_3$.  Constructing the module in GAP shows there is a regular orbit under $U_3(4)$, however, there is no regular orbit under $2\times U_3(4)$ and $b(G)=2$ here.
For $r=5,9,13$, we summarise the relevant bounds on eigenspace dimensions of projective prime order elements in Table \ref{u34-case-analysis}. Applying Proposition \ref{tools}\ref{qsgood}, we see that $G$ has a regular orbit on $V$ in all cases.
\begin{table}[h!]
\begin{tabular}{cccccc}
\toprule
 & \multicolumn{5}{c}{Bounds on eigenspace dimensions} \\ 
\cmidrule{2-6}
$r$ & $2A$ & $2B$ & $o(x)=3$ & $o(x)=5$ & $o(x)=13$ \\ 
\midrule
5 & 8 & 6 & 4 & 5 & 1 \\ 
9 & 8 & 6 & 4 & 4 & 1 \\ 
13 & 8 & 6 & 4 & 4 & 1 \\ 
\bottomrule
\end{tabular} 
\caption{Upper bounds on $\emax$ for projective prime order elements acting on $V_{12}(r)$. \label{u34-case-analysis}}
\end{table}
\par
Finally, suppose $q=3$. Since $U_3(3) \cong G_2(2)'$, we will consider representations in characteristics 2, 3, 7.
The number of prime order elements in $U_3(3).2$ is 2771. We determine in GAP that if $x$ is of order 7 or in class 3A, then $\alpha(x)=2$. Otherwise, $\alpha(x) \leq 3$ unless $x$ is an inner involution and $\alpha(x) \leq 4$. Therefore, if $G$ has no regular orbit on an absolutely irreducible  $\mathbb{F}_rE(G)$-module $V$,
\begin{align*}
r^d & \leq 2(i_7(G) + |3A|)r^{\floor{d/2}}+ 2(|3B| + |2B|)r^{\floor{2d/3}}+ 2 |2A| r^{\floor{3d/4}}\\
& \leq 2(1728+ 672)r^{\floor{d/2}}+ 2(56+ 252)r^{\floor{2d/3}}+ 2 \times 63 r^{\floor{3d/4}}
\end{align*}
	Substituting in the cases listed in Table \ref{u3q-cases}, we see that we only now need to consider $d=3,6,7,14$ for $r=9,81,729$, $r\leq 16$, $r=3,7,9$ and $r=2,4$ respectively. We resolve all of the remaining cases by constructing the modules in GAP and computing the base size.We summarise our findings in Table \ref{u33_tab}.

	\begin{table}[h!]
		\centering
		\begin{tabular}{@{}lllll@{}}
			\toprule
			$d$ & Field & $b(G)=1$ & $b(G)=2$ & $b(G)=3$ \\ \midrule
			3 & $r=9^k$ & $r\geq 729$ & $r=81^*$ & $r=9^*$ \\
			6 & $r=2^k$ & $r\geq 8$ & $r=4$ & $r=2$ \\
			& $r=9^k$ & $r\geq 9$ &  &  \\
			& $r=7^k$ & $r\geq 49$ & $r=7^*$ &  \\
			7 & $r=3^k$ & $r\geq 9$ & $r=3$ &  \\
			& $r=7^k$ & $r\geq 7$ &  &  \\
			14 & $r=2^k$ & $r\geq 4$ & $r=2$ &  \\ \bottomrule
		\end{tabular}
	\caption{The base sizes of some irreducible modules for $G$ with $G/F(G) \cong U_3(3)$. \label{u33_tab}}
	\end{table}
The entries of Table \ref{u33_tab} with asterisks require some further explanation.
	When $(d,r)=(6,7)$, we compute that $\langle \rho \rangle \times U_3(3)$, $\rho \in \mathbb{F}_7^\times$  has a regular orbit on $V$, except when $\langle \rho \rangle \cong \mathbb{F}_7^\times$. In this case, $b(G)=2$ by Lemma \ref{fieldext}. Moreover, $U_3(3).2$ and $3\times U_3(3).2$ have a regular orbit on $V$, while  $2\times U_3(3).2$ and $6\times U_3(3).2$ do not. Again, these have $b(G)=2$  by Lemma \ref{fieldext}.
	
	If instead $(d,r)=(3,9)$ then $|V|<|G|$ and $b(G) =2$ if $G=U_3(3)$, and $b(G) =3$ if $c\times U_3(3)$ with $c \in\{2,4,8\}$.  If now $(d,r)=(3,81)$, then $G$ has a regular orbit on $V$ if $G = U_3(3)$ or $5\times U_3(3)$, and $b(G) =2$ if $G=c\times U_3(3)$ for even $c$ dividing 80. 
	
\end{proof}
\begin{proposition}
\label{u_4_q}
Theorem  \ref{umain} holds when $E(G)/Z(E(G)) \cong U_4(q)$ with $q\geq 2$.
\end{proposition}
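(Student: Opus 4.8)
The plan is to follow the same template used for $U_3(q)$, adapted to the $4$-dimensional natural module and using the Weil-representation machinery of Section~\ref{weilbackground}. First I would invoke Proposition~\ref{alphas}: for $1\neq x\in G$ with $\mathrm{soc}(G)=U_4(q)$ we have $\alpha(x)\leq 4$, except that for $q\geq 3$ an involutory graph automorphism has $\alpha(x)\leq 6$, and for $q=2$ we are in the excluded case $G_0=L_4^\epsilon(2)$ (so $U_4(2)\cong \mathrm{PSp}_4(3)$ is handled separately, as announced in Remark~\ref{gcd}). Combining this with $d_1(E(G))=\lfloor(q^4-1)/(q+1)\rfloor$ from Proposition~\ref{mindegprop} (with the exception $d_1=6$ for $(4,3)$), Lemma~\ref{graph}\ref{ugraph} bounding the number of graph automorphisms by $4q^{(n^2+n)/2-1}=4q^9$, and the crude bound $|G/F(G)|\leq |\mathrm{P}\Gamma\mathrm{U}_4(q).2|$ on the number of prime-order elements, Proposition~\ref{tools}\ref{crude} gives
\[
r^d \leq 2|\mathrm{P}\Gamma\mathrm{U}_4(q).2|\, r^{\lfloor 3d/4\rfloor} + 4q^{9}\, r^{\lfloor 5d/6\rfloor}.
\]
Substituting $d=d_1(E(G))$ and $r\geq 2$, this should fail for all but finitely many $q$ — I expect only $q=2,3$ and perhaps $q=4,5$ to survive — reducing to a finite list of triples $(q,d,r)$, compiled from Hiss--Malle \cite{HM} together with Proposition~\ref{d2unitary} to bound the next-smallest degree.

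Next I would treat the surviving cases individually, in decreasing order of $d$, exactly as in the $U_3(q)$ proof. For each remaining module I would tighten the bound on $\emax(x)$ for projective prime-order $x$: for semisimple elements use the Brauer character in \cite{modatlas} when available, or for Weil modules use Propositions~\ref{weilunitary}, \ref{itoeig} and \ref{su_weil_tensor} to read off $\dim C_W(g)$ and hence the eigenvalue multiplicities via Proposition~\ref{intchar}; for unipotent elements and graph automorphisms fall back on Proposition~\ref{tools}\ref{alphabound} with the refined $\alpha$-bounds from GAP, or an explicit matrix construction. Then apply Proposition~\ref{tools}\ref{eigsp2} (or \ref{qsgood}) with the separate counts $i_{r_0}(\mathrm{Aut}(U_4(q)))$, $\gamma(\mathrm{Aut}(U_4(q)))$ from Lemma~\ref{graph}\ref{ugraph}, and class sizes of the small-eigenspace elements from \cite{bg}. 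This should dispose of everything except a handful of modules where $|G|>|V|$ — principally the small $U_4(3)$ modules (the $6_1.U_4(3)$ Weil modules of dimension $6$ over various $\F_r$, and $3_1.U_4(3)$) listed in Tables~\ref{allbad} and \ref{noros} — where I would instead compute the base size directly in GAP or Magma, using Lemma~\ref{fieldext} to transfer results between field extensions.

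The main obstacle I anticipate is the $U_4(3)$ case: this group has large Schur multiplier ($3_1, 3_2, 4$, hence $12$) and outer automorphism group $D_8$, so there are many covering groups and isoclinic variants to disentangle, several inequivalent $6$-dimensional Weil modules over $\F_r$ for $r\in\{13,16,19,25,31,37,49\}$, and numerous cases with $|G|\geq|V|$ where a regular orbit genuinely fails and the exact base size must be pinned down computationally. The graph-automorphism contribution (the $\alpha(x)\leq 6$ exception for $U_4(q).2$, $q\geq 3$) also makes the crude inequality weaker, so for $q=3$ the reduction will not be clean and a more careful eigenspace analysis via the Weil character will be needed before Proposition~\ref{tools} can be applied; for $q=4,5$ the unique small modules are $4$- and $12$-dimensional respectively and should yield to the standard argument or a short computation. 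As in the unitary sections already completed, I expect asterisked subtleties — where the base size depends on a Brauer character irrationality or on which of several algebraically-conjugate modules is taken — which I would record in remarks accompanying the relevant tables.
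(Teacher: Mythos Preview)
Your plan matches the paper's approach almost exactly: the paper applies Proposition~\ref{tools}\ref{crude} with $d_1(G)\geq (q^4-1)/(q+1)$ and $\alpha(G)\leq 6$ to reduce to $q\leq 5$, and then works through $q=5,4,3,2$ individually using counts of prime-order elements, Brauer characters from \cite{modatlas}, and GAP/Magma constructions, with the $q=3$ case indeed dominating the effort (Tables~\ref{u43-cases}--\ref{u43-case-analysis}).

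There is one organizational gap: your plan defers $q=2$ on the grounds that $U_4(2)\cong\mathrm{PSp}_4(3)$ is ``handled separately''. In fact the symplectic section (Theorem~\ref{mainsymp}) explicitly \emph{excludes} $(n,q)=(4,3)$ and points back to this very proposition for it, so $q=2$ must be treated here. Moreover Proposition~\ref{alphas} does not cover $G_0=U_4(2)$ (it is among the excluded cases $L_4^\epsilon(2)$), so you cannot simply quote $\alpha(x)\leq 4$ or $6$; the paper instead computes class-by-class $\alpha$-bounds in GAP (Table~\ref{s43-counts}) and then runs the same eigenspace analysis across all characteristics $r_0\mid |U_4(2)|$ (Table~\ref{s43-case-analysis}). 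This produces the bulk of the $U_4(2)$ rows in Tables~\ref{allbad} and \ref{noros}, including the exceptional $(G,V)$ with $b(G)=\lceil\log|G|/\log|V|\rceil+2$ listed in Theorem~\ref{mainthm}(iii). Your toolkit would handle this once you realise it belongs here, but as written the case is missing.
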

\begin{proof}
Applying Proposition \ref{tools} \ref{crude} with $d_1(G) \geq (q^4-1)/(q+1)$ from Proposition \ref{mindegprop} and $\alpha(G) \leq 6$ from Proposition \ref{alphas}, we deduce that $G$ has a regular orbit on all $V$ if $q \geq 7$. For the remainder we may assume that $q\leq 5$.
\par
Suppose $q=5$. The number of prime order elements in $\mathrm{Aut}(U_4(5))$ is 1572224649, while the number of involutions is 2391775. By Proposition \ref{alphas}, $\alpha(x) \leq 4$ for all non-trivial $x \in G/F(G)$, unless $x$ is an involutory graph automorphism with $\alpha(x) \leq 6$. 
Therefore, if $G$ has no regular orbit on $V$,
	\[
	r^d \leq 2\times 1572224649r^{\floor{\frac{3d}{4}}} + 2\times 1971900r^{\floor{\frac{5d}{6}}},
	\]
so by examining \cite{HM} we see that $G$ has a regular orbit on $V$ unless possibly $r=2$ and $V=V_{104}(2)$. 
We compute in GAP that this module is a composition factor of the reduction modulo 2 of the 105-dimensional irreducible module of $G$ over $\mathbb{Z}$, so we can deduce the Brauer character of $V$ from the ordinary character. From the Brauer character we find that $\emax(g_{2'}) \leq 42$. We also find that the number of graph automorphisms in $G/F(G)$ is at most 3150. Therefore, if $G$ has no regular orbit on $V$ then by Proposition \ref{tools}\ref{qsgood},
\[
r^{104} \leq2\times 1572224649r^{42} +  2391775r^{\floor{\frac{3d}{4}}}+ 3150r^{\floor{\frac{5d}{6}}}.
\]
This inequality is false for $r=2$, so $G$ has a regular orbit on $V$.
\par
Now suppose $q=4$.
 The number of prime order elements in $U_4(4).4$ is 156437999, and 335855 of these are involutions. If $G$ has no regular orbit on an irreducible module $V$,
\[
r^d \leq 2\times 156437999r^{\floor{\frac{3d}{4}}}+2\times 335855r^{\floor{\frac{5d}{6}}}
\]
This is false for all $r,d$ except $r=3$ and $d \leq 72$. By \cite{HM}, this leaves a 52-dimensional module $V$ to consider. This module can be constructed by reducing a 52-dimensional module for $G$ over $\mathbb{Z}$, so we can deduce the Brauer character corresponding to $V$. The number of elements of order 3 in $G$ is 1214720, and examining the Brauer character we see that $\emax(g_{3'}) \leq 28$, except if the element $x$ is in class 2A or 2C, where $\dim C_V(x) \leq 32$ or 36 respectively. Therefore, if $G$ has no regular orbit on $V$,
	\[
	r^{52} \leq 2\times 156437999r^{28}+ 2\times |2A|r^{32}+2\times |2C|r^{36}+1214720r^{\floor{\frac{3\times 52}{4}}}.
	\]
This is false for $r=3$, so $G$ has a regular orbit on $V$.
\par
Now suppose $q=3$.
 We summarise the numbers of prime order elements in $H=U_4(3).D_8$ computed using GAP in Table \ref{u43-counts}.
\begin{table}[h!]
\begin{tabular}{cc}
\toprule
$r_0$ & $i_{r_0}(H)$ \\ 
\midrule
2 (graph auts) & 20664 \\ 
2 (not graph auts) & 7911 \\ 
3 & 47600 \\ 
5 & 653184 \\ 
7 & 933120 \\ 
\midrule
Total & 1662479 \\ 
\bottomrule
\end{tabular} 
\caption{Elements of prime order in $U_4(3).D_8$. \label{u43-counts}}
\end{table}
Therefore, if  $G$ has no regular orbit on $V$,
\[
r^d \leq 2(1662479-20664)r^{\floor{\frac{3d}{4}}} + 2\times 20664r^{\floor{\frac{5d}{6}}}
\]
This is false except for the cases in Table \ref{u43-cases}, which is derived from examining \cite{HM} and also takes Brauer character irrationalities into account. For example,  when $r=2$, the inequality is false for all $d\geq 93$.  An examination of \cite{HM} shows that the Brauer character of every absolutely irreducible module for $E(G)$ in characteristic 2 of dimension $36\leq d\leq 93$ contains a $z_3$ irrationality, so is not realised over $\F_2$, so we do not have to consider any such modules over $\F_2$.
\begin{table}[h!]
\begin{tabular}{ccccc}
\toprule
$d$ & $E(G)$ & Characteristic & Irrationalities & $r\leq $ \\ 
\midrule
6 & $3_1.U_4(3)$ & 2 & $z_3$ & $2^{15}$ \\ 
6 & $6_1.U_4(3)$ & $\neq 2,3$ & $z_3$ & 41407  \\ 
15 & $3_1.U_4(3)$ & $\neq 3$ & $z_3$ & 32 \\ 
20 & $U_4(3)$ & 2 &  & 16 \\ 
20 & $2.U_4(3)$ & $\neq 2,3$ &  & 7 \\ 
20 & $4.U_4(3)$ & $\neq 2,3$ & $i_1$ & 5 \\ 
21 & $U_4(3)$ & $\neq 2,3$ &  & 7 \\ 
21 & $3_1.U_4(3)$ & $\neq 2,3$ & $z_3$ & 7 \\ 
34 & $U_4(3)$ & 2 &  & 4 \\ 
35 & $U_4(3)$ & $\neq 2,3$ &  & 5 \\ 
36 & $3_2.U_4(3)$ & $\neq 3$ & $z_3$ & 4\\
\bottomrule
\end{tabular} 
\caption{Remaining modules to consider for $G$ with $\mathrm{soc}(G/F(G)) \cong U_4(3)$.\label{u43-cases}}
\end{table}
We summarise our analysis of these remaining cases in Tables \ref{u43-ros} and \ref{u43-case-analysis}. The italicised entries in Table \ref{u43-case-analysis} were obtained by a construction in GAP and the underlined entries using Magma. The bolded entries were found by constructing the group $m.U_4(3).2_2$ as a subgroup of $m.U_6(2)$, and then computing eigenspace dimensions. The remaining entries were obtained from the Brauer characters and Proposition \ref{tools}\ref{alphabound}.
An application of  Proposition \ref{tools}\ref{eigsp1} implies that $G$ has a regular orbit on $V$ in all of the cases listed in Table \ref{u43-case-analysis}, except for those listed in Table \ref{u43-ros}, where the value of $b(G)$ is computed explicitly in Magma \cite{Magma}.

\begin{table}[h]
\begin{tabular}{cccc}
\toprule
$G$              & $(d,r)$   & $\lceil \frac{\log|G|}{\log |V|} \rceil$ & $b(G)$                \\
\midrule
$U_4(3)\leq G\leq U_4(3).D_8$     & $(20,2)$  & 2                                        & 2            \\
$3_1.U_4(3).2_2$ & $(15,4)$  & 1                                        & 1      \\
$c\circ(6_1.U_4(3))$, $c\mid 24$ & $(6,25)$  & 1                                        & 2        \\
$c\circ(6_1.U_4(3).2_2)$, $c\mid 24$ & $(6,25)$  & 1                                        & 2        \\
$6_1.U_4(3).2_2$ & $(6,7)$   & 2                                        & 2            \\
$c\circ(6_1.U_4(3).2_2)$, $c\mid 48$ & $(6,49)$  & 1                                        & 2             \\
$3_1.U_4(3)$     & $(6,4)$   & 2                                        & 3      \\
$3_1.U_4(3).2_2$ & $(6,4)$   & 3                                        & 3       \\
$3_1.U_4(3)$     & $(6,16)$  & 1                                        & 2      \\
$15\circ(3_1.U_4(3))$ & $(6,16)$  & 2                                        & 2       \\
$c\circ(3_1.U_4(3).2_2)$, $c\mid 15$ & $(6,16)$  & 2                                        & 2       \\
$3_1.U_4(3).2_2$ & $(6,64)$  & 1                                        & 1          \\
$3_1.U_4(3).2_2$ & $(6,256)$ & 1                                        & 1        \\
\bottomrule   
\end{tabular}
\caption{Base sizes of some modules for $G$ with $\mathrm{soc}(G/F(G)) \cong U_4(3)$. \label{u43-ros}}
\end{table}

	{\footnotesize
		\begin{table}[h!]
		\centering
	\begin{tabular}{@{}lccccccc@{}}
		\toprule
		$E(G)$ & $(d,r)$ & 2A & 2B & 2C & 2D & 2E & 2F \\ \midrule
		$U_4(3)$ & $(35,5)$ & $(19,16)$ & $(21,14)$ & $(20,15)$ & $(25,10)$ & $(18,17)$ & -- \\
		$U_4(3)$ & $(34,2^k)$ & \textit{18} & {\ul 20} & {\ul 19} & {\ul 20} & {\ul 19} & -- \\
		$3_1.U_4(3)$ & $(21,7)$ & $(13,8)$ & -- & -- & $(16,5)$ & $(12,9)$ & -- \\
		$U_4(3)$ & $(21,7)$ & $(13,8)$ & $(14,7)$ & $(11,10)$ & $(15,6)$ & $(11,10)$ & $(11,10)$ \\
		& $(21,5)$ & $(13,8)$ & $(14,7)$ & $(11,10)$ & $(15,6)$ & $(11,10)$ & $(11,10)$ \\
		$4.U_4(3)$ & $(20,5)$ & $(10^2)$ & $(13,7)$ & $(10^2)$ & -- & -- & -- \\
		$2.U_4(3)$ & $(20,7)$ & $(12,8)$ & $(14,6)$ & $(10^2)$ & $(10^2)$ & $(10^2)$ & $(10^2)$ \\
		& $(20,5)$ & $(12,8)$ & $(14,6)$ & $(10^2)$ & $(10^2)$ & $(10^2)$ & $(10^2)$ \\
		$U_4(3)$ & $(20,2^k)$ & {\ul 12} & {\ul 14} & {\ul 10} & {\ul 14} & {\ul 10} & {\ul 11} \\
		$3_1.U_4(3)$ & $(15,4^k)$ & \textbf{9} & -- & -- & \textbf{11} & \textbf{9} & -- \\
		& $(15,25)$ & $(8,7)$ & -- & -- & $(10,5)$ & $(9,6)$ & -- \\
		& $(15,7)$ & $(8,7)$ & -- & -- & $(10,5)$ & $(9,6)$ & -- \\
		$6_1.U_4(3)$ & $(6,25^k)$ & $(4,2)$ & -- & -- & $(5,1)$ & $(3^2)$ & -- \\
		& $(6,7^k)$ & $(4,2)$ & -- & -- & $(5,1)$ & $(3^2)$ & -- \\
		$3_1.U_4(3)$ & $(6,4^k)$ & \textbf{4} & -- & -- & \textbf{5} & \textbf{3} & -- \\
		\midrule
		\#elements &  & 2835 & 540 & 4536 & 126 & 5670 & 4536 \\ \bottomrule
	\end{tabular}
			\caption{Eigenspace dimensions of elements of projective prime order, Part I.  \label{u43-case-analysis}}
	\end{table}
\begin{table}[h!]
	\ContinuedFloat
	\centering
		\begin{tabular}{@{}lccc>{\centering\arraybackslash}p{2cm}cc@{}}
			\toprule
			$E(G)$ & $(d,r)$ & 3A & 3B & 3C/3D & 5A & 7A/7B \\ \midrule
			$U_4(3)$ & $(35,5)$ & $(17,9^2)$ & $(17,9^2)$ & $(12^2,11)$ & \textit{7} & $(5^7)$ \\
			$U_4(3)$ & $(34,2^k)$ & $(16,9^2)$ & $(16,9^2)$ & $(12^2,10)$ & $(7^4,6)$ & $(5^6,4)$ \\
			$3_1.U_4(3)$ & $(21,7)$ & $(9,6^2)$ & $(11,5^2)$ & $(7^3)$ & $(5,4^4)$ & \textbf{3} \\
			$U_4(3)$ & $(21,7)$ & $(9^2,3)$ & $(9,6^2)$ & $(9,6^2)$ & $(5,4^4)$ & \textit{3} \\
			& $(21,5)$ & $(9^2,3)$ & $(9,6^2)$ & $(9,6^2)$ & \textit{5} & $(3^7)$ \\
			$4.U_4(3)$ & $(20,5)$ & $(9^2,2)$ & $(8,6^2)$ & $(8,6^2)$ & 10 & $(3^6,2)$ \\
			$2.U_4(3)$ & $(20,7)$ & $(9^2,2)$ & $(8,6^2)$ & $(8,6^2)$ & $(4^5)$ & \textbf{3} \\
			& $(20,5)$ & $(9^2,2)$ & $(8,6^2)$ & $(8,6^2)$ & 10 & $(3^6,2)$ \\
			$U_4(3)$ & $(20,2^k)$ & $(9^2,2)$ & $(8,6^2)$ & $(8,6^2)$ & $(4^5)$ & $(3^6,2)$ \\
			$3_1.U_4(3)$ & $(15,4^k)$ & $(9,3^2)$ & $(7,4^2)$ & $(5^3)$ & $(3^5)$ & $(3,2^6)$ \\
			& $(15,25)$ & $(9,3^2)$ & $(7,4^2)$ & $(5^3)$ & \textbf{3} & $(3,2^6)$ \\
			& $(15,7)$ & $(9,3^2)$ & $(7,4^2)$ & $(5^3)$ & $(3^5)$ & \textbf{3} \\
			$6_1.U_4(3)$ & $(6,25^k)$ & $(3^2)$ & $(4,1^2)$ & $(2^3)$ & \textbf{2} & $(1^6)$ \\
			& $(6,7^k)$ & $(3^2)$ & $(4,1^2)$ & $(2^3)$ & $(2,1^4)$ & \textbf{1} \\
			$3_1.U_4(3)$ & $(6,4^k)$ & $(3^2)$ & $(4,1^2)$ & $(2^3)$ & $(2,1^4)$ & $(1^6)$ \\
			\midrule
			\#elements &  & 560 & 3360 & 3360/40320 (2 classes) & 653184 & 466560 (each) \\ \bottomrule
		\end{tabular}
			\caption{Eigenspace dimensions of elements of projective prime order, Part II}
		\end{table}
	}	
	Finally, suppose $q=2$. Recall that $ U_4(2) \cong \mathrm{PSp}_4(3)$, so we investigate modules in all characteristics $r_0$ such that $r_0 \mid |U_4(2)|$.
	We summarise some information about elements of prime order in $H= U_4(2) .2$ in Table \ref{s43-counts}. This information was obtained using GAP.
	\begin{table}[h!]
		\centering \begin{tabular}{ccc}
			\toprule
			Prime $r_0$ &  $i_{r_0}(H)$ & $\alpha(x) \leq$ \\ 
			\midrule
			2 & 891 & 3 (2B/2D), 5 (2A), 6 (2D) \\ 
			3 & 800 & 4 (3A), 3 (o/w) \\ 
			5 & 5184& 2 \\ 
			\bottomrule
		\end{tabular} 
		\caption{Elements of prime order in $H= U_4(2) .2$. \label{s43-counts}}
	\end{table}
	
	If $G$ has no regular orbit on $V$,
	\[
	\frac{1}{2} r^d \leq 5184r^{\floor{d/2}}+80r^{\floor{3d/4}}+ (240+480)r^{\floor{2d/3}}+ (270+540)r^{\floor{2d/3}}+45r^{\floor{4d/5}}+ 36r^{\floor{5d/6}}.
	\]
	
	
	This is false except for the modules set out in Table \ref{s43-case-analysis}, which were obtained from the Brauer character tables \cite[p. 60--62]{modatlas}. In Table \ref{s43-case-analysis} we also present our analysis of these cases. We give the eigenspace dimensions of elements of projective prime order, obtained by
	examining the Brauer characters of the modules and constructing the modules themselves using GAP.
	{\renewcommand{\arraystretch}{1.5}	\begin{table}[h!]
			\centering \begin{tabular}{ccccccccc}
				\toprule
				& \multicolumn{8}{c}{Class in $ U_4(2) .2$} \\ \cline{2-9} 
				& 2A & 2B & 3A/3B & 3C & 3D & 5A & 2C & 2D \\
				\midrule
				Size & 45 & 270 & 80 & 240 & 480 & 5184 & 36 & 540 \\
				\midrule
				$V_4(4^k)$ & 3 & 2 & $(3,1)$ & $(2^2)$ & $(2,1^2)$ & $(1^4)$ & -- & -- \\
				$V_6(2^k)$ & 4 & 4 & $(3^2)$ & $(4,1^2)$ & $(2^3)$ & $(2,1^4)$ & 5 & 3 \\
				$V_{14}(2^k)$ & 8 & 8 & $(8,3^2)$ & $(6,4^2)$ & $(5^2,4)$ & $(3^4,2)$ & 10 & 8 \\
				$V_5(3^k)$ & $(4,1)$ & $(3,2)$ & 3 & 3 & 3 & $(1^5)$ & $(4,1)$ & $(3,2)$ \\
				$V_{10}(3^k)$ & $(6,4)$ & $(6,4)$ & 6 & 4 & 4 & $(2^5)$ & $(6,4)$ & $(6,4)$ \\
				$V_{14}(3^k)$ & $(10,4)$ & $(8,6)$ & 6 & 6 & 6 & $(3^4,2)$ & $(10,4)$ & $(8,6)$ \\
				$V_{4}(3^k)$ & $(2^2)$ & $(2^2)$ & 3 & 2 & 2 & $(1^4)$ & $(2^2)$ & $(2^2)$ \\
				$V_{16}(3^k)$ & $(8^2)$ & $(8^2)$ & 8 & 6 & 6 & $(4,3^4)$ & $(8^2)$ & $(8^2)$ \\
				$V_4(25^k)$ & $(2^2)$ & $(2^2)$ & $(1,3)$ & $(2^2)$ & $(2,1^2)$ & 1 & -- & -- \\
				$V_5(25^k)$ & (4,1) & $(3,2)$ & $(2,3)$ & $(1,2^2)$ & $(3,1^2)$ & 1 & -- & -- \\
				$V_6(5^k)$ & $(4,2)$ & $(4,2)$ & $(3^2)$ & $(4,1^2)$ & $(2^3)$ & 2 & $(5,1)$ & $(3^2)$ \\
				$V_{10}(25^k)$ & $(6,4)$ & $(6,4)$ & $(1,6,3)$ & $(4,3^2)$ & $(4,3^2)$ & 2 & -- & -- \\
				$V_{15a}(5^k)$ & $(8,7)$ & $(8,7)$ & $(9,3^2)$ & $(7,4^2)$ & $(5^3)$ & 3 & $(10,5)$ & $(9,6)$ \\
				$V_{15b}(5^k)$ & (11,4) & $(9,6)$ & $(3,6^2)$ & $(5^3)$ & $(7,4^2)$ & 3 & $(10,5)$ & $(8,7)$ \\ \hline
			\end{tabular}
			\caption{Eigenspace dimensions of projective prime order elements in $G$. \label{s43-case-analysis}}
		\end{table}
	}

	Applying Proposition \ref{tools}\ref{eigsp1}, or alternatively constructing the module with GAP and then computing the base size, we see that $G$ has a regular orbit on $V$ unless $(G,V)$ appear in Table \ref{allbad}, where the base size is computed explicitly.
\end{proof}

Now assume that $n\geq 5$. 
Applying Proposition \ref{tools}\ref{crude}, with $d_1(E(G)) \geq \lfloor (q^n-1)/(q+1)\rfloor$ and  $\alpha(G) \leq n$, we see that either $G$ has a regular orbit on $V$ or $G$ is a group listed in Table \ref{n>5ugrps}. 
	\begin{table}[h!]
		\centering \begin{tabular}{ccccccccc}
			\toprule 
			$n$ & 5 & 6 & 7 & 8 & 9 & 10 & 11 \\ 
			\midrule
			$q$ & $2,3$ & 2,3 & 2,3 & 2 & 2 & 2 & 2  \\ 
			\bottomrule 
		\end{tabular} 
		\caption{Remaining groups to consider when $n\geq 5$. \label{n>5ugrps}}
	\end{table}
	
	We now consider each of these cases in turn.

\par
	\begin{proposition}
	Theorem \ref{umain} holds when $E(G)/Z(E(G)) \cong U_{11}(2)$.
	\end{proposition}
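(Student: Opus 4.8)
The plan is to run the same machine used for the other $U_n(q)$ with $n\geq 5$, which is the crude bound of Proposition~\ref{tools}\ref{crude} together with the input data $\alpha(G)\leq n=11$ (from Proposition~\ref{alphas}) and $d_1(E(G))\geq \lfloor(q^{11}-1)/(q+1)\rfloor$ with $q=2$ (from Proposition~\ref{mindegprop}), so $d_1\geq \lfloor 2047/3\rfloor=682$. First I would bound the number of projective prime order classes crudely by $|{\rm Aut}(U_{11}(2))|$ (or more carefully split off the involutions, bounded by Proposition~\ref{invols} as $i_2({\rm Aut}(U_{11}(2)))<2(2^{N_2}+2^{N_2-1})$ with $N_2=\dim\bar G-\tfrac12|\Phi|$ for type $A_{10}$, and the graph automorphisms by Lemma~\ref{graph}\ref{ugraph}, which gives at most $4\cdot 2^{(121+11)/2-1}=2^{67}$), so that if $G$ has no regular orbit on $V=V_d(2)$ then
\[
2^{d}\leq 2\,|{\rm Aut}(U_{11}(2))|\,2^{\lfloor 10d/11\rfloor}.
\]
Since $|{\rm Aut}(U_{11}(2))|<2^{242}$ by the standard estimate $|\pgl_n^\epsilon(q)|<q^{n^2-1}$ (used already in the proof of Lemma~\ref{graph}) times the small outer factor, the right-hand side is at most $2^{243+\lfloor 10d/11\rfloor}$, and substituting $d\geq 682$ makes the inequality $2^{682}\leq 2^{243+620}=2^{863}$ — wait, that does \emph{not} immediately fail, so the point is that the exponent gap $d-\lfloor 10d/11\rfloor\approx d/11$ must beat $243$, i.e.\ one needs $d/11>243$, which holds comfortably once $d\geq 682$ since $682/11=62<243$ — so in fact a more careful count is required. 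Concretely I would use the sharper involution and semisimple element counts: $i_2({\rm Aut}(U_{11}(2)))$ is of order roughly $2^{N_2}$ with $N_2=120-55=65$ (here $\dim\bar G=120$, $|\Phi|=110$ for $A_{10}$), and the total number of prime order elements is dominated by the semisimple ones, which is still $O(2^{120})$; then the inequality becomes $2^{d}\lesssim 2^{121}\,2^{\lfloor 10d/11\rfloor}$, and $d-\lfloor 10d/11\rfloor\geq d/11$, so failure needs $d/11\leq 121$, i.e.\ $d\leq 1331$. This still does not kill $d=682$ outright with the crudest bound, so the real step is to use the \emph{refined} inequality Proposition~\ref{tools}\ref{eigsp2} (or \ref{qsgood}), separating the contribution of unipotent ($r_0=2$) elements — which have a single eigenspace, of dimension at most $\lfloor(1-1/\alpha)d\rfloor\leq \lfloor 10d/11\rfloor$ but whose \emph{number} is only $O(2^{65})$ — from the semisimple elements, whose eigenspaces on a near-minimal Weil-type module are genuinely much smaller than $\lfloor 10d/11\rfloor d$.

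The key quantitative step, then, is to get a good upper bound on $\emax(g)$ for semisimple $g$ of prime order acting on the smallest few modules $V$ of $U_{11}(2)$. For $n=11$ the relevant small module is the Weil module of dimension $(2^{11}+2)/3=683$ (or its companion of dimension $(2^{11}-1)/3=682$ with a trivial twist adjustment), and by Proposition~\ref{d2unitary} any nontrivial irreducible of dimension below $(q^{n-2}-1)(q-1)\lfloor(q^{n-2}-1)/(q+1)\rfloor$ — which for $q=2$, $n=11$ is $(2^9-1)\cdot 1\cdot\lfloor 511/3\rfloor=511\cdot 170=86870$ — must be a Weil module. So the only modules I must handle are the Weil modules of dimension $682$ and $683$, plus possibly a handful of slightly larger ones up to the Hiss–Malle / $d_2$ threshold, which I would look up. For Weil modules of $\mathrm{SU}_n(q)$ in odd characteristic $r_0$ (here $q=2$ is even, so I use the even-$q$ Weil module framework of Section~\ref{weilbackground}, via the embedding $4\circ 2^{1+2m}.\mathrm{SU}_m(q)$), Proposition~\ref{weilunitary} gives the ordinary Weil character value $(-1)^n(-q)^{\dim C_W(g)}$, and Proposition~\ref{itoeig} pins down the possible values of $\dim C_W(g)$ for semisimple $g$ of odd prime order; translating via Proposition~\ref{intchar} bounds $\emax(g)$ on each irreducible constituent. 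The upshot should be that for any $g$ of prime order $\neq 2$, $\emax(g)$ on the $682$- or $683$-dimensional module is at most something like $\tfrac{1}{3}(2^{9}+\text{small})\approx 171$, i.e.\ a factor of roughly $4$ below $d$, vastly better than the $10d/11\approx 621$ that the crude bound allows.

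With those eigenspace bounds in hand I would plug into Proposition~\ref{tools}\ref{eigsp2}: the semisimple sum is bounded by $2\,|{\rm Aut}(U_{11}(2))|\,2^{171}$ (order $2^{242+171}=2^{413}$, comfortably below $2^{682}$), the unipotent sum by $i_2({\rm Aut}(U_{11}(2)))\,2^{\lfloor 10d/11\rfloor}\approx 2^{65}\cdot 2^{621}=2^{686}$ — hmm, that is \emph{above} $2^{682}$, so I actually need the sharper unipotent eigenspace bound too. Here I would use Proposition~\ref{tools}\ref{alphabound} with the genuine $\alpha$-value for the worst unipotent element: for $U_{11}(2)$, a transvection has $\alpha(x)=n=11$ (Proposition~\ref{alphas}, since this is not one of the listed exceptions for large $n$), giving $\emax\leq\lfloor 10d/11\rfloor$, but for a transvection one in fact knows $\dim C_V(x)$ exactly on a Weil module (the Weil character value at a transvection is computable from Proposition~\ref{weilunitary} specialised to $\dim C_W=n-1$), yielding $\emax$ of order $\tfrac23\cdot 2^{11}/2\approx 682\cdot\tfrac{q-1}{q}$-ish but with the precise coefficient well below $1$; and for \emph{all} unipotent elements other than transvections, $\emax$ drops much faster. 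So the refined count gives unipotent contribution $i_2\cdot 2^{(\text{transvection codim})}$ which is $2^{65}\cdot 2^{\approx 455}=2^{520}\ll 2^{682}$. Hence the total right-hand side of Proposition~\ref{tools}\ref{eigsp2} is $\ll 2^{682}\leq|V|$, so $G$ has a regular orbit on $V$ in every case. The main obstacle I anticipate is precisely this transvection/unipotent estimate: the crude $\alpha$-bound is too weak, so one must either compute the Weil character at unipotent classes explicitly or cite a dimension bound on $\dim C_V(u)$ for unipotent $u$ on Weil modules; once that is pinned down the rest is routine arithmetic, and the conclusion is that Theorem~\ref{umain} holds for $E(G)/Z(E(G))\cong U_{11}(2)$ — in particular $G$ always has a regular orbit, so this family contributes nothing to Table~\ref{allbad}.
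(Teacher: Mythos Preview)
Your proposal contains a fundamental error: throughout you work over $\mathbb{F}_2$, but the hypothesis of Theorem~\ref{umain} is cross-characteristic, i.e.\ $(r,q)=1$. Since $q=2$ for $U_{11}(2)$, the field size $r$ must be \emph{odd}, and the smallest case is $r=3$. All of your displayed inequalities with base $2$ are therefore meaningless, and your identification of involutions as the ``unipotent'' elements is backwards: for $r$ odd the involutions of $G$ are semisimple on $V$, while the elements with a single eigenspace are those whose order divides $r$. In the critical case $r=3$ these are the elements of order $3$, of which there are far more (about $3^{53.58}$ in $\mathrm{Aut}(U_{11}(2))$) than involutions. Your ``main obstacle'' --- transvection eigenspaces in characteristic $2$ --- does not arise at all.

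The paper's argument runs as follows. The crude bound of Proposition~\ref{tools}\ref{crude} with $\alpha(G)\leq 11$ and $d_1\geq 682$ (note $|\mathrm{Aut}(U_{11}(2))|$ is of order roughly $2^{121}$, not $2^{242}$: you have $q^{n^2-1}=2^{120}$, not $2^{242}$), together with Proposition~\ref{d2unitary}, reduces to the single module $V=V_{682}(3)$. For semisimple elements of prime order at least $5$, Propositions~\ref{weilunitary}, \ref{itoeig} and~\ref{intchar} applied to the ambient $2048$-dimensional Weil module give $\emax\leq 512$ (your estimate of $171$ is too optimistic: an order-$5$ element with $\dim C_W=7$ has Weil character value $128$, forcing a $1$-eigenspace of dimension $128+384=512$). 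For elements of order $2$ and $3$ the paper simply retains the crude $\alpha$-bound $\lfloor 10\cdot 682/11\rfloor=620$ but counts them separately ($i_2<3^{42.02}$, $i_3<3^{53.58}$). The resulting inequality
\[
3^{682}\leq 2(3^{42.02}+3^{53.58})\,3^{620}+2\,|U_{11}(2).2|\cdot 3^{512}
\]
is false, finishing the proof. No refinement of the worst-case ($\alpha$-value $11$) eigenspace bound is needed once one is computing in the correct characteristic.
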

\begin{proof}
By Propositions \ref{tools}\ref{crude} and \ref{d2unitary}, we only need consider the 682-dimensional Weil module $V$ for $r=3$.
In GAP we compute that $i_2(H)< 3^{42.02}$  and $i_3(H)<3^{53.58}$ for $H=\mathrm{Aut}(U_{11}(2))$. We will bound the eigenspace dimensions of elements of projective prime order at least 5 on the $2048$-dimensional Weil module $\overline{V}$ for $G$ using Propositions \ref{weilunitary} and \ref{intchar}. Since $V$ is a constituent of $\overline{V}$, this also gives upper bounds for eigenspace dimensions in $V$. Let $\chi$ be the Brauer character corresponding to $\overline{V}$ and let $W$ be the natural module for $U_{11}(2)$. Now, let $x\in G/F(G)$, and suppose $o(x)=5$. By Proposition \ref{itoeig}, $\dim C_W(x) \in \{3,7\}$ and therefore by Proposition \ref{weilunitary}, $\chi(x) = 8$ or 128, which, in the notation of Proposition \ref{intchar}, corresponds to eigenvalues $(\Omega^{408},1^8)$ or $(\Omega^{384},1^{128})$. Therefore, $\dim C_V(x) \leq \dim C_{\overline{V}}(x)\leq 512$. We summarise the analogous calculations for other prime order elements $x$ in Table \ref{u112-eigsp}.

\begin{table}[]
\begin{tabular}{@{}cccc@{}}
\toprule
$o(x)$ & $\dim C_W(x)$ & $\chi(x)$ & Eigenvalues\\ \midrule
7 & 5 & 32 & $(\Omega^{288},1^{32})$ \\
11 & 1 & 2 & $(\Omega^{186},1^2)$ \\
 & 6 & -64 & $(\overline{\Omega}^{64}, \Omega^{128})$ \\
17 & 3 & 8 & $(\Omega^{120},1^8)$ \\
19 & 2 & -4 & $(\overline{\Omega}^{4}, \Omega^{104})$ \\
31 & 1 & 2 & $(\Omega^{66},1^2)$ \\
43 & 4 & -16 & $(\overline{\Omega}^{16}, \Omega^{32})$ \\
683 & 0 & -1 & $(\overline{\Omega}, \Omega^{3})$ \\ \bottomrule
\end{tabular}
\caption{Eigenvalues of some prime order elements of $U_{11}(2)$ on $\overline{V} = V_{2048}(3)$.}
\label{u112-eigsp}
\end{table}

From Table \ref{u112-eigsp}, we therefore deduce that $\emax(g_{\{2,3\}'}) \leq 512$ on $\overline{V}$ and therefore $V$. Therefore, if $G$ has no regular orbit on $V$,
\[
3^{682} \leq 2(3^{42.02}+3^{53.58})3^{\floor{10\times 682/11}}+ 2|U_{11}(2).2|\times 3^{512}.
\]
This is false for $r=3$, so $G$ has a regular orbit on $V$.
\end{proof}

	\begin{proposition}
	Theorem \ref{umain} holds when $E(G)/Z(E(G)) \cong U_{10}(2)$.
	\label{prop_u102}
	\end{proposition}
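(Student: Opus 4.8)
The plan is to follow exactly the template used for $U_{11}(2)$ in the preceding proposition, adapting the numerology to $n=10$, $q=2$. First I would invoke Proposition~\ref{tools}\ref{crude} together with Proposition~\ref{d2unitary} (the bound $d_2$ on the smallest non-Weil module) to cut the problem down to a short list of modules: the Weil modules of dimension $(2^{10}-1)/3=341$ and $(2^{10}+2\cdot(-1)^{10})/3=342$, realised over small cross-characteristic fields $\F_r$ with $(r,|G|)>1$. One checks from Proposition~\ref{tools}\ref{crude}, using $\alpha(G)\le n=10$ from Proposition~\ref{alphas} and $d_1(E(G))\ge 341$, that for all $d$ and $r$ except these Weil dimensions over a few small $r$ there is a regular orbit; the degrees are then pinned down precisely via \cite{HM} and Proposition~\ref{d2unitary}. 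The remaining cases will be a 341- or 342-dimensional module over $\F_3$ (and possibly $\F_5$, $\F_7$, $\F_9$), exactly as one expects from the Brauer character tables.

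Next, for the surviving module(s) I would bound the eigenspace dimensions $\emax(g)$ of elements $g$ of projective prime order. For the unipotent (order $2$) elements and order-$3$ elements one uses the GAP counts $i_2(H)$, $i_3(H)$ for $H=\mathrm{Aut}(U_{10}(2))$, bounding their fixed spaces crudely via $\alpha$ (Proposition~\ref{tools}\ref{alphabound}). For elements of projective prime order coprime to $6$, I would work inside the $2^{10}=1024$-dimensional ambient Weil module $\overline V$ for $G$: use Proposition~\ref{itoeig} to list the possible values of $\dim C_W(g)$ on the natural module $W$, feed these into Proposition~\ref{weilunitary} to compute the ordinary/Brauer Weil character value $\chi(g)=(-1)^{10}(-2)^{\dim C_W(g)}$, and then read off the eigenvalue multiplicities on $\overline V$ from Proposition~\ref{intchar}. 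This yields a uniform bound $\emax(g_{\{2,3\}'})\le 2^{9}=512$ on $\overline V$, hence on its constituent $V$, just as in the $U_{11}(2)$ case (a small table of $(o(g),\dim C_W(g),\chi(g),\text{eigenvalues})$ would be displayed). One must be slightly careful about the exceptional value $i=\delta(r_0,q)=2$ case excluded in Proposition~\ref{itoeig} (order-$3$ elements), which is why those are handled separately via the $\alpha$-bound and GAP counts.

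Then I would assemble the inequality from Proposition~\ref{tools}\ref{qsgood}: if $G$ has no regular orbit on $V=V_d(r)$ with $d\in\{341,342\}$,
\[
r^d \le 2\bigl(i_2(H)+i_3(H)\bigr)\,r^{\lfloor 9d/10\rfloor} + 2\,|U_{10}(2).2|\,r^{512},
\]
where the first term absorbs the (few) remaining prime-order contributions bounded crudely by $\alpha$, and the second absorbs everything with $\emax\le 512$. Substituting the explicit GAP values of $i_2(H)$, $i_3(H)$ and $|U_{10}(2).2|$, and $r=3$ (the only field that can realise a $341$- or $342$-dimensional module over $\F_r$ with $(r,|G|)>1$, after irrationalities are taken into account), one checks the inequality fails, so $G$ has a regular orbit on $V$. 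If the Brauer character tables reveal a second small field (e.g. $\F_9$), the same inequality is checked there.

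The main obstacle I expect is the eigenvalue bookkeeping on the ambient Weil module: one has to correctly enumerate the possible semisimple element orders dividing $|U_{10}(2)|$ but coprime to $6$ (these are governed by the order of $2$ modulo the relevant primes, i.e. the parameter $i=\delta(r_0,2)$), determine $\dim C_W(g)$ in each case from Proposition~\ref{itoeig}, and verify that in every case the resulting multiplicity of the eigenvalue $1$ on $\overline V$ is at most $512$ — being careful that $n=10$ is even so the sign in Proposition~\ref{weilunitary} is $+1$, and that for a few primes the $t=1/2$ branch of Proposition~\ref{itoeig} applies. Once that table is in hand the final arithmetic is routine and purely mechanical, just as in the $U_{11}(2)$ proof.
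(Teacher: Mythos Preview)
Your overall template is right, but there are two concrete gaps, one fatal.

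First, the bound $\emax(g_{\{2,3\}'})\le 2^9=512$ on the ambient $1024$-dimensional Weil module is wrong (and in any case useless, since $512>342=\dim V$). Working through Propositions~\ref{weilunitary}, \ref{itoeig} and \ref{intchar} carefully for $n=10$, $q=2$, the largest fixed space arises from order-$5$ elements with $\dim C_W(g)=6$, giving $\chi(g)=(-2)^6=64$ and multiplicity of $1$ equal to $64+(1024-64)/5=256$. The correct uniform bound is $\emax(g_{\{2,3\}'})\le 256$, as the paper obtains. Also, the initial reduction via Proposition~\ref{tools}\ref{crude} leaves $r=3,5,7$, not just $r=3$; you need all three.

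Second, and this is the real obstruction: for $r=3$ your crude $\alpha$-bound on elements of order $3$ does not suffice. With $\alpha(g_3)\le 10$ you get $\dim C_V(g_3)\le\lfloor 9d/10\rfloor=306$ (for $d=341$), and since $i_3(H)<3^{44.54}$ the term $2\,i_3(H)\,r^{306}\approx 3^{351}$ already exceeds $r^d=3^{341}$, so the inequality in Proposition~\ref{tools}\ref{qsgood} is \emph{true} and you conclude nothing. The paper's essential extra ingredient for $r=3$ is to observe that every class of order-$3$ elements has a representative in $K=\mathrm{SU}_6(2)\times\mathrm{SU}_4(2)$, and to use Proposition~\ref{su_weil_tensor} to decompose the Weil module restricted to $K$ as a tensor product of Weil modules. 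Computing the composition factors of $V\downarrow K$ explicitly and applying Propositions~\ref{compfactors} and \ref{tensorcodim} then yields $\dim C_V(g_3)\le 273$ (resp.\ $274$) for $d=341$ (resp.\ $342$), which is sharp enough to make the final inequality fail. Without this tensor-product restriction argument the $r=3$ case cannot be closed by your method.
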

\begin{proof}
By Propositions \ref{tools}\ref{crude} and \ref{d2unitary}, we must consider the 341- and 342-degree Weil modules for $r=3,5,7$.
We begin with $r=7$.
By Propositions \ref{weilunitary}, \ref{intchar} and \ref{itoeig}, $\emax(g_{\{2,3,7\}'}) \leq 256$ on a Weil module. Let $H=G/F(G)$. If $G$ has no regular orbit on the 342-dimensional Weil module $V$,
\[
r^{342} \leq 2(i_2(H)+i_3(H)+i_7(H))7^{\floor{9\times 342/10}}+ 2|U_{10}(2).2|7^{256},
\]
and this is false for $r=7$. An analogous inequality gives the result when $d=341$.
Now suppose $r=5$. 
By Propositions \ref{weilunitary}, \ref{intchar} and \ref{itoeig}, $\emax(g_{\{2,3,5\}'}) \leq 160$ on a Weil module. Let $H=G/F(G)$. If $G$ has no regular orbit on the 342-dimensional Weil module $V$, then by Proposition \ref{tools}\ref{qsgood},
\[
r^{342} \leq \frac{1}{4}i_5(H)\times 5^{\floor{9\times 342/10}}+2(i_2(H)+i_3(H))5^{\floor{9 \times 342/10}}+ 2|U_{10}(2).2|5^{160},
\]
and this is false for $r=5$. An analogous inequality gives the result when $d=341$.
Finally,  let us assume $r=3$. Here we use Propositions \ref{weilunitary}, \ref{intchar} and \ref{itoeig} to deduce that $\emax(g_{\{2,3\}'}) \leq 256$ on any of the Weil modules. We observe that every class of elements of order 3 in $G$ has a representative in the subgroup $K = \mathrm{SU}_6(2) \times \mathrm{SU}_4(2)$.
By Proposition \ref{su_weil_tensor},  the $2^{10}$-dimensional Weil module $V'$ for $\mathrm{SU}_{10}(2)$ restricts to the subgroup $\mathrm{SU}_6(2) \times \mathrm{SU}_4(2)$
	as the tensor product of a $2^6$-dimensional module for $\mathrm{SU}_6(2)$, and a $2^4$-dimensional module for $\mathrm{SU}_4(2)$.  As described in Section \ref{weilbackground}, the $2^6$-dimensional module restricts as the sum of three irreducible Weil modules of dimensions 21, and one copy of the trivial module for $\mathrm{SU}_6(2)$. On the other hand, the $2^4$-dimensional module restricts as the sum of three irreducible Weil modules of dimensions 5, and one copy of the trivial module for $\mathrm{SU}_4(2)$. Therefore,
	\[
V' \downarrow K = (21+21+21+1)\otimes (5+5+5+1) =(21\otimes 5)^9 +(21\otimes 1)^3+(1\otimes 5)^3+ 1\otimes 1.
	\]
Since $V'$ restricts to $\mathrm{SU}_{10}(2)$ as a sum of two Weil modules of dimension 341 and one of dimension 342, we deduce that $V_{341}(3) \downarrow K$ has composition factors $ (21\otimes 5)^3/21\otimes 1/1\otimes 5$, while $V_{342}(3) \downarrow K$ has composition factors $ (21\otimes 5)^3/21\otimes1 /1\otimes 5/1\otimes 1$. So by Proposition \ref{compfactors}, any element of order 3 has a fixed point space of dimension at most $3\times(\floor{\frac{5\times 21}{6}} \times 5)+\lfloor\frac{5\times 21}{6}\rfloor+ 1\times 5 = 273$ on $V_{341}(3)$, and $274$ on $V_{342}(3)$. By inspecting \cite[Table B.4]{bg}, we deduce that $i_2(G/F(G)) < 3^{34.29}$ and $i_3(G/F(G)) < 3^{44.54}$ by Proposition \ref{invols}. Therefore, if $G$ has no regular orbit on $V$,
	\[
	r^{d} \leq 2|U_{10}(2).2|r^{256} + 2i_2(G/F(G))r^{\floor{9d/10}}+ i_3(G/F(G))r^{d-68},
	\]
	where $d=341$ or 342. This is false for $r=3$, so $G$ has a regular orbit on all of the Weil modules $V$.
%
%
%
%
\end{proof}

\begin{proposition}
		Theorem \ref{umain} holds when $E(G)/Z(E(G)) \cong U_{9}(2)$.
	\end{proposition}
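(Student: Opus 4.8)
The plan is to follow the same template used for $U_{11}(2)$ and $U_{10}(2)$ in the preceding propositions. First I would apply Proposition~\ref{tools}\ref{crude} together with the minimal degree bound $d_1(E(G)) \geq \lfloor (q^9-1)/(q+1) \rfloor = 171$ from Proposition~\ref{mindegprop} and the bound $\alpha(G) \leq 9$ from Proposition~\ref{alphas}, to reduce to a short list of modules. By Proposition~\ref{d2unitary}, any irreducible module below the second-smallest degree bound must be a Weil module of degree $(2^9+1)/3 = 171$ or $(2^9 - 2)/3 = 170$, so only finitely many small fields $r \in \{3,5,7\}$ (those dividing $|U_9(2)|$ with $r$ coprime to $2$ and such that $|V|$ is not obviously too small) survive. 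For each surviving $(d,r)$ I would then bound $\emax(g_{P'})$ for elements of projective prime order away from the bad primes.

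The key computational step is the eigenvalue analysis on the $2^9 = 512$-dimensional Weil module $\overline{V}$. Using Proposition~\ref{weilunitary}, the ordinary Weil character of degree $2^9$ has value $(-1)^9(-2)^{\dim C_W(g)} = -(-2)^{\dim C_W(g)}$; combined with Proposition~\ref{itoeig} to enumerate the possible values of $\dim C_W(g)$ for semisimple $g$ of odd prime order, and Proposition~\ref{intchar} to convert Brauer character values into eigenvalue multiplicities, I obtain a uniform bound of the form $\emax(g_{\{2,3\}'}) \leq 128$ (the largest eigenspace arises from the element of order $3$ or from small semisimple elements, so one must treat the prime $3$ separately exactly as in Proposition~\ref{prop_u102}). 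For the prime $3$ I would restrict the $2^9$-dimensional Weil module to $K = \mathrm{SU}_6(2) \times \mathrm{SU}_3(2)$ (or $\mathrm{SU}_5(2) \times \mathrm{SU}_4(2)$) using Proposition~\ref{su_weil_tensor}, note that every class of order $3$ elements meets $K$, decompose into composition factors and apply Proposition~\ref{compfactors} to get a bound of the form $\dim C_V(g_3) \leq d - c$ for an explicit constant $c$. Finally I would bound $i_2(G/F(G))$ and $i_3(G/F(G))$ using Proposition~\ref{invols} (computing $N_2, N_3$ from $\dim \overline{G}$ and $|\Phi|$ for type ${}^2A_8$) and plug everything into Proposition~\ref{tools}\ref{qsgood}:
\[
r^d \leq 2|U_9(2).2|\, r^{128} + 2 i_2(G/F(G))\, r^{\lfloor 8d/9\rfloor} + i_3(G/F(G))\, r^{d-c},
\]
verifying it fails for $r = 3,5,7$ and the relevant $d \in \{170,171\}$.

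The main obstacle I anticipate is the prime-$3$ case: unlike larger primes, elements of order $3$ can have large fixed spaces on a Weil module (the $\emax$ bound from the $\alpha$-estimate alone, $\lfloor 8d/9\rfloor$, is too weak), so the tensor-decomposition argument via Proposition~\ref{su_weil_tensor} and the careful bookkeeping of composition factors is essential, and one must check that the resulting exponent $d - c$ together with the count $i_3(G/F(G))$ still beats $r^d$ for $r=3$. A secondary concern is that for $r = 3$ the module $V$ of degree $170$ (the "smaller" Weil module) may have $|V|$ close to $|G|$, in which case the inequality could be tight; if Proposition~\ref{tools} fails to close the case I would fall back on a direct computation in Magma or GAP, as done elsewhere in the paper, or sharpen the order-$3$ eigenspace bound by taking a slightly larger subgroup $K$. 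Once all surviving cases are dispatched, Theorem~\ref{umain} holds for $E(G)/Z(E(G)) \cong U_9(2)$.
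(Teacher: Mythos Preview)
Your overall template is right, but there are two structural points you have missed that the paper's proof hinges on.

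First, $\mathrm{Out}(U_9(2))\cong S_3$, not $C_2$: since $\gcd(9,q+1)=3$ there is a diagonal automorphism of order~$3$, and there are genuine involutory graph automorphisms. So throughout you must work in $U_9(2).S_3$, not $U_9(2).2$. This matters twice: the crude reduction leaves more field sizes (the paper has to treat $r\in\{3,5,7,9,11,17\}$, not just $\{3,5,7\}$), and the outer classes of order $2$ and $3$ need separate eigenspace bounds. Your proposed inequality, with the single term $2|U_9(2).2|\,r^{128}$, already fails numerically at $r=3$ (the left side is $3^{170}$ while that term alone is roughly $3^{178}$), so separating out order~$5$ to push the ``everything else'' exponent down to $80$, and handling order-$2$ and order-$3$ elements with their own counts, is not optional.

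Second, and this is the step you would not have guessed from the $U_{10}(2)$ and $U_{11}(2)$ templates: at $r=5$ the involutory graph automorphisms are the obstruction. The $\alpha$-bound gives only $\emax\le\lfloor 8\cdot170/9\rfloor=151$, and with the graph-automorphism count (about $2^{45}$) this term alone exceeds $5^{170}$. The paper resolves this by restricting the $170$-dimensional module to the maximal subgroup $J_3.2<U_9(2).2$; the restriction stays irreducible (dimension forces it), and the Brauer table of $J_3.2$ in characteristic~$5$ then gives $\dim C_V(g)\le 85$ for the outer involutions. Nothing in your plan would produce this bound.

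Your idea for $r=3$ is correct in spirit but needs $\mathrm{GU}_6(2)\times\mathrm{GU}_3(2)$ rather than $\mathrm{SU}_6(2)\times\mathrm{SU}_3(2)$: the diagonal automorphisms of order~$3$ do not all lie in the $\mathrm{SU}$ product, and you need every order-$3$ class to be represented in the subgroup for Proposition~\ref{compfactors} to apply.
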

\begin{proof}
We need to consider $r\leq 17$, and by \cite{HM}, we only need consider the Weil modules of degree 170 and 171. The 171-dimensional Weil modules exist in characteristics $r_0 \neq 2,3$, and their Brauer characters contain the $z_3$ irrationality, so we only have $r=7$ here.
By Propositions \ref{weilunitary}, \ref{intchar} and \ref{itoeig},
$\emax(g_{\{2,3,5\}'}) \leq 80$ for semisimple $g_{\{2,3,5\}'}$, and by \cite[Lemma 8.3.10]{goodwin} elements $g \in E(G)$ of projective order 2,3 have $\emax(g) \leq 128$. 
	
	We compute that $\alpha(g_{r_0}) \leq 3$ for $r_0 \in\{5,7,11,17\}$, so $\emax(g_{r_0}) \leq 113+\delta$ for $V$ a Weil module of dimension $170+\delta$. Moreover, we find that  $\alpha(x) \leq 6$ for $x$ an outer element of order 3, except for two classes of order 43776 with $\alpha(x)\leq 8$. We also calculate from \cite[Table B.4]{bg} that the number of outer elements of order 3 in $G/F(G)$ is at most $7^{19.29}$, the number of elements of order 5 is less than $5^{27.48}$, and by Lemma \ref{graph}, the number of involutory graph automorphisms in $G/F(G)$ is at most $2^{45}$.
	
	If $d=171$, $r=7$ and $G$ has no regular orbit on $V$, then 
	\[
	r^{171} \leq 2 |U_9(2).S_3|r^{128}+ 2 \times 7^{19.29}r^{\floor{7\times 171/8}}+ 2\times 2^{45} r^{\floor{8\times 171/9}}.
	\]
	But this is false, implying the existence of a regular orbit.
	Let $H=G/F(G)$.
	If $d=170$ and $G$ has no regular orbit on $V$, then, making use of Proposition \ref{invols}, we have 
	\[
	r^{170} \leq 2 |U_9(2).S_3|r^{128}+ 2 \times i_3(H) r^{\floor{7\times 170/8}}+ 2\times i_2(H)r^{\floor{8\times 170/9}},
	\]
	which is false for $r=7,9,11,17$. 
	Now suppose $r=5$. By \cite[Proposition 3.3.17]{bg}, $U_9(2).S_3$ has a single $U_9(2).S_3$-class of involutory graph automorphisms, which splits into three $U_9(2)$-classes. Namely, there are three groups of shape $U_9(2).2$, each of which has a single class of outer automorphisms and these together comprise the graph automorphisms of $U_9(2).S_3$.
	We restrict $V$ to the subgroup $J_3.2< U_9(2).2<U_9(2).S_3$.
	By \cite[Theorem 4.10.8]{BHRD}, $J_3$ is a maximal subgroup of $U_9(2)$ which can be extended so that each of the three groups of shape $U_9(2).2$ has a maximal subgroup $J_3.2$. 
	It follows that the outer automorphisms in $J_3.2 < U_9(2).2$ are contained in the unique class of graph automorphisms of $U_9(2).2$
	The Brauer character tables for $J_3.2$ are available in \cite[p. 215--219]{modatlas}. The smallest non-trivial irreducible module for $J_3.2$ in characteristic 5 has dimension 170, so the restriction of $V$ to $J_3.2$ must be irreducible. 
We therefore deduce from the Brauer character tables of $J_3.2$ that $\dim C_V(g)\leq 85$ for a graph automorphism $g$.
Therefore, if $G$ has no regular orbit on $V$,
	\[
	r^{170} \leq  2 |U_9(2).S_3| r^{80} + i_5(H) r^{\floor{2\times 170/3}} + 2(i_2(H)+i_3(H))r^{141}+2(2\times 43776)r^{\floor{7\times 170/8}}.
	\]
	This is false for $r=5$.
	Finally, suppose $r=3$. Then $V$ is an irreducible (but not necessarily faithful) module for $K_1 =\mathrm{GU}_9(2)$. From \cite[Table B.4]{bg}, we deduce that every class of elements of order 3 in $K_1$ has a representative in $K_2=\mathrm{GU}_6(2)\times\mathrm{GU}_3(2)$. Let $\rho$ be the representation from $K_1$ into $\gl(V)$, so that $\rho(K_1)$ is faithful. Then every element of order 3 in $G$ lies in $\rho(K_1)$, so is conjugate to an element of $\rho(K_2)$. We now restrict $V$ to $\rho(K_2)$. The composition factors of $V \downarrow K_2$ are tensor products of Weil modules for $\mathrm{GU}_6(2)$ and $\mathrm{GU}_3(2)$ (cf. \cite[Corollary 3.4]{GERARDIN197754}). In particular, $V \downarrow K_2=(3\otimes 21)^2/2\otimes 21/ 2\otimes 1$. We find that all elements  of order 3 in $\mathrm{GU}_6(2)$ have fixed point spaces of dimension at most 11 on the 21-dimensional Weil module for $\mathrm{GU}_6(2)$. Therefore, by Propositions \ref{compfactors} and  \ref{tensorcodim}, for $g\in G$ of order 3, we have $\dim C_V(g) \leq 2(11\times 3)+ 2\times 11+2\times 1 = 90$. It follows that if $G$ has no regular orbit on $V$,
	\[
	r^{170} \leq  2 |U_9(2).S_3| r^{80} + 2i_5(H) r^{\floor{4\times 170/5}} + 
	i_3(H) r^{90} + 2i_2(H) r^{128}.
	\]
	This is false for $r=3$, so $G$ has a regular orbit on $V$.
\end{proof}

\begin{proposition}
Theorem \ref{umain} holds when $E(G)/Z(E(G)) \cong U_{8}(2)$.
	\end{proposition}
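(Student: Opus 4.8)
The plan is to follow the template used above for $U_{11}(2)$, $U_{10}(2)$ and $U_9(2)$. By Proposition~\ref{alphas}, $\alpha(x)\le 8$ for every $1\ne x\in G/F(G)$, and by Proposition~\ref{mindegprop}, $d_1(E(G))\ge\floor{(2^8-1)/3}=85$; feeding these into Proposition~\ref{tools}\ref{crude} eliminates $V=V_d(r)$ for all but finitely many odd prime powers $r$ (recall $r$ is coprime to $q=2$), and combining this with Proposition~\ref{d2unitary} shows that the surviving modules are the Weil modules of dimension $85$ and $86$. An inspection of \cite{HM} then shows the $86$-dimensional Weil modules occur only in characteristics $r_0\ne 2,3$ and carry an irrationality that pins down the admissible fields, leaving a short list of pairs $(d,r)$ to treat individually.

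For each surviving $r$, the first step is to bound $\emax(g)$ for elements $g$ of projective prime order coprime to a small set of primes (at least $\{2,3\}$, enlarged to include $r_0$ and, where $r$ forces it, $5$ and $7$). Working on the $2^8$-dimensional Weil module $\overline V$ of $\mathrm{SU}_8(2)=U_8(2)$, Proposition~\ref{weilunitary} gives the ordinary character value $(-2)^{\dim C_W(g)}$, where $W$ is the natural module, and for $r_0$-regular classes the Brauer character agrees with this; Proposition~\ref{itoeig} constrains $\dim C_W(g)$ (with the awkward case $r_0=3$, $\delta(3,2)=2$, excluded), and Proposition~\ref{intchar} then recovers the multiplicity of the trivial eigenvalue. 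Since the $85$- or $86$-dimensional module is a constituent of $\overline V$, this yields a bound on $\dim C_V(g)$ well below $(1-1/\alpha(g))d$ for all $g$ of order outside that small set; elements of order $r_0$ not yet covered are handled via Proposition~\ref{tools}\ref{alphabound} with the $\alpha$-bound.

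The remaining ingredients are: a bound on $\emax(g)$ for elements of $E(G)$ of projective order $2$ or $3$, taken from Goodwin's estimates \cite[Lemma 8.3.10]{goodwin} (and, for order $3$, refined by restriction to $\mathrm{GU}_6(2)\times\mathrm{GU}_3(2)$ using Propositions~\ref{su_weil_tensor}, \ref{compfactors} and \ref{tensorcodim}); a bound for the outer involutory graph automorphisms, via a sharpened $\alpha$-bound or a restriction to a suitable geometric or almost simple subgroup; and the counts of prime-order elements of $\mathrm{Aut}(U_8(2))=U_8(2).2$, obtained in GAP \cite{GAP4} or from \cite[Table B.4]{bg} together with Proposition~\ref{invols} and Lemma~\ref{graph}. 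Inserting all of this into Proposition~\ref{tools}\ref{qsgood} (or \ref{eigsp2}, or \ref{eigsp1} when the Brauer character gives every eigenspace) contradicts the non-existence of a regular orbit for every admissible $r$ except possibly the smallest ones.

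The main obstacle will be the case $r=3$ (and perhaps $r=5$): there $r^d$ grows slowly while the elements of projective order $3$, which act unipotently over $\F_3$ and can therefore have large fixed spaces, are numerous, so the crude estimate fails. To close it I would restrict each Weil module to $K=\mathrm{GU}_6(2)\times\mathrm{GU}_3(2)$ (or $\mathrm{GU}_4(2)\times\mathrm{GU}_4(2)$), observe from \cite[Table B.4]{bg} that every order-$3$ element of $G$ is conjugate into $K$, decompose $V\downarrow K$ into tensor products of Weil modules via Proposition~\ref{su_weil_tensor}, and bound $\dim C_V(g_3)$ sharply by Propositions~\ref{compfactors} and \ref{tensorcodim}; with the element counts from Proposition~\ref{invols} this should force a regular orbit. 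In the few residual cases, if any, I would construct $V$ directly in GAP \cite{GAP4} or Magma \cite{Magma} and compute $b(G)$.
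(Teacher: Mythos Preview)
Your plan is correct in outline and would work, but the paper takes a more direct route that avoids most of the machinery you propose. Rather than invoking Goodwin's lemma for orders $2$ and $3$ and then refining via restriction to $\mathrm{GU}_6(2)\times\mathrm{GU}_3(2)$ (as was done for $U_9(2)$ and $U_{10}(2)$, and as you anticipate here), the paper simply constructs the ordinary character table of $U_8(2).2$ in Magma. From this it reads off that $\emax(g_{\{2,3,5\}})\le 54$ for semisimple elements, with the single exception of class $2A$ where $\emax\le 64$. For unipotent elements of order $r_0$ it computes $\alpha$-values in GAP: $\alpha(g_{r_0})=2$ for $r_0\ge 5$, while $\alpha(g_3)\le 3$ except for one class of size $21760$ with $\alpha\le 8$ and two classes of size $29941760$ with $\alpha\le 4$. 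Combining these with the Weil-character bound $\emax(g_{\{2,3,5\}'})\le 40$ from Propositions~\ref{weilunitary}, \ref{intchar}, \ref{itoeig}, a single inequality disposes of all $r\ge 3$ at once for $d=86$ (and similarly for $d=85$).

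The upshot is that the difficulty you anticipate at $r=3$ never materialises: the character-table bounds are sharp enough that no separate restriction argument is needed. Your approach trades one computer computation (the character table) for another (decomposing $V\downarrow K$ and bounding fixed spaces on the tensor factors), and both are legitimate; the paper's is shorter here because $U_8(2)$ is just small enough for Magma to produce the full character table directly.
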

\begin{proof}
 The number of prime order elements in $\mathrm{P} \Gamma \mathrm{U}_8(2)$ is less than $2^{58.92}$. By Proposition \ref{tools}\ref{crude} and examining \cite{HM}, we see that we only need consider the Weil modules of $G$ of degrees 85 and 86 for $r\leq 61$. By Propositions \ref{weilunitary}, \ref{intchar} and \ref{itoeig}, we compute that semisimple elements of prime order at least 7 have $\emax(g_{\{2,3,5\}'}) \leq 40$. Now, constructing the character table of $U_8(2).2$ using Magma, we determine that $\emax(g_{\{2,3,5\}}) \leq 54$, except that elements in class $2A$ have $\emax(g) \leq 64$. We summarise upper bounds on the number of prime order elements in Table \ref{u82-prime}.
\begin{table}[h!]
\begin{tabular}{ccccccccc}
\toprule
Order $r_0$ & 2 (inner) & 2 (outer) & 3 & 5 & 7 & 11 & 17 & 43 \\ 
\midrule
$\log_{r_0}(\#r_0\textrm{-elts})<$ & 32.31 & 35.23 &  26.45& 20.87 & 19.29 & 14.72 & 14.02 & 10.74 \\ 
\bottomrule
\end{tabular} 
\caption{Bounds on the number of elements of prime order in $\mathrm{Aut}(U_8(2))$.\label{u82-prime}}
\end{table}
	We now find upper bounds for fixed point spaces of elements of order $r_0$ in modules in characteristic $r_0$. We compute that if $r_0\geq 5$, then $\alpha(g_{r_0})=2$. If instead $r_0 = 3$, then we find that $\alpha(g_3)\leq 3$, unless $g_3$ belongs to a class of size 21760 or one of two further classes, each of size 29941760, in which case $\alpha(g_3)\leq 8,4$ respectively.
	Let $H=G/F(G)$. If $G$ has no regular orbit on one of the 86-dimensional modules $V = V_d(r)$ in characteristic $r_0$,
	\begin{align*}
	r^{86} \leq & 2\times 2^{58.92}r^{40} + 2i_2(H)r^{54}+2i_5(H)r^{43}+ 2\times |2A|r^{64} + i_{r_0}(H)r^{86/2}+2i_3(H)r^{\floor{2\times 86/3}}\\
	&+ 2\times 21670r^{\floor{7\times 86/8}}+ 4\times 29941760r^{\floor{3\times 86/4}}.
	\end{align*}
	This is false for $r\geq 3$. A similar calculation gives the result for the 85-dimensional module.
\end{proof}

		\begin{proposition}
	Theorem \ref{umain} holds when $E(G)/Z(E(G)) \cong U_{7}(3)$.
	\end{proposition}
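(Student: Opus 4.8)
The plan is to show that $G$ always has a regular orbit on $V$, so that conclusion (i) of Theorem \ref{umain} holds; since $d\ge 546$ forces $|V|=r^{d}>|G|$, a regular orbit gives $b(G)=1=\lceil\log|G|/\log|V|\rceil$. Recall that the Schur multiplier of $U_7(3)$ is trivial and $|\mathrm{Out}(U_7(3))|=2$, so $E(G)=U_7(3)$ and $\mathrm{Aut}(U_7(3))=U_7(3).2$.

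First I would assemble the three standard ingredients. By Proposition \ref{mindegprop}, every non-trivial absolutely irreducible cross-characteristic $\overline{\mathbb{F}}_rE(G)$-module has dimension $d\ge d_1(U_7(3))=\lfloor (3^{7}-1)/4\rfloor=546$. By Proposition \ref{alphas}, applied with $n=7\ge 5$ (and noting that $U_7(3)$ is none of the listed exceptions), every $1\ne x\in G/F(G)$ satisfies $\alpha(x)\le 7$. Finally, the number of elements of prime order in $\mathrm{Aut}(U_7(3))$ is at most $|U_7(3).2|-1<2\,|U_7(3)|$.

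Feeding these into Proposition \ref{tools}\ref{crude}, I obtain that if $G$ has no regular orbit on $V=V_d(r)$ then
\[
r^{d}\ \le\ 4\,|U_7(3)|\cdot r^{\lfloor 6d/7\rfloor},
\]
because the coefficient-weighted count of prime-order classes on the right-hand side of that inequality is at most twice the number of prime-order elements of $G/F(G)$. Writing $d=7k+j$ with $0\le j\le 6$ one checks $d-\lfloor 6d/7\rfloor=\lceil d/7\rceil$, which is non-decreasing in $d$, so it is enough to rule out the worst case $d=546$, $r=2$: here $\lceil 546/7\rceil=78$, and a direct computation from $|U_7(3)|=3^{21}\prod_{i=2}^{7}\bigl(3^{i}-(-1)^{i}\bigr)$ gives $2^{78}>4\,|U_7(3)|$. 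Hence $r^{\,d-\lfloor 6d/7\rfloor}\ge 2^{78}>4\,|U_7(3)|$ for all $d\ge 546$ and all $r\ge 2$ coprime to $3$, contradicting the displayed inequality. Therefore $G$ has a regular orbit on $V$ in every case, and conclusion (i) of Theorem \ref{umain} holds.

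This is one of the easy families: there is no genuine obstacle, only the concrete inequality $2^{78}>4\,|U_7(3)|$, whose margin is roughly $3.7\%$. If a more comfortable margin were wanted one could instead apply Proposition \ref{d2unitary} to reduce to the two Weil modules of dimensions $546$ and $547$ and then bound the eigenspace dimensions of prime-order elements via Propositions \ref{weilunitary}, \ref{intchar} and \ref{itoeig}, exactly as in the preceding propositions for $U_8(2),\dots,U_{11}(2)$; but this refinement is not necessary.
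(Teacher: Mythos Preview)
Your argument is correct, and it is in fact simpler than the paper's. The paper, in its initial sweep for $n\ge 5$, applies Proposition~\ref{tools}\ref{crude} with $\alpha(G)\le n$ and $d\ge d_1(E(G))$ but leaves $(n,q)=(7,3)$ in Table~\ref{n>5ugrps} as a residual case (presumably because it bounds $|G/F(G)|$ more crudely); it then reduces via Proposition~\ref{d2unitary} to the $546$-dimensional Weil module at $r=2$, invokes Propositions~\ref{weilunitary}, \ref{intchar} and \ref{itoeig} to obtain $\emax(g_{\{2,3\}'})\le 459$, and finishes with the mixed inequality
\[
r^{546}\le 2|U_7(3).2|\,r^{459}+2\bigl(i_2(G/F(G))+i_3(G/F(G))\bigr)r^{\lfloor 6\cdot 546/7\rfloor}.
\]
You sidestep all of this by doing the exact arithmetic in the crude bound: with $4|U_7(3)|<2^{78}$ (your $3.7\%$ margin is accurate), the inequality $r^{d}\le 4|U_7(3)|\,r^{\lfloor 6d/7\rfloor}$ already fails for every $d\ge 546$ and $r\ge 2$. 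Your approach is more elementary and self-contained; the paper's approach has a much larger safety margin and aligns with the uniform treatment of the neighbouring Weil-module cases $U_8(2),\dots,U_{11}(2)$, which is useful if one is worried about error in a tight numerical estimate.
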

\begin{proof}
By Propositions \ref{tools}\ref{crude} and \ref{d2unitary}, we need only consider the 546-dimensional Weil module for $r=2$. By Propositions \ref{weilunitary}, \ref{intchar} and \ref{itoeig}, $\emax(g_{\{2,3\}'}) \leq 459$. Therefore, if $G$ has no regular orbit on $V$, then 
\[
r^{546} \leq 2|U_7(3).2|r^{459}+2(i_2(G/F(G))+i_3(G/F(G)))r^{\floor{6\times 546/7}}
\]
which is false for $r=2$, so $G$ has a regular orbit on $V$.
\end{proof}

		\begin{proposition}
Theorem \ref{umain} holds when $E(G)/Z(E(G)) \cong U_{7}(2)$.
	\end{proposition}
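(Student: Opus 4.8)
The plan is to follow the by-now familiar template used throughout Section~\ref{u_grps}: first prune the possibilities using Proposition~\ref{tools}\ref{crude} together with the minimal-degree bounds, then handle the finitely many surviving modules individually with the sharper inequalities in Proposition~\ref{tools}. For $G$ with $E(G)/Z(E(G)) \cong U_7(2)$ we have $\alpha(x) \le 7$ for all projective prime order $x$ by Proposition~\ref{alphas} (the exceptions in that proposition do not arise for $U_7(q)$ with $n=7$), and $d_1(E(G)) \ge \lfloor (2^7-1)/(2+1) \rfloor = 42$ by Proposition~\ref{mindegprop}. Substituting these into Proposition~\ref{tools}\ref{crude} and bounding the number of projective prime order elements by $|\mathrm{P}\Gamma\mathrm{U}_7(2).2|$ (or, more efficiently, by counting elements of prime order in $\mathrm{Aut}(U_7(2))$ in GAP), one finds the inequality fails except for a small range of $(d,r)$. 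Consulting \cite{HM} (and Proposition~\ref{d2unitary}, which applies since $(7,2)\neq(4,2),(4,3)$) should leave only the Weil modules of degree $42$ and $43$ over a handful of small fields $r$ coprime to $2$.

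Next I would treat these remaining Weil modules. For semisimple elements of prime order $r_0 \ge 5$ (those not dividing $2$ or $3$), I would bound $\emax$ using Propositions~\ref{weilunitary}, \ref{intchar} and \ref{itoeig}: the $2^7$-dimensional Weil module $\overline V$ has Brauer character value $(-1)^7(-2)^{\dim C_W(x)}$ on $x$, with $\dim C_W(x)$ running over the set prescribed by Proposition~\ref{itoeig}, and Proposition~\ref{intchar} then converts each character value into a precise eigenvalue multiset, hence a bound on $\dim C_V(x) \le \dim C_{\overline V}(x)$ for the relevant $42$- or $43$-dimensional constituent $V$. For $x$ of order $2$ or $3$ I would either use $\alpha(x)\le 7$ via Proposition~\ref{tools}\ref{alphabound}, refine $\alpha(x)$ in GAP, or restrict to a subgroup containing representatives of all such classes (e.g.\ $\mathrm{GU}_4(2)\times\mathrm{GU}_3(2)$ for order-$3$ elements, using Proposition~\ref{su_weil_tensor} and Proposition~\ref{compfactors}) to get a better bound. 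Counting the relevant classes in $\mathrm{Aut}(U_7(2))$ and feeding everything into Proposition~\ref{tools}\ref{qsgood} (or \ref{eigsp1}) should then yield $r^d > $ RHS for all surviving $(d,r)$, establishing a regular orbit.

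If any stubborn module remains where the inequality is too tight — plausibly the degree-$42$ module over $\F_3$ or a slightly larger field, where $|G|$ is not far below $|V|$ — I would fall back on explicit computation: construct $V$ from generators in \cite{onlineATLAS} in GAP or Magma and either compute eigenspace dimensions to re-apply Proposition~\ref{tools}, or directly verify the existence of a regular orbit (and, if none exists, compute $b(G)$, noting $b(G)=2$ whenever $|G|>|V|$, using Lemma~\ref{fieldext} to transfer base-size bounds between field extensions as needed). I expect the main obstacle to be exactly this borderline case near $|G|\approx|V|$: the crude counting bounds will comfortably kill the large modules, but the smallest Weil module over the smallest admissible field may require a careful, possibly computational, eigenspace analysis rather than a one-line inequality. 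Based on the pattern of Table~\ref{allbad} and Table~\ref{noros}, I anticipate no genuine exceptions arise for $U_7(2)$, so the proposition should conclude that $G$ always has a regular orbit on $V$, i.e.\ alternative~(i) of Theorem~\ref{umain} holds with $b(G)=1$.
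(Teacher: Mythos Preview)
Your proposal is correct and follows essentially the same approach as the paper: reduce via Proposition~\ref{tools}\ref{crude} to the Weil modules of degree $42$ and $43$ over small fields, then bound eigenspaces using character values (from the ordinary character table of $U_7(2).2$, constructed in Magma) together with refined $\alpha$-values from GAP, and finish with Proposition~\ref{tools}\ref{qsgood}. The only notable implementation difference is in characteristic $3$: the paper restricts $V_{42}(3^k)$ to $K=3.U_6(2).3$ (which contains a Sylow $3$-subgroup) rather than your suggested $\mathrm{GU}_4(2)\times\mathrm{GU}_3(2)$, obtaining $V\downarrow K = 21/21$ and hence $\dim C_V(g_3)\le 22$; your tensor-product restriction would also work but the paper's choice gives a slightly cleaner bound, and in either case no computational fallback is needed---a regular orbit is established for all $r\ge 3$ by the inequality alone.
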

\begin{proof}
	Let $H = U_7(2).2$. We begin by summarising some information about the prime order elements in $H$, computed using GAP, in Table \ref{u72-counts}. In particular, we observe that $H$ contains fewer than $2^{45.11}$ elements of prime order.
	\begin{table}[h!]
		\centering \begin{tabular}{ccc}
			\toprule
			$r_0$ & $\log_{r_0}i_{r_0}(H)<$  & $\alpha(x) \leq$ \\ 
			\midrule
			2 & $27.33$ & 3 (2D), 7 \\  
			3 & $20.16$ & 2 (3J/3K), 7 \\ 
			5 & $15.52$ & 2 \\  
			7 & $14.87$ & 2 \\ 
			11 & $11.88$ & 2 \\ 
			43 & $8.27$ & 2 \\ 
			\bottomrule
		\end{tabular} 
		\caption{Bounds on  the number of prime order elements in $U_7(2).2$. \label{u72-counts}}
	\end{table}
	
	If $G$ has no regular orbit on $V$, then by \ref{crude} from Proposition \ref{tools},
	\[
	\frac{1}{2}r^d \leq (i_{43}(H)+i_{11}(H)+ i_7(H)+ i_5(H))r^{\floor{d/2}}+ ( i_2(H)+ i_3(H))r^{\floor{6d/7}}.
	\]
	From \cite{HM}, we see that we only need to consider Weil modules of dimensions 42, 43 for $r\leq 43$ and $r=7,25$ respectively,  since the 43-dimensional representation does not exist in characteristics 2,3 and the Brauer character contains a $z_3$ irrationality.
	
	First let $V = V_{43}(r)$. We compute that $\alpha(g_{\{5,7\}}) = 2$. We can construct the ordinary character table of $U_7(2).2$ using Magma \cite{Magma}. From this, we can infer the values of the Brauer characters in characteristics $p=5,7$. We therefore deduce that if $G$ has no regular orbit on $V$,
		\begin{align*}
		\frac{1}{2} r^{43} \leq i_{43}(H) r+i_{11}(H) r^{4}+ i_7(H)r^{\floor{43/2}}+  i_5(H)r^{\floor{43/2}} &+ i_3(H)r^{26} +|2A|r^{32}\\
		& + (|2B|+|2C|)r^{26}+ |2D|r^{\floor{2\times 43/3}}.
		\end{align*}
	This inequality is false for both $r=7,25$.
	
	Now suppose $V$ is the 42-dimensional Weil module for $U_7(2).2$. As before, we can deduce the Brauer character of $V$ from the ordinary character of a Weil module over $\mathbb{C}$ of the same dimension. Therefore, if $G$ has no regular orbit on $V$ in characteristics 5,7 then
	\begin{align*}
	\frac{1}{2} r^{42} \leq i_{43}(H) r+i_{11}(H) r^{4}+ i_7(H)r^{21}+  i_5(H)r^{21}+ i_3(H)r^{26} &+|2A|r^{32}\\
	&+ |2B|r^{26}+|2C|r^{24} + |2D|r^{21}.
	\end{align*}
	This inequality is false for $r=5,7,25$. Similar computations give the results in characteristics 11 and 43.

Finally let us assume $V = V_{42}(3^k)$. Here we bound $\emax(g_3)$ by restricting the module to $K=3.U_6(2).3$, which contains a Sylow 3-subgroup of $U_7(2)$. By comparing the character tables of these groups, we find that $V\downarrow K$ has two composition factors, each of dimension 21. Every element of order 3 in $K$ has a fixed point space of dimension at most 11 on each of these 21-dimensional modules, so, applying Proposition \ref{compfactors}, we have $\dim C_V(g_3)\leq 22$. We can also infer the values of the Brauer character from the ordinary character of a Weil module of the same dimension (cf. \cite[\S 6]{modatlas}). So if $G$ has no regular orbit on $V$,
	\begin{align*}
	r^{42} \leq & 2 i_{43}(H) r+ 2i_{11}(H) r^{4}+ 2 i_7(H)r^{6}+ 2  i_5(H)r^{21}+\frac{1}{2}(i_3(H)-|3J|-|3K|)r^{22}\\
	&+\frac{1}{2} (|3J|+|3K|)r^{21}+|2A|(r^{32}+r^{10}) + 2|2B|r^{26}+2|2C|r^{24} +2 |2D|r^{23}.
	\end{align*}
	This is false for $r\geq 3$.
	\end{proof}

		\begin{proposition}
Theorem \ref{umain} holds when $E(G)/Z(E(G)) \cong U_6(3)$.
	\end{proposition}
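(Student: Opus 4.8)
The plan is to follow the same template used for the other large unitary groups in this section, relying on the crude counting bound of Proposition~\ref{tools}\ref{crude} together with the minimal degree and next-smallest-degree bounds for $\mathrm{SU}_6(q)$. First I would record that, by Proposition~\ref{mindegprop}, $d_1(U_6(3)) \ge (q^6-1)/(q+1) = 182$, and by Proposition~\ref{alphas} we have $\alpha(x) \le 6$ for all $1 \ne x \in G/F(G)$ (as $n = 6$, with no graph-automorphism exception arising in characteristic $3$ that would worsen this beyond $6$). Plugging $d \ge 182$ and $\alpha(G) \le 6$ into \eqref{crudeeqn}, using the rough bound that the number of projective-prime-order elements of $G/F(G)$ is at most $|\mathrm{P}\Gamma\mathrm{U}_6(3)| < 3^{36}$ or so, shows that $r^d > 2\sum |\bar x^H| r^{\lfloor 5d/6\rfloor}$ for all $r \ge 2$ once $d \ge 182$; hence $G$ has a regular orbit on every such $V$, leaving only a short finite list of small modules and small fields to inspect.

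Next I would consult \cite{HM} (and Proposition~\ref{d2unitary}, which for $n = 6$, $q = 3$ forces any irreducible of degree below $(q^4-1)(q-1)\lfloor(q^4-1)/(q+1)\rfloor$ to be a Weil module of degree $(q^6-1)/(q+1) = 182$ or $(q^6+q)/(q+1) = 183$) to pin down exactly which $(d,r)$ survive. I expect only the two Weil modules of degrees $182$ and $183$ to remain, and only for the very small coprime characteristics, say $r \in \{2,4,5,7,13\}$ (with the $183$-dimensional module ruled out in characteristics $2$ and $3$ and carrying a $z_3$ irrationality, as happened for $U_7(2)$, $U_8(2)$, $U_9(2)$). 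For each surviving case I would bound $\emax(g)$ for semisimple $g$ of prime order at least $5$ via Propositions~\ref{weilunitary}, \ref{intchar} and \ref{itoeig} applied to the ambient $q^6 = 729$-dimensional Weil module $\overline V$ of $\mathrm{SU}_6(3)$, reading off $\dim C_W(g)$ from Proposition~\ref{itoeig} and converting to an eigenvalue multiplicity; this typically yields $\emax(g_{\{2,3\}'}) \le $ something well below $5d/6$. For the primes $2$ and $3$ I would instead bound $\emax$ either from the Brauer character in \cite{modatlas} when available, or by restricting $\overline V$ to $\mathrm{SU}_3(3)\times\mathrm{SU}_3(3)$ (which contains a Sylow $3$-subgroup) via Proposition~\ref{su_weil_tensor} and applying Proposition~\ref{compfactors}, exactly as in Proposition~\ref{prop_u102}.

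Then I would feed these refined eigenspace bounds into Proposition~\ref{tools}\ref{qsgood}, together with GAP/\cite{bg}-derived counts $i_{r_0}(G/F(G))$ of prime-order elements (using Proposition~\ref{invols} and Lemma~\ref{graph} to bound $i_2$, $i_3$), to show in each remaining case that
\[
r^d \le 2\sum_{x \in \mathcal P \cap G_{r_0'}} |\bar x^H|\, r^{\emax(x)} + \sum_{x \in \mathcal P \cap G_{r_0}} \tfrac{1}{o(\bar x)-1}|\bar x^H|\, r^{\emax(x)}
\]
fails, so a regular orbit exists. If any single case resists this inequality — most plausibly $r = 2$, $d = 182$, where the gap between $d$ and $\emax$ is tightest and where $U_6(3)$ is large — I would fall back to an explicit computation in Magma or GAP, constructing the module from \cite{onlineATLAS} and either computing $\emax$ of the awkward prime-order classes directly or checking for a regular orbit by inspection. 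The main obstacle I anticipate is precisely this last point: $|U_6(3)|$ is large (around $10^{15}$) so $|V| = 2^{182}$ comfortably exceeds $|G|$ and a regular orbit should exist, but verifying the counting inequality in characteristic $2$ may require a genuinely careful bound on $\emax$ of unipotent elements (via the $\mathrm{SU}_3(3)^2$ restriction) rather than the blunt $\alpha(x) \le 6$ estimate, and getting that bound sharp enough is where the real work lies.
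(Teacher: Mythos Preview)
Your broad outline is right and matches the paper's template, but two of your concrete expectations are off, and one of them matters.

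First, the crude bound is much stronger than you anticipate: with $d \ge 182$ and $\alpha(G) \le 6$, the inequality $r^d \le 2|\mathrm{P}\Gamma\mathrm{U}_6(3)|\,r^{\lfloor 5d/6\rfloor}$ already fails for every $r \ge 3$, so the only surviving case is $r = 2$, $d = 182$. There is no list $r \in \{2,4,5,7,13\}$ to work through, and no $183$-dimensional module to consider.

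Second, and more importantly, the Weil-character route you propose for semisimple $g$ of order at least $5$ is too weak here. For an element of order $5$ in $\mathrm{SU}_6(3)$ one has $\dim C_W(g) = 2$, so $\chi(g) = 9$ on the $729$-dimensional Weil module, giving eigenvalue multiplicities $(1^{153}, \zeta^{144},\ldots)$. Thus the Weil bound yields only $\emax(g_5) \le 153$, which is \emph{not} ``well below $5d/6$''---in fact $\lfloor 5 \cdot 182/6\rfloor = 151 < 153$. Plugging $153$ into Proposition~\ref{tools}\ref{qsgood} gives a right-hand side of order $2^{211}$, which does not beat $2^{182}$. So your primary method fails at exactly the case you correctly flagged as the hard one, and you would be forced onto your computational fallback.

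The paper's proof sidesteps this entirely. It computes the Brauer character of $V_{182}(2)$ directly in Magma, obtaining $\emax(g_{2'}) \le 81$, and refines $\alpha(g_2)$ in GAP to $\alpha(g_2) \le 5$ except for class $2A$. Then
\[
r^{182} \le 2|U_6(3).2^2|\,r^{81} + |2A|\,r^{\lfloor 5 \cdot 182/6\rfloor} + i_2(G/F(G))\,r^{\lfloor 4 \cdot 182/5\rfloor}
\]
is false for $r = 2$, and the proof is complete in a few lines. The $\mathrm{SU}_3(3)^2$ restriction you propose is never needed.
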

\begin{proof}
We have $d_1(G) \geq 182$ by Proposition \ref{mindegprop}, while $\alpha(G) \leq 6$ by Proposition \ref{alphas}. Therefore, applying Proposition \ref{tools}\ref{crude} and examining \cite{HM}, we see that we only need to consider $r=2$ and $V = V_{182}(2)$.

From the Brauer character (computed using Magma \cite{Magma}), we deduce that $\emax(g_{2'}) \leq 81$. Using GAP, we determine that $\alpha(g_2) \leq 5$, except for elements in class 2A where $\alpha(x)\leq 6$ by Proposition \ref{alphas}.
Therefore, if $G$ has no regular orbit on $V$ then
\[
r^{182} \leq 2|U_6(3).2^2|r^{81} + |2A|r^{\floor{5\times 182/6}} + i_2(G/F(G))r^{\floor{4\times 182/5}}
\]
This is false, so $G$ has a regular orbit on $V$.
\end{proof}

	\begin{proposition}
Theorem \ref{umain} holds when $E(G)/Z(E(G)) \cong U_{6}(2)$.
	\end{proposition}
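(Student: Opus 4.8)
The plan is to follow the template used throughout this section. By Proposition~\ref{mindegprop} the layer has $d_1(E(G)) \geq \floor{(2^6-1)/(2+1)} = 21$, and by Proposition~\ref{alphas}(i) every non-identity $x \in G/F(G)$ satisfies $\alpha(x) \leq 6$, since $U_6(2)$ is not among the listed exceptions. Counting the prime-order elements of $\mathrm{Aut}(U_6(2))$ in GAP \cite{GAP4} and substituting $\alpha(G) \leq 6$ and $d_1(E(G))$ into Proposition~\ref{tools}\ref{crude}, the inequality $r^d \leq 2\sum_{x\in\mathcal{P}}|\bar x^H| r^{\floor{5d/6}}$ fails once $d$ or $r$ is large, so $G$ has a regular orbit on $V$ except for finitely many pairs $(d,r)$. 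Applying Proposition~\ref{d2unitary} with $(n,q)=(6,2)$, any absolutely irreducible module of dimension less than $(2^4-1)(2-1)\floor{(2^4-1)/3}=75$ is a Weil module of dimension $21$ or $22$; hence, consulting \cite{HM}, the surviving cases are the $21$- and $22$-dimensional Weil modules over $\F_r$ for the small odd $r$ with $(r,|G|)>1$ (so $r_0 \in \{3,5,7,11\}$), together with a short list of modules of dimension at least $75$, which by the crude bound occur only for $r \in \{2,3\}$.

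For the Weil modules the eigenspace dimensions of semisimple elements of prime order are read off from the ordinary Weil character via Propositions~\ref{weilunitary}, \ref{intchar} and \ref{itoeig} (the Brauer character of a cross-characteristic Weil module being a reduction of the complex one), and for the remaining small-order elements — principally the unipotent ones — I would either refine the bound on $\alpha(x)$ in GAP, or restrict $V$ to a geometric subgroup such as $\mathrm{SU}_4(2)\times\mathrm{SU}_2(2)$ and combine Proposition~\ref{su_weil_tensor} with Proposition~\ref{compfactors}, or compute $\dim C_V(x)$ directly from an explicit construction of the module. Feeding these bounds into Proposition~\ref{tools}\ref{eigsp1} (or \ref{qsgood}) then settles almost all of these $(G,V)$. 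For the few larger modules over $\F_2$ and $\F_3$, the Brauer characters are available in \cite{modatlas} or can be deduced from ordinary characters, $\emax(g_{r_0'})$ is bounded accordingly, and a further application of Proposition~\ref{tools}\ref{qsgood} shows that a regular orbit exists.

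A small residue of modules then remains, either because the inequalities are inconclusive or because $|G| > |V|$; for these I would compute $b(G)$ directly in GAP \cite{GAP4} or Magma \cite{Magma}. The main obstacle is the $21$-dimensional module $V=V_{21}(3)$: one has $|U_6(2)| \approx 3^{20.9}$, just below $|V| = 3^{21}$, so $U_6(2)$ itself lies right on the borderline, whereas every larger group in the lattice $U_6(2) \leq G \leq \F_3^\times \times (U_6(2).S_3)$ satisfies $|G| > |V|$ and hence has no regular orbit; this forces a careful determination of $b(G)$ across that lattice and accounts for the $U_6(2)$ entries recorded in Tables~\ref{allbad} and~\ref{noros}. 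In all other cases $G$ has a regular orbit on $V$, which completes the proof.
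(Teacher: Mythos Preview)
Your proposal follows essentially the same route as the paper: apply Proposition~\ref{tools}\ref{crude} together with Proposition~\ref{d2unitary} to reduce to the Weil modules of dimensions $21$ and $22$ for small $r$, then finish these off with explicit eigenspace bounds and direct computation in the residual case $V_{21}(3)$.

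Two small points. First, a slip: you mention modules over $\F_2$, but $q=2$ is the defining characteristic here, so no cross-characteristic module with $r_0=2$ arises. Second, the paper does not use the uniform bound $\alpha(G)\le 6$ at the first step; instead it computes $\alpha(x)$ class by class in GAP (obtaining $\alpha\le 2$ for orders $5,7,11$, and $\alpha\le 3$ for most involutions and order-$3$ elements, with $\alpha\le 6$ only for the classes $2A$, $2D$, $3D$). This sharper initial inequality kills all non-Weil modules outright, so there is no ``short list of modules of dimension at least $75$'' to deal with. Your cruder first pass would leave several larger modules over $\F_3$ (up to dimension roughly $120$), which you would then have to eliminate one by one --- doable, but more work. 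The paper also notes that the $22$-dimensional module does not exist in characteristic~$3$, and that the $21$-dimensional module for $3.U_6(2)$ carries a $z_3$ irrationality, so only $r=7$ needs checking there; these observations trim the case analysis further. Your identification of $V_{21}(3)$ as the one genuine borderline case, and of the lattice $U_6(2)\le G\le \F_3^\times\times(U_6(2).S_3)$ as requiring explicit base-size computation, is exactly right.
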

\begin{proof}
Let $H = U_6(2).S_3$.
We summarise some information about elements of prime order in $H$ in Table \ref{u62-counts}.
\begin{table}[h!]
\begin{tabular}{ccc}
\toprule
$r_0$ &$ i_{r_0} (H)$ & $\alpha(g_{r_0}) \leq$ \\ 
\midrule
2 & 1529055 & 6 (2A/2D), 3 (o/w) \\ 
3 & 22061888 & 6 (3D), 3 (o/w) \\ 
5 & 306561024 & 2 \\ 
7 & 1313832960 & 2 \\ 
11 & 1672151040 & 2 \\ 
\bottomrule
\end{tabular} 
\caption{Elements of prime order in $U_6(2).S_3$. \label{u62-counts}}
\end{table}
Therefore, if $G$ has no regular orbit on $V$,
\[
\frac{1}{2} r^d \leq ( i_{11}(H)+ i_{7}(H)+ i_{5}(H))r^{\floor{d/2}} + ( i_{3}(H)+ i_{2}(H))r^{\floor{2d/3}}+ (|2A|+|2D|+|3D|)r^{\floor{5d/6}}
\]
Consulting \cite{HM}, we see that the only modules that remain are the Weil modules of dimension 21 and 22 for $r\leq 11$.
First suppose $V$ is a 21-dimensional Weil module for $U_6(2).S_3$ in characteristic 3. When $r=3$, we compute using Magma that there is no regular orbit of $G$ on $V$.
Now suppose $r=9$. Then from constructing the module in GAP, we find that  $\dim C_V(g_3) \leq 11$ and all involutions have eigenspaces of dimension less than 14, except elements in class 2A, which have an eigenspace of dimension 16. Therefore, if $G$ has no regular orbit on $V$,
\[
r^{21} \leq 2 ( i_{11}(H) +  i_{7}(H) +  i_{5}(H)) r^{10} + 
   i_{3}(H) r^{11} + 2  i_{2}(H) r^{14}+ 693(r^{16}+r^5)
\]
This is false for $r=9$, so $G$ has a regular orbit on $V$ and $b(G)=2$ for $r=3$ by Lemma \ref{fieldext}.
Now suppose that $V$ is a 21-dimensional Weil module for $3.U_6(2).3$ in characteristic $\neq 2,3$. 
Since the Brauer character contains a $z_3$ irrationality, we only consider $r=7$. Constructing the module in GAP, we find that $\dim C_V(g_7) = 3$. This along with observations from the Brauer character (constructed in Magma \cite{Magma}) show that if $G$ has no regular orbit on $V$,
\[
r^{21} \leq  2 ( i_{11}(H) +  i_{7}(H) +  i_{5}(H)) r^5 +2  i_{3}(H) r^{11} + 
  2  i_{2}(H) r^{12}
\]
This is false, so $G$ has a regular orbit on $V$.

\begin{table}[h!]
\begin{tabular}{p{3cm}cccccccc}
\toprule
 & \multicolumn{8}{c}{Class in $U_6(2)$}  \\ 
\cmidrule{2-9}
Characteristic $r_0$         & 2A       & 2B     & 2C      & 3A         & 3B        & 3C         & 5A        & 7A            \\ \midrule
5                            & $(16,6)$ & (14,8) & (12,10) & $(10,6^2)$ & $(9^2,4)$ & $(10,6^2)$ & 6         & $(4,3^6)$   \\
7                            & $(16,6)$ & (14,8) & (12,10) & $(10,6^2)$ & $(9^2,4)$ & $(10,6^2)$ & $(6,4^4)$ & 4           \\
11                           & $(16,6)$ & (14,8) & (12,10) & $(10,6^2)$ & $(9^2,4)$ & $(10,6^2)$ & $(6,4^4)$ & $(4,3^6)$          \\ \midrule
Size of class in $U_6(2).S_3$ & 693      & 62370  & 249480  & 118272     & 197120    & 4730880    & 306561024 & 1313832960 \\ \bottomrule
\end{tabular}
\caption{Eigenspace dimensions on $V= V_{10}(r)$, Part I. \label{u62-case-analysis}}
\end{table}
\begin{table}[h!]
\ContinuedFloat
\begin{tabular}{p{3cm}ccccccc}
\toprule
 & \multicolumn{7}{c}{Class in $U_6(2)$}  \\ 
\cmidrule{2-8}
Characteristic $r_0$          & 11A/B      & 2D       & 2E       & 3D       & 3E         & 3F        & 3G        \\ \midrule
5                             & $(2^{11})$ & $(15,7)$ & $(11^2)$ & $(11^2)$ & $(12,5^2)$ & $(8^2,6)$ & $(8,7^2)$ \\
7                             & $(2^{11})$ & $(15,7)$ & $(11^2)$ & $(11^2)$ & $(12,5^2)$ & $(8^2,6)$ & $(8,7^2)$ \\
11                            & 2          & $(15,7)$ & $(11^2)$ & $(11^2)$ & $(12,5^2)$ & $(8^2,6)$ & $(8,7^2)$ \\
\midrule
Size of class in $U_6(2).S_3$ & 1672151040 & 19008    & 1197504  & 1344     & 118272     & 4730880   & 12165120  \\ \bottomrule
\end{tabular} 
\caption{Eigenspace dimensions on $V= V_{10}(r)$, Part II. }
\end{table}

Now suppose that $V$ is a 22-dimensional Weil module for $U_6(2).S_3$. From \cite{HM} we observe that this module does not exist in characteristic 3, so we only need consider $V$ in characteristic 5,7 or 11.  We summarise our analysis of $V$ in Table \ref{u62-case-analysis}. The eigenspace dimensions in the Table were derived from the corresponding Brauer characters and from computing the eigenspaces of elements of the relevant matrix group in GAP.
Therefore, if $G$ has no regular orbit on $V$ in characteristic 5,
\begin{align*}
r^{22} & \leq   \frac{1}{10}i_{11}(H)(11r^2) + \frac{1}{6} i_7(H) (r^4+6r^3) + \frac{1}{4}i_5(H) r^6 + 
  \frac{1}{2}|3A| (r^{10} + 2 r^6) +  \frac{1}{2}|3B| (2 r^9 + r^4) \\
  &+ 
   \frac{1}{2}|3C|(r^{10} + 2 r^6) +  \frac{1}{2}|3D| (2 r^{11}) + \frac{1}{2}|3E| (r^{12} + 2 r^5) + 
   \frac{1}{2}|3F|(2 r^8 + r^6) +  \frac{1}{2}|3G|(r^8 + 2 r^7) \\
 &+ |2A| (r^{16} + r^6) + 
 |2B| (r^{14} + r^8) + |2C|(r^{12} + r^{10}) + |2D| (r^{15} + r^7) + |2E| (2 r^{11})  
\end{align*}
This is false for $r=5$, so $G$ has a regular orbit on $V$. The calculations for $r=7,11$ are similar.
\end{proof}

\begin{proposition}
Theorem \ref{umain} holds when $E(G)/Z(E(G)) \cong U_{5}(3)$.
	\end{proposition}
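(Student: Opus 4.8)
The group $E(G)/Z(E(G)) \cong U_5(3)$ fits the pattern of the preceding unitary cases, so the plan is to apply Proposition \ref{tools}\ref{crude} to reduce to a finite list of modules, and then handle the survivors via Weil-module eigenvalue estimates together with explicit computation. First I would record the input data: by Proposition \ref{mindegprop} we have $d_1(U_5(3)) = \lfloor(3^5-1)/(3+1)\rfloor = 60$, and by Proposition \ref{alphas} (with $n=5$) every non-identity $x$ in $G/F(G)$ has $\alpha(x)\le 5$. Feeding $d = 60$ and $\alpha(G)\le 5$ into the crude inequality $r^d \le 2\sum_{x\in\mathcal P}|x^H|r^{\lfloor(1-1/\alpha(x))d\rfloor}$, bounded above using $|\mathrm P\Gamma\mathrm U_5(3).2|$ (or better, the count of prime-order elements of $\mathrm{Aut}(U_5(3))$ computed in GAP) as a bound on $|\mathcal P|$-summed class sizes, I expect the inequality to fail for all $r\ge 2$ except $r=2$ and a bounded range of $d$. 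Consulting \cite{HM}, together with Proposition \ref{d2unitary} (since $n=5\ge 5$ and $(n,q)=(5,3)\neq(4,2),(4,3)$), the only surviving candidates should be the Weil modules of dimension $(3^5+1)/(3+1)=61$ and $(3^5-3)/(3+1)=60$ over $\mathbb F_2$ — these are the minimal-degree representations below the $d_2$-bound.

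For the remaining Weil modules $V = V_{60}(2)$ and $V_{61}(2)$, the plan is to bound $\mathcal E_{\mathrm{max}}(g)$ for elements $g$ of projective prime order. For semisimple elements $g_{2'}$ of order $\ge 5$, I would use Propositions \ref{weilunitary}, \ref{intchar} and \ref{itoeig}: the $3^5 = 243$-dimensional (or rather $2^5$-dimensional, since $q=3$ is odd the relevant Weil module sits inside $V_0$ of dimension $q^m$ with $m=5$, i.e.\ $3^5$) ambient Weil module has Brauer character value $(-1)^n(-q)^{\dim C_W(g)}$ with $\dim C_W(g)$ ranging over an explicit arithmetic progression determined by $\delta(r_0,q)$, and this translates via Proposition \ref{intchar} into an eigenvalue multiset, hence an upper bound on $\dim C_V(g)$ well below $\tfrac{4}{5}d$. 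For elements of order $2$ and $3$ I would either invoke \cite[Lemma 8.3.10]{goodwin}-type bounds on $\mathcal E_{\mathrm{max}}$ for unipotent/small-order elements, or restrict $V$ to a subgroup containing a Sylow $3$-subgroup (e.g.\ $\mathrm{GU}_3(3)\times\mathrm{GU}_2(3)$) and apply Proposition \ref{su_weil_tensor} together with Proposition \ref{compfactors} to bound the fixed-space dimension of order-$3$ elements by a sum of tensor-product contributions, exactly as in Propositions \ref{prop_u102} and the $U_9(2)$, $U_7(2)$ cases. The involution count $i_2(\mathrm{Aut}(U_5(3)))$ and the order-$3$ count are read off from GAP or \cite{bg}.

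With these eigenspace bounds in hand, I would apply Proposition \ref{tools}\ref{qsgood}: if $G$ has no regular orbit on $V=V_d(2)$ then
\[
2^{d} \le 2\sum_{x\in\mathcal P\cap G_{2'}}|x^H|\,2^{\mathcal E_{\mathrm{max}}(x)} + \sum_{x\in\mathcal P\cap G_{2}}\tfrac{1}{o(\bar x)-1}|x^H|\,2^{\mathcal E_{\mathrm{max}}(x)},
\]
and I expect the right-hand side to be strictly smaller than $2^{60}$ (resp.\ $2^{61}$) once the semisimple terms carry exponent at most roughly $d/3$ and the $2$- and $3$-element terms carry exponent at most roughly $d-12$ with class sizes of size only about $3^{20}$. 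If the plain inequality is too weak for, say, the order-$3$ contribution, the fallback is to sharpen the order-$3$ eigenspace bound via the subgroup-restriction argument above, which is the standard device in this paper. The only genuinely delicate point — and the step I expect to be the main obstacle — is confirming that no additional small-dimensional non-Weil module over $\mathbb F_2$ slips through the $d_2$-bound of Proposition \ref{d2unitary} for $(n,q)=(5,3)$: here the bound is $(q^{n-2}-1)(q-1)\lfloor(q^{n-2}-1)/(q+1)\rfloor = 26\cdot 2\cdot 6 = 312$, comfortably above $61$, so in fact the Weil modules are the only ones below dimension $312$ and the reduction is clean. Hence in all cases $G$ has a regular orbit on $V$, and since $(r,|G|)>1$ forces $|G|>|V|$ to be checked only for these two modules (where $|U_5(3)|\approx 2^{42}\cdot\ldots > 2^{61}$? — no, $|U_5(3)|$ exceeds $2^{61}$, so one must in fact verify this does not occur, but the Weil-bound computation above already rules out the absence of a regular orbit directly), we conclude $b(G) = \lceil\log|G|/\log|V|\rceil$ and Theorem \ref{umain} holds for $E(G)/Z(E(G))\cong U_5(3)$.
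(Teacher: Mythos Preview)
Your reduction step has a genuine arithmetic error that leaves real cases unaddressed. With $d_1 = 60$ and $\alpha(G)\le 5$, the crude inequality from Proposition~\ref{tools}\ref{crude} becomes $r^{60} \le 2|\mathrm{Aut}(U_5(3))|\,r^{48}$, i.e.\ $r^{12} \le 2|\mathrm{Aut}(U_5(3))|$. Since $|\mathrm{Aut}(U_5(3))| = 2\cdot|U_5(3)| \approx 5.2\times 10^{11}$, this holds for $r\le 9$ or so, not just $r=2$. Taking into account that $(r,3)=1$ and $(r,|G|)>1$ (the prime divisors of $|U_5(3)|$ being $2,3,5,7,61$), you are left with the Weil modules in characteristics $2$, $5$, and $7$, not only characteristic $2$. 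The paper indeed treats $r=7$, then $r=5$, then characteristic $2$ as three separate cases, and the arguments for $r=5,7$ are not subsumed by your $r=2$ analysis.

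For those missing cases the paper does not use the subgroup-restriction or Weil-tensor machinery you outline; it simply reads $\mathcal E_{\max}(g_2)\le 40$ and $\mathcal E_{\max}(g_{\{2,r_0\}'})\le 27$ off the ordinary character table of $U_5(3).2$ (constructed in Magma) and bounds $i_{r_0}$ and $\alpha(g_{r_0})$ directly, then applies Proposition~\ref{tools}\ref{qsgood}. Your plan for the characteristic-$2$ case is in the right spirit, though the paper again takes a more computational route there: the Brauer character gives $\mathcal E_{\max}(g_3)\le 27$ and $\mathcal E_{\max}(g_{\{2,3\}'})\le 12$ directly, and the involution fixed-space dimensions ($40,34,30$ for classes $2A,2B,2C$) are obtained by explicit construction in Magma rather than via Goodwin-type estimates or restriction to $\mathrm{GU}_3(3)\times\mathrm{GU}_2(3)$. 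Your proposed methods would likely also succeed for $r=2$, but you must first fill the gap at $r=5,7$.
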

\begin{proof}
By Proposition \ref{tools}\ref{crude} and \ref{d2unitary}, $r\leq 7$ and we must consider 60-  and 61-dimensional Weil modules.We calculate that $i_2(U_5(3).2)= 5430159$, and the total number of prime order elements in $U_5(3).2$ is 54826442783. 
We begin with $r=7$.
Constructing the group in GAP, we deduce that $i_7(U_5(3).2)< 7^{10.52}$. From the character table of $U_5(3).2$, we find that $\emax(g_2) \leq 40$, and $\emax(g_{\{2,7\}'})\leq 27$. Therefore, if $G$ has no regular orbit on the 60-dimensional Weil module $V$, then
\[
r^{60} \leq 2\times 54826442783r^{27}+2i_2(G/F(G))r^{40}+7^{10.52}r^{\floor{4\times 60/5}}.
\]
This is false for $r=7$, so $G$ has a regular orbit on $V$. Analogous calculations show that $G$ also has a regular orbit on the three 61-dimensional Weil modules in characteristic 7.
Suppose now that $r=5$, and let $V$ be the 60-dimensional Weil module. Again, we find that $\emax(g_2) \leq 40$ from the character, and $\emax(g_{\{2,5\}'}) \leq 27$. We also compute that $i_5(U_5(3).2)<5^{13.61}$, and these elements all have $\alpha(x)=2$. Therefore, if $G$ has no regular orbit on $V$,
\[
r^{60} \leq 2\times 54826442783 r^{27} + 2\times 5430159 r^{40} + 5^{13.61} r^{30}.
\]
This is false for $r=5$, and analogous calculations give the results for the 61-dimensional modules as well. 
	Finally, suppose that $V$ is a 60-dimensional Weil module in characteristic 2.
	Examining the Brauer character we see that $\emax(g_3)\leq 27$, and $\emax(g_{\{2,3\}'})\leq 12$. 
	Using Magma, we deduce that elements $g$ in the
	classes 2A, 2B and 2C satisfy the bounds $\dim C_V(g) \leq 40, 34$ and 30 respectively. Therefore, if $G$ has no regular orbit on $V$,
	\begin{align*}
	r^{60} \leq &  2\times 54826442783 r^{21}+ 2i_3(G/F(G)) r^{27}+ 
	|2A| r^{40}  +|2B|r^{34}+ |2C|r^{30}.
	\end{align*}
	This is false for $r\geq 2$.
	\end{proof}

\begin{proposition}
Theorem \ref{umain} holds when $E(G)/Z(E(G)) \cong U_{5}(2)$.
	\end{proposition}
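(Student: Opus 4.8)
The plan is to follow the standard strategy of this section. I first collect the structural data for $U_5(2)$. Since $U_5(2)$ has trivial Schur multiplier we have $E(G) = U_5(2)$, the Fitting subgroup $F(G)$ is the cyclic group of scalars, of order dividing $r-1$, and $\mathrm{Aut}(U_5(2)) = U_5(2).2$. As $|U_5(2)| = 2^{10}\cdot 3^5\cdot 5\cdot 11$ and $\gcd(r-1,r)=1$, the hypotheses $(r,|G|)>1$ and $(r,q)=1$ force $r = r_0^e$ with $r_0 \in \{3,5,11\}$; in particular, and since $U_5(2)$ has no exceptional isomorphism, only cross-characteristic modules occur. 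By Proposition \ref{mindegprop}, $d_1(U_5(2)) = \lfloor (2^5-1)/(2+1)\rfloor = 10$, and by Proposition \ref{d2unitary} every cross-characteristic irreducible of dimension below $(2^3-1)(2-1)\lfloor (2^3-1)/(2+1)\rfloor = 14$ is a Weil module of dimension $10$ or $11$. Finally, by Proposition \ref{alphas}, $\alpha(x) \leq 5$ for every non-trivial $x \in G/F(G)$, since none of the listed exceptions applies to $U_5(2)$.

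I then count the prime order elements of $\mathrm{Aut}(U_5(2))$ in GAP \cite{GAP4}, obtaining a number $N$, and substitute into Proposition \ref{tools}\ref{crude}: if $G$ has no regular orbit on $V = V_d(r)$ then $r^d \leq 2N\, r^{\lfloor 4d/5\rfloor}$, and by the monotonicity clause of that part, once this fails for a given $d$ it fails for all larger $d$. Combining this with the degrees of the cross-characteristic irreducibles of $U_5(2)$ taken from \cite{HM} (and \cite{modatlas}) leaves only the $10$- and $11$-dimensional Weil modules, together with a short list of higher-dimensional modules over the smallest admissible fields, to be examined individually.

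For each surviving pair $(G,V)$ I split into two cases. If $|V| \leq |G|$ there is no regular orbit, and I compute $b(G)$ directly from an explicit construction of $V$ in GAP or Magma \cite{Magma}, using Lemma \ref{fieldext} to pass between $\mathbb{F}_r$ and $\mathbb{F}_{r^i}$ where convenient; these produce the rows for $U_5(2)$ in Tables \ref{allbad} and \ref{noros}. If $|V| > |G|$, I bound the eigenspace dimensions of elements of projective prime order — for the Weil modules via the Weil character value in Proposition \ref{weilunitary} together with Propositions \ref{intchar} and \ref{itoeig}, for the remaining modules from the Brauer characters in \cite{modatlas}, and by explicit matrix computation in GAP when these do not suffice — and feed these bounds into the appropriate inequality of Proposition \ref{tools} (parts \ref{eigsp1}--\ref{crude}) to obtain a regular orbit.

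The main obstacle is the cluster of $10$-dimensional modules over $\mathbb{F}_3$, $\mathbb{F}_5$ and $\mathbb{F}_9$, and the various intermediate and scalar-extended groups $U_5(2) \leq G/F(G) \leq U_5(2).2$ acting on them. Here there is no regular orbit, the exact base size ($2$ or $3$) has to be determined, and when there are inequivalent $10$-dimensional modules the answer must be decided one module at a time from Brauer character data; over $\mathbb{F}_9$ the crude test $|V| \leq |G|$ is moreover inconclusive, so Proposition \ref{tools} or a direct computation is needed. This is precisely where the computational assistance acknowledged in Section \ref{techniques} for $U_5(2)$ is used, and the outcome is exactly the list of $U_5(2)$ entries appearing in the exception tables.
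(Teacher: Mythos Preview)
Your proposal is correct and follows essentially the same approach as the paper: both use a crude Proposition~\ref{tools}\ref{crude} bound with $\alpha(G)\le 5$ to reduce to the Weil modules and one $44$-dimensional module over $\mathbb{F}_3$, then refine via Brauer-character eigenspace data (the paper records these in a table rather than invoking Propositions~\ref{weilunitary}--\ref{itoeig}), and finish the small-$r$ Weil cases by direct GAP/Magma computation. One minor point: the paper's eigenspace inequality for $V_{10}(r)$ only kicks in at $r\ge 25$, so $r=11$ also requires explicit computation (and yields a regular orbit in all cases), not just $r\in\{3,5,9\}$; this is covered by your ``explicit matrix computation in GAP when these do not suffice'' clause but is worth flagging.
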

\begin{proof}
The number of prime order elements in $H = U_5(2).2$ is 3479519. We summarise some further information about prime order elements in $U_5(2).2$ in Table \ref{u52-counts}.
\begin{table}[h!]
\begin{tabular}{ccc}
\toprule
 $r_0$ & $i_{r_0}(H)$ & $\alpha(x) \leq$ \\ 
\midrule
2 (inner) & 3135 & 5 \\ 
2 (outer) & 19008 & 3 \\ 
3 & 56672 & 2 (3F) 5 (o/w) \\ 
5 & 912384 & 2 \\ 
11 & 2488320 & 2 \\ 
\hline 
\end{tabular} 
\caption{Elements of prime order in $H= U_5(2).2$. \label{u52-counts}}
\end{table}
Therefore, applying Proposition \ref{tools}\ref{alphabound} and \ref{qsgood}, 
it follows from \cite{HM} that we only need to further consider a 44-dimensional module when $r=3$, 11-dimensional Weil modules for $r = 25$ and 10-dimensional Weil modules for $r\leq 187$.
If $V$ is the 44-dimensional module and $r=3$, we infer from the Brauer character that $\emax(g_{3'}) \leq  28$.
Therefore, if $G$ has no regular orbit on $V$,
\[
r^{44} \leq 2\times 3479519r^{28} + |3F|r^{\floor{44/2}}+(56672-42240)r^{\floor{4\times 44/5}},
\]
which is false, so $G$ has a regular orbit on $V$.
Now suppose $V = V_{11}(25)$.  From the Brauer characters of the two conjugate modules of dimension 11, we see that if $G$ has no regular orbit on $V$,
\[
r^{11} \leq 912384r^{5} + 3135(r^8+r^3)+2\times 56672 r^6+ 2488320r
\]
This is false for $r=25$. 
Now suppose $V$ is a 10-dimensional irreducible Weil module. 
We summarise information about eigenspace dimensions in $U_5(2).2$ obtained from the Brauer tables and constructions in GAP in Table \ref{u52-case-analysis}.
\begin{table}[h!]
\begin{tabular}{cccccccccc}
\toprule
 & \multicolumn{9}{c}{Class in $U_5(2).2$}  \\ 
\cmidrule{2-10}
Characteristic $r_0$ & 2A & 2B & 3A/3B & 3C/3D & 3E & 3F & 5A & 11A/11B & 2C \\ 
\midrule
3 & (8,2) & (6,4) & 5 & 5 & 6 & 4 & $(2^5)$ & $(1^{10})$ & $(5^2)$ \\ 
5 & (8,2) & (6,4) & $(5^2)$ & $(4,3^2)$ & $(6,2^2)$ & $(4^2,2)$ & 2 & $(1^{10})$ & $(5^2)$ \\ 
11 & (8,2) & (6,4) & $(5^2)$ & $(4,3^2)$ & $(6,2^2)$ & $(4^2,2)$ & $(2^5)$ & 1 & $(5^2)$ \\ 
\midrule
Size of class in $U_5(2).2$ & 165 & 2970 & 352 & 7040 & 7040 & 42240 & 912384 & 2488320 & 19008 \\ 
\bottomrule
\end{tabular} 
\caption{Eigenspace dimensions on $V= V_{10}(r)$. \label{u52-case-analysis}}
\end{table}
Now, if $G$ has no regular orbit on $V$,
\begin{align*}
r^{10} \leq &165 (r^8 + r^2) + 2970 (r^6 + r^4) + 352 (2 r^5) + 
  7040 (r^4 + 3 r^2 + r^6 + 2 r^2) + 42240 (2 r^4 + r^2) \\
  &+ 912384 (5 r^2) + 2488320 (10 r) + 19008 (2 r^5).
\end{align*}
This is false for $r\geq 25$. If $r=3$ then $|V|<|G|$ and we compute using GAP that $b(U_5(2).2) = 2$ for both 10-dimensional modules. We also compute that $b(2\times U_5(2))= 2$ and $b(2\times U_5(2).2) = 3$. If $r=5$ then we have $|V|<|G|$ as well, and $b(G)=2$ by Lemma \ref{fieldext}. If $r=9$ and $\rho \in \mathbb{F}_9^\times $, then we compute that $\langle \rho \rangle\times U_5(2)$ has a regular orbit on $V$ unless  $\langle \rho \rangle \cong \mathbb{F}_9^\times$, in which case $b(G) = 2$ by Lemma \ref{fieldext}. Moreover, $U_5(2).2$ has a regular orbit on each of the two 10-dimensional modules $V$, but $\langle \rho \rangle \times U_5(2).2$ does not for non-identity $\rho$. Again, in these cases we have $b(G) = 2$ by Lemma \ref{fieldext}. If instead $r=11$, then we compute that both $\langle \rho \rangle  \times U_5(2)$ and $\langle \rho \rangle  \times U_5(2).2$ always have a regular orbit on $V$.
\end{proof}
We are now able to establish the following result.
\begin{proposition}
	Theorem
  \ref{umain} holds when $E(G)/Z(E(G)) \cong U_n(q)$ with $n\geq 5$ and $q\geq 2$.
\end{proposition}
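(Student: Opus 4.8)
The plan is to observe that the statement to be proved is a summary proposition: it asserts that Theorem~\ref{umain} holds for all unitary groups $U_n(q)$ with $n\geq 5$ and $q\geq 2$, and this is precisely the conjunction of the individual propositions that have just been established (for $U_{11}(2)$, $U_{10}(2)$, $U_9(2)$, $U_8(2)$, $U_7(3)$, $U_7(2)$, $U_6(3)$, $U_6(2)$, $U_5(3)$ and $U_5(2)$). So the proof is essentially a matter of checking that these cases exhaust all possibilities. First I would invoke Proposition~\ref{tools}\ref{crude} together with the bounds $d_1(E(G))\geq \lfloor(q^n-1)/(q+1)\rfloor$ from Proposition~\ref{mindegprop} and $\alpha(G)\leq n$ from Proposition~\ref{alphas}: substituting these into the crude inequality \eqref{crudeeqn} shows that the inequality fails — hence $G$ has a regular orbit and case~(i) of Theorem~\ref{umain} holds — for all $(n,q)$ with $n\geq 5$, $q\geq 2$ except those appearing in Table~\ref{n>5ugrps}, namely $(n,q)$ with $n=5$, $q\in\{2,3\}$; $n=6$, $q\in\{2,3\}$; $n=7$, $q\in\{2,3\}$; and $n\in\{8,9,10,11\}$, $q=2$.

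Next I would note that each of these finitely many remaining $(n,q)$ has been treated individually in the preceding propositions, each of which concludes that either $b(G)=\lceil\log|G|/\log|V|\rceil$ or $(G,V)$ lies in Table~\ref{allbad}. Concatenating these conclusions gives exactly the dichotomy in Theorem~\ref{umain} for all $n\geq 5$. I would also remark that the ``stability'' clause of Proposition~\ref{tools}\ref{crude} — that failure of \eqref{crudeeqn} for a given $d$ implies failure for all larger $d$ — is what lets us reduce the infinitely many module dimensions to the minimal one $d_1(E(G))$, and that the finiteness of the list of exceptional $(n,q)$ in Table~\ref{n>5ugrps} is genuinely forced because $|\mathrm{P}\Gamma\mathrm{U}_n(q).2|$ grows polynomially in $q$ while $r^{d_1}$ grows like $r^{q^{n-1}/(q+1)}$, so the right-hand side of \eqref{crudeeqn} is eventually dominated.

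The proof is therefore short: assemble the reduction via Proposition~\ref{tools}\ref{crude}, quote Table~\ref{n>5ugrps}, and cite the individual propositions. I do not anticipate a genuine obstacle here, since all the real work — the delicate eigenspace-dimension estimates from Brauer characters, Weil-module restrictions (Propositions~\ref{weilunitary}, \ref{su_weil_tensor}, \ref{itoeig}), and the computational base-size determinations in GAP and Magma — has already been carried out in the earlier propositions. If anything, the only point requiring a little care is making sure the reduction step correctly produces \emph{precisely} the list in Table~\ref{n>5ugrps} and nothing is omitted; this is a routine substitution into \eqref{crudeeqn} for each small value of $n$, using that the primes dividing $2|\Gamma\mathrm{U}_n(q)|$ constrain which $r$ can occur.

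\begin{proof}
Combining Proposition~\ref{tools}\ref{crude} with the bounds $d_1(E(G))\geq \lfloor(q^n-1)/(q+1)\rfloor$ of Proposition~\ref{mindegprop} and $\alpha(x)\leq n$ of Proposition~\ref{alphas}, we see that if $G$ has no regular orbit on $V=V_d(r)$ then
\[
r^d\leq 2\,|\mathrm{P}\Gamma\mathrm{U}_n(q).2|\,r^{\lfloor\frac{n-1}{n}d\rfloor}.
\]
By the final assertion of Proposition~\ref{tools}\ref{crude}, it suffices to check this for $d=d_1(E(G))$; doing so shows that for $n\geq 5$ and $q\geq 2$ the inequality fails, so $G$ has a regular orbit on $V$, unless $(n,q)$ appears in Table~\ref{n>5ugrps}. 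Each of the finitely many pairs $(n,q)$ in Table~\ref{n>5ugrps} has been handled in the preceding propositions of this section, in each of which it is shown that either $b(G)=\lceil\log|G|/\log|V|\rceil$ or $(G,V)$ lies in Table~\ref{allbad}. This establishes Theorem~\ref{umain} for all $n\geq 5$ and $q\geq 2$.
\end{proof}
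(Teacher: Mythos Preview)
Your proposal is correct and matches the paper's approach exactly: the paper presents this proposition without proof, simply stating ``We are now able to establish the following result'' after having carried out the reduction to Table~\ref{n>5ugrps} and treated each case individually. Your write-up just makes explicit the assembly that the paper leaves implicit.
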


\section{Proof of Theorem \ref{mainthm}: Symplectic groups}
The following is the main result of this section. Note that $\mathrm{PSp}_4(2) \cong L_2(9).2_2$ and $\mathrm{PSp}_4(3)\cong U_4(2)$,  and these groups are considered in Propositions \ref{alt_grp_prop} and \ref{u_4_q} respectively,  so we do not need to consider them here.

\begin{theorem}
\label{mainsymp}
Suppose $G$ is an almost quasisimple group with $E(G)/Z(E(G))\cong \mathrm{PSp}_n(q)$, $n\geq 4$, with $(n,q) \neq (4,2), (4,3)$. Let $V=V_d(r)$, $(r,q)=1$ be a module for $G$, with absolutely irreducible restriction to $E(G)$. Also suppose that $(r, |G|)>1$. Then one of the following holds.
\begin{enumerate}
\item $b(G) = \lceil \log |G| /\log |V| \rceil$,
\item $b(G) = \lceil \log |G| /\log |V| \rceil+1$ and $(G,V)$ lies in Table \ref{allbad}, or 
\item $(G,V)=(2\times \mathrm{PSp}_6(2), V_7(3))$ and $b(G)= \lceil \log |G| /\log |V| \rceil+2=4$.
\end{enumerate}
\end{theorem}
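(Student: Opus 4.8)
\textbf{Proof strategy for Theorem \ref{mainsymp}.}
The plan is to follow the now-familiar pattern used for the linear and unitary groups: first reduce to a finite list of pairs $(G,V)$ via a crude counting argument, then handle the survivors case by case. For general $\mathrm{PSp}_n(q)$ with $n=2m$, I would invoke Proposition \ref{tools}\ref{crude} together with the bounds $d_1(E(G))$ from Proposition \ref{mindegprop} and the bound $\alpha(x)\le m+3$ (with the transvection exception $\alpha(x)=2m$ or $2m+1$) from Propositions \ref{alphas} and \ref{sympalphas}. Substituting these in shows that the inequality
\[
r^d \le 2\sum_{x\in\mathcal{P}\cap G_{r_0'}}|\bar x^H|r^{\lfloor(1-1/\alpha(x))d\rfloor}+\sum_{x\in\mathcal{P}\cap G_{r_0}}\tfrac{1}{o(\bar x)-1}|\bar x^H|r^{\lfloor(1-1/\alpha(x))d\rfloor}
\]
fails for all but finitely many $(n,q)$, leaving (after consulting \cite{HM} and, for the large symplectic groups, Propositions \ref{d2symp} and \ref{speqn}--\ref{itoeig} for the Weil modules) a short list dominated by $\mathrm{PSp}_6(2)$, $\mathrm{PSp}_6(3)$, $\mathrm{PSp}_4(7)$, $\mathrm{PSp}_8(q)$ for small $q$, and the Weil modules of $\mathrm{PSp}_{2m}(q)$ for $q$ odd and $m$ small.

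For the generic survivors among the large groups I would proceed exactly as in Section \ref{u_grps}: use Propositions \ref{speqn} and \ref{itoeig} to pin down the eigenvalue multiplicities of semisimple elements on Weil modules via Proposition \ref{intchar}, refine $\alpha(x)$ for unipotent elements in GAP, count prime-order elements using Proposition \ref{invols} and \cite[Table B.3]{bg}, and then apply Proposition \ref{tools}\ref{qsgood} or \ref{eigsp1}. This disposes of $\mathrm{PSp}_4(7)$ on $V_{24}(2)$, the higher-rank symplectic groups, and the bulk of the Weil-module cases, leaving a finite collection of genuinely small pairs — principally those with $E(G)/Z(E(G))\in\{\mathrm{PSp}_6(2),\mathrm{PSp}_6(3)\}$ — to be resolved by explicit construction in GAP \cite{GAP4} or Magma \cite{Magma}. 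For these I would tabulate the eigenspace dimensions of all projective-prime-order elements (Brauer characters from \cite{modatlas} for $r_0'$-elements, direct computation for $r_0$-elements), apply Proposition \ref{tools}\ref{eigsp1}, and then finish the residual cases computationally, recording the outcomes in Table \ref{allbad} and Table \ref{noros}.

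The one pair demanding special attention is $(G,V)=(2\times\mathrm{PSp}_6(2),V_7(3))$ in conclusion (iii). Here $E(G)=2.\mathrm{PSp}_6(2)$ does \emph{not} act on a $7$-dimensional module — the relevant $7$-dimensional module in characteristic $3$ is the reduction of the reflection representation of $\mathrm{Sp}_6(2)\cong W(E_7)/\{\pm1\}$, on which the centre of $2.\mathrm{PSp}_6(2)$ acts trivially — so $G=2\times\mathrm{PSp}_6(2)$ acts with the central $\mathbb{F}_3^\times$ factor acting as scalars. Since $|G|=2|\mathrm{Sp}_6(2)|=2903040>3^7=2187=|V|$, there is no regular orbit, so $b(G)\ge 2$; and since $\lceil\log|G|/\log|V|\rceil = \lceil \log_3(2903040)\rceil = \lceil 7.02\ldots\rceil$, wait — one must compute this carefully: $3^7=2187$, $3^8=6561$, and $2903040 > 6561$, so in fact $\lceil\log|G|/\log|V|\rceil\ge 2$. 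I would compute the exact value of $\lceil\log|G|/\log|V|\rceil$ (it is $2$, since $|G|<|V|^2=3^{14}=4782969$) and then show directly in Magma that no pair, nor any triple, of vectors in $\mathbb{F}_3^7$ has trivial pointwise stabiliser in $G$, while some quadruple does, giving $b(G)=4=\lceil\log|G|/\log|V|\rceil+2$. The main obstacle is precisely this: the $\mathcal{C}_9$-type counting bounds of Proposition \ref{tools} are too weak to force $b(G)\le 3$ here because $\mathrm{PSp}_6(2)$ has an involution class (the transvections) with $\alpha=7$ and hence an eigenspace of dimension $6$ on a $7$-dimensional module, so the union $\bigcup_g C_V(g)$ genuinely covers too much of $V$; the value $b(G)=4$ can only be certified by explicit computation, which is why this pair is isolated as a separate conclusion.
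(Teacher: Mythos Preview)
Your proposal is correct and follows essentially the same approach as the paper, which organises the proof into $q$ odd (Proposition \ref{oddcharsprop}) and $q$ even (Proposition \ref{evencharsprop}) before applying the crude bound of Proposition \ref{tools}\ref{crude} together with Propositions \ref{sympalphas}, \ref{mindegprop} and \ref{d2symp} to reduce to a finite list, and then resolving the survivors via Brauer characters, Weil-module eigenspace calculations (Propositions \ref{speqn}--\ref{intchar}), and GAP/Magma computation. One minor correction to your discussion of conclusion (iii): in the notation of Remark \ref{gcd}(i), writing $G = 2\times\mathrm{PSp}_6(2)$ already means $E(G) = \mathrm{PSp}_6(2)$ with $F(G) = \mathbb{F}_3^\times$ of order $2$ acting by scalars, so your detour through the double cover $2.\mathrm{PSp}_6(2)$ is unnecessary --- the paper simply records $b(G)=4$ for this pair in Table \ref{s62ros} after direct computation, exactly as you propose.
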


\subsection{Odd characteristic}
\begin{proposition}
\label{oddcharsprop}
Theorem \ref{mainsymp} holds for $E(G)/Z(E(G)) \cong \mathrm{PSp}_n(q)$, $q$ odd.
\end{proposition}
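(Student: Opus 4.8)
The plan is to follow the template laid out in Section~\ref{techniques}: reduce to finitely many pairs $(G,V)$ via the crude bound, then dispatch each family in turn. For $\mathrm{PSp}_n(q)$ with $q$ odd we have the minimal degree $d_1(G)\geq \tfrac{1}{2}(q^{n/2}-1)$ from Proposition~\ref{mindegprop}, and from Proposition~\ref{sympalphas} we know $\alpha(x)\leq m+3$ (where $n=2m$) except when $x$ is a transvection, in which case $\alpha(x)=2m$, or the small exception $(m,q)=(2,3)$ which is excluded. So the first step is to feed these bounds, together with Proposition~\ref{invols} (for the number of involutions), Lemma~\ref{graph} (for graph/field automorphisms where relevant), and the trivial bound $|\mathrm{P}\Gamma\mathrm{Sp}_n(q)|$ on the number of remaining prime-order elements, into Proposition~\ref{tools}\ref{crude}. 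Since the left side of \eqref{crudeeqn} is increasing in $d$, substituting $d=d_1(G)$ should kill all but a short list of $(n,q)$ and then, via \cite{HM}, all but finitely many modules $V=V_d(r)$ with $r$ bounded.

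The second step is to organise the surviving cases by rank. For large $n$ (say $n\geq 8$ or so) one expects the crude bound alone, possibly refined using Propositions~\ref{d2symp}, \ref{speqn}, \ref{weilunitary}-style Weil-character arguments (via Propositions~\ref{intchar} and \ref{itoeig}), and the eigenvalue structure of transvections on the minimal (Weil) modules, to force a regular orbit everywhere. The transvections are the genuine nuisance here, because $\alpha = 2m$ only gives $\emax \leq d(1-\tfrac{1}{2m})$, which is weak; but on a Weil module of dimension roughly $\tfrac12 q^m$ a transvection fixes a subspace of codimension only about $\tfrac12 q^{m-1}$, so one should instead bound $\emax$ directly from $\dim C_W(\cdot)$ on the natural module $W$ combined with Proposition~\ref{speqn}, and only fall back on $\alpha$ for non-Weil modules. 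For the bounded list of small cases — principally $\mathrm{PSp}_6(3)$, $\mathrm{PSp}_4(q)$ for small $q$ (with $q=3$ excluded), $\mathrm{PSp}_4(7)$, and perhaps $\mathrm{PSp}_6(5)$, $\mathrm{PSp}_8(3)$ — the remaining modules have dimension and $r$ small enough that one reads eigenspace dimensions of projective-prime-order elements from the Brauer character tables in \cite{modatlas} (or constructs the module in GAP/Magma), tabulates them as in Tables~\ref{l2q-case-analysis} etc., and applies Proposition~\ref{tools}\ref{eigsp1} or \ref{eigsp2}. Any module where that fails, or where $|G|>|V|$, is handled by a direct base-size computation, landing the exceptions in Table~\ref{allbad} or Table~\ref{noros}.

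The main obstacle, as usual, will be the handful of small groups where $|G|$ is comparable to $|V|$ so the inequalities of Proposition~\ref{tools} are too crude: these are exactly the entries destined for Table~\ref{allbad}. Concretely I expect $\mathrm{PSp}_6(3)$ acting on its $13$- and $14$-dimensional modules, and the $2.\mathrm{PSp}_6(3)$ variant on $V_{14}(7)$, to require careful Brauer-character bookkeeping plus computer verification, and one must also be attentive to isoclinic variants and to the precise value of $\lceil\log|G|/\log|V|\rceil$ for each admissible central extension (the same subtlety flagged for the $L_2(q)$ tables). Since the $q$-even symplectic groups, and $\mathrm{PSp}_4(2)\cong L_2(9).2_2$, $\mathrm{PSp}_4(3)\cong U_4(2)$, are treated elsewhere, this proposition only needs the odd-$q$ analysis, so the $\mathrm{PSp}_6(2)$ and $\pom_8^+(2)$-flavoured complications do not arise here.

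Assembling these pieces: the crude bound eliminates all $(n,q)$ with $q$ odd beyond a finite list; \cite{HM} and Propositions~\ref{d2symp}, \ref{mindegprop} cut the module list to finitely many; Proposition~\ref{tools}\ref{eigsp1}/\ref{eigsp2} with character-table input handles the bulk; and the residual small cases are computed directly, yielding conclusions (i), (ii), or—in the single case noted in Theorem~\ref{mainsymp}(iii), which in fact belongs to the $q$-even analysis and so is not met here—(iii). This establishes Theorem~\ref{mainsymp} for $q$ odd.
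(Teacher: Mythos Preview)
Your plan is essentially the paper's approach: apply Proposition~\ref{tools}\ref{crude} with $d_1\geq\tfrac12(q^m-1)$ and the $\alpha$-bounds of Proposition~\ref{sympalphas} to reduce to a finite list, then treat each residual $(n,q)$ via \cite{HM}, Proposition~\ref{d2symp}, Weil-character calculations (Propositions~\ref{speqn}, \ref{itoeig}, \ref{intchar}), Brauer tables, and direct computation for the smallest modules. Two remarks on where the details differ from your expectations. First, the residual list is longer than you suggest: the crude bound leaves $(n,q)$ with $q=3$ all the way up to $n=14$, together with $\mathrm{PSp}_6(q)$ for $q\leq 9$ and $\mathrm{PSp}_4(q)$ for $q\leq 19$, so each of $\mathrm{PSp}_{14}(3),\mathrm{PSp}_{12}(3),\mathrm{PSp}_{10}(3),\mathrm{PSp}_8(3),\mathrm{PSp}_8(5)$ and $\mathrm{PSp}_6(5),\mathrm{PSp}_6(7),\mathrm{PSp}_6(9)$ needs an individual argument on its Weil modules. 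Second, for several of these (notably $\mathrm{PSp}_{12}(3)$ and $\mathrm{PSp}_{10}(3)$ at $r\in\{4,5\}$) the Weil-character formula alone does not control $\dim C_V(g)$ for elements of order dividing $r$; the paper handles this by restricting the Weil module to a subgroup $\mathrm{Sp}_{2k}(3)\times\mathrm{Sp}_{2(m-k)}(3)$ and using the resulting tensor decomposition (recorded as Proposition~\ref{goodwin_12_3}, the symplectic analogue of Proposition~\ref{su_weil_tensor}) together with Propositions~\ref{compfactors} and~\ref{tensorcodim}. You allude to subgroup restriction in general terms, but this specific tensor splitting of Weil modules is the missing ingredient you would need to carry the large-rank cases through.
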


Suppose $(n,q) \neq (4,3)$ (this case will be dealt with separately), and let $m=n/2$. From \cite[Table B.7]{bg} we compute that the number of transvections in $G/F(G)$ is at most $q^n-1$.  If $G$ has no regular orbit on an absolutely irreducible $\F_rE(G)$-module $V=V_d(r)$, then by Propositions \ref{mindegprop}, \ref{sympalphas} and \ref{tools}\ref{crude},
\begin{equation}
\label{generalspeqn}
r^d \leq 2|\mathrm{P}\Gamma \mathrm{Sp}_n(q)|r^{\floor{\frac{m+2}{m+3} d}}+ 2(q^{n}-1) r^{\floor{\frac{n-1}{n} d}}
\end{equation}
This is false in all cases except those listed in Table \ref{oddcharsremain}.
\begin{table}[h!]
\begin{tabular}{ccccccc}
\toprule
$n$ &14 & 12 & 10 & 8 & 6 & 4 \\ 
\midrule
$q$ &3& 3 & 3 & 3,5 & $\leq 9$ & $\leq 19$ \\ 
\bottomrule
\hline 
\end{tabular} 
\caption{Remaining symplectic groups in odd characteristic. \label{oddcharsremain}}
\end{table}

\begin{proposition}
Proposition \ref{oddcharsprop} holds for $E(G)/Z(E(G)) \cong \mathrm{PSp}_{14}(3)$.
\end{proposition}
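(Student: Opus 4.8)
The plan is to follow the same template used for the other large classical groups in this paper, specialising the general bound \eqref{generalspeqn} to the case $n=14$, $q=3$. Here $m=7$, so $\tfrac{m+2}{m+3}=\tfrac9{10}$ and $\tfrac{n-1}{n}=\tfrac{13}{14}$. By Proposition \ref{mindegprop}, $d_1(\mathrm{PSp}_{14}(3))=\tfrac12(3^7-1)=1093$, and from \cite[Table B.7]{bg} the number of transvections in $G/F(G)$ is at most $3^{14}-1$. Since $|\mathrm{P}\Gamma\mathrm{Sp}_{14}(3)|=|\mathrm{PSp}_{14}(3)|$ (no diagonal or field automorphisms for $q=3$ prime, only the trivial outer automorphism group aside from $\mathrm{PGSp}$, which equals $\mathrm{PSp}$ here), I would first substitute $d=1093$ into \eqref{generalspeqn} and check that
\[
3^{1093} > 2|\mathrm{Sp}_{14}(3)|\,3^{\lfloor 9\cdot 1093/10\rfloor}+2(3^{14}-1)\,3^{\lfloor 13\cdot 1093/14\rfloor},
\]
which holds comfortably since $\log_3|\mathrm{Sp}_{14}(3)|$ is around $104$ while $\tfrac1{10}d\approx 109$. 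By the monotonicity clause of Proposition \ref{tools}\ref{crude}, the inequality then fails for every $d\ge 1093$, so $G$ has a regular orbit on every $V$ of dimension at least $1093$.

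Next I would consult \cite{HM} (or the relevant minimal-degree results, Proposition \ref{mindegprop}) to see which absolutely irreducible cross-characteristic modules of $\mathrm{PSp}_{14}(3)$ and its covers have dimension below $1093$. Since the Schur multiplier of $\mathrm{PSp}_{14}(3)$ is cyclic of order $2$, the only sub-$1093$-dimensional modules in cross characteristic are the four Weil modules of dimensions $(3^7\pm1)/2$, i.e.\ $1093$ and $1094$ — and $1093$ is already excluded by the previous paragraph. So the only remaining case is $V=V_{1094}(r)$ a Weil module (two of dimension $1093$, two of dimension $1094$; the $1093$ ones sit at exactly the threshold and I should double-check that \eqref{generalspeqn} at $d=1093$ is genuinely strict — it is, with room to spare). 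For $d=1094$ the same crude substitution again fails, so $G$ has a regular orbit; but to be safe I would treat the Weil modules directly using the sharper tools. Using Propositions \ref{speqn} and \ref{intchar} together with Proposition \ref{itoeig}, I would bound $\emax(g)$ for semisimple $g$ of odd prime order on the $3^7$-dimensional Weil module, obtaining a bound well below $\tfrac12 d$; for $r_0$-elements (with $r_0\mid r$) I would use the transvection bound of Proposition \ref{sympalphas} giving $\emax\le\lfloor\tfrac{13}{14}d\rfloor$, and $\alpha\le m+3=10$ otherwise. Feeding these into Proposition \ref{tools}\ref{qsgood} or \ref{eigsp2} — splitting off transvections and small-$\alpha$ classes — yields an inequality that fails for all $r$ coprime to $3$, so $G$ has a regular orbit on every Weil module.

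Since every absolutely irreducible cross-characteristic module has dimension $\ge 1093$ and all such have been shown to admit a regular orbit, conclusion (i) of Theorem \ref{mainsymp} holds (indeed $b(G)=1$), and neither of the exceptional possibilities (ii), (iii) arises for $\mathrm{PSp}_{14}(3)$. I expect essentially no obstacle here: the group is so large relative to its smallest faithful module that the crude bound \eqref{generalspeqn} does almost all the work, and the only mild care needed is (a) confirming the list of small modules from \cite{HM} is exactly the Weil modules, and (b) handling the two Weil modules of dimension exactly $1093$ if one wants to avoid relying on strictness of the $d=1093$ inequality — in which case the Weil-character estimate via Propositions \ref{speqn}, \ref{intchar}, \ref{itoeig} closes the gap cleanly. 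The whole proof should be only a few lines, mirroring the adjacent propositions for $\mathrm{PSp}_{12}(3)$ and $\mathrm{PSp}_{10}(3)$.
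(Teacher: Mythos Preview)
Your central numerical claim is wrong, and this invalidates the ``almost for free'' structure of your argument. You evaluate \eqref{generalspeqn} at $d=1093$ using base-$3$ logarithms, but $r$ is the characteristic of the module and must be coprime to $q=3$. The worst case is $r=2$. There one has $\log_2|\mathrm{P}\Gamma\mathrm{Sp}_{14}(3)|\approx 166$, not $104$, while $\tfrac{1}{10}d\approx 109$ and $\tfrac{1}{14}d\approx 78$; so in fact
\[
2\,|\mathrm{P}\Gamma\mathrm{Sp}_{14}(3)|\cdot 2^{\lfloor 9\cdot1093/10\rfloor}\approx 2^{1150}\gg 2^{1093},
\]
and \eqref{generalspeqn} does \emph{not} fail at $r=2$. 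This is exactly why $(n,q)=(14,3)$ survives into Table~\ref{oddcharsremain}. (Two smaller slips compound this: $\mathrm{Out}(\mathrm{PSp}_{14}(3))$ has order $2$, not $1$; and the $1094$-dimensional Weil modules only exist in odd characteristic, so for $r=2$ only $d=1093$ is relevant.)

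Thus the Weil-module analysis is not a cosmetic backup but the entire content of the proof. Your sketch of it is in the right spirit but needs repair: transvections in $\mathrm{Sp}_{14}(3)$ have order $3$, so they are never ``$r_0$-elements'' here; and Propositions~\ref{speqn}, \ref{itoeig}, \ref{intchar} give eigenspace bounds only for semisimple (in the natural module) elements of order $\ge 5$. The paper therefore treats order-$2$ and order-$3$ elements via $\alpha$-bounds --- $\alpha(g_3)\le 9$ for non-transvections (from the proof of \cite[Theorem~4.3]{gs}), $\alpha(g_3)\le 14$ for transvections, $\alpha(g_2)\le 10$ --- and couples these with the count bounds from Proposition~\ref{invols}. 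Carrying out the eigenvalue-multiplicity computation for primes $5,7,11,13,41,61,73,547,1093$ on the $2187$-dimensional Weil module (as in Table~\ref{s143-analysis}) yields $\emax(g_{\{2,3\}'})\le 486$, and then the refined inequality
\[
2^{1093}\le 2|\mathrm{PSp}_{14}(3).2|\cdot 2^{486}+i_2\cdot 2^{983}+2(3^{14}-1)\cdot 2^{1014}+2\,i_3\cdot 2^{971}
\]
is the one that actually fails. You should make this the main line of the argument, not a footnote.
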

\begin{proof}
By Proposition \ref{d2symp} and \eqref{generalspeqn}, we only need consider Weil modules $V$ of dimension 1093 for $r=2$. Let $\overline{V}$ be the 2187-dimensional Weil module (as described in Section \ref{weilbackground}) for $G$ with corresponding character $\rho$.
For an element $x \in G$ of projective prime order at least 5, we use Propositions \ref{speqn}, \ref{itoeig}, and \ref{intchar} to find the eigenspace dimensions of $x$ on $\overline{V}$ over $\overline{\mathbb{F}}_2$, and summarise our findings in Table \ref{s143-analysis}. These eigenspace dimensions of $x$ on $V$ form upper bounds for the eigenspace dimensions of $x$ on $V$, and therefore  $\emax(g_{\{2,3\}'}) \leq  486$ on $V$. From the proof of \cite[Theorem 4.3]{gs}, $\alpha(g_3)\leq 9$, unless $g_3$ is a transvection, where $\alpha(g_3) \leq 14$ by Proposition \ref{sympalphas}. The number of transvections in $G/F(G)$ is at most $3^{14}-1$ by \cite[Table B.7]{bg}, and for $x$ an involution, $\alpha(x) \leq 10$ by Proposition \ref{sympalphas}. Therefore, if $G$ has no regular orbit on $V$,
\[
r^{d} \leq 2|\mathrm{PSp}_{14}(3).2| r^{486} + i_2(G/F(G))r^{\floor{9d/10}}+2(3^{14}-1)r^{\floor{13d/14}} + 2i_3(G/F(G))r^{\floor{8d/9}}
\]
From the upper bounds for $i_2(G/F(G))$ and $i_3(G/F(G))$ given in Proposition \ref{invols}, we see that this is false for $r=2$ and $d=1093$.
\end{proof}
\begin{table}[h!]
\begin{tabular}{@{}cccc@{}}
\toprule
Proj. prime order & $\dim C_W(x)$ & $|\rho(x)|$ & Eigenvalue mults \\ \midrule
5 & 2 & 3 & $(438^4,435)$ \\
 & 6 & 27 & $(459, 432^4)$ \\
 & 10 & 243 & $(486^4,243)$ \\
7 & 2 & 3 & $(315, 312^6)$ \\
 & 8 & 81 & $(324^6,243)$ \\
11 & 4 & 9 & $(207, 198^{10})$ \\
13 & 2 & 3 & $(171, 168^{12})$ \\
 & 8 & 81 & $(243, 162^{12})$ \\
41 & 6 & 27 & $(54^{40},27)$ \\
61 & 4 & 9 & $(36^{60},27)$ \\
73 & 2 & 3 & $(30^{72},27)$ \\
547 & 0 & 1 & $(4,3^{546})$ \\
1093 & 0 & 1 & $(3,2^{1092})$ \\ \bottomrule
\end{tabular}
\caption{Eigenspace dimensions of some prime order elements in $\mathrm{PSp}_{14}(3)$ on a 2187-dimensional Weil module.}
\label{s143-analysis}
\end{table}

Recall the notation for the composition factors of a reducible module introduced in Section \ref{techniques}. 
\begin{proposition}
	\label{goodwin_12_3}
	Let $V_{364}$ and $V_{365}$ be  364- and 365-dimensional Weil modules for $\mathrm{Sp}_{12}(3)$ respectively over $\F_r$. Let $H = \mathrm{Sp}_{8}(3)\times  \mathrm{Sp}_{4}(3)$.
	\begin{enumerate}
		\item If $r$ is odd, $V_{364} \downarrow H = 41\otimes 4/40\otimes 5$, and $V_{365} \downarrow H  = 41\otimes 5/40\otimes 4$.
		\item If $r$ is even, then $V_{365}$ does not exist and $V_{364} \downarrow H = (40\otimes4)^2/1\otimes 4/40\otimes 1$.
	\end{enumerate}
Moreover, if $g \in \mathrm{Sp}_{12}(3)$ is semisimple of projective prime order not 5 or 73, then $\emax(g)\leq 273$ on a Weil module.
\end{proposition}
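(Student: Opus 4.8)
The plan is to prove the three assertions of Proposition \ref{goodwin_12_3} in turn, then derive the eigenspace bound from the branching rules in (i) and (ii). All of this is a direct application of the Weil-module machinery assembled in Section \ref{weilbackground}, specialised to $\mathrm{Sp}_{12}(3)$ sitting over the subgroup $H=\mathrm{Sp}_8(3)\times\mathrm{Sp}_4(3)$.

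For parts (i) and (ii), I would start from the $3^6=729$-dimensional Weil module $V_0$ for $\mathrm{Sp}_{12}(3)$ (the module denoted $V_0$ in Section \ref{weilbackground}, coming from the representation of the special group $3^{1+12}$), and restrict it along the natural embedding $\mathrm{Sp}_8(3)\times\mathrm{Sp}_4(3)\le\mathrm{Sp}_{12}(3)$. Because the underlying symplectic space decomposes orthogonally as $W_8\perp W_4$, the special group $3^{1+12}$ is a central product of $3^{1+8}$ and $3^{1+4}$, so the $729$-dimensional representation restricts as the (outer) tensor product of the $81$-dimensional Weil module for $\mathrm{Sp}_8(3)$ with the $9$-dimensional Weil module for $\mathrm{Sp}_4(3)$; this is the symplectic analogue of Proposition \ref{su_weil_tensor}, and I would cite \cite{GERARDIN197754} or the discussion in Section \ref{weilbackground} for it. Now, over a field of odd characteristic $r$, the $81$-dimensional Weil module of $\mathrm{Sp}_8(3)$ splits as $41/40$ (the two irreducible Weil constituents of dimensions $(3^4+1)/2$ and $(3^4-1)/2$), and the $9$-dimensional Weil module of $\mathrm{Sp}_4(3)$ splits as $5/4$ similarly — here one must be slightly careful since $(m,q)=(2,3)$ is the excluded case of Proposition \ref{d2symp}, but the Weil module still has constituents of dimensions $5$ and $4$, which is all that is needed. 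Tensoring, $V_0\downarrow H = 41\otimes5 \;/\; 41\otimes4 \;/\; 40\otimes5 \;/\; 40\otimes4$, of dimensions $205,164,200,160$. The $729$-dimensional Weil module of $\mathrm{Sp}_{12}(3)$ restricts to $\mathrm{Sp}_{12}(3)$ as the sum of the two Weil modules of dimensions $365=(3^6+1)/2$ and $364=(3^6-1)/2$, plus an extra trivial summand (total $365+364=729$); matching the "$+1$" eigenvalue structure (the plus-type vs. minus-type constituents, as recorded in Propositions \ref{speqn}) pins down which tensor factor lands in which summand, giving $V_{365}\downarrow H=41\otimes5/40\otimes4$ and $V_{364}\downarrow H=41\otimes4/40\otimes5$. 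For part (ii), when $r$ is even the module $V_0$ has a different constituent structure: the $81$-dimensional module for $\mathrm{Sp}_8(3)$ has a single Weil constituent of dimension $40$ appearing with multiplicity two together with a trivial constituent (cf. the $r_0=2$ case in Section \ref{weilbackground}, where there are two Weil modules both of dimension $(q^m-1)/2$), and likewise the $9$-dimensional module restricts as $4^2/1$; the $365$-dimensional module does not exist in characteristic $2$, and expanding $(40^2/1)\otimes(4^2/1)$ and extracting the composition factors of $V_{364}$ gives $(40\otimes4)^2/1\otimes4/40\otimes1$.

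For the final sentence, let $g\in\mathrm{Sp}_{12}(3)$ be semisimple of projective prime order $\ell\notin\{5,73\}$. I would bound $\emax(g)$ on the $729$-dimensional Weil module $\overline V$ using Proposition \ref{speqn} (which gives $|\chi_1(g)+\chi_2(g)|^2=|C_W(g)|$, hence determines $\chi_1(g)+\chi_2(g)$ up to a scalar), Proposition \ref{itoeig} (which constrains $\dim C_W(g)$ to a short list depending on $\delta(\ell,3)$), and Proposition \ref{intchar} (which converts the character value into eigenvalue multiplicities); this is exactly the tabulation carried out for $\mathrm{PSp}_{14}(3)$ in Table \ref{s143-analysis}. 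Running through the possible values of $\dim C_W(g)\le 10$ one finds that the largest eigenspace on $\overline V$ has dimension at most $273$ except precisely when $\ell=5$ (where $\dim C_W(g)=10$ can force a $243$- or larger eigenspace pushing the bound higher on the $729$-dim module, but this is the excluded prime) or $\ell=73$ (a primitive prime divisor of $3^{12}-1$, again excluded); since $V_{364}$ and $V_{365}$ are constituents of $\overline V$, the same bound $\emax(g)\le273$ holds on each Weil module.

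The main obstacle I anticipate is the pair (i)/(ii) branching, specifically the bookkeeping at the excluded small case $\mathrm{Sp}_4(3)$: Proposition \ref{d2symp} explicitly excludes $(m,q)=(2,3)$, so I cannot simply quote it for the $9$-dimensional Weil module of $\mathrm{Sp}_4(3)$, and I will need to verify directly (e.g.\ from the known character degrees of $\mathrm{Sp}_4(3)\cong 2.U_4(2)$ or from \cite{modatlas}) that in odd characteristic its Weil module still has constituents of dimensions exactly $5$ and $4$, and in characteristic $2$ the structure $4^2/1$. The rest — the tensor-product restriction of $V_0$, matching $\pm$ types to identify which Weil module of $\mathrm{Sp}_{12}(3)$ contains which tensor factor, and the eigenvalue count for the last sentence — is routine given Propositions \ref{speqn}, \ref{itoeig}, \ref{intchar} and the exceptional-isomorphism remarks, though the identification step needs a short argument comparing Brauer character values on a suitable element (or invoking the unique embedding of Weil modules) rather than pure dimension count.
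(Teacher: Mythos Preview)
The paper's proof is a two-line citation to \cite[Lemma~9.3.13]{goodwin} together with the standard fact that the Brauer character of a Weil module is determined by the ordinary one. Your proposal is considerably more ambitious: you attempt to rederive both the branching rules and the eigenspace bound from scratch using the machinery of Section~\ref{weilbackground}. For parts (i) and (ii) your approach is essentially correct and is exactly the argument underlying Goodwin's lemma, modulo one slip of the pen: you write that the $729$-dimensional module ``restricts to $\mathrm{Sp}_{12}(3)$ as the sum of the two Weil modules \dots\ plus an extra trivial summand (total $365+364=729$)''---there is of course no trivial summand in odd characteristic, as your own dimension count shows.

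There is, however, a genuine gap in your argument for the eigenspace bound. You propose to bound $\emax(g)$ on the $729$-dimensional module $\overline V$ via Proposition~\ref{speqn} and then pass to the constituents $V_{364}$, $V_{365}$. But Proposition~\ref{speqn} only gives $|\chi_1(g)+\chi_2(g)|$, not the individual values $\chi_i(g)$, and the resulting bound on $\overline V$ is too weak. Concretely, take an involution $t\in\mathrm{Sp}_{12}(3)$ with $\dim C_W(t)=10$ (so $t$ acts as $-1$ on a nondegenerate $2$-space). Then $|\omega(t)|=3^5=243$, and since $t$ has eigenvalues $\pm1$ on $\overline V$ with multiplicities summing to $729$, one of them is $486$. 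Thus $\emax^{\overline V}(t)=486$, not $\le 273$, so your claim that ``the largest eigenspace on $\overline V$ has dimension at most $273$'' is false. (On the individual Weil modules the bound does hold---restricting along $\mathrm{Sp}_2(3)\times\mathrm{Sp}_{10}(3)$ one finds $\emax^{V_{365}}(t)=244$ and $\emax^{V_{364}}(t)=242$---but this requires knowing $\chi_1(t)$ and $\chi_2(t)$ separately, which is exactly what Goodwin's explicit character computation provides and what Proposition~\ref{speqn} alone does not.) To repair the argument you would need either the individual Weil character formula or a systematic use of the branching rules in (i)/(ii) (and their analogues for other Levi subgroups, since not every semisimple class meets $\mathrm{Sp}_8(3)\times\mathrm{Sp}_4(3)$) to compute eigenspaces class by class.
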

\begin{proof}
	This follows from the proof of \cite[Lemma 9.3.13]{goodwin}, and the fact that we can deduce the Brauer character of $V$ from the ordinary character of a Weil module over $ \mathbb{C}$ (cf. \cite [\S 6]{modatlas}).
\end{proof}
\begin{proposition}
\label{s123-prop}
Proposition \ref{oddcharsprop} holds for $E(G)/Z(E(G)) \cong \mathrm{PSp}_{12}(3)$.
\end{proposition}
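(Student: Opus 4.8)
The plan is to follow the template used throughout this section: first use the crude counting bound of Proposition \ref{tools}\ref{crude} together with the known lower bound $d_1(\mathrm{Sp}_{12}(3))$ from Proposition \ref{mindegprop} and the bound $d_2$ from Proposition \ref{d2symp} to reduce to the Weil modules, then handle each characteristic $r$ individually. By Proposition \ref{d2symp} and inequality \eqref{generalspeqn}, the only surviving modules are the Weil modules of dimension $364$ (for $r=2,4$) and $365$ (for $r$ odd), and combining with \eqref{generalspeqn} the relevant fields are $r=2$ and small odd $r$; examining \cite{HM} pins down precisely which $(d,r)$ remain. So the proof splits into the cases $r=2$ and $r$ odd.

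For $r$ odd, the strategy is to apply Proposition \ref{tools}\ref{qsgood}. For semisimple elements of projective prime order $\neq 5, 73$, Proposition \ref{goodwin_12_3} already gives $\emax(g)\le 273$; for the exceptional primes $5$ and $73$ one reads off the eigenvalue multiplicities on the $3^6$-dimensional Weil module via Propositions \ref{speqn}, \ref{itoeig}, \ref{intchar} (exactly as in Table \ref{s143-analysis} for $\mathrm{Sp}_{14}(3)$), then passes to the $364$- or $365$-dimensional constituent. For elements of order $r$ (i.e. $r_0 = 3$ when $r$ is a power of $3$), one uses $\alpha(g_3)\le 9$ from the proof of \cite[Theorem 4.3]{gs} except for transvections where $\alpha \le 12$ by Proposition \ref{sympalphas}, with the transvection count $\le 3^{12}-1$ from \cite[Table B.7]{bg}; for involutions $\alpha(x)\le 9$ by Proposition \ref{sympalphas}. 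Feeding these bounds, together with the involution and order-$3$ element counts from Proposition \ref{invols} and the class data, into Proposition \ref{tools}\ref{qsgood} should give a contradiction for all odd $r$ under consideration, establishing a regular orbit.

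For $r=2$ (and $r=4$), the only semisimple prime to worry about other than those already bounded is $3$: here one restricts $V_{364}$ to $H=\mathrm{Sp}_8(3)\times \mathrm{Sp}_4(3)$ using Proposition \ref{goodwin_12_3}(ii), namely $V_{364}\downarrow H = (40\otimes 4)^2/1\otimes 4/40\otimes 1$, bounds the fixed space of an order-$3$ element on each tensor factor (using that every order-$3$ class of $\mathrm{Sp}_{12}(3)$ meets $H$, and known fixed-space bounds on the $40$- and $4$-dimensional Weil modules), and applies Propositions \ref{compfactors} and \ref{tensorcodim} to get a bound of the shape $\dim C_V(g_3)\le 2(c_8\cdot 4) + 4 + c_8$ for the appropriate constant $c_8$. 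For the remaining semisimple elements one again uses Propositions \ref{speqn}, \ref{itoeig}, \ref{intchar} to conclude $\emax(g_{\{2,3\}'})$ is bounded well below $d/2$. Then Proposition \ref{tools}\ref{qsgood}, with the $\mathrm{Aut}$-level element counts from Proposition \ref{invols} and \cite[Table B.7]{bg}, yields the contradiction showing a regular orbit exists.

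The main obstacle I expect is the $r=2$, order-$3$ case: getting a sharp enough bound on $\dim C_V(g_3)$. The crude bound $\alpha(g_3)\le 9$ gives only $\lfloor 8d/9\rfloor$, which is far too weak at $d=364$ (it exceeds $d/2$ by a wide margin and would never beat the $2^{364}$ on the left), so the restriction-to-$H$ argument via Proposition \ref{goodwin_12_3}(ii) is essential, and one must be careful that the constituent bounds on the $40$- and $4$-dimensional Weil modules are genuinely correct and that every order-$3$ class really does have a representative in $H$. If the resulting bound is still slightly too weak, one may need to split the order-$3$ elements into transvections versus the rest (transvections have a much larger fixed space but are few in number, $\le 3^{12}-1$), mirroring the treatment of $\mathrm{Sp}_{14}(3)$ above. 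Everything else is routine substitution into Proposition \ref{tools}.
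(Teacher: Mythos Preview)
Your overall template is right, but you have systematically misidentified which prime-order elements are unipotent in each characteristic, and this makes both branches of your argument fail at the crucial step.

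Since we are in cross-characteristic for $\mathrm{PSp}_{12}(3)$, the defining characteristic is $3$, so $r$ is never a power of $3$; your parenthetical ``$r_0=3$ when $r$ is a power of $3$'' cannot occur. Concretely, after reducing via \eqref{generalspeqn} and Proposition~\ref{d2symp} (and noting the $z_3$ irrationality, which forces $r=4$ rather than $r=2$ in characteristic~$2$), the paper is left with the Weil modules of dimensions $364$, $365$ for $r\in\{4,5\}$ and $r=5$ respectively. For $r=5$, the unipotent elements are those of order~$5$, not order~$3$; your plan to bound $\emax(g_5)$ via Propositions~\ref{speqn}, \ref{itoeig}, \ref{intchar} is invalid there because those results apply only to semisimple elements. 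The paper instead restricts to $H=\mathrm{Sp}_8(3)\times\mathrm{Sp}_4(3)$ using Proposition~\ref{goodwin_12_3}(i) (the odd-$r$ decomposition $41\otimes 4/40\otimes 5$ and $41\otimes 5/40\otimes 4$), notes that $H$ contains a Sylow $5$-subgroup so every order-$5$ class meets $H$, and bounds $\dim C_V(g_5)\le 283$ (resp.\ $284$) via Propositions~\ref{compfactors} and~\ref{tensorcodim}.

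For $r=4$ you make the mirror error: elements of order~$3$ are semisimple here and are already covered by the $\emax\le 273$ bound of Proposition~\ref{goodwin_12_3}, so your restriction argument aimed at order-$3$ elements is unnecessary. What actually needs bounding is the fixed space of \emph{involutions}, which are unipotent in characteristic~$2$; your proposal never addresses them. The paper observes that the $364$-dimensional Weil modules do not extend to $\mathrm{PSp}_{12}(3).2$, so only inner involutions occur, all of which have representatives in $\rho(H)$; then Proposition~\ref{goodwin_12_3}(ii) gives the composition factors $(40\otimes 4)^2/1\otimes 4/40\otimes 1$, yielding $\dim C_V(g_2)\le 2\cdot 40\cdot 3+3+40=283$. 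Together with $\emax(g_{2'})\le 273$ and $i_2<2^{68}$, Proposition~\ref{tools}\ref{qsgood} finishes. Your restriction idea and Proposition~\ref{goodwin_12_3}(ii) are exactly right---you just need to apply them to involutions, not to order-$3$ elements.
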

\begin{proof}
Here by \eqref{generalspeqn} and Propositions \ref{d2symp} and \ref{tools}\ref{crude}, we only need consider the Weil modules of dimension 364, 365 for $r=4,5$ and $r=5$ respectively.
First suppose $r=5$. We restrict the Weil modules to $H=\mathrm{Sp}_8(3) \times \mathrm{Sp}_4(3)$ and by Proposition \ref{goodwin_12_3}, we see that $
V_{364} \downarrow H = 41\otimes 4+40\otimes 5$, while $
V_{365} \downarrow H  = 41\otimes 5+40\otimes 4$, where each integer denotes an irreducible module of that dimension.
Now, $H$ contains a Sylow 5-subgroup of $G$, so all classes of elements of order 5 in $G$ have representatives in $H$. Therefore, by Propositions  \ref{sympalphas} and \ref{tensorcodim},
any element of order 5 in $G$ has a fixed point space of dimension at most $41\times 3+40\times 4 = 283$ on the 364-dimensional module, and at most $41\times 4+40\times 3 = 284$ on the 365-dimensional module. Examining the Brauer character and usingProposition \ref{goodwin_12_3}, we see that $\emax(g_{\{5,73\}'}) \leq 273$ on either of the Weil modules. Since 73 is a primitive prime divisor of $3^{12}-1$, $\alpha(x) \leq 3$ for any $x \in G$ of order 73 by the proof of \cite[Theorem 4.3]{gs}. Therefore, if $G$ has no regular orbit on $V$, then $r^d \leq 2|\mathrm{PSp}_{12}(3).2|r^{284}$, and this is false for $r=5$, $d=364$ or $365$. 

Now suppose $V=V_{364}(4)$. Since neither of the two Weil modules of this dimension extend to a module for $\mathrm{PSp}_{12}(3).2$, we have $G/F(G) \cong  \mathrm{PSp}_{12}(3)$. Let $\rho$ be the natural projection map from $\mathrm{Sp}_{12}(3)$ to $ \mathrm{PSp}_{12}(3)$. We observe that all classes of involutions in  $\mathrm{PSp}_{12}(3)$ have representatives in $H=\rho(\mathrm{Sp}_8(3) \times \mathrm{Sp}_4(3))$.  Now, $V_{364} \downarrow H$ has composition factors $(40\otimes 4)^2$, $1\otimes 4$ and $40\otimes 1$ by Proposition \ref{goodwin_12_3}. By Propositions \ref{compfactors} and \ref{tensorcodim}, every involution in $G$ has a fixed point space of dimension at most $2\times 40\times 3+3+40 = 283$. Using the same argument as the characteristic 5 case, we deduce that $\emax(g_{2'})\leq 273$. We also compute that the number of inner involutions of $G$ is less than $2^{68}$. Therefore, if $G$ has no regular orbit on $V$,
\[
r^{364} \leq 2|\mathrm{PSp}_{12}(3).2|r^{273} + 2^{68}r^{283}
\]
This is false for $r=4$, so $G$ has a regular orbit on $V$.
\end{proof}
\begin{proposition}
Proposition \ref{oddcharsprop} holds for $E(G)/Z(E(G)) \cong \mathrm{PSp}_{10}(3)$.
\end{proposition}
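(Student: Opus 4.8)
The plan is to follow exactly the template used for $\mathrm{PSp}_{14}(3)$ and $\mathrm{PSp}_{12}(3)$, specialised to $m=5$. First I would invoke Proposition~\ref{d2symp} together with the master inequality \eqref{generalspeqn} to cut the problem down: the only absolutely irreducible cross-characteristic modules $V$ small enough to require separate attention are the Weil modules of dimension $(3^5\pm 1)/2 = 121$ and $122$, and (consulting \cite{HM} and the bound \eqref{generalspeqn}) these need only be considered for small $r$ — concretely for $r_0\in\{2\}$ for one dimension and the few small prime powers for which \eqref{generalspeqn} fails when $d=121,122$. So the proof reduces to a handful of $(d,r)$ pairs with $E(G)/Z(E(G))\cong\mathrm{PSp}_{10}(3)$.

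Next, for each surviving Weil module I would bound $\emax(x)$ for $x\in G$ of projective prime order. For semisimple $x$ of order not equal to $5$ or the primitive prime divisors of $3^{10}-1$ (i.e. $11$, and divisors of $3^5+1=244$, namely $61$), I would use Propositions~\ref{speqn}, \ref{itoeig} and \ref{intchar} applied to the $3^5=243$-dimensional Weil module $\overline V$ for $G$: compute $\dim C_W(x)$ from Proposition~\ref{itoeig}, read off $|\rho(x)|$ via Proposition~\ref{speqn}, then convert to eigenvalue multiplicities via Proposition~\ref{intchar}, giving a uniform bound $\emax(g_{\{2,3\}'})\le 2\cdot 3^{?}$ on $\overline V$ and hence on $V$, exactly as in Table~\ref{s143-analysis}. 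The order-$5$ elements are the genuine obstacle (as always with these symplectic Weil-module arguments): for those I would restrict $V$ to $H=\mathrm{Sp}_8(3)\times\mathrm{Sp}_2(3)$ or $\mathrm{Sp}_6(3)\times\mathrm{Sp}_4(3)$ — whichever contains a Sylow $5$-subgroup of $G$ — use Proposition~\ref{su_weil_tensor}'s symplectic analogue (the tensor-product decomposition of the Weil module, cf. the use of \cite[Lemma 9.3.13]{goodwin} in Proposition~\ref{goodwin_12_3}; I would state the analogous $\mathrm{Sp}_{10}$ decomposition as a lemma if needed) and then apply Propositions~\ref{compfactors} and \ref{tensorcodim} to get the fixed-point bound. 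For unipotent $x$ I would use $\alpha(g_3)\le 8$ from the proof of \cite[Theorem 4.3]{gs} unless $g_3$ is a transvection, in which case $\alpha(g_3)\le 10$ by Proposition~\ref{sympalphas}, with the transvection count $\le 3^{10}-1$ from \cite[Table B.7]{bg}; for involutions $\alpha(x)\le 8$ by Proposition~\ref{sympalphas}.

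Then I would assemble the counting inequality. Using the upper bounds $i_2(G/F(G)), i_3(G/F(G))$ from Proposition~\ref{invols} and $|\mathrm{PSp}_{10}(3).2|$, Proposition~\ref{tools}\ref{qsgood} (or \ref{crude}) gives, if $G$ has no regular orbit on $V=V_d(r)$,
\[
r^{d} \le 2|\mathrm{PSp}_{10}(3).2|\,r^{e_{\mathrm{ss}}} + 2(3^{10}-1)\,r^{\lfloor 9d/10\rfloor} + i_2(G/F(G))\,r^{\lfloor 7d/8\rfloor} + 2\,i_3(G/F(G))\,r^{e_3},
\]
where $e_{\mathrm{ss}}$ is the semisimple eigenspace bound, $e_3$ the fixed-point bound for non-transvection order-$3$ elements (from the tensor/composition-factor argument), and $d\in\{121,122\}$. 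I expect this to fail for $r=2$ (the only surviving field), yielding a regular orbit; for any other surviving small $r$ the analogous inequality with the max-over-$\kappa$ refinement of Proposition~\ref{tools}\ref{qsgood} should likewise fail. The main obstacle, as in the $\mathrm{PSp}_{12}(3)$ case, is getting a sharp enough bound on the order-$5$ (and order-$3$) fixed point spaces: a crude $\alpha$-based bound of $\lfloor(1-1/\alpha)d\rfloor$ is too weak, so the tensor-decomposition-plus-Proposition~\ref{tensorcodim} step is essential and requires correctly identifying which proper subgroup of the form $\mathrm{Sp}_{2a}(3)\times\mathrm{Sp}_{2b}(3)$ with $a+b=5$ contains a Sylow $5$-subgroup and how the Weil module decomposes under it. If a small number of $(G,V)$ pairs survive all the inequalities (which I would not expect here, given $d_1=121$ is already large), I would fall back to an explicit computation in GAP/Magma, exactly as done throughout the paper, recording the outcome in Table~\ref{allbad} or confirming $b(G)=\lceil\log|G|/\log|V|\rceil$.
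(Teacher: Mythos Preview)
Your overall template is right, but two concrete things go wrong and would make the argument fail as written.

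First, the field list. The Weil characters of $\mathrm{Sp}_{10}(3)$ contain a $z_3$ irrationality, so the $121$- and $122$-dimensional modules are \emph{not} realised over $\mathbb{F}_2$; the surviving fields from \eqref{generalspeqn} and \cite{HM} are $r\in\{4,7,13,16,25,49\}$. Your claim that ``$r=2$ (the only surviving field)'' is simply incorrect, and you therefore never check the cases that actually matter.

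Second, and more seriously, your eigenspace bounds for involutions and order-$3$ elements are too weak. Using $\alpha(g_2)\le 8$ gives $\emax(g_2)\le\lfloor 7\cdot 121/8\rfloor=105$, but with $i_2(\mathrm{PSp}_{10}(3).2)$ bounded via Proposition~\ref{invols} (roughly $8\cdot 3^{29}\approx 5.5\times 10^{14}$) one has $i_2\cdot r^{105}>r^{121}$ for every $r$ in the list above --- the counting inequality never falsifies. The paper avoids this by treating elements of each projective prime order $r_0$ differently according to whether $r_0\mid r$: when $r_0\mid r$ it uses the restriction to $\mathrm{Sp}_8(3)\times\mathrm{Sp}_2(3)$ (together with Propositions~\ref{compfactors} and \ref{tools}\ref{alphabound}) to get, for instance, $\dim C_V(g_2)\le 94$ when $r$ is even; when $r_0\nmid r$ it quotes \cite[Lemma 9.3.8]{goodwin} for the explicit eigenspace dimensions on the complex Weil modules, yielding e.g.\ $\emax(g_2)\le 82$ and $\emax(g_3)\le 81$. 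These sharper numbers are what make Proposition~\ref{tools}\ref{qsgood} go through for each $r$. Your instinct that a restriction/tensor argument is needed is correct, but it is needed for \emph{every} prime $r_0$ dividing $r$ (so in particular for $r_0=2$ when $r\in\{4,16\}$, for $r_0=7$ when $r\in\{7,49\}$, for $r_0=13$ when $r=13$), not just for $r_0=5$; and for the semisimple elements of small order you need the Goodwin bounds rather than the crude $243$-dimensional Weil-character estimate, which for involutions gives no information at all on the $121$-dimensional constituent.
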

\begin{proof}
By \eqref{generalspeqn} and Proposition \ref{d2symp}, we only need to consider the Weil modules of $G$ of dimensions 121 and 122 with $r \in \{ 4,7,13,16,25,49\}$, due to $z_3$ irrationalities in the corresponding Brauer characters. 

 We summarise upper bounds for eigenspace dimensions of elements of projective prime order acting on $V=V_{121}(r)$ in Table \ref{s103-case-analysis}. We obtain the bounds for elements of projective order $r_0 \mid r$ by restricting $V$ to $\mathrm{Sp}_8(3) \times \mathrm{Sp}_2(3)$ and then applying Propositions \ref{compfactors} and \ref{tools}\ref{alphabound}.  For $r_0 \nmid r$, we achieve the bounds for $r_0 = 11,41,61$ by applying Propositions \ref{speqn} and \ref{itoeig}. For the remaining semisimple elements of projective prime order, \cite[Lemma 9.3.8]{goodwin} gives the eigenspace dimensions of these element on the Weil modules over $\mathbb{C}$, and from this we can deduce the eigenspace dimensions on $V$. Applying Proposition \ref{tools}\ref{qsgood} it follows that $G$ has a regular orbit on $V$ for all  valid $r$. If instead $V=V_{122}(r)$, we compute upper bounds on $\emax$ which are at most one greater than those listed in Table \ref{s103-case-analysis}, so another application of Proposition \ref{tools}\ref{qsgood} yields the result.

 \begin{table}[h!]
\begin{tabular}{@{}ccc@{}}
$r_0$ & $r_0 \nmid r$ & $r_0 \mid r$ \\ \midrule
2 & 82 (2A), 70 & 94 \\
3 & 81 (3A/3B) 68 (O/W) & -- \\
5 & 27 & 67 \\
7 & 18 & 76 \\
11 & 23 & -- \\
13 & 14 & 94 \\
41 & 8 & -- \\
61 & 4 & -- \\ \bottomrule
\end{tabular}
\caption{Bounds on eigenspace dimensions for projective prime order elements in $G$ with $\mathrm{soc}(G/F(G)) \cong \mathrm{PSp}_{10}(3)$ acting on $V_{121}(r)$. }
\label{s103-case-analysis}
\end{table}
\end{proof}
\begin{proposition}
Proposition \ref{oddcharsprop} holds for $E(G)/Z(E(G)) \cong \mathrm{PSp}_{8}(5)$.
\end{proposition}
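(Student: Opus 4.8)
The plan is to mimic the structure used for $\mathrm{PSp}_{10}(3)$ and $\mathrm{PSp}_{12}(3)$: combine the crude bound \eqref{generalspeqn} with Propositions \ref{d2symp}, \ref{speqn}, \ref{itoeig} and \ref{intchar} to reduce to the Weil modules, then knock these off with Proposition \ref{tools}\ref{qsgood}. First I would apply \eqref{generalspeqn} together with Proposition \ref{d2symp}: for $\mathrm{PSp}_8(5)$ we have $m=4$, $d_1(G)=(5^4-1)/2 = 312$ from Proposition \ref{mindegprop}, and $\alpha(x)\le m+3 = 7$ except for transvections (where $\alpha(x)\le 8$ by Proposition \ref{sympalphas}); the number of transvections in $G/F(G)$ is at most $5^8-1$ by \cite[Table B.7]{bg}. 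Substituting these bounds into \eqref{generalspeqn} should eliminate all $r$ except small values (I expect $r=2,3$), and by Proposition \ref{d2symp} the only modules of dimension below $(5^4-1)(5^4-5)/2\cdot 6$ that survive are the four Weil modules of dimensions $(5^4\pm1)/2 = 312, 313$. One then consults \cite{HM} and the Brauer character irrationality data to see exactly which $(d,r)$ pairs remain.

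Next I would bound the eigenspace dimensions on the $5^4 = 625$-dimensional Weil module $\overline V$ for $G$. For $x\in G$ of semisimple projective prime order $r_0\ge 3$ with $r_0\nmid q$, Proposition \ref{itoeig} gives $\dim C_W(x)$ (with $W$ the natural module), Proposition \ref{speqn} gives $|\rho(x)|$ via $|\chi_1(x)+\chi_2(x)|^2 = |C_W(x)|$, and Proposition \ref{intchar} then pins down the eigenvalue multiplicities, hence an upper bound for $\emax^{\overline V}(x)$, which bounds $\emax^V(x)$ since $V$ is a constituent of $\overline V$. This should yield $\emax(g_{\{2,3\}'}) \le 469$ or so (around $3\cdot 625/4$) — a table analogous to Table \ref{s103-case-analysis} or Table \ref{s143-analysis} would record the cases $r_0 = 13, 31, 313, 521, \dots$ For the prime $3$ (if $3\nmid q$, which it isn't since $q=5$, so $3$ is a cross-characteristic prime) and for $r_0 = 2$, I would instead restrict $V$ to a subgroup such as $\mathrm{Sp}_6(5)\times\mathrm{Sp}_2(5)$ or $\mathrm{Sp}_4(5)\times\mathrm{Sp}_4(5)$ that contains a Sylow $2$- or $3$-subgroup, decompose the Weil module as a sum/tensor of Weil modules via the symplectic analogue of the restriction behaviour, and apply Propositions \ref{compfactors} and \ref{tensorcodim} to get a bound of the form $\emax(g)\le c\cdot 313/(\text{something})$. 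For the defining prime $r_0=5$: since $(r,q)=1$ we never have $r_0 = 5$, so there are no unipotent contributions from $\mathrm{char}(\F_r)$ dividing $q$; the only elements of order dividing $r$ are for $r\in\{2,3\}$, already handled.

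Finally, for each surviving module $V = V_d(r)$ with $d\in\{312,313\}$ and $r\in\{2,3\}$, I would plug the eigenspace bounds together with the counts $i_{r_0}(\mathrm{Aut}(\mathrm{PSp}_8(5)))$ (from Proposition \ref{invols} for $r_0=2,3$, and from \cite[Table B.7]{bg} or GAP for the larger primes) into Proposition \ref{tools}\ref{qsgood}: if $G$ has no regular orbit then
\[
r^{d} \le 2\sum_{x\in\mathcal P\cap G_{r_0'}} |\bar x^H| r^{\emax(x)} + \sum_{x\in\mathcal P\cap G_{r_0}} \frac{1}{o(\bar x)-1}|\bar x^H| r^{\emax(x)},
\]
and I expect this to fail for both $r=2$ and $r=3$, giving a regular orbit in every case — so conclusion (i) of Theorem \ref{mainsymp} holds with $\mathrm{PSp}_8(5)$ contributing nothing to Table \ref{allbad}. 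The main obstacle I anticipate is the prime $r_0=3$: because $3$ divides $|\mathrm{PSp}_8(5)|$ to a fairly high power, the count $i_3$ is large, and the generic bound $\alpha(g_3)\le 7$ gives $\emax(g_3)\le \lfloor 6\cdot 313/7\rfloor = 268$, which may not be small enough once multiplied by $i_3$; the restriction-to-subgroup argument (getting $\emax(g_3)$ down to roughly $3\cdot 313/4$ or smaller using the tensor decomposition and Proposition \ref{tensorcodim}) is the delicate part, exactly as in the proofs of Propositions \ref{s123-prop} and \ref{goodwin_12_3}. If even that is insufficient for $r=3$, one falls back to the computational route: construct the Weil module in GAP or Magma and verify the regular orbit directly, as was done for the exceptional small cases throughout the paper.
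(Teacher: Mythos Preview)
Your approach is sound in principle but massively overshoots: the step you list in passing --- ``consults \cite{HM} and the Brauer character irrationality data to see exactly which $(d,r)$ pairs remain'' --- actually finishes the proof on its own, and the paper's argument is precisely that. After the reduction via \eqref{generalspeqn} and Proposition~\ref{d2symp} to the Weil modules of dimensions $312$ and $313$ with $r\in\{2,3\}$, the paper simply observes (citing \cite[\S 5]{MR1947325}) that none of these Weil modules is realised over $\F_2$ or $\F_3$; compare the treatment of $\mathrm{PSp}_6(5)$ a few propositions later, where the $b_5$ irrationality forces $r\in\{4,9,16\}$. So there are no surviving $(d,r)$ pairs and the proposition follows immediately.

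Everything you wrote after that point --- the eigenvalue analysis via Propositions~\ref{speqn}, \ref{itoeig}, \ref{intchar}, the restriction to $\mathrm{Sp}_6(5)\times\mathrm{Sp}_2(5)$, the anticipated difficulty with $i_3$, the computational fallback --- is never needed. There is no genuine gap in your plan, but you should execute the irrationality check before embarking on the heavy machinery; here it would have saved you the entire second half of the argument.
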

\begin{proof}
From Proposition \ref{d2symp} we deduce that the non-trivial irreducible modules of $G$ of minimal dimension are Weil modules of dimensions 312 and 313, and the next smallest module has dimension at least 32240. By \eqref{generalspeqn}, we see that we only need to consider the Weil modules for $r=2,3$; however the Weil modules are not realised over either field by \cite[\S 5]{MR1947325}. 
\end{proof}
\begin{proposition}
Proposition \ref{oddcharsprop} holds for $E(G)/Z(E(G)) \cong \mathrm{PSp}_{8}(3)$.
\end{proposition}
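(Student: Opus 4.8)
\textbf{Proof strategy for $E(G)/Z(E(G)) \cong \mathrm{PSp}_8(3)$.}
The plan is to follow the same template used for the larger symplectic groups in this section. First I would invoke Proposition \ref{d2symp} to pin down the small irreducible modules: the non-trivial irreducibles of least dimension are the Weil modules, of dimensions $(3^4-1)/2 = 40$ and $(3^4+1)/2 = 41$, and the next smallest has dimension at least $(3^4-1)(3^4-3)/2(3+1) = 780$. Then, substituting $d_1(E(G)) = 40$, $\alpha(G) \le m+3 = 7$ (with the transvection exception $\alpha(x) \le n = 8$, of which there are at most $3^8-1$ in $G/F(G)$ by \cite[Table B.7]{bg}) into \eqref{generalspeqn}, I would check for which $r$ the inequality can fail. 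I expect this to leave only a short list of small characteristics $r$ (coprime to $3$), together with the Weil modules of dimension $40$ and $41$ (and possibly a handful of slightly larger modules that \cite{HM} lists in that dimension range), exactly as in the $\mathrm{PSp}_{10}(3)$ and $\mathrm{PSp}_{12}(3)$ cases.

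Next, for each surviving pair $(d,r)$ I would bound the eigenspace dimensions $\emax(x)$ of elements $x$ of projective prime order. For semisimple $x$ of order coprime to both $3$ and $r$, I would use Propositions \ref{speqn} and \ref{itoeig} together with Proposition \ref{intchar} to read off eigenvalue multiplicities on the $3^4 = 81$-dimensional ambient Weil module $\overline{V}$, hence on its irreducible constituents $V$; for the very small primes ($2$, $3$, $5$) that these arguments do not cover, I would restrict $V$ to a subgroup such as $\mathrm{Sp}_6(3)\times\mathrm{Sp}_2(3)$ or $\mathrm{Sp}_4(3)\times\mathrm{Sp}_4(3)$ containing a Sylow subgroup for the relevant prime, decompose the restriction into composition factors (using the ordinary Weil character together with \cite[\S 6]{modatlas}), and apply Propositions \ref{compfactors} and \ref{tensorcodim}. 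Conjugacy class sizes and refined bounds on $\alpha(x)$ come from GAP and \cite{bg}. With these bounds in hand I would feed them into Proposition \ref{tools}\ref{eigsp1} (or \ref{qsgood}/\ref{eigsp2}) to show $G$ has a regular orbit on $V$ in all but finitely many cases.

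Finally, the residual modules---those where $|G| > |V|$ or where the eigenspace estimates are too weak---would be handled computationally: constructing the relevant modules in GAP or Magma from the generators in \cite{onlineATLAS}, and either determining eigenspace dimensions precisely to re-apply Proposition \ref{tools}, or directly computing the base size. Any pair $(G,V)$ for which $b(G) = \lceil \log|G|/\log|V|\rceil + 1$ will then be recorded in Table \ref{allbad}. The main obstacle I anticipate is the characteristic-$2$ and characteristic-$3$ Weil modules of dimension $40$ and $41$: here $|V| = r^{40}$ or $r^{41}$ is small, the group $\mathrm{PSp}_8(3).2$ is large (so $|G|$ may exceed $|V|$), and the Propositions \ref{speqn}--\ref{itoeig} machinery does not apply to the defining prime $3$ or to $r_0 = 2$, so a careful subgroup-restriction argument (or a direct Magma computation) will be needed to control $\emax(g_2)$ and $\emax(g_3)$ and to settle the exact base size.
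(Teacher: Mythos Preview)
Your proposal is essentially correct and follows the same template as the paper's proof: reduce to the Weil modules of dimensions $40$ and $41$ via \eqref{generalspeqn} and Proposition~\ref{d2symp}, then bound $\emax(x)$ class by class and apply Proposition~\ref{tools}\ref{eigsp1}/\ref{qsgood}. The paper carries this out with slightly different tools---it reads the semisimple eigenspace bounds directly from the Brauer character (via \cite[Table~XV]{goodwin2}) rather than through Propositions~\ref{speqn}--\ref{itoeig}, and for unipotent elements of order $r_0\in\{5,7,13\}$ it computes refined $\alpha$-values in GAP rather than restricting to $\mathrm{Sp}_6(3)\times\mathrm{Sp}_2(3)$, while for $r_0=2$ it constructs the module in Magma---but your subgroup-restriction route would work equally well and is exactly what the paper uses for $\mathrm{PSp}_{12}(3)$.

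One small slip: you flag ``characteristic-$3$ Weil modules'' as an obstacle, but since $q=3$ we are in cross-characteristic and never have $r_0=3$; elements of order $3$ are always $r'$-elements and their eigenspaces are visible in the Brauer character. The genuine residual case is characteristic $2$ (where $d=40$, $r=4^k$ because of a $z_3$ irrationality), and there the paper handles $\emax(g_2)$ by direct Magma construction. No entry for $\mathrm{PSp}_8(3)$ appears in Table~\ref{allbad}: $G$ has a regular orbit on every such $V$.
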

\begin{proof}
The number of prime order elements in $H=\mathrm{PSp}_8(3).2$ is 8658865029997007, and of these, 8022531299328000 are elements of order 41 with $\alpha(x) = 2$ (computed in GAP), and 6560 are transvections. By Propositions \ref{sympalphas} and \ref{tools} if $G$ has no regular orbit on the irreducible module $V=V_d(r)$,
\[
\frac{1}{2}r^d \leq i_{41}(H)r^{\floor{d/2}}+ 6560r^{\floor{7d/8}}+ (8658865029997007-i_{41}(H)-6560)r^{\floor{6d/7}}
\]
Therefore, by Proposition \ref{d2symp}, we only need to check the 40- and 41- dimensional Weil modules for $r\leq 329$.
In characteristic 41, there is nothing to consider because the Weil characters contain $z_3$, and  $\mathbb{F}_{41}$ does not contain a third root of unity. Moreover, $\mathbb{F}_5$ also has no third roots of unity,  so we only need to consider $r=25$ in characteristic 5. 
The computed upper bounds for $\emax$ on $V=V_{40}(r)$ are summarised in Table \ref{s83-case-analysis}. The values in the third column were computed from the associated Brauer character inferred from \cite[Table XV]{goodwin2}. If $r_0=5,7,13$, the bounds in the final column were determined by an application of \ref{tools}\ref{alphabound}, with a more accurate upper bound for $\alpha(g_{r_0})$ found using GAP. If instead $r_0=2$, the upper bound is found by constructing the relevant matrix group in Magma \cite{Magma} and computing the eigenspaces directly.
\begin{table}[h!]
	\centering \begin{tabular}{@{}cccc@{}}
		\toprule
	 $r_0$ & $i_{r_0}(\mathrm{PSp}(8,3).2) <$ & $r_0 \nmid r$ & $r_0 \mid r$ \\ \midrule
		2 & $2^{31.91}$ & 28 & 27* \\
		3 & $3^{24.11}$ & 27 (3A/3B), 23 (o/w) & -- \\
		5 & $5^{18.99}$ & 9 & 20 \\
		7 & $7^{16.92}$ & 6 & 20 \\
		13 & $13^{13.13}$ & 4 & 20 \\
		41 & $41^{9.87}$ & 2 & -- \\ \bottomrule
	\end{tabular}
	\caption{Upper bounds on $\emax$ for elements of projective prime order acting on $V_{40}(r)$. }
	\label{s83-case-analysis}
\end{table}
We have asterisked an entry in Table \ref{s83-case-analysis} because $\dim C_V(g_2)=27$ on $V=V_{40}(4^k)$ if $g_2$ is in class 2A, and $\dim C_V(g_2)\leq 24$ otherwise.
Therefore, applying Proposition \ref{tools}\ref{eigsp1}, we see that $G$ has a regular orbit on $V$ in all cases where $r$ is odd.

If $V$ is a Weil module in characteristic 2, then $d=40$ and $r = 4^k$.
Examining the Brauer character, we see that if $G$ has no regular orbit on $V$ then 
	\begin{align*}
	r^{40} &\leq 2^{31.91}r^{24}+|2A|r^{27}+|5A|(4r^9+r^4)+|5B|(5r^8)+|7A|(6r^6+r^4)+i_{13}(G/F(G))(r^4+12r^3)\\
	&+ i_{41}(G/F(G))(20r^2)+( |3A|+|3B|)(r^{27}+r^{13})+|3C|(2r^{18}+r^4)+|3D|(r^{22}+2r^9)\\
	&+(|3E|+|3F|)(r^{18}+r^{13}+r^9)+ |3G|(r^{16}+2r^{12})+|3H|(2r^{15}+r^{10})\\
	&+|3I|(r^{16}+2r^{12})+ (|3J|+|3K|)(r^{15}+r^{13}+r^{12}).
	\end{align*}
	This inequality is false for $r\geq 4$. 
Finally, if $d=41$ and $r$ is odd, then we compute upper bounds for $\emax$ and find that they are at most one greater than those given in Table \ref{s83-case-analysis}. An application of Proposition \ref{tools}\ref{eigsp1} gives the result.
\end{proof}
\begin{proposition}
Proposition \ref{oddcharsprop} holds for $E(G)/Z(E(G)) \cong \mathrm{PSp}_{6}(9)$.
\end{proposition}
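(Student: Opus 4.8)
The plan is to mimic the structure used in the preceding propositions for symplectic groups in odd characteristic: bound the minimal representation degree below, bound $\alpha(G)$ above, apply Proposition \ref{tools}\ref{crude} to reduce to finitely many $(d,r)$, and then finish each remaining case via Brauer character data together with Proposition \ref{tools}\ref{eigsp1} or \ref{qsgood}. Concretely, for $E(G)/Z(E(G)) \cong \mathrm{PSp}_6(9)$ we have $m=3$, so by Proposition \ref{mindegprop} the minimal degree of a cross-characteristic representation is $d_1(E(G)) = \tfrac12(9^3-1) = 364$, and by Proposition \ref{d2symp} the next smallest module has dimension at least $(9^3-1)(9^3-9)/(2\cdot 10) = 364\cdot 72 = 26208$. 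Hence the only modules we need to examine are the Weil modules of dimensions $364$ and $365$. By Proposition \ref{sympalphas}, $\alpha(x)\leq m+3 = 6$ for all $x$ except transvections, where $\alpha(x) \leq 2m = 6$ (for $q$ odd), so in fact $\alpha(x)\leq 6$ throughout; and by \cite[Table B.7]{bg} the number of transvections in $G/F(G)$ is at most $q^6-1 = 9^6-1$. Substituting $d=364$, $\alpha(G)=6$ (so $\lfloor 5d/6\rfloor$), and the crude bound $|{\rm P}\Gamma{\rm Sp}_6(9)| < 9^{21}\cdot 2\cdot 2$ into \eqref{generalspeqn}, the inequality $r^{364} \leq 2|{\rm P}\Gamma{\rm Sp}_6(9)| r^{\lfloor 5\cdot 364/6\rfloor} + 2(9^6-1)r^{\lfloor 5\cdot 364/6\rfloor}$ should fail already for $r=2$, since $364 - \lfloor 5\cdot 364/6\rfloor = 364 - 303 = 61$ and $r^{61}$ dwarfs the polynomially-bounded element counts.

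So I expect the crude inequality \eqref{generalspeqn} to rule out every field $r$ outright, leaving nothing to check, and the proposition will follow immediately; this is the cleanest possible outcome and matches the $\mathrm{PSp}_8(5)$ case, where the minimal modules simply did not need finer analysis. To be safe I would double-check the arithmetic: $\lfloor 5\cdot 364/6\rfloor = \lfloor 303.33\rfloor = 303$, the gap is $61$, and $|{\rm Sp}_6(9)| = 9^9 \prod_{i=1}^{3}(9^{2i}-1) < 9^{21}$, so $2|{\rm P}\Gamma{\rm Sp}_6(9)| < 9^{22}\cdot 2 = 2^{1+\log_2 9\cdot 22} < 2^{71}$, while even $r=2$ gives $2^{364} $ on the left against roughly $2^{71}\cdot 2^{303} = 2^{374}$ on the right — so the crude bound as literally stated does \emph{not} immediately fail at $d=364$, $r=2$. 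This is the subtlety, and it is exactly why the analogous $\mathrm{PSp}_{12}(3)$, $\mathrm{PSp}_{10}(3)$, $\mathrm{PSp}_8(3)$ cases required extra work. I therefore anticipate needing the refinement: replace $\alpha(G)$ by the sharper bound $\alpha(x)\leq 3$ for semisimple elements whose order is a primitive prime divisor of $9^6-1$ (via the proof of \cite[Theorem 4.3]{gs}), and more importantly use Propositions \ref{speqn} and \ref{itoeig} and \ref{intchar} to bound $\emax(g_{\{2,3\}'})$ on a Weil module by something close to $\tfrac13\cdot 365 \approx 122$, together with Proposition \ref{invols} to bound $i_2(G/F(G))$ and $i_3(G/F(G))$, and then feed these into Proposition \ref{tools}\ref{qsgood}. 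A resulting inequality of the shape
\[
r^{364} \leq 2|\mathrm{PSp}_6(9).2|\, r^{122} + 2(9^6-1) r^{\lfloor 5\cdot 364/6\rfloor} + \bigl(i_2(G/F(G)) + i_3(G/F(G))\bigr) r^{\lfloor 5\cdot 364/6\rfloor}
\]
then genuinely fails for all $r\geq 2$ once one notes $r^{364}$ against $r^{303}\cdot(\text{subexponential})$ plus $r^{122}\cdot(\text{subexponential})$ — the dominant competing term $r^{303}$ has coefficient only polynomially large in $q=9$, which is fixed. For the $365$-dimensional Weil modules the same computation, with eigenspace bounds increased by at most one, gives the conclusion.

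The main obstacle, then, is purely bookkeeping: confirming that the transvection term $2(9^6-1)r^{\lfloor 5d/6\rfloor}$ (the one surviving term with the largest exponent) is still beaten by $r^d$, i.e. that $r^{d-\lfloor 5d/6\rfloor} = r^{61} > 2(9^6-1)$ — which holds for every $r\geq 2$ since $2(9^6-1) < 2^{21} < 2^{61}$ — and similarly that the Weil-module eigenspace contribution $r^{122}$ times $|{\rm PSp}_6(9).2|<2^{72}$ is beaten, i.e. $r^{d-122}=r^{242}>2^{72}$, again clear. Thus once the eigenspace bound $\emax(g_{\{2,3\}'})\leq 122$ on the Weil module is established from Propositions \ref{speqn}, \ref{itoeig}, \ref{intchar} (exactly as in the $\mathrm{PSp}_{14}(3)$ and $\mathrm{PSp}_{10}(3)$ arguments, computing $\dim C_W(g)$ for each semisimple prime order $\geq 5$ and reading off eigenvalue multiplicities), the proof closes with no computer calculation needed. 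I would present it as: reduce to $d\in\{364,365\}$ via Propositions \ref{mindegprop}, \ref{d2symp}; bound $\alpha$ and transvection/involution counts; establish the Weil eigenspace bound; conclude via Proposition \ref{tools}\ref{qsgood}.
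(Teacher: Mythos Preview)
Your approach is sound and would close the case, but the paper takes a different and shorter route. After the same reduction via \eqref{generalspeqn} and Proposition~\ref{d2symp} (which leaves only $d=364$ and $r=2$, since the $365$-dimensional Weil modules do not exist in characteristic~$2$ and $r\geq 4$ is already ruled out by the crude inequality), the paper exploits the embedding $\mathrm{PSp}_6(9).2_1 < \mathrm{PSp}_{12}(3)$: the Weil modules for both groups have the same minimal dimension $364$ over $\mathbb{F}_2$, so the $\mathrm{PSp}_{12}(3)$ Weil module restricts irreducibly, and the eigenspace bound $\emax(g)\leq 283$ is inherited wholesale from the already-proven Proposition~\ref{s123-prop}. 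The proof then finishes with the single inequality $2^{364} \leq 2|\mathrm{PSp}_6(9).2_1|\cdot 2^{283}$, which fails. Your route---recomputing Weil eigenspace bounds for $\mathrm{PSp}_6(9)$ from Propositions~\ref{speqn}, \ref{itoeig}, \ref{intchar}---works too, though your estimate of $122$ is off: the worst case is order-$5$ elements with $\dim C_W(g)=4$, giving $\emax(g)\leq (729+81)/5 = 162$ on the full Weil module. Since any bound below $295$ suffices (given $i_2+i_3 < 2^{49}$ from Proposition~\ref{invols} and the exponent gap $364-303=61$), this is harmless. The paper's embedding trick simply avoids the bookkeeping.
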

\begin{proof}
	
By  \eqref{generalspeqn} and Proposition \ref{d2symp}, we only need to consider the Weil modules of $G$ of dimensions 364 for $r=2$.
Now $\mathrm{PSp}_6(9).2_1 < \mathrm{PSp}_{12}(3)$, and since the Weil modules for $\mathrm{PSp}_{12}(3)$ over $\F_2$ have dimension 364 as well, the Weil module  $V_2$ of $\mathrm{PSp}_{12}(3)$ must restrict irreducibly to $\mathrm{PSp}_6(9).2_1$, since 364 is the minimal dimension of a non-trivial absolutely irreducible module for both groups. 
By the proof of Proposition \ref{s123-prop}, we see that for projective prime order $g \in G$, $\emax(g) \leq 283$. Therefore if $G$ has no regular orbit on $V$,
$r^{364} \leq 2|\mathrm{PSp}_6(9).2_1|r^{283}$ and this is false for $r=2$.
\end{proof}
\begin{proposition}
Proposition \ref{oddcharsprop} holds for $E(G)/Z(E(G)) \cong \mathrm{PSp}_{6}(7)$.
\end{proposition}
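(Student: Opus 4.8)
The plan is to follow the template used throughout this section: apply the crude counting bound of Proposition \ref{tools}\ref{crude} together with the minimal-degree bound from Proposition \ref{mindegprop} and the $\alpha$-bound from Proposition \ref{sympalphas}, reducing to a short list of small modules, and then handle each one individually. For $\mathrm{PSp}_6(7)$ we have $m=3$, so Proposition \ref{sympalphas} gives $\alpha(x)\le m+3=6$ for all $1\ne x\in G/F(G)$ except for transvections, where $\alpha(x)=2m=6$ as well (here $q=7$ is odd, so the transvection bound is $2m$, coinciding with $m+3$). Hence $\alpha(G)\le 6$ uniformly. Proposition \ref{mindegprop} gives $d_1(E(G))=\tfrac12(7^3-1)=171$, and Proposition \ref{d2symp} tells us that below dimension $(7^3-1)(7^3-7)/2\cdot 8$ the only modules are the Weil modules of dimensions $(7^3\pm1)/2=171$ and $172$. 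So first I would substitute $d=171$, $\alpha=6$ into \eqref{generalspeqn} (or directly into Proposition \ref{tools}\ref{crude}), using that the number of transvections in $G/F(G)$ is at most $7^6-1$ and $|\mathrm{P}\Gamma\mathrm{Sp}_6(7)|$ bounds the remaining prime-order elements. This inequality should force $r=2$ (and possibly $r=3$), leaving only the Weil modules $V_{171}(r)$ and $V_{172}(r)$ to examine.

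Next I would pin down the eigenspace dimensions of elements of projective prime order on a Weil module, exactly as in the $\mathrm{PSp}_{14}(3)$, $\mathrm{PSp}_{12}(3)$ and $\mathrm{PSp}_{10}(3)$ cases. Let $\overline V=V_{343}(\overline{\mathbb{F}}_r)$ be the $q^m=7^3$-dimensional Weil module. For semisimple elements $x$ of projective prime order $s\nmid 2\cdot 7$ I would use Proposition \ref{speqn} to compute $|\chi_1(x)+\chi_2(x)|^2=|C_W(x)|$, combine this with the possible values of $\dim C_W(x)$ from Proposition \ref{itoeig}, and then translate the resulting character values into eigenvalue multiplicities via Proposition \ref{intchar}. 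This yields $\emax(g_{\{2,7\}'})\le$ some explicit bound (I expect something around $\tfrac17\cdot 343 + O(1)$ levels, giving a bound comfortably below $d/2$ plus a small constant, roughly $86$ or so on the $171$/$172$-dimensional constituent — in any case well below $d$). For $r_0=7$ (if it survives) and for the involution/order-$3$ elements I would either refine $\alpha$ in GAP, or restrict $V$ to a subgroup such as $\mathrm{Sp}_4(7)\times\mathrm{Sp}_2(7)$ containing a Sylow subgroup and apply Propositions \ref{compfactors}, \ref{tensorcodim}, \ref{su_weil_tensor}-style arguments together with Proposition \ref{goodwin_12_3}-style restriction facts (or rather the analogous statement from \cite{goodwin} for $\mathrm{Sp}_6$). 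For the small primes $2$ and $3$ I would fall back on the bounds $i_2, i_3$ of Proposition \ref{invols} and the transvection count of \cite[Table B.7]{bg}.

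With these ingredients, I would then plug everything into Proposition \ref{tools}\ref{qsgood} (or \ref{eigsp1}): if $G$ has no regular orbit on $V=V_d(r)$ with $d\in\{171,172\}$, then
\[
r^d \le 2\,|\mathrm{PSp}_6(7).2|\,r^{\,\emax(g_{\{2,3\}'})} + i_2(G/F(G))\,r^{\lfloor 5d/6\rfloor} + 2(7^6-1)\,r^{\lfloor 5d/6\rfloor} + 2\,i_3(G/F(G))\,r^{\lfloor 5d/6\rfloor},
\]
and I expect this to fail for $r=2$ (and $r=3$ if relevant), establishing a regular orbit in every case, so that conclusion (i) of Theorem \ref{mainsymp} holds and there is nothing to add to Table \ref{allbad}. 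I would double check the edge case where the $172$-dimensional Weil module might not be realisable over $\mathbb{F}_2$ (parity/irrationality issues as in the $\mathrm{PSp}_8(5)$ and $\mathrm{PSp}_8(3)$ proofs — e.g. a $z_7$ or $i$ irrationality in the Brauer character could kill $r=2$ or $r=3$ outright), which would only shorten the case analysis.

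The main obstacle will be getting a sufficiently sharp bound on $\emax(g_2)$ and $\emax(g_3)$ on the $171$- and $172$-dimensional constituents: the crude bound $\lfloor(1-1/6)d\rfloor = \lfloor 5d/6\rfloor \approx 142$ is multiplied by the fairly large factors $i_2(G/F(G))$ and $2(7^6-1)\approx 2.4\times 10^5$, so unless $5d/6$ comfortably beats $\log_r\big(2|\mathrm{PSp}_6(7).2|\big)$ the inequality may not close with the crude bound alone and I would need the subgroup-restriction refinement (à la Proposition \ref{goodwin_12_3} / \cite[Lemma 9.3.x]{goodwin}) to pull $\emax(g_2)$ down to roughly $2d/3$. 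Everything else is routine substitution following the pattern already established for the larger symplectic groups in this section.
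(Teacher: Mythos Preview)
Your proposal is correct and follows essentially the same approach as the paper's proof: reduce via Proposition~\ref{tools}\ref{crude} and Proposition~\ref{d2symp} to the Weil modules, bound $\emax(g_{\{2,3\}'})$ using Proposition~\ref{speqn}, refine $\alpha$ for the remaining primes, and apply Proposition~\ref{tools}\ref{qsgood}. The paper's execution is a little tighter than you anticipate: only $r=2$ survives the initial reduction, and in characteristic~$2$ only the two $171$-dimensional Weil modules exist (there is no $172$-dimensional module when $r_0=2$; cf.\ Section~\ref{weilbackground}); the paper then obtains $\emax(g_{\{2,3\}'})\le 91$ from Proposition~\ref{speqn} and, via a GAP computation, $\alpha(g_{\{2,3\}})\le 4$ (so $\emax(g_{\{2,3\}})\le 128$, a bit weaker than your hoped-for $2d/3$ but sufficient), and the inequality $r^{171}\le 2|\mathrm{PSp}_6(7).2|\,r^{91}+(i_2+i_3)\,r^{128}$ fails for $r=2$. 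Your separate transvection term is harmless but redundant: transvections here have order~$7$, so they are already absorbed into the $\{2,3\}'$ bound.
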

\begin{proof}
By \eqref{generalspeqn} and Proposition \ref{d2symp}, we only need consider the two 171-dimensional Weil modules for $\mathrm{PSp}_6(7)$ with $r=2$. Using Proposition \ref{speqn}, we deduce that $\emax(g_{\{2,3\}'})\leq 91$. 
In GAP we determine that $\alpha(g_{\{2,3\}}) \leq 4$, so $\dim C_V(g_{\{2,3\}}) \leq 128$ by Proposition \ref{tools}\ref{alphabound}.
 We compute that $i_2(G/F(G)) < 2^{33.72}$ and $i_3(G/F(G)) < 2^{40}$. So if $G$ has no regular orbit on $V$,
\[
r^{171}\leq 2|\mathrm{PSp}_6(7).2|r^{91}+(2^{33.72} + 2^{40})r^{128}.
\]
This is false for $r=2$.
\end{proof}
\begin{proposition}
Proposition \ref{oddcharsprop} holds for $E(G)/Z(E(G)) \cong \mathrm{PSp}_{6}(5)$.
\end{proposition}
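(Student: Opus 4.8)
The plan is to follow the three preceding $\mathrm{PSp}_6(q)$ arguments. By Proposition~\ref{mindegprop} we have $d_1(\mathrm{Sp}_6(5)) = (5^3-1)/2 = 62$, and by Proposition~\ref{d2symp} the only non-trivial absolutely irreducible $\F_r E(G)$-modules of dimension below $(5^3-1)(5^3-5)/(2\cdot 6) = 1240$ are Weil modules of dimension $62$ or $63$; since these are also exactly the absolutely irreducible modules of dimension at most $250$, the list in \cite{HM}, together with the recorded Brauer-character irrationalities (which dictate over which $\F_r$ each module is realised), is complete. For $n=6$ both exponents in \eqref{generalspeqn} equal $\lfloor 5d/6\rfloor$, so substituting $d=62$ shows the inequality fails once $r$ is large; replacing $|\mathrm{P}\Gamma\mathrm{Sp}_6(5)|$ by the actual number of prime-order elements of $\mathrm{Aut}(\mathrm{PSp}_6(5))$, counted in GAP \cite{GAP4} or bounded via Proposition~\ref{invols}, sharpens this and leaves only the $62$- and $63$-dimensional Weil modules over a short explicit list of prime powers $r$ sharing a prime with $|\mathrm{Sp}_6(5)| = 2^{10}\,3^4\,5^9\cdot 7\cdot 13\cdot 31$.

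For each surviving pair $(V,r)$ I will bound $\emax(g)$ for $g$ of projective prime order. When $g$ is semisimple of order coprime to $\{2,3,r_0\}$, Proposition~\ref{itoeig} determines the possible values of $\dim C_W(g)$ on the natural module $W=V_6(5)$, Proposition~\ref{speqn} then gives $|\chi_1(g)+\chi_2(g)|$, and Proposition~\ref{intchar} converts this into eigenvalue multiplicities on the $125$-dimensional Weil module, which bound those on its constituents; this forces $\emax(g)$ to be small for all such $g$, in particular for orders $7$, $13$ and $31$. For unipotent elements (projective order dividing $r$) and for elements of order $2$ or $3$ I will use the $\alpha$-bounds of Proposition~\ref{sympalphas} ($\alpha(x)\le m+3 = 6$ in general, and $\alpha(x)\le 2m = 6$ or $2m+1 = 7$ for transvections) through Proposition~\ref{tools}\ref{alphabound}, refining where needed by restricting $V$ to $\mathrm{Sp}_4(5)\times\mathrm{Sp}_2(5)$, whose Weil modules are tensor products of Weil modules, so that the composition factors of $V$ restricted to this subgroup are tensor products to which Propositions~\ref{compfactors} and~\ref{tensorcodim} apply, exactly as in Propositions~\ref{goodwin_12_3} and~\ref{s123-prop}; alternatively I will read the eigenspace dimensions from a Brauer character in \cite{modatlas} or from the ordinary Weil characters, or compute them directly in GAP or Magma \cite{Magma}. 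The numbers of involutions and of elements of order $3$ in $\mathrm{Aut}(\mathrm{PSp}_6(5))$ are obtained from GAP or from Proposition~\ref{invols}.

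Inserting these bounds into Proposition~\ref{tools}\ref{eigsp1} (or \ref{qsgood}) should show that $G$ has a regular orbit on $V$ in every surviving case, so that conclusion~(i) of Theorem~\ref{mainsymp} holds and $\mathrm{PSp}_6(5)$ contributes no row to Table~\ref{allbad}; in any residual small-$r$ case with $|V|<|G|$ I would instead determine $b(G)$ from an explicit construction of the module, or transfer a base from a field extension using Lemma~\ref{fieldext}. I expect the main obstacle to be getting the bounds on $\emax(g_2)$ and $\emax(g_3)$ sharp enough: the bare $\alpha$-bound only gives $\emax\le\lfloor 5d/6\rfloor$, while the number of such elements has size roughly $|V|^{1/2}$, so one must show that genuinely large fixed spaces occur for only a few conjugacy classes, which is precisely what the restriction to $\mathrm{Sp}_4(5)\times\mathrm{Sp}_2(5)$ (and, for $V_{62}(2)$, possibly a direct machine computation) is for.
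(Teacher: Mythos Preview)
Your plan matches the paper's proof in overall structure: reduce via \eqref{generalspeqn} and Proposition~\ref{d2symp} to the Weil modules, bound eigenspaces of semisimple elements of order $7,13,31$ (and also $5$) via Propositions~\ref{speqn} and~\ref{itoeig}, then feed everything into the counting inequality.

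Two points where the paper is more direct than what you propose. First, the $b_5$ irrationality in the Weil characters, together with the hypothesis $(r,|G|)>1$, already forces $r\in\{4,9,16\}$ for $d=62$ and $r=9$ for $d=63$; in particular only characteristics $2$ and $3$ survive, and there is no residual $|V|<|G|$ case to handle. Second, the obstacle you anticipate for $\emax(g_2)$ and $\emax(g_3)$ is resolved not by restricting to $\mathrm{Sp}_4(5)\times\mathrm{Sp}_2(5)$ but simply by computing in GAP that $\alpha(g_2)\le 4$ and $\alpha(g_3)\le 3$, which via Proposition~\ref{tools}\ref{alphabound} gives $\emax(g_2)\le\lfloor 3d/4\rfloor$ and $\emax(g_3)\le\lfloor 2d/3\rfloor$; together with the Weil-character bounds on elements of order $5$ (namely $\dim C_V(g_5)\le 30$, or $\le 50$ for the $15624$ transvections) this makes a single inequality of the shape in Proposition~\ref{tools}\ref{qsgood} fail for all $r\ge 4$.
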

\begin{proof}
By \eqref{generalspeqn} and inspection of \cite{HM}, we only need to consider Weil modules with $d=62, 63$ for  $r \in \{ 4, 9, 16 \}$ due to a $b_5$ irrationality.
From Propositions \ref{speqn} and \ref{itoeig}, we see that elements  $g \in G$ of projective order 7,13 or 31 have $\dim C_V(g) \leq $ 18, 10 and 5 respectively. Furthermore, for $g \in G$ of order 5,  $\dim C_V(g) \leq 30$ unless $g$ is one of the 15624 transvections in $G$, in which case $\dim C_V(g) \leq 50$. In GAP we compute that all elements of order 2, 3 have $\alpha(x) \leq 4, 3$ respectively. 
Let $H=G/F(G)$. If $G$ has no regular orbit on a Weil module $V$ in either characteristic 2 or 3,
	\[
	\frac{1}{2} r^{62} \leq i_2(H) r^{\floor{3\times 63/4}}+ i_3(H) r^{\floor{2\times 63/3}}+ i_5(H)r^{30} + 15624r^{50}+i_7(H)r^{18}+i_{13}(H)r^{10} + i_{31}(H)r^5
	\]
	and this is false for $r \geq 4$.
\end{proof}
\begin{proposition}
Proposition \ref{oddcharsprop} holds for $E(G)/Z(E(G)) \cong \mathrm{PSp}_{6}(3)$.
\end{proposition}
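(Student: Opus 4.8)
The plan is to follow the same template used throughout this section: first reduce to a short list of candidate modules via the crude counting bound of Proposition \ref{tools}\ref{crude}, then dispose of the survivors one at a time using sharper eigenspace information. For $\mathrm{PSp}_6(3)$ we have $d_1(E(G)) = 13$ by Proposition \ref{mindegprop} (with the exception noted there absorbed into the small-dimensional analysis), and $\alpha(x)\le m+3 = 6$ for all projective prime order $x$ by Proposition \ref{sympalphas}, except transvections, which satisfy $\alpha(x)=2m=6$ anyway, and the genuinely bad class $(m,q)=(2,3)$ does not arise here. Counting prime order elements in $\mathrm{Aut}(\mathrm{PSp}_6(3)) = \mathrm{PSp}_6(3).2$ in GAP and substituting into
\[
r^{d}\le 2\,i_P(\mathrm{Aut}(\mathrm{PSp}_6(3)))\,r^{\lfloor (1-1/6)d\rfloor}+2(3^6-1)r^{\lfloor (1-1/6)d\rfloor}
\]
(using the transvection count $\le 3^6-1$ from \cite[Table B.7]{bg}, and noting transvections contribute only one eigenspace so actually the weaker bound suffices) shows the inequality fails for all but finitely many pairs $(d,r)$. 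Consulting \cite{HM} and \cite{modatlas} for the irreducible cross-characteristic degrees of $2.\mathrm{PSp}_6(3)$ then produces the explicit short list — the relevant modules are the Weil-type modules of dimension $13$ and $14$ over small fields (characteristics $2$, $5$, $7$), together with any larger modules of dimension $\lesssim$ a few hundred that squeak through the crude bound in characteristic $2$.

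Next I would treat the surviving cases individually. For the $13$- and $14$-dimensional modules the natural tool is Proposition \ref{tools}\ref{eigsp1}: read off eigenspace dimensions of semisimple projective prime order elements from the Brauer character in \cite{modatlas} (or the ordinary character via \cite[\S6]{modatlas}), bound the eigenspaces of elements of order $r_0\mid r$ either by $\lfloor (1-1/\alpha(x))d\rfloor$ with a GAP-refined $\alpha(x)$, or by an explicit construction of the matrix group in GAP, and then substitute into the inequality of Proposition \ref{tools}\ref{eigsp1}. Following the pattern of Table \ref{l2q-case-analysis} and Table \ref{s83-case-analysis}, I expect most of these to yield regular orbits outright, with a handful of exceptional small pairs — e.g. $c\times \mathrm{PSp}_6(2)$-style analogues such as $c\times \mathrm{PSp}_6(3)$ for $c\in\{2,3,6\}$ over $\F_7$ and $2.\mathrm{PSp}_6(3)\le G\le \F_7^\times\circ(2.\mathrm{PSp}_6(3).2)$ over $\F_7$, and the $13$-dimensional module over $\F_4$ — where either $|G|>|V|$ forces $b(G)\ge 2$ (so one computes $b(G)$ directly in Magma/GAP) or Lemma \ref{fieldext} pins down $b(G)=2$ from a computation over a subfield. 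These go into Table \ref{allbad} or Table \ref{noros} exactly as in the statement of Theorem \ref{mainsymp}.

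For any larger survivor in characteristic $2$ (dimension in the tens-to-hundreds range), the refinement in Proposition \ref{tools}\ref{qsgood} should close the gap: separate the $2'$-part, bound $\emax(g_{2'})$ from the Brauer character, bound $\emax(g_3)$ via Proposition \ref{invols} together with a restriction of $V$ to a subgroup like $\mathrm{Sp}_4(3)\times\mathrm{Sp}_2(3)$ (using Propositions \ref{compfactors} and \ref{tensorcodim} as in the proof of Proposition \ref{s123-prop}), and handle transvections separately with the $3^6-1$ count. The main obstacle I anticipate is not any single inequality but the sheer number of small modules for $c\circ(2.\mathrm{PSp}_6(3))$ and its extensions that land in the regime $|G|$ comparable to $|V|$: for these the crude and even the refined counting bounds are simply false, and one has no choice but to construct each module explicitly and either exhibit a regular orbit or compute the exact base size. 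Getting the bookkeeping right across all isoclinic variants and all central products $c\circ(2.\mathrm{PSp}_6(3))$, $c\mid 2$, and matching each to the correct row of Table \ref{allbad}, is where the care is needed; the representation-theoretic input itself is routine given \cite{HM}, \cite{modatlas}, and the machinery already assembled in Section \ref{techniques}.
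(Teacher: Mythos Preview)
Your plan is essentially the paper's: reduce via a class-by-class refinement of Proposition \ref{tools}\ref{crude} (the paper tabulates per-prime $\alpha$-bounds in Table \ref{s63-counts} rather than using the uniform $\alpha\le 6$), leaving $d=78$ at $r=2$ and the Weil modules $d=13,14$ for small $r$, then finish these with eigenspace data from Brauer characters plus GAP constructions, with the genuinely tiny cases ($r=4,7$) handled by direct computation and Lemma \ref{fieldext}.

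One correction worth flagging: your proposed restriction to $\mathrm{Sp}_4(3)\times\mathrm{Sp}_2(3)$ is misplaced. For the $78$-dimensional module in characteristic $2$ it is unnecessary, since $g_3$ is semisimple there and the Brauer character already gives $\emax(g)\le 46$ for all projective prime order $g$. More importantly, for the $14$-dimensional module in characteristics $7$ and $13$ this subgroup cannot help at all, as $|\mathrm{Sp}_4(3)\times\mathrm{Sp}_2(3)|$ is coprime to $7\cdot 13$. The paper instead restricts $V_{14}$ to the maximal subgroup $K=\mathrm{Sp}_2(27).3$, which contains a Sylow $p$-subgroup of $E(G)$ for $p\in\{7,13\}$ and on which $V$ remains irreducible; constructing $V\downarrow K$ in Magma then gives $\dim C_V(g_p)=2$ exactly. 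Your fallback of a direct GAP construction of the full module would also work here, so this is a refinement rather than a gap.
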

\begin{proof}
We summarise information about elements of prime order in $H=\mathrm{PSp}_6(3).2$ in Table \ref{s63-counts}.
\begin{table}[h!]
\begin{tabular}{ccc}
\toprule
$r_0$ & $i_{r_0}(\mathrm{PSp}_6(3).2)$ & $\alpha(x) \leq$ \\ 
\midrule
2 & 605151 & 4 (2A), 3 (2B), 5 (2C)  \\ 
3 & 5307848 & 6 (3A/3B), 3 (o/w) \\ 
5 & 38211264 & 2 \\  
7 & 327525120 & 2 \\ 
13 & 705438720 & 2 \\ 
\midrule
Total & 1077088103& \\
\bottomrule
\end{tabular} 
\caption{Elements of prime order in $\mathrm{PSp}_6(3).2$. \label{s63-counts}}
\end{table}
Therefore, if $G$ has no regular orbit on the $d$-dimensional irreducible module $V$,
{\small
\[
\frac{1}{2} r^d\leq (i_5(H)+i_7(H)+i_{13}(H))r^{\floor{d/2}}+ i_3(H)r^{\floor{2d/3}}+ (|3A|+|3B|) r^{\floor{5d/6}}+ |2A|r^{\floor{3d/4}} +|2B|^{\floor{2d/3}}+ |2C|r^{\floor{4d/5}}
\]
}
Examining \cite{HM}, we see that this is false for all non-trivial absolutely irreducible modules for $E(G)$, except for $V$ with $d=78$ and $r=2$, or $d=13,14$ for $r\leq 93$.
If $V= V_{78}(2)$, then from a GAP construction and the Brauer character we deduce that every element $g$ of projective prime order has $\emax(g) \leq 46$ on $V$. Therefore if $G$ has no regular orbit on $V$, $r^{78} \leq 2\times 1077088103 r^{46}$, which is false.

Now suppose $V = V_{14}(r)$, so $E(G)=\mathrm{Sp}_6(3)$. The Brauer characters of these representations contain a $z_3$ irrationality, so we only need to consider $r= 7, 13,25, 49$.
	We bound the eigenspace dimensions of unipotent elements $g$ of order $r_0$ by restricting $V$ to a subgroup $K<E(G)$. When $p=7,13$, we restrict to the maximal subgroup $K=\mathrm{Sp}_2(27).3$. Comparing the Brauer character tables of $K$ and $E(G)$, we see that $V$ remains irreducible when restricted to $K$. Moreover, $K$ contains a Sylow $p$-subgroup of $E(G)$ in both cases. We are able to construct the module $V \downarrow K$ using Magma, and we find that $\dim C_V(g_p)=2$ for $p=7,13$. If instead $p=5$, we determine that $\alpha(g_5)=2$ so $\dim C_V(g_5)\leq 7$. 
	
	Therefore, if $G$ has no regular orbit on $V = V_{14}(13)$,

	\begin{align*}
	r^{14} \leq &|2A|(r^{10}+r^4)+ |2B|(2r^7)+(|3A|+|3B|)(r^9+r^5)+|3C|(2r^6+r^2)\\
	&+|3D|(r^8+2r^3)+(|3E|+|3F|)(r^6+r^5+r^3)+|3G|(r^6+2r^4)+ i_5(H)(4r^3+r^2)\\
	&+i_7(H)(7r^2)+i_{13}(H)r^2.
	\end{align*}
This is false for $r=13$, and the calculations for $r=25,49$ are similar. If $r=7$, then using a construction in Magma, we show that there is no regular orbit and $b(G)=2$ by Proposition \ref{fieldext}.
Now suppose that $V$ is a 13-dimensional Weil module. Again, there is a $z_3$ irrationality, so $r \in \{4,7,13,16,25,49,64\}$. Table \ref{s63-case-analysis2} summarises some fixed point space bounds for unipotent elements. The values were obtained from constructions of the relevant modules in GAP.

\begin{table}[h!]
\begin{tabular}{@{}cc@{}}
\toprule
Characteristic $r_0$ & $\dim C_V(g) \leq$ \\ \midrule
2 & 9 (2A), 7 (2B) \\
5 & 3 \\
7 & 2 \\
13 & 1 \\ \bottomrule
\end{tabular}
\caption{Bounds on $\dim C_V(g)$ for unipotent elements $g$ acting on $V_{13}(r)$.}
\label{s63-case-analysis2}
\end{table}
 If $G$ has no regular orbit on $V=V_{13}(13)$, then 
\begin{align*}
r^{13} \leq& 705438720r+ |2A|(r^8+r^5)+|2B|(r^7+r^6)+|3A|(r^9+r^4)+|3B|(r^9+r^4)+|3C|(2r^6+r)\\
&+|3D|(r^7+2r^3)+(|3E|+|3F|)(r^6+r^4+r^3)+|3G|(r^5+2r^4)+|5A|(4r^3+r)+|7A|(6r^2+r).
\end{align*}
This is false for $r=13$. Similar calculations give the results for $r \geq 16$. If $V = V_{13}(4)$, then $|V|<|G|$ and by Proposition \ref{fieldext}, $b(G)=2$. If $r=7$, then we compute using Magma that $\mathrm{PSp}_6(3)$ has a regular orbit on $V$, however $c\times \mathrm{PSp}_6(3)$ does not for $c\in \{2,3,6\}$. In these cases, $b(G)=2$ by the result for $r=49$ and Proposition \ref{fieldext}.
\end{proof}
\begin{proposition}
Proposition \ref{oddcharsprop} holds for $E(G)/Z(E(G)) \cong \mathrm{PSp}_{4}(q)$, $q\in\{13,17,19\}$.
\end{proposition}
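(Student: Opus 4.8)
The plan is to argue exactly as in the preceding propositions of this subsection. Set $m=2$; since the excluded case $(m,q)=(2,3)$ does not arise, Propositions~\ref{alphas} and~\ref{sympalphas} give $\alpha(x)\leq m+3=5$ for every $x\in G$ of projective prime order (transvections have $\alpha(x)=2m=4$). By Proposition~\ref{mindegprop} we have $d_1(E(G))=(q^2-1)/2$, and by Proposition~\ref{d2symp} every absolutely irreducible $\F_rE(G)$-module of dimension less than $q(q-1)^2/2$ is a Weil module of dimension $(q^2-1)/2$ or $(q^2+1)/2$. Since $q$ is prime, $G/F(G)\leq\mathrm{PGSp}_4(q)$ and there are no field automorphisms; substituting $d=d_1(E(G))$ and $\alpha(G)\leq 5$ into~\eqref{generalspeqn} and using the monotonicity in Proposition~\ref{tools}\ref{crude}, the inequality fails — so $G$ has a regular orbit on $V$ — unless $r$ lies below a small explicit bound, and for those small $r$ it also fails once $\dim V$ is moderately large; in particular the only surviving modules are the Weil modules of dimension $(q^2\pm 1)/2$ over the handful of small fields $\F_r$.

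For each such $(q,r)$ I would first pin down, from the irrationalities of the Weil characters (which involve $\tfrac12(-1\pm\sqrt{q^{*}})$ with $q^{*}=(-1)^{(q-1)/2}q$), exactly which Weil modules are realised over $\F_r$; recall from Section~\ref{weilbackground} that in characteristic $2$ there are only two Weil modules, both of dimension $(q^2-1)/2$, i.e.\ of dimension $84$, $144$, $180$ for $q=13,17,19$ respectively. Then, for a projective prime order element $x\in G$ of odd order, Propositions~\ref{speqn}, \ref{itoeig} and~\ref{intchar} determine the eigenvalue multiplicities of $x$ on the $q^{2}$-dimensional Weil module and hence bound $\emax^{V}(x)$ (these bounds are of order $d/q$). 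For an element $g$ of order $r_0=\mathrm{char}(\F_r)$, which has a single eigenspace on $\overline V$, I would bound $\emax^{V}(g)$ by restricting $V$ to a subgroup $K\leq E(G)$ containing a Sylow $r_0$-subgroup — for instance $\mathrm{Sp}_2(q)\times\mathrm{Sp}_2(q)$ — reading off the composition factors of $V\downarrow K$ from the restriction of Weil modules (as in the proof of Proposition~\ref{s123-prop}), and applying Propositions~\ref{compfactors} and~\ref{tensorcodim}; for the smallest modules one can instead construct $V$ directly in GAP or Magma and either compute $\dim C_V(g)$ or test outright for a regular orbit. Feeding these eigenspace bounds into Proposition~\ref{tools}\ref{qsgood} (or~\ref{eigsp1}) should give a contradiction in every remaining case, so $G$ has a regular orbit on $V$.

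The main obstacle is the characteristic $2$ Weil module of dimension $(q^2-1)/2$, most severe for $q=13$ where $d=84$: here $\mathrm{Aut}(\mathrm{PSp}_4(q))$ has only $O(q^{6})$ involutions by Proposition~\ref{invols}, yet the generic bound $\emax^{V}(g_2)\leq\lfloor\tfrac45 d\rfloor$ coming from $\alpha(g_2)\leq 5$ is still too weak to make Proposition~\ref{tools}\ref{crude} fail, so one genuinely needs the sharper, restriction-based bound on $\emax^{V}(g_2)$ (or a direct computation). Finally, since $|V|\geq 2^{(q^2-1)/2}$ dwarfs $|G|$ in all of these cases, once a regular orbit is exhibited we have $b(G)=1=\lceil\log|G|/\log|V|\rceil$, so alternative~(i) of Theorem~\ref{mainsymp} holds and none of these groups appears in Table~\ref{allbad}.
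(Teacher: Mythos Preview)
Your approach is correct and would eventually succeed, but the paper takes a much shorter route that avoids ever analysing the Weil modules individually.

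The paper's key observation is this. In $\mathrm{Aut}(\mathrm{PSp}_4(q))$ for $q=13,17$, the elements whose order is a primitive prime divisor of $q^4-1$ (namely $17$ and $29$ respectively) account for roughly half of all prime order elements, and a GAP check shows $\alpha(x)=2$ for these. Combined with $\alpha(x)\leq 4$ for the remaining non-involutions (from Proposition~\ref{alphas}, not just the $m+3=5$ bound from Proposition~\ref{sympalphas} that you use) and $\alpha(x)\leq 5$ for involutions, the refined version of Proposition~\ref{tools}\ref{crude} already fails at $d=d_1(E(G))$ for every relevant $r$, so there is nothing more to do. For $q=19$, the crude inequality leaves only the Weil modules at $r=2$, and these are discarded in one line because the $b_{19}$ irrationality is not realised over $\F_2$.

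Your plan would work but is more laborious: with the uniform bound $\alpha(x)\leq 5$ you would still have Weil modules surviving for small $r$, and you would then need the eigenspace bounds and restrictions you describe. Two remarks on your execution: first, the case you flag as the ``main obstacle'' ($q=13$, $d=84$, $r=2$) is actually killed by your own irrationality step, since $b_{13}$ satisfies $x^2+x+1=0$ in characteristic $2$ and hence lies in $\F_4\setminus\F_2$; second, for $q=17$ one has $b_{17}\in\F_2$, so the $144$-dimensional module over $\F_2$ genuinely survives your cruder inequality and would require the eigenspace analysis --- whereas the paper's refinement disposes of it for free.
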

\begin{proof}
If $q=19$, then by \eqref{generalspeqn} and Proposition \ref{d2symp}, we only need to consider Weil modules for $r=2$, but these are not realised over $\mathbb{F}_2$ due to a $b_{19}$ irrationality in the Brauer character. Now let $q=17$.
There are 62466181283 elements of prime order in $\mathrm{PSp}_4(17).2$ and of these, 24304611 are involutions and 48492960768 are elements of order 29. In GAP we compute that  $\alpha(g_{29}) =2$. Therefore, if $G$ has no regular orbit on some irreducible module $V$ in cross-characteristic, 
\[
\frac{1}{2} r^d \leq (62466181283 - 48492960768) r^{\floor{3d/4}}+ 
  48492960768 r^{\floor{d/2}} + 24304611 r^{\floor{4 d/5}}.
\]
This is false for $d=d_1(E(G))$ by Proposition \ref{mindegprop}, and so for all $V$ by Proposition \ref{tools}\ref{crude}.
Similarly, the number of prime order elements in $\mathrm{PSp}_4(13).2$ is 6986011811, and of these, 4883931 are involutions and 3224422656 are elements of order 17. In GAP, we compute that $\alpha(g_{17})=2$. Therefore, if $G$ has no regular orbit on an irreducible module $V$,
\[
\frac{1}{2} r^d \leq (6986011811 - 3224422656) r^{\floor{3 d/4}}+ 
 3224422656 r^{\floor{d/2}} + 4883931 r^{\floor{4 d/5}}.
\]
By Propositions \ref{d2symp} and \ref{tools}\ref{crude}, we see that this is false for $d=d_1(E(G))$ by Proposition \ref{mindegprop}, and so for all $V$.
\end{proof}
\begin{proposition}
Proposition \ref{oddcharsprop} holds for $E(G)/Z(E(G)) \cong \mathrm{PSp}_{4}(11)$.
\end{proposition}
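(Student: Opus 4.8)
The plan is to follow the template established in the previous propositions of Section~\ref{u_grps} and in the opening of Subsection~\ref{oddcharsprop}. The group $E(G)/Z(E(G)) \cong \mathrm{PSp}_4(11)$ is one of the finitely many remaining cases left after the general inequality~\eqref{generalspeqn}, so I already know that $d_1(E(G))=(11^2-1)/2 = 60$ by Proposition~\ref{mindegprop}, and that for $x\in G$ of projective prime order, $\alpha(x)\le 5$ unless $x$ is a transvection (where $\alpha(x) = 2m = 4$, wait --- for $\mathrm{PSp}_4(q)$, $q$ odd, a transvection has $\alpha(x) = n = 4$, and by Proposition~\ref{alphas}(v) involutions may have $\alpha(x) \le 5$ when $q\neq 3$). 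First I would record, using GAP, the number of prime-order elements in $\mathrm{PSp}_4(11).2$, broken down by order, together with the refined bounds on $\alpha(x)$; in particular I expect a large class of elements of order equal to a primitive prime divisor of $11^4-1$ (namely $61$) with $\alpha(x) = 2$, which is what makes the crude bound work. Then a first application of Proposition~\ref{tools}\ref{crude} (in the form used elsewhere, splitting off the transvections and the involutions) reduces to a short list of small-dimensional modules, which by Proposition~\ref{d2symp} must be the Weil modules of dimensions $(11^2-1)/2 = 60$ and $(11^2+1)/2 = 61$, with $r$ bounded by a modest constant.

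Next I would handle these finitely many Weil modules individually. For the cross-characteristic part, the Brauer character of a Weil module can be deduced from the ordinary Weil character via Proposition~\ref{speqn} together with Proposition~\ref{itoeig}, so I can compute $\emax(g)$ for every semisimple element $g$ of projective prime order; I expect $\emax(g_{\{2,3\}'})$ to be comfortably below $\tfrac23 d$ except for transvections of order $11$, which are not cross-characteristic. For the defining prime appearing in $r$ (only relevant if $r$ is a power of $2$ or $3$, since $(r,|G|)>1$ forces $r_0 \mid |\mathrm{PSp}_4(11)|$ and the Weil modules exist in those characteristics), I would bound $\emax(g_{r_0})$ using Proposition~\ref{tools}\ref{alphabound} with the GAP-refined value of $\alpha(g_{r_0})$, or by restricting $V$ to a subgroup such as $\mathrm{Sp}_2(11)\times\mathrm{Sp}_2(11)$ (or $\mathrm{Sp}_2(121)$) containing a Sylow $r_0$-subgroup and applying Proposition~\ref{compfactors}. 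Feeding these bounds into Proposition~\ref{tools}\ref{qsgood} (or \ref{eigsp1}) should kill every remaining pair $(G,V)$, giving a regular orbit in all cases, so that conclusion~(i) of Theorem~\ref{mainsymp} holds. It is conceivable that one or two smallest cases (say $r=2$ or $3$) will need an explicit computation in Magma to confirm the regular orbit, exactly as in the $\mathrm{PSp}_6(3)$ proposition, and I would flag those.

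The main obstacle will be the characteristic-$2$ (and possibly characteristic-$3$) Weil modules, where I cannot simply read off eigenvalue multiplicities from the ordinary Weil character and instead must control the unipotent elements. The delicate point is getting a sharp enough bound on $\dim C_V(g)$ for $g$ a transvection or short-root element: the naive $\alpha$-bound gives only $\lfloor(1-1/4)d\rfloor = 45$ on the $60$-dimensional module, and combined with the $\approx 11^4$ transvections this term $2\cdot 11^4 \cdot r^{45}$ is dangerously large when $r=2$. I would therefore expect to need the restriction-to-a-subgroup argument (Proposition~\ref{compfactors} with $\mathrm{Sp}_2(11)^2$ or $\mathrm{Sp}_2(121)$, whose Weil modules have dimensions $5$ or $6$) to pin $\dim C_V(g_2)$ down to roughly $d/2$, which is what the analogous $\mathrm{PSp}_6(3)$ and $\mathrm{PSp}_8(3)$ arguments do; failing that, a direct Magma eigenspace computation. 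Once the transvection contribution is tamed, the inequality should fail for all $r\ge 2$ and the proof concludes. No entries for $\mathrm{PSp}_4(11)$ appear in Tables~\ref{allbad} or~\ref{noros}, so I anticipate the final answer is that $G$ always has a regular orbit here, i.e. only case~(i) occurs.
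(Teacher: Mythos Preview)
Your plan is essentially the paper's own argument, and it goes through without any of the complications you anticipate. The initial crude inequality, once you split off the very large class of order-$61$ elements (a primitive prime divisor of $11^4-1$, with $\alpha=2$) and the involutions (with $\alpha\le 5$), is already false except for the Weil modules of dimensions $60$ and $61$ with $r=3$ only. So there is no characteristic-$2$ case to worry about at all; the subgroup-restriction machinery you sketch for $r=2$ is unnecessary.

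Two small corrections to your write-up. First, transvections in $\mathrm{PSp}_4(11)$ have order $11$, not $2$; in any cross-characteristic $r$ they are semisimple, and their eigenspace dimensions come straight from the Weil character via Proposition~\ref{speqn}. The element class you should be thinking about for the unipotent contribution when $r=3$ is simply the set of order-$3$ elements, and there the paper just uses the coarse bound $\alpha(g_3)\le 4$ from Proposition~\ref{sympalphas} (giving $\dim C_V(g_3)\le \lfloor 3d/4\rfloor$), together with the Brauer-character bound $\emax(g_{3'})\le 36$, to kill both Weil modules at $r=3$ with a single inequality. No Magma computation or explicit construction is needed.
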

\begin{proof}
	The number of prime order elements in $H = \mathrm{PSp}_4(11).2$ is 3646870787 and of these, 3162456000  are elements of order 61 which we compute have $\alpha(x) = 2$. We also have $i_2(H)= 1800843$. Therefore, if $G$ has no regular orbit on  $V= V_d(r)$, then
\[
r^d \leq 2i_{61}(H) r^{\floor{d/2}} + 
2i_2(H) r^{\floor{4d/5}}+ (3646870787 - i_{61}(H) - 
i_2(H))( r^{\floor{3d/4}}+r^{\ceil{d/4}}).
\]
This is false for all $V$, except for the 60- and 61- dimensional Weil modules for $r=3$. There are 3247640 elements of order 3 in $\mathrm{PSp}_4(11)$. From the Brauer character, we deduce that $\emax(g_{3'})\leq 36$ on a Weil module. Therefore, if $G$ has no regular orbit on either of the Weil modules $V$,
\[
r^{60} \leq 2\times 3643623147r^{36} + 3247640r^{\floor{3\times 61/4}}.
\]
This is false for $r=3$.
 \end{proof}
\begin{proposition}
Proposition \ref{oddcharsprop} holds for $E(G)/Z(E(G)) \cong \mathrm{PSp}_{4}(9)$.
\end{proposition}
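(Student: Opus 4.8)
The plan is to follow the template established for the other symplectic groups of small rank in this section, refined for the special features of $\mathrm{PSp}_4(9)$. First I would compute, using GAP and \cite{bg}, the number of prime order elements in $H=\mathrm{PSp}_4(9).2^2$ (or the relevant almost simple overgroup), singling out in particular the large classes of elements of order coprime to $3$ — most notably elements of order $41$, which is a primitive prime divisor of $9^4-1$ and so satisfies $\alpha(x)=2$ by the proof of \cite[Theorem 4.3]{gs} — together with the transvections (of which there are at most $9^4-1$ by \cite[Table B.7]{bg}), the involutions, and the graph/field automorphisms. Using Proposition \ref{sympalphas} (so $\alpha(x)\le m+3=5$ for all $x$ except transvections, where $\alpha(x)\le 2m=4$ since $q$ is odd) together with Proposition \ref{tools}\ref{crude} and the bound $d_1(E(G))=\tfrac12(9^2-1)=40$ from Proposition \ref{mindegprop}, I would first cut the problem down to a finite list: the crude inequality $r^d\le 2|\mathrm{P}\Gamma\mathrm{Sp}_4(9)|r^{\lfloor 5d/6\rfloor}+2(9^4-1)r^{\lfloor 3d/4\rfloor}$ should fail for $d$ much beyond $40$, leaving only the Weil modules of dimension $40$ and $41$ (by Proposition \ref{d2symp}, the next-smallest module has dimension at least $(9^2-1)(9^2-9)/(2\cdot 10)$, comfortably large) for a bounded range of $r$.

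Next I would bound the eigenspace dimensions of projective prime order elements on the $81$-dimensional Weil module $\overline V$ (as in Section \ref{weilbackground}) and hence on its irreducible constituents $V=V_{40}$, $V_{41}$. For semisimple elements of order coprime to the characteristic and not too small, this uses Propositions \ref{speqn}, \ref{itoeig} and \ref{intchar}: given $x$ of order $\ell$, Proposition \ref{itoeig} restricts $\dim C_W(x)$, Proposition \ref{speqn} then gives $|\chi_1(x)+\chi_2(x)|^2=|C_W(x)|$, and Proposition \ref{intchar} converts the character value into eigenvalue multiplicities on $\overline V$, yielding $\emax(g_{\{2,3\}'})\le 27$ or thereabouts (i.e. roughly $\tfrac13\cdot 81$). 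For unipotent elements (order $3$) when the characteristic is not $3$ this is where I would restrict $V$ to a parabolic-free subgroup containing a Sylow $3$-subgroup — natural candidates are $\mathrm{Sp}_2(81).2$ or $\mathrm{Sp}_2(9)\wr 2$-type subgroups — compute the composition factors of the restriction from Brauer character tables, and apply Propositions \ref{compfactors} and \ref{tensorcodim}, exactly as in the $\mathrm{PSp}_6(3)$ and $\mathrm{PSp}_{12}(3)$ arguments. For involutions and for elements of order $3$ dividing the characteristic ($r=3^k$, needed since $q=9$ means we must also treat defining characteristic — but in fact $(r,q)=1$ is assumed, so characteristic $3$ is excluded here; the relevant small $r$ are $2,4,5,7,\dots$), I would refine $\alpha(x)$ computationally in GAP and/or construct the module explicitly in Magma to read off exact fixed-point-space dimensions.

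With these bounds in hand I would assemble the master inequality from Proposition \ref{tools}\ref{eigsp1} (or \ref{qsgood}), of the shape
\[
r^d \le 2|\mathrm{PSp}_4(9).2^2|\,r^{\emax(g_{2'})} + i_2(H)\,r^{\emax(g_2)} + 2(9^4-1)\,r^{\lfloor 3d/4\rfloor},
\]
and check that it fails for every remaining $(d,r)$, thereby forcing a regular orbit in all but finitely many cases; the residual cases (small $r$, possibly $r=2$ or $r=4$, where $|V|$ is close to $|G|$) would be settled by direct construction of the module in GAP or Magma and explicit base-size computation, with any surviving no-regular-orbit examples recorded in Table \ref{allbad} or \ref{noros}. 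The main obstacle I anticipate is twofold: first, pinning down the eigenspace dimensions of unipotent elements of order $3$ in cross-characteristic tightly enough — the crude bound $\lfloor(1-1/\alpha)d\rfloor$ with $\alpha\le 4,5$ is likely too weak against the very large class of order-$3$ elements, so the subgroup-restriction argument (finding the right subgroup and computing the restriction's composition factors) will need care; and second, the genuinely computational endgame for the smallest fields, where Weil modules of dimension $40$ over $\mathbb F_2$ or $\mathbb F_4$ may be large enough that an exhaustive orbit computation is expensive, requiring the kind of assistance acknowledged in Section \ref{techniques} for $\mathrm{PSp}_4(9)$.
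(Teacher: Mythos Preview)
Your approach is sound and would get there, but the paper takes a more economical route for the Weil modules. After the initial reduction to $d\in\{40,41\}$ and $r\le 5$ (which the paper does exactly as you describe, using the order-$41$ elements with $\alpha=2$), rather than bounding order-$3$ eigenspaces from scratch via restriction to subgroups like $\mathrm{Sp}_2(81).2$ or $\mathrm{Sp}_2(9)\wr 2$, the paper observes that $\mathrm{Sp}_4(9).2_1<\mathrm{Sp}_8(3)$ and that the Weil modules of $\mathrm{Sp}_8(3)$ (also of dimensions $40$ and $41$) restrict irreducibly to $\mathrm{Sp}_4(9).2_1$. Since the $\mathrm{PSp}_8(3)$ case has already been handled earlier in the section, the cases $r=4,5$ are inherited for free. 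Only $r=2$, $d=40$ (where moreover $G/F(G)\le\mathrm{PSp}_4(9).2_1$, as the other extensions do not preserve this module) needs a direct argument: the paper constructs the module in GAP, reads off the fixed-point-space dimensions for the involution classes including field automorphisms, and finishes with a single inequality of type \ref{eigsp1}. Your proposal would work but duplicates effort that the overgroup trick sidesteps.

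One minor slip: your crude exponent $\lfloor 5d/6\rfloor$ should be $\lfloor 4d/5\rfloor$ (since $\alpha(x)\le m+3=5$ gives $(1-1/5)d$, not $(1-1/6)d$); this is exactly \eqref{generalspeqn} with $m=2$. Also, your aside about ``unipotent elements (order $3$) when the characteristic is not $3$'' is slightly tangled: in cross-characteristic these elements are semisimple in the representation and their eigenspaces come straight from the Brauer character, so no subgroup restriction is needed for them --- the restriction technique is only required for elements of order dividing $r$, which here means $r_0\in\{2,5,41\}$.
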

\begin{proof}
The number of prime order elements in $H=\mathrm{PSp}_4(9).2^2$ is 457397927 and of these 419904000 are elements of order 41 and have $\alpha(x) = 2$ from GAP computations. In addition, $i_2(H) = 610983$. Therefore, if $G$ has no regular orbit on $V=V_d(r)$,
\[
 r^d \leq 2 i_{41}(H)r^{\floor{d/2}}+ 2i_2(H)r^{\floor{4d/5}}+ 
  2 (457397927 - i_{41}(H) - i_{2}(H)) r^{\floor{3d/4}}.
\]
This is false for all $V$ except for the 40- and 41- dimensional Weil modules for $r\leq 5$. Note that $\mathrm{Sp}_4(9).2_1 < \mathrm{Sp}_8(3)$, and these groups have Weil representations of the same dimensions. Moreover, comparing the ordinary characters, we see that the Weil representations of $\mathrm{Sp}_8(3)$ remain irreducible when restricted to $\mathrm{Sp}_4(9).2_1$. 
If $r=4,5$, then we deduce that $G$ has a regular orbit on $V$ because the same is true for the Weil module of $ \mathrm{Sp}_8(3)$. 
This leaves the 40-dimensional Weil module for $r=2$. In this case, we have $G/F(G) \leq \mathrm{PSp}_4(9).2_1$. We compute that $i_2(\mathrm{PSp}_4(9).2_1)=368631$. Constructing the module in GAP, we find that elements $g$ in classes 2A or 2B have $\dim C_V(g) = 20$ and 24 respectively. Moreover, the two classes of field automorphisms in $\mathrm{PSp}_4(9).2_1$ also have $\dim C_V(g) = 20$ and 24. These observations, along with those from the Brauer character, imply that if $G$ has no regular orbit on $V$, then
	\begin{align*}
	r^{40} \leq &(|2A| + |2C|) r^{20} + (|2B| + |2D|) r^{24} + 
	|3A| (r^{22} + 2 r^9) + |3B| (r^4 + 2 r^{18}) + 
	|3C| (r^{10} + 2 r^{15}) \\
	&+ |3D| (r^{16} + 2 r^{12}) + 2i_5(H) r^9 + 
	i_{41}(H) (40 r).
	\end{align*}
	This is false for $r=2$ and so $G$ has a regular orbit on $V$.
\end{proof}
\begin{proposition}
Proposition \ref{oddcharsprop} holds for $E(G)/Z(E(G)) \cong \mathrm{PSp}_{4}(7)$.
\end{proposition}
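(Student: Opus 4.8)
```latex
\begin{proof}[Proof proposal]
The plan is to follow the same template as the preceding propositions in this subsection. First I would record the relevant arithmetic data for $G_0 = \mathrm{PSp}_4(7)$: by Proposition \ref{mindegprop} the minimal cross-characteristic degree is $d_1(G_0) = \tfrac12(7^2-1) = 24$, and by Proposition \ref{d2symp} the only modules of dimension below $(q^2-1)(q^2-q)/2(q+1) = 48\cdot 42/16 = 126$ are the four Weil modules of dimensions $24$ and $25$. Next I would compute in GAP the number of prime order elements of $H = \mathrm{PSp}_4(7).2$, splitting off the elements of order $5$ (which by Proposition \ref{sympalphas}/\ref{alphas} have $\alpha(x) = 2$, since $5 = \delta(5,7)$ gives a long cycle) and the $\tfrac12(q^4-1) = 1200$ transvections (which have $\alpha(x) = n = 4$), and noting $\alpha(x) \le m+3 = 5$ for all other $x$ by Proposition \ref{sympalphas}. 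Substituting $d = 24$ into the crude bound of Proposition \ref{tools}\ref{crude},
\[
r^d \le 2 i_5(H)\, r^{\lfloor d/2\rfloor} + 2\cdot 1200\, r^{\lfloor 3d/4\rfloor} + 2\bigl(|{\rm prime\ order\ elts}| - i_5(H) - 1200\bigr) r^{\lfloor 4d/5\rfloor},
\]
which should already rule out all but a handful of $(d,r)$; by \eqref{generalspeqn} and inspection of \cite{HM} I expect only the Weil modules $V_{24}(r)$, $V_{25}(r)$ for a few small $r$ to survive, possibly only $r = 2$ after accounting for a $b_7$ irrationality in the relevant Brauer characters.

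For the surviving cases I would refine the eigenspace bounds. For semisimple elements $x$ of order coprime to $r$ I would use Proposition \ref{speqn} together with Proposition \ref{itoeig} to pin down $\dim C_W(x)$ and hence, via $|\chi_1(x) + \chi_2(x)|^2 = |C_W(x)|$ and Proposition \ref{intchar} applied to the $q^m = 49$-dimensional Weil character, the eigenspace multiplicities of $x$ on $\overline V$; this gives $\emax(g_{\{2,7\}'})$ on the irreducible Weil constituents. For unipotent elements (order dividing $r$) and involutions I would either refine $\alpha(x)$ in GAP and apply Proposition \ref{tools}\ref{alphabound}, or — if $|V| > |G|$ is borderline — construct $V$ explicitly in GAP/Magma and read off $\dim C_V(g)$ directly. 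Then an application of Proposition \ref{tools}\ref{eigsp1} (or \ref{qsgood} if a unipotent contribution dominates) should establish a regular orbit in every remaining case, so that $b(G) = 1 = \lceil \log|G|/\log|V|\rceil$ and conclusion (i) of Theorem \ref{mainsymp} holds; if instead some small module has $|V| < |G|$, I would record the exact base size computationally, as in the $\mathrm{PSp}_6(3)$ case.

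The main obstacle I anticipate is purely computational: the two $24$-dimensional Weil modules over $\mathbb{F}_2$ are the smallest candidates and there $2^{24}$ is not large, so the crude inequality may not close with generic bounds on $\emax$. The resolution should mirror the $\mathrm{PSp}_6(7)$ and $\mathrm{PSp}_6(5)$ arguments: obtain the Brauer character of the Weil constituent from the ordinary Weil character of the same degree (as in \cite[\S 6]{modatlas}), use it to get sharp values of $\emax(g_{2'})$, and handle the involutions by an explicit matrix construction, after which Proposition \ref{tools}\ref{qsgood} should suffice. A secondary point to watch is whether the $b_7$ irrationality in the $24$-dimensional Weil characters forces $G/F(G) \cong \mathrm{PSp}_4(7)$ over $\mathbb{F}_2$ and which of the four Weil modules are realised over which small fields; this affects the exact list of $(G,V)$ to check but not the method.
\end{proof}
```
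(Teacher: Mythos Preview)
Your plan is correct and follows essentially the same approach as the paper: reduce via the crude inequality of Proposition~\ref{tools}\ref{crude} (using the $\alpha$-bounds from Proposition~\ref{sympalphas}) to the $24$-dimensional Weil modules for small~$r$, then finish with Brauer-character eigenspace data and a GAP construction. Two minor points where the paper differs from your expectations: there is no $b_7$ irrationality eliminating $r=2$ here---the paper does have to treat $V_{24}(2)$ and $V_{24}(4)$ explicitly; and for $r=2$ one has $|V|<|G|$, so $b(G)=2$ is obtained from Lemma~\ref{fieldext} (via the $r=4$ regular orbit) rather than computationally, giving $b(G)=\lceil\log|G|/\log|V|\rceil$ as required.
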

\begin{proof}
 We summarise some information about elements of prime order in $H = \mathrm{PSp}_4(7).2$ in Table \ref{s47-counts}.
\begin{table}[h!]
\begin{tabular}{ccc}
\toprule
$r_0$ & $i_{r_0}(H)$ & $\alpha(g_{r_0})\leq$ \\ 
\midrule 
2 & 122451 & 3 (2B/2D), 5 (2A/2C) \\ 
3 & 274400 & 3 \\ 
5 & 5531904 & 2 \\ 
7 & 5764800 & 2 (7E/7F), 4 (7A/7B), 3 (o/w)\\ 
\bottomrule 
\end{tabular} 
\caption{Information about elements of prime order in $\mathrm{PSp}_4(7).2$. \label{s47-counts}}
\end{table}
Therefore, if $G$ has no regular orbit on an irreducible module $V = V_d(r)$,
\begin{align*}
\frac{1}{2} r^d  &\leq (|2B|+|2D|)r^{\floor{2d/3}}+ (|2A|+|2C|)r^{\floor{4d/5}}+ i_3(H)r^{\floor{2d/3}}+ i_5(H)r^{\floor{d/2}}+(|7E|+|7F|)r^{\floor{d/2}}\\
&+ (|7A|+|7B|)r^{\floor{3d/4}}+  (|7C|+|7D|)r^{\floor{2d/3}}.
\end{align*}
From \cite{HM} we see that we need only consider the 24-dimensional Weil modules for $r=2,4$.  When $r=4$, from the Brauer character and constructing the module in GAP, we see that if $G$ has no regular orbit on $V$, 
\begin{align*}
r^{24} \leq & |3A| (3 r^8) +|3B|( r^{10} + 2 r^7) + |5A|(4 r^5+r^4) +   (|7A|+|7B|)(3 r^7 + r^3) + |7C|(6 r^4) \\
	& + |7D| (r^6 + 6 r^3) +   (|7E|+|7F|)(3 r^4 + 4 r^3) + |2A| r^{15} + |2B| r^{12}.
	 \end{align*}
 This is false for $r=4$. 
 When $r=2$, $|V|<|G|$ and by Proposition \ref{fieldext}, $b(G)=2$.
 \end{proof}
\begin{proposition}
Proposition \ref{oddcharsprop} holds for $E(G)/Z(E(G)) \cong \mathrm{PSp}_{4}(5)$.
\end{proposition}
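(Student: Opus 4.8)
The plan is to follow the template established for the other $\mathrm{PSp}_4(q)$ cases: start with the crude counting inequality \eqref{generalspeqn}, specialised to $q=5$ and $m=2$, to cut down to a finite list of modules, then handle the survivors by more refined eigenspace estimates or direct computation. First I would record the relevant arithmetic data for $H=\mathrm{PSp}_4(5).2$: the total number of prime order elements, the number of involutions, the number of elements of order $13$ (a primitive prime divisor of $5^4-1$, so $\alpha(x)=2$ for these by the proof of \cite[Theorem 4.3]{gs}), and the number of transvections. By Propositions \ref{sympalphas} and \ref{tools}\ref{crude}, if $G$ has no regular orbit on $V=V_d(r)$ then
\[
\tfrac12 r^d \le i_{13}(H) r^{\lfloor d/2\rfloor} + (\text{transvection count})\, r^{\lfloor 3d/4\rfloor} + \big(\text{remaining prime order elements}\big) r^{\lfloor 4d/5\rfloor},
\]
using $\alpha(x)\le m+3=5$ in general and $\alpha(x)\le 2m=4$ for transvections. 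By Proposition \ref{d2symp}, $d_1(E(G))$ corresponds to the Weil modules of dimensions $12$ and $13$, and the next smallest module has dimension $(5^2-1)(5^2-5)/(2\cdot6)=40$; substituting $d=12$ shows the inequality fails once $r$ exceeds some modest bound, so only the $12$- and $13$-dimensional Weil modules for small $r$ remain, together with possibly a handful of other small modules drawn from \cite{HM} (one should check for $z_3$ or $b_5$ irrationalities, which will exclude several fields).

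Next I would treat the surviving Weil modules. For the $12$- and $13$-dimensional Weil modules over $\overline{\mathbb{F}}_r$ with $r$ coprime to $5$, I would use Propositions \ref{speqn} and \ref{itoeig} together with Proposition \ref{intchar} to bound $\emax(g)$ for semisimple $g$ of each relevant prime order (orders $2,3,13$, and the other primes dividing $|\mathrm{PSp}_4(5)|$), exactly as in the $\mathrm{PSp}_4(7)$, $\mathrm{PSp}_4(9)$ and $\mathrm{PSp}_4(11)$ proofs. For unipotent elements (when $r_0\mid r$, i.e. $r_0=2,3$), I would either refine $\alpha$ via a GAP computation and apply Proposition \ref{tools}\ref{alphabound}, or restrict $V$ to a suitable subgroup such as $\mathrm{Sp}_2(25).2$ or $\mathrm{Sp}_2(5)\times\mathrm{Sp}_2(5)$ (which contains a Sylow $r_0$-subgroup) and apply Proposition \ref{compfactors} together with Proposition \ref{tensorcodim}. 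Plugging these bounds into Proposition \ref{tools}\ref{eigsp1} or \ref{qsgood} should establish a regular orbit for all but the very smallest fields.

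Finally I would deal with the genuine exceptions, which I expect to occur only when $|V|<|G|$, i.e. $r^d<|G|$: for $d=12$ or $13$ this forces $r=2$ (and possibly $r=3$). In those cases a regular orbit is impossible, and I would compute $b(G)$ directly by constructing $V$ in GAP or Magma; for composite $r$ arising as field extensions I would invoke Lemma \ref{fieldext} to transfer the base size. Any pair $(G,V)$ with $b(G)=\lceil\log|G|/\log|V|\rceil+1$ thus found gets added to Table \ref{allbad}, and the rest fall under case (i) of Theorem \ref{mainsymp}. The main obstacle will be the unipotent eigenspace bounds for $r_0=2$ and $r_0=3$ on the $12$- and $13$-dimensional modules: here $\alpha$ can be as large as $4$ or $5$ so the naive bound $\lfloor(1-1/\alpha)d\rfloor$ may be too weak to close the inequality, and one will need either the subgroup-restriction trick or an explicit matrix computation to get a sharp enough value of $\dim C_V(g)$.
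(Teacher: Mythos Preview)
Your approach is essentially the same as the paper's and would succeed. A few minor differences worth noting: the paper sharpens the initial inequality by computing class-by-class $\alpha$-bounds in GAP (e.g.\ $\alpha\le 3$ for involutions in classes $2B,2D$ and for elements of order $3$, and $\alpha\le 4,3,2$ for the various order-$5$ classes), which trims the survivor list to just the Weil modules for $r\in\{4,9,16\}$ (dimension $12$) and $r=9$ (dimension $13$), plus $V_{40}(2)$; your cruder uniform bound $\alpha\le 5$ would leave a slightly longer list but nothing unmanageable. The $b_5$ irrationality indeed rules out $r=2,3$ for the Weil modules, so the anticipated $|V|<|G|$ exceptions never materialise and $\mathrm{PSp}_4(5)$ contributes nothing to Table~\ref{allbad}. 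For $r=4$ and $r=16$ on the $12$-dimensional module the counting inequality still fails even with sharp eigenspace data, and the paper resolves these by a direct GAP orbit search rather than by subgroup restriction; your fallback to explicit computation covers this.
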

\begin{proof}
We summarise information about elements of prime order in $H =\mathrm{PSp}_4(5).2$ in Table \ref{s45-counts}.
\begin{table}[h!]
\begin{tabular}{ccc}
\toprule
$r_0$ & $i_{r_0}(H)$ & $\alpha(x)\leq$ \\ 
\midrule
2 & 16875 & 5 (2A/2C), 3 (2B/2D) \\ 
3 & 26000 & 3 \\ 
5 & 390624 & 4 (5A/5B), 3 (5C/5D), 2 (5E/5F)  \\ 
13 & 1080000& 2 \\ 
\bottomrule 
\end{tabular} 
\caption{Elements of prime order in $H =\mathrm{PSp}_4(5).2$. \label{s45-counts}}
\end{table}
Therefore, if $G$ has no regular orbit on an irreducible module $V$,
\begin{align*}
\frac{1}{2}r^d \leq & (|2B|+|2D|)r^{\floor{2d/3}}+ (|2A|+|2C|)r^{\floor{4d/5}}+ i_3(H)r^{\floor{2d/3}}+ i_{13}(H)r^{\floor{d/2}}+(|5E|+|5F|)r^{\floor{d/2}}\\
&+ (|5A|+|5B|)r^{\floor{3d/4}}+  (|5C|+|5D|)r^{\floor{2d/3}}.
\end{align*}
It follows from \cite{HM} that we only need to consider 12- and 13-dimensional Weil modules for  $r = 4,9,16$, and $r=9$ respectively, as well as $d=40$ for $r=2$.
We begin by supposing $V= V_{40}(2)$. Examining the corresponding Brauer character, we see that $\emax(g_{2'}) \leq 16$, and constructing the module in GAP, we find that elements in class 2C have a 26-dimensional fixed point space, and all other involutions $g$ have $\dim C_V(g) \leq 24$. Therefore, if $G$ has no regular orbit on $V$,
\[
r^d \leq 2(1080000+390624+26000)r^{16}+ 16875r^{24}+ 300r^{26}.
\]
This is false for $r=2$, so $G$ has a regular orbit on $V$. 
Now suppose that $V = V_{13}(9)$. Examining the Brauer character and constructing the modules in GAP, we find that if $G$ has no regular orbit on one of the two 13-dimensional Weil modules,
\begin{align*}
r^{13} \leq & |2A| (r^9 + r^4) + |2B| (r^7 + r^6) + (|5A|+|5B|) (r^3 + 2 r^5) +   |5C|(4 r^3 + r) + |5D| (r^5 + 4 r^2) \\
& + (|5E|+|5F|)(3 r^3 + 2 r^2) +  (|13A|+|13B|) (13 r) + (|3A|+|3B|) r^5
\end{align*}
This is false for $r=9$.

Now suppose that $V$ is one of the 12-dimensional Weil modules for $r=4,9,16$.
If $r=9$ and $G$ has no regular orbit on $V$, then from the Brauer character and constructions in GAP,
\begin{align*}
r^{12} \leq & |2A|(2 r^6) + |2B| (2 r^6) + (|5A|+|5B|) (r^2 + 2 r^5) + 
  |5C| (4 r^3) + |5D| (r^4 + 4 r^2)\\
  & +  (|5E|+|5F|)(2 r^3 + 3 r^2) +   (|13A|+|13B|)  (12 r) + (|3A|+|3B|) r^7.
\end{align*}
This is false for $r=9$. If $r=4$ or 16 then we construct the module in GAP and find a representative of a regular orbit.
\end{proof}
\subsection{Even characteristic}
We now consider symplectic groups over finite fields of even characteristic. In particular, we set out to prove the following.
\begin{proposition}
\label{evencharsprop}
Theorem \ref{mainsymp} holds for $E(G)/Z(E(G)) \cong \mathrm{PSp}_n(q)$, $q$ even.
\end{proposition}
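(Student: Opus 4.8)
The plan is to mirror the structure already used for the odd-characteristic symplectic groups in Proposition~\ref{oddcharsprop}, but now over $\F_q$ with $q$ even. First I would fix $n=2m$ and recall the relevant ingredients: by Proposition~\ref{mindegprop}, $d_1(E(G))=\tfrac{(q^m-1)(q^m-q)}{2(q+1)}$ for $(m,q)\neq(2,2)$ (and $d_1=4$ for $\mathrm{PSp}_4(2)$, which is excluded here as $\mathrm{PSp}_4(2)\cong L_2(9).2_2$); by Proposition~\ref{sympalphas}, for $x\in\mathrm{Aut}(\mathrm{PSp}_{2m}(q))$ we have $\alpha(x)\le m+3$ unless $x$ is a transvection with $\alpha(x)=2m+1$, or the small exception $(m,q)=(2,3)$ (not relevant, $q$ even); and by \cite[Table B.7]{bg} the number of transvections in $G/F(G)$ is bounded by roughly $q^n$. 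Substituting these bounds into Proposition~\ref{tools}\ref{crude}, i.e.
\[
r^d \leq 2|\mathrm{P}\Gamma\mathrm{Sp}_n(q)|\,r^{\lfloor\frac{m+2}{m+3}d\rfloor} + 2(q^n-1)\,r^{\lfloor\frac{2m-1}{2m}d\rfloor},
\]
and using $d\ge d_1(E(G))$ together with the monotonicity in $d$ guaranteed by Proposition~\ref{tools}\ref{crude}, I would show the inequality fails for all but a small finite list of pairs $(n,q)$ — I expect this to leave only $\mathrm{PSp}_4(q)$ for small $q$ (say $q\in\{4,8,16\}$ or so), $\mathrm{PSp}_6(q)$ for $q\in\{2,4\}$, $\mathrm{PSp}_8(2)$, $\mathrm{PSp}_{10}(2)$, $\mathrm{PSp}_{12}(2)$, and perhaps $\mathrm{PSp}_{14}(2)$, analogous to Table~\ref{oddcharsremain}.

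Next I would treat the surviving groups one at a time, in decreasing order of $n$, exactly as in the odd case. For each, I would consult \cite{HM} (and Proposition~\ref{d2symp}-type degree-gap results, though note Proposition~\ref{d2symp} is stated for $q$ odd, so for $q$ even I would instead use the minimal-degree bounds of Table~\ref{mindeg} and the next-degree information directly from \cite{HM}) to cut down to finitely many irreducible modules $V=V_d(r)$ in each relevant cross-characteristic $r_0\nmid q$. For the large groups ($\mathrm{PSp}_{2m}(2)$ with $m$ large), where the smallest faithful module is the $\tfrac{1}{3}(2^m-1)(2^{m-1}-1)$-type module and its companion, I would bound $\emax$ of semisimple elements using the Weil-character machinery — Propositions~\ref{speqn} is again for $q$ odd, so for $\mathrm{Sp}_{2m}(2)$ I would instead restrict to geometric subgroups $\mathrm{Sp}_{2a}(2)\times\mathrm{Sp}_{2b}(2)$, apply Propositions~\ref{compfactors} and \ref{tensorcodim}, and bound $\emax(g)$ for elements of order $2,3$ and small odd primes that way, while using $\alpha$-bounds from \S\ref{poe} and GAP-refinements for unipotent elements. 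Combining these bounds with the counts $i_{r_0}(\mathrm{Aut}(G_0))$ from Proposition~\ref{invols} and GAP, I would run Proposition~\ref{tools}\ref{qsgood} or \ref{eigsp1} to rule out the absence of a regular orbit in essentially all cases.

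For the genuinely small leftover cases — $\mathrm{PSp}_4(4)$, $\mathrm{PSp}_4(8)$, $\mathrm{PSp}_4(16)$, $\mathrm{PSp}_6(2)$, $\mathrm{PSp}_6(4)$, $\mathrm{PSp}_8(2)$ and so on — I would proceed computationally as elsewhere in the paper: for modules $V$ with $|V|<|G|$ there is no regular orbit, and $b(G)$ is read off (or bounded via Lemma~\ref{fieldext}) from a GAP/Magma construction; for $|V|>|G|$ I would construct $V$ from the online ATLAS generators, compute the eigenspace dimensions of projective-prime-order elements, and either close the case with Proposition~\ref{tools}\ref{eigsp1} or verify existence of a regular orbit directly. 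The entries that survive with no regular orbit — notably $\mathrm{PSp}_6(2)$ and its cover $2.\mathrm{PSp}_6(2)$ in characteristics $3,5,7,9,\dots$, and $\mathrm{PSp}_6(3)$-related entries already handled in the odd case — would be recorded in Tables~\ref{allbad} and \ref{noros}, and in particular I would confirm the exceptional case $(G,V)=(2\times\mathrm{PSp}_6(2),V_7(3))$ with $b(G)=4$ stated in Theorem~\ref{mainsymp}(iii) by an explicit Magma computation. The main obstacle I anticipate is the absence of a clean Weil-character formula for $q$ even in the symplectic case (Propositions~\ref{speqn}--\ref{d2symp} all assume $q$ odd), so the eigenspace bounds for the large families $\mathrm{PSp}_{2m}(2)$ must be obtained less cleanly by restriction to subgroups and composition-factor arguments, making those the delicate part of the estimate; fortunately, for even $q$ the minimal degrees grow quadratically in $q^m$ rather than linearly, so the crude bound eliminates far more cases at the outset and only $q=2$ (and a couple of $\mathrm{PSp}_4(q)$ with small $q$) require this more careful treatment.
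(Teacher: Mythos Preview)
Your approach is essentially the same as the paper's --- apply the crude inequality from Proposition~\ref{tools}\ref{crude} with the minimal-degree and $\alpha$ bounds to reduce to finitely many $(n,q)$, then dispatch each survivor case by case using Brauer characters, GAP/Magma constructions, and Proposition~\ref{tools}\ref{eigsp1}/\ref{qsgood}. Two small corrections and one simplification are worth noting.

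First, your transvection exponent is off by one: for $q$ even Proposition~\ref{sympalphas}(ii) gives $\alpha(x)=2m+1$, so the second term should carry $r^{\lfloor\frac{2m}{2m+1}d\rfloor}$, not $r^{\lfloor\frac{2m-1}{2m}d\rfloor}$. Second, and more to the point, the paper forgoes the refined split altogether and simply uses the uniform bound $\alpha(G)\le n+1$ from Proposition~\ref{alphas}(ii); because $d_1(E(G))$ grows quadratically in $q^m$ when $q$ is even, this already kills everything except $q=2$ with $n\in\{6,8,10\}$ and $(n,q)=(4,4)$ --- a much shorter list than the one you anticipated (no $\mathrm{PSp}_{12}(2)$, $\mathrm{PSp}_{14}(2)$, $\mathrm{PSp}_6(4)$, or $\mathrm{PSp}_4(q)$ with $q>4$ survive). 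Consequently the restriction-to-subgroup and composition-factor machinery you planned for the ``large $\mathrm{PSp}_{2m}(2)$'' cases is never needed: for $\mathrm{PSp}_{10}(2)$ the paper just computes sharper $\alpha$-values prime by prime in GAP and applies \ref{crude} once more; for $\mathrm{PSp}_8(2)$, $\mathrm{PSp}_6(2)$, and $\mathrm{PSp}_4(4)$ the Brauer tables and small GAP constructions already give enough eigenspace information. Your instinct in the last sentence --- that the quadratic minimal degree makes the even-$q$ reduction much more efficient --- is exactly the right observation, and it means the delicate part you worried about never arises.
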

By Proposition \ref{mindegprop} we have $d_1(E(G))\geq (q^{n/2}-1)(q^{n/2}-q)/2(q+1)$ and also $\alpha(G) \leq n+1$ by Proposition \ref{alphas}. Applying Proposition \ref{tools}\ref{crude}, we see that $G$ has a regular orbit on $V$ except possibly when $q=2$ and $6\leq n\leq 10$, or $(n,q)=(4,4)$. We consider each of these cases in turn.

\begin{proposition}
Theorem \ref{mainsymp} holds for $E(G)/Z(E(G)) \cong \mathrm{PSp}_{10}(2)$.
\end{proposition}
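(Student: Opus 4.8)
The plan is to apply the now-standard machinery of Proposition~\ref{tools} together with the degree and conjugacy-class data for $\mathrm{Sp}_{10}(2)$ and its outer automorphism group (here $\mathrm{Aut}(\mathrm{PSp}_{10}(2))=\mathrm{PSp}_{10}(2)$ since $\mathrm{Sp}_{10}(2)$ is complete, so $G/F(G)\cong\mathrm{PSp}_{10}(2)$ and $E(G)$ is one of $\mathrm{Sp}_{10}(2)$ or a central extension with $F(G)$ scalar). First I would use Proposition~\ref{mindegprop}: for $\mathrm{PSp}_{2n}(q)$ with $q$ even we have $d_1(E(G))=(q^n-1)(q^n-q)/2(q+1)$, which for $(n,q)=(5,2)$ gives $d_1=(31)(30)/6=155$. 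By Proposition~\ref{alphas}, $\alpha(x)\le n+1=11$ for all $1\ne x\in G/F(G)$, with $\alpha(x)=2m=10$ or $2m+1=11$ only for transvections (Proposition~\ref{sympalphas}). The number of transvections in $\mathrm{PSp}_{10}(2)$ is $2^{10}-1=1023$ by \cite[Table B.7]{bg}, and $|\mathrm{PSp}_{10}(2)|<2^{50}$. Substituting into Proposition~\ref{tools}\ref{crude},
\[
r^d\le 2\cdot 2^{50}\, r^{\lfloor 10d/11\rfloor}+2\cdot 1023\, r^{\lfloor 9d/10\rfloor},
\]
which, using $d\ge 155$ and the ``failure propagates upward'' clause of Proposition~\ref{tools}\ref{crude}, leaves only a short finite list of $(d,r)$ to examine --- one expects $r=3$ and $d$ close to $155$ (the Weil-type module of $\mathrm{Sp}_{10}(2)$ coming from $O^{\pm}_{10}(2)$), possibly $d=156$, together with perhaps one or two further small-dimensional modules. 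I would pin down this list from \cite{HM}, also invoking Proposition~\ref{d2symp} (or the analogous degree bounds for $\mathrm{Sp}_{2n}(2)$) to rule out intermediate dimensions.

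Next I would handle the surviving module(s) individually. For the $155$- or $156$-dimensional module $V=V_d(3)$, I would refine the bound on $\emax(x)$ for $x$ of projective prime order: for semisimple $x$ (orders $3,5,7,11,17,31$) I would use the Brauer character, inferred from the ordinary Weil character of $\mathrm{Sp}_{10}(2)$ via \cite[Theorem 4.8]{MR332945}-type formulae (or directly from the permutation character on the natural module), to bound eigenspace dimensions; for $x$ of order $3$ (the characteristic-dividing case) I would restrict $V$ to a suitable subgroup $K$ containing a Sylow $3$-subgroup --- e.g.\ $\mathrm{Sp}_4(2)\times\mathrm{Sp}_6(2)$ or $\mathrm{Sp}_2(2)\times\mathrm{Sp}_8(2)$, or an $\mathrm{O}^\pm$-type subgroup --- and apply Propositions~\ref{compfactors} and~\ref{tensorcodim} to get a bound strictly below $d-d/11$. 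Then Proposition~\ref{tools}\ref{qsgood} or \ref{eigsp1}, fed with the refined eigenspace dimensions and the conjugacy class sizes (computed in GAP \cite{GAP4}), should give $r^d$ strictly larger than the right-hand side, forcing a regular orbit; for any residual module where $|V|>|G|$ fails or the inequality is tight, I would construct $V$ explicitly in GAP/Magma and either exhibit a regular-orbit representative or compute $b(G)$ directly.

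The main obstacle I anticipate is the unipotent ($p=3$) case on the smallest module: transvections have $\alpha=10$ or $11$, hence a crude eigenspace bound of roughly $d-d/11\approx 141$ on a $155$-dimensional module, and the count $1023$ of transvections times $r^{141}$ is not automatically beaten by $r^{155}$ for $r=3$ without a sharper eigenspace estimate. Resolving this requires either a genuine computation of $\dim C_V(t)$ for a transvection $t$ (which for Weil-type modules of $\mathrm{Sp}_{2n}(2)$ is known to be around $d/2+O(1)$, comfortably small), or restricting to a reducing subgroup and using Proposition~\ref{compfactors}; I would quote the former as a short GAP construction. Everything else --- the other prime orders, and the arithmetic of $|\mathrm{PSp}_{10}(2)|$ against $3^{155}$ --- is comfortably in the safe range, so the proof reduces to: (i) extract the finite list from Proposition~\ref{tools}\ref{crude}; (ii) sharpen $\emax$ for the handful of survivors, with the transvection bound the one genuinely needing care; (iii) conclude via Proposition~\ref{tools}\ref{qsgood}, falling back to direct computation in GAP \cite{GAP4} or Magma \cite{Magma} only if a case remains.
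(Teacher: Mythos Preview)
Your approach is correct in outline and uses the same basic machinery (Proposition~\ref{tools}\ref{crude}), but the paper's proof is considerably shorter. Rather than applying the uniform bound $\alpha(x)\le 11$ and then handling surviving modules one at a time, the paper first computes in GAP refined bounds on $\alpha$ by prime order: $\alpha(x)\le 2$ for $o(x)\in\{7,11,17,31\}$, $\alpha(x)\le 3$ for $o(x)=5$, $\alpha(x)\le 6$ for $o(x)=3$ and for non-transvection involutions, with $\alpha(x)\le 11$ only for the $1023$ transvections. Feeding these refined $\alpha$-values and the exact prime-order element counts $i_{r_0}(H)$ into a single application of Proposition~\ref{tools}\ref{crude} makes the inequality fail already at $d=d_1=155$ for every $r\ge 3$, so no residual modules survive and no module-by-module eigenspace or restriction analysis is ever needed. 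In particular, your anticipated ``transvection obstacle'' evaporates once the contribution of all non-transvections is controlled by $r^{\lfloor 5d/6\rfloor}$ rather than $r^{\lfloor 10d/11\rfloor}$. (Minor arithmetic: $|\mathrm{PSp}_{10}(2)|\approx 2^{54.5}$, not $<2^{50}$; this makes the list of survivors under your cruder inequality larger than you estimated, but does not affect the correctness of your strategy.)
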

\begin{proof}
First let $E(G)/Z(E(G))\cong \mathrm{PSp}_{10}(2)$. We include information about elements of prime order in $H=G/F(G)$ in Table \ref{s102-info}, where the upper bounds of $\alpha(x)$ are computed using GAP.

\begin{table}[h!]
\begin{tabular}{@{}ccc@{}}
\toprule
$r_0$ & $i_{r_0}(H)$ & $\alpha(x) \leq $ \\ \midrule
2 & 1424671743 & 11 (transvection), 6 (o/w) \\
3 & 107929774592 & 6 \\
5 & 13789672833024 & 3 \\
7 & 4923662008320 & 2 \\
11 & 751977470361600 & 2 \\
17 & 486573657292800 & 2 \\
31 & 2401476437606400 & 2 \\ \bottomrule
\end{tabular}
\caption{Information about elements of prime order in $H=\mathrm{PSp}_{10}(2)$.}
\label{s102-info}
\end{table}

Therefore, if $G$ has no regular orbit on $V$, then by Proposition \ref{tools}\ref{crude},
\[
\frac{1}{2} r^d \leq (i_{7}(H)+i_{11}(H)+i_{17}(H)+i_{31}(H))r^{\floor{d/2}}+i_5(H)r^{\floor{2d/3}}+i_3(H)r^{\floor{5d/6}}+ i_2(H)r^{\floor{5d/6}}+ 1023r^{\floor{10d/11}}
\]
This is false for all $V$, so $G$ has a regular orbit in all cases.
\end{proof}

\begin{proposition}
Theorem \ref{mainsymp} holds for $E(G)/Z(E(G)) \cong \mathrm{PSp}_{8}(2)$.
\end{proposition}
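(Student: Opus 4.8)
The plan is to follow the same template used throughout this section for the other $\mathrm{PSp}_n(2)$ cases. First I would gather the arithmetic data for $H = G/F(G)$, which sits between $\mathrm{PSp}_8(2)$ and $\mathrm{Aut}(\mathrm{PSp}_8(2)) = \mathrm{PSp}_8(2)$ (this group has trivial outer automorphism group, so $H = \mathrm{PSp}_8(2)$ and $F(G)$ is just the scalars); using GAP I would tabulate $i_{r_0}(H)$ for each prime $r_0$ dividing $|H|$, together with the refined bounds on $\alpha(x)$ coming from Propositions~\ref{alphas} and~\ref{sympalphas} (in particular $\alpha(x)\le 6$ in general, $\alpha(x)=9$ for a transvection, and $\alpha(x)=2$ or $3$ for the semisimple elements of larger order). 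I would record this in a table analogous to Table~\ref{s102-info}.

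Next I would feed these into Proposition~\ref{tools}\ref{crude}: if $G$ has no regular orbit on $V=V_d(r)$ then
\[
\tfrac12 r^d \le \sum_{r_0} i_{r_0}(H)\, r^{\lfloor (1-1/\alpha_{r_0})d\rfloor},
\]
with the transvection term carrying exponent $\lfloor 8d/9\rfloor$. Using $d_1(E(G)) = (q^{n/2}-1)(q^{n/2}-q)/2(q+1) = 51$ for $(n,q)=(8,2)$ from Proposition~\ref{mindegprop}, and the monotonicity clause of Proposition~\ref{tools}\ref{crude}, this inequality should fail for all $d$ above some small threshold, leaving only a short explicit list of $(d,r)$ to examine — by inspection of \cite{HM} I expect this to be the Weil-type modules of small dimension (dimensions around $35$, $51$, $84$) over the first few admissible fields, plus possibly the $\mathrm{Sp}_8(2)$-modules coming from $\mathrm{O}_8^\pm(2)$-type constructions.

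For each surviving pair I would then sharpen the eigenspace bounds: for semisimple elements of $r_0'$-order I would read off $\emax(g_{r_0'})$ from the Brauer character in \cite{modatlas} (or, for the $\mathrm{Sp}_8(2)$ Weil-type modules, from Propositions~\ref{speqn}, \ref{itoeig}, \ref{intchar}); for unipotent elements I would either use Proposition~\ref{tools}\ref{alphabound} with a GAP-refined $\alpha(x)$, or restrict $V$ to a geometric subgroup containing a Sylow $2$-subgroup (such as $\mathrm{Sp}_6(2)\times\mathrm{Sp}_2(2)$ or $\mathrm{Sp}_4(2)\wr 2$) and apply Propositions~\ref{compfactors} and~\ref{tensorcodim}. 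Plugging these into Proposition~\ref{tools}\ref{eigsp1} or \ref{qsgood} should dispatch every remaining module, with any genuinely borderline case (where $|V|$ is comparable to or smaller than $|G|$) handled by an explicit GAP/Magma base-size computation and, if needed, Lemma~\ref{fieldext} to pass between fields. I expect the only exceptional outcome to be the small module $V_7(3)$ (the $7$-dimensional module over $\mathbb{F}_3$, on which $2\times\mathrm{PSp}_6(2)$ already misbehaves at rank $6$ — but $\mathrm{PSp}_8(2)$ has no $7$-dimensional module, so more likely there are no exceptions here at all, or at most entries landing in Table~\ref{allbad}).

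The main obstacle will be the largest surviving modules, where the crude bound is only barely violated: there the generic $\alpha(x)\le 6$ bound gives an exponent $\lfloor 5d/6\rfloor$ that is too weak against the enormous semisimple conjugacy classes, so success hinges on getting genuinely good eigenspace bounds for unipotent elements via the subgroup-restriction method, which requires correctly identifying the composition factors of $V$ restricted to a suitable subgroup. If a direct construction of $V$ in GAP is feasible at that dimension, that sidesteps the difficulty; otherwise the character-theoretic restriction argument is the delicate step.
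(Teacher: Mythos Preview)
Your plan follows the paper's template closely, and the paper's proof indeed proceeds exactly as you describe: a crude inequality via Propositions~\ref{sympalphas} and~\ref{tools}\ref{crude} reduces to a short list of modules (of dimensions $35$, $50$, $51$, $85$, $118$, $135$ over small fields), whose eigenspace bounds are then read off from Brauer characters and GAP constructions and plugged into Proposition~\ref{tools}\ref{qsgood}. The outcome is as you guessed: no exceptions arise for $\mathrm{PSp}_8(2)$.

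Two points to correct. First, your computation of $d_1(E(G))$ is off: for $\mathrm{PSp}_8(2)$ the formula gives $(2^4-1)(2^4-2)/(2\cdot 3)=35$, not $51$, so the $35$-dimensional module (for $r$ up to about $156$) is the one requiring the most care, and indeed the paper spells out that case in detail. Second, Propositions~\ref{speqn} and~\ref{itoeig} concern Weil modules for $\mathrm{Sp}_{2m}(q)$ with $q$ \emph{odd}; there is no Weil-module machinery available here since $q=2$, so that part of your toolkit does not apply. The paper gets all the needed eigenspace bounds directly from the Brauer tables in \cite{modatlas} and from explicit GAP constructions, without any subgroup-restriction arguments.
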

\begin{proof}
Suppose that $E(G)/Z(E(G))\cong \mathrm{PSp}_{8}(2)$. There are 255 transvections in $G/F(G)$.  
By Propositions \ref{sympalphas} and \ref{tools}\ref{crude}, we only need to further consider modules listed in Table \ref{s82-cases}, which was compiled by inspecting \cite{HM}.
\begin{table}[h!]
\begin{tabular}{ccc}
\toprule
Dimension & Characteristic & $r \leq $ \\ 
\midrule
35  & $\neq 2$ & 156 \\ 
50  & 3 & 9 \\ 
51 & $\neq 2,3$ & 17 \\ 
85  & $\neq 2,3$ & 5 \\ 
118  & 3,5 & 3 \\ 
135 & $\neq 2,17$ & 3 \\ 
\bottomrule
\end{tabular} 
\caption{Absolutely irreducible $\mathrm{PSp}_8(2)$-modules to consider. \label{s82-cases}}
\end{table}

We summarise our analysis of the relevant modules $V=V_d(r)$ with dimension greater than 35 in Table \ref{s82-case-analysis}. Entries where $o(x) \nmid r$ were derived from the Brauer table if the entry is bold, and from Proposition \ref{tools}\ref{alphabound} otherwise. Entries where $o(x) \mid r$ were derived from Proposition \ref{tools}\ref{alphabound} if $d \geq 85$, and from a construction in GAP otherwise. Applying Proposition \ref{tools}\ref{qsgood}, we see that $G$ has a regular orbit on $V$ in all cases.

\begin{table}[h!]
\begin{tabular}{cccccccc}
\toprule
 & $o(x)=2$ & Transvections & $o(x) = 3$ & $o(x)=5$ & $o(x)=7$ & $o(x)=17$ \\ 
\midrule
\# elements & 1371135 & 255 & 16461440 & 171085824 & 1128038400 & 5573836800 \\ 
$V_{135}(3)$ & 115 & 120 & 115 & \textbf{31} & \textbf{31} & \textbf{31} \\  
$V_{118}(3)$ & 101 & 104 & 101 & \textbf{30 }&\textbf{ 30 }& \textbf{30} \\  
$V_{85}(5)$ & \textbf{64} & \textbf{64} & \textbf{43 }& 72 & \textbf{43 }& \textbf{43} \\  
$V_{51}(r)$, $r\in \{5,7,17\}$ & \textbf{36} &\textbf{ 36} & \textbf{21} & $\bm{11}$ & $\bm{9}$ & $\bm{3}$ \\  
$V_{50}(r)$ $r=3,9$ & \textbf{36} &\textbf{ 36} & 22 & \textbf{10} &\textbf{8} & \textbf{3} \\ 
\bottomrule
\end{tabular} 
\caption{Upper bounds on $\emax(g)$ for prime order $g$ acting on some absolutely irreducible modules of $\mathrm{PSp}_8(2)$.\label{s82-case-analysis}}
\end{table}
Finally, suppose that $V =  V_{35}(r)$ for $r \leq 156$. We will give the details here for this module in characteristic 3, the other cases are similar. Constructing the module in GAP, we find that elements in classes $3A, 3B, 3C, 3D$ have fixed point spaces of dimensions $21,15,13$ and 13 respectively. This, along with the Brauer character of the module allows us to deduce that if $G$ has no regular orbit on $V$, then 
\begin{align*}
r^{35} \leq & |2A|(r^{28}+r^7)+(|2B|+|2F|)(r^{19}+r^{16})+|2C|(r^{23}+r^{12})+(|2D|+|2E|)(r^{20}+r^{15})+|7A|(7r^5)\\
& + |5A|(r^{11}+4r^6)+|5B|(5r^7)+(|17A|+|17B|)(r^3+16r^2)+|3A|r^{21}+|3B|r^{15}+(|3C|+|3D|)r^{13}
\end{align*}
and this is false for $r\geq 3$.
\end{proof}

\begin{proposition}
Theorem \ref{mainsymp} holds for $E(G)/Z(E(G)) \cong \mathrm{PSp}_{6}(2)$.
\end{proposition}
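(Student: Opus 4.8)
The plan is to follow the now-familiar template used throughout this section, specialised to $E(G)/Z(E(G)) \cong \mathrm{PSp}_6(2)$. First I would record the relevant arithmetic data for $H = \mathrm{PSp}_6(2)$ (which has trivial outer automorphism group): the number $i_{r_0}(H)$ of elements of each prime order $r_0 \in \{2,3,5,7\}$, together with sharp upper bounds on $\alpha(x)$ obtained from GAP, noting that $\mathrm{PSp}_6(2)$ has a transvection class with $\alpha(x) = 6$ or $7$ and that $\alpha(x) \leq 3$ for most other classes (using Propositions \ref{sympalphas} and \ref{alphas}). Feeding these into Proposition \ref{tools}\ref{crude} via an inequality of the shape
\[
\tfrac{1}{2} r^d \leq (i_5(H) + i_7(H))r^{\lfloor d/2 \rfloor} + i_3(H) r^{\lfloor 2d/3\rfloor} + (\text{involution terms}) r^{\lfloor 2d/3 \rfloor} + (\text{transvections}) r^{\lfloor 6d/7 \rfloor},
\]
and substituting the list of dimensions of absolutely irreducible cross-characteristic modules for $2.\mathrm{PSp}_6(2)$ and $\mathrm{PSp}_6(2)$ from \cite{HM} (recalling $d_1 = 7$ by Proposition \ref{mindegprop}), I would cut the problem down to a short explicit list of pairs $(d,r)$ to examine by hand — I expect these to be the $7$-, $8$-, $14$-dimensional modules (and possibly a $15$-, $21$-, or $35$-dimensional module) for small $r$.

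Next I would treat each surviving $(d,r)$ individually, in decreasing order of dimension. For the larger modules I would use the Brauer character from \cite{modatlas} to bound $\emax(g_{r_0'})$ for semisimple prime order elements, combine this with $\alpha$-based or explicit GAP bounds on $\dim C_V(g)$ for unipotent elements of order dividing $r$, and apply Proposition \ref{tools}\ref{eigsp1} or \ref{qsgood} to conclude the existence of a regular orbit; I would tabulate the eigenspace dimensions, as in Tables \ref{s82-case-analysis} and \ref{s63-case-analysis2}. The genuinely delicate cases are the small ones where $|V| < |G|$: for $(d,r) = (7,3)$ with $G = \mathrm{PSp}_6(2)$ or $2 \times \mathrm{PSp}_6(2)$, for $(d,r) = (8,3)$ with $G = 2.\mathrm{PSp}_6(2)$, for $(d,r) = (7,9)$, $(7,11)$, $(7,13)$, $(7,17)$, $(7,19)$, and for $(d,r) = (8,5), (8,7), (8,9)$, which the statement of Theorem \ref{mainthm} and Tables \ref{allbad}, \ref{noros} flag as having $b(G) > 1$. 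For these I would construct the module explicitly in GAP (using generators from the online ATLAS \cite{onlineATLAS}) or in Magma and compute the base size directly, using Lemma \ref{fieldext} to transfer base-size bounds up field extensions where $|V| < |G|$ forces $b(G) \geq 2$; the exceptional entry $(G,V) = (2\times \mathrm{PSp}_6(2), V_7(3))$ with $b(G) = 4 = \lceil \log|G|/\log|V|\rceil + 2$ would be verified by direct computation.

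The main obstacle is the $7$-dimensional Weil module of $\mathrm{PSp}_6(2) \cong \mathrm{Sp}_6(2)$ in characteristic $3$ (and the closely related $V_8(3)$ for $2.\mathrm{PSp}_6(2)$): here $|V|$ is small relative to $|G|$, Proposition \ref{tools} cannot give a regular orbit, and the subtle value $b(2\times \mathrm{PSp}_6(2), V_7(3)) = 4$ must be pinned down computationally, which means the argument relies essentially on the explicit base-size computation rather than on the inequalities. A secondary point requiring care is bookkeeping of which modules extend to which overgroups $c \times \mathrm{PSp}_6(2)$ (for $c$ dividing $r-1$) and $c \circ (2.\mathrm{PSp}_6(2))$, and of the precise value of $\lceil \log|G|/\log|V|\rceil$ in each case, so that the final classification matches the rows of Tables \ref{allbad} and \ref{noros}; I would resolve this by consulting the Brauer character tables in \cite{modatlas} to determine the relevant centres and field-of-definition constraints, exactly as in the $\mathrm{PSp}_4(q)$ cases above.
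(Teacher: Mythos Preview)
Your proposal is correct and follows essentially the same approach as the paper: a crude inequality from Proposition \ref{tools}\ref{crude} and Proposition \ref{sympalphas} reduces to a finite list of $(d,r)$, then Brauer character and GAP eigenspace data feed into Proposition \ref{tools}\ref{qsgood}/\ref{eigsp1} for the larger modules, with the smallest modules ($d=7,8,14$ for small $r$) handled by direct construction in GAP/Magma, exactly as you describe.

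One correction of scope: the hypothesis of Theorem \ref{mainsymp} includes $(r,|G|)>1$, and since $|\mathrm{PSp}_6(2)| = 2^9\cdot 3^4\cdot 5\cdot 7$, the only cross-characteristic field characteristics in play are $3,5,7$. The cases $(d,r)=(7,11),(7,13),(7,17),(7,19)$ that you list are coprime cases, already covered by \cite{MR1829482} (they appear in Table \ref{allbad} with the $\dag$ marker for precisely this reason) and are not part of what this proposition must establish. Dropping them from your to-do list shortens the computation and matches the paper's Table \ref{s62ros}, which only records base sizes for $r\in\{3,5,7,9\}$.
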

\begin{proof}
We first compute that the number of prime order elements in $G/F(G)$ is at most $277199$. Therefore, by Propositions \ref{sympalphas} and \ref{tools}\ref{crude}, if $G$ has no regular orbit on $V=V_d(r)$,
\[
r^d \leq 2\times 277199 r^{\floor{ \frac{5d}{6}}} + 63 r^{\floor{ \frac{6d}{7}}} 
\]
After inspecting \cite{HM}, we find that we only need to further consider the absolutely irreducible modules that lie in Table \ref{s62-case-analysis} for $d\geq 21$, and Table \ref{s62smallcase} for $d<21$.

In both tables we also present information about eigenspaces of elements of projective prime order on these remaining modules. In Table \ref{s62-case-analysis}, upper bounds on $\emax$ for elements of projective prime order are given. The boldface entries were obtained from the corresponding Brauer characters, while the italicised entries were obtained from explicit constructions in GAP. The remainder were obtained from Proposition \ref{tools} \ref{alphabound}. 
In Table \ref{s62smallcase}, we give explicit eigenspace dimensions computed from the corresponding Brauer characters, and for elements whose order divides $r$, we obtain the dimension of their fixed point spaces from an explicit construction in GAP. Applying Proposition \ref{tools}\ref{qsgood} in each case, we see that $G$ has a regular orbit on $V$ except for the cases given in Table \ref{s62ros}, where the base size is computed from an explicit construction of the module in either GAP or Magma.  
This concludes our analysis of almost quasisimple $G$ with $E(G)/Z(E(G))\cong \mathrm{PSp}_{6}(2)$.
\end{proof}

{\small
	\begin{table}[h!]
		\centering 
		\begin{tabular}{@{}ccc@{}}
			\toprule
			& $\left\lceil \frac{\log |G|}{\log |V|} \right\rceil$ & $b(G)$ \\ \midrule
			$V_{14}(3)$ & 1 & 2 \\
			$V_8(3)$ & 2 & 3 \\
			$V_8(9)$ & 1 & 2 \\
			$V_8(5)$ & 2 & 2 \\
			$V_8(7)$ & 1 & 2 \\
			\bottomrule
		\end{tabular}
		\quad
		\begin{tabular}{@{}ccc@{}}
			\toprule
			& $\left\lceil \frac{\log |G|}{\log |V|} \right\rceil$ & $b(G)$ \\ \midrule
			$V_7(9)$ & 1 & 2 \\
			$V_7(5)$ & 2 & 2 \\
			$V_7(3)$ & 2 & 3 \\
			$V_7(3)$ & 2 & 4 \\
			&&\\
			\bottomrule
		\end{tabular}
		\caption{Base sizes for small absolutely irreducible modules for  $\mathrm{PSp}_6(2)$.}
		\label{s62ros}
	\end{table}
}
	
	\begin{table}[h!]
		\centering \begin{tabular}{lcccccc}
			\toprule
			$E(G)$ & Module $V$ & $o(x)=2$ & Transvections & $o(x)=3$ & $o(x)=5$ & $o(x)=7$ \\
			\midrule
			$2.\mathrm{PSp}_6(2)$ & $V_{56}(3)$ & \textbf{28} & \textbf{28} & 46 & \textbf{12} & \textbf{8} \\
			$\mathrm{PSp}_6(2)$ & $V_{49}(3)$ & \textbf{28} & \textbf{34} & 40 & \textbf{10} & \textbf{7} \\
			$2.\mathrm{PSp}_6(2)$ & $V_{48}(3)$ & \textbf{24} & \textbf{24} & \textit{20} & \textbf{10} & \textbf{7} \\
			$2.\mathrm{PSp}_6(2)$ & $V_{48}(5)$ & \textbf{24} & \textbf{24} & \textbf{20} & 40 & \textbf{7} \\
			$\mathrm{PSp}_6(2)$ & $V_{35}(7)$ & \textbf{23} & \textbf{25} & \textbf{15} & \textbf{7} & 28 \\
			$\mathrm{PSp}_6(2)$ & $V_{35}(5)$ & \textbf{23} & \textbf{25} & \textbf{15} & 17 & \textbf{5} \\
			$\mathrm{PSp}_6(2)$ & $V_{35}(r)$, $r=3,9$ & \textbf{20} & \textbf{20} & \textit{15} & \textbf{7} & \textbf{5} \\
			$\mathrm{PSp}_6(2)$ & $V_{34}(r)$, $r=3,9$ & \textbf{22} & \textbf{24} & \textit{14} & \textbf{7} & \textbf{5} \\
			$\mathrm{PSp}_6(2)$ & $V_{27}(5)$ & \textbf{17} & \textbf{21} & \textbf{15} & \textit{7} & \textbf{4} \\
			$\mathrm{PSp}_6(2)$ & $V_{27}(r)$, $r=3,9$ & \textbf{17} & \textbf{21} & \textit{15} & \textbf{7} & \textbf{4} \\
			$\mathrm{PSp}_6(2)$ & $V_{26}(7)$ & \textbf{16} & \textbf{20} & \textbf{14} & \textbf{6} & 13 \\
			$\mathrm{PSp}_6(2)$ & $V_{21}(7)$ & \textbf{13} & \textbf{16} & \textbf{11} & \textbf{5} & \textit{3} \\
			$\mathrm{PSp}_6(2)$ & $V_{21}(5^k)$, $k\in \{1,2\}$ & \textbf{13} & \textbf{16} & \textbf{11} & \textit{5} & \textbf{3} \\
			$\mathrm{PSp}_6(2)$ & $V_{21}(3^k)$ & \textbf{12} & \textbf{15} & \textit{11} & \textbf{5} & \textbf{3} \\
			\midrule
			\# elements &  & 5040 & 63 & 16352 & 48384 & 207360\\
			\bottomrule
		\end{tabular}
		\caption{Upper bounds on $\emax$ of projectively prime order elements in $\mathrm{PSp}_6(2)$ for remaining modules with $d \geq 21$.}
		\label{s62-case-analysis}
	\end{table}

\begin{table}[h!]
\bgroup
\def\arraystretch{1.5}
\begin{tabular}{@{}c@{\hskip 0.5cm}c@{\hskip 0.5cm}c@{\hskip 0.5cm}c@{\hskip 0.5cm}c@{\hskip 0.5cm}c@{\hskip 0.5cm}c@{\hskip 0.5cm}c@{\hskip 0.5cm}c@{\hskip 0.5cm}c@{}}
\toprule
              & 2A      & 2B      & 2C      & 2D      & 3A         & 3B         & 3C         & 5A         & 7A         \\ \midrule
Size          & 63      & 315     & 945     & 3780    & 672        & 2240       & 13440      & 48384      & 207360     \\
$V_{15}(5^k)$ & (10,5)  & (11,4)  & (9,6)   & (8,7)   & $(5^3)$    & $(6^2,3)$  & $(7,4^2)$  & \textit{7} & $(3,2^6)$  \\
$V_{15}(7^k)$ & (10,5)  & (11,4)  & (9,6)   & (8,7)   & $(5^3)$    & $(6^2,3)$  & $(7,4^2)$  & $(3^5)$    & \textit{7} \\
$V_{14}(3^k)$ & (10,4)  & (10,4)  & (8,6)   & (8,6)   & \textit{6} & \textit{6} & \textit{6} & $(3^4,2)$  & $(2^7)$    \\
$V_{8}(3^k)$  & $(2^4)$ & $(4^2)$ & $(2^4)$ & $(4^2)$ & \textit{4} & \textit{4} & \textit{4} & $(2^4)$    & $(2,1^6)$  \\
$V_{8}(5^k)$  & $(2^4)$ & $(4^2)$ & $(2^4)$ & $(4^2)$ & $(4^2)$    & $(3^2,2)$  & $(4,2^2)$  & \textit{2} & $(2,1^6)$  \\
$V_{8}(7^k)$  & $(2^4)$ & $(4^2)$ & $(2^4)$ & $(4^2)$ & $(4^2)$    & $(3^2,2)$  & $(4,2^2)$  & $(2^4)$    & \textit{2} \\
$V_{7}(3^k)$  & $(6,1)$ & $(4,3)$ & $(5,2)$ & $(4,3)$ & \textit{5} & \textit{3} & \textit{3} & $(3,1^4)$  & $(1^7)$    \\
$V_{7}(5^k)$  & $(6,1)$ & $(4,3)$ & $(5,2)$ & $(4,3)$ & $(5,1^2)$  & $(3^2,1)$  & $(3,2^2)$  & 3& $(1^7)$    \\
$V_{7}(7^k)$  & $(6,1)$ & $(4,3)$ & $(5,2)$ & $(4,3)$ & $(5,1^2)$  & $(3^2,1)$  & $(3,2^2)$  & $(3,1^4)$  & \textit{1} \\ \bottomrule
\end{tabular}
\egroup
\caption{Eigenspace dimensions of projectively prime order elements for $E(G)/Z(E(G))\cong \mathrm{PSp}_{6}(2)$.}
\label{s62smallcase}
\end{table}

\begin{proposition}
Theorem \ref{mainsymp} holds for $E(G)/Z(E(G)) \cong \mathrm{PSp}_{4}(4)$.
\end{proposition}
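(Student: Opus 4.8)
\textbf{Proof proposal for the case $E(G)/Z(E(G)) \cong \mathrm{PSp}_4(4)$.}

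The plan is to follow the same general strategy used throughout Section 6: first apply the crude counting bound of Proposition \ref{tools}\ref{crude} to reduce to a short list of modules, then treat each surviving module individually using better eigenspace bounds. For $\mathrm{PSp}_4(4)$ we have $\alpha(G) \leq n+1 = 5$ by Proposition \ref{alphas} (case (ii)), except that transvections may have $\alpha(x) = n+1 = 5$ as well — in fact both cases coincide here, so $\alpha(x) \leq 5$ for all projective prime order $x$. By Proposition \ref{mindegprop} we have $d_1(E(G)) \geq (q^{n/2}-1)(q^{n/2}-q)/2(q+1) = (4^2-1)(4^2-4)/(2\cdot 5) = 15\cdot 12/10 = 18$. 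First I would compute (in GAP) the number of prime order elements of $H = G/F(G) \leq \mathrm{P}\Gamma\mathrm{Sp}_4(4).2 = \mathrm{Sp}_4(4).4$, noting that $|\mathrm{Aut}(\mathrm{PSp}_4(4))| = |\mathrm{Sp}_4(4)|\cdot 4$ and that there is an exceptional graph automorphism here; the number of transvections in $G/F(G)$ is at most $q^4 - 1 = 255$. Substituting $d_1 = 18$, $\alpha = 5$ into
\[
r^d \leq 2\,|\mathrm{P}\Gamma\mathrm{Sp}_4(4).2|\,r^{\lfloor 4d/5\rfloor} + 2(q^4-1)r^{\lfloor 4d/5\rfloor},
\]
the inequality should fail for all but a handful of small $(d,r)$, leaving (after consulting \cite{HM}) a short list: the $18$-dimensional Weil-type modules and possibly a few others of dimension up to roughly $50$, for small $r$ coprime to $2$.

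For each surviving module I would bound $\emax(g)$ for elements of projective prime order, distinguishing semisimple elements (using the Brauer character in \cite{modatlas}, or inferring Brauer values from ordinary characters of $\mathrm{Sp}_4(4)$ computed in Magma, together with Propositions \ref{intchar} and \ref{itoeig} where applicable) from unipotent elements of order $r_0 = \mathrm{char}(\mathbb{F}_r)$ (bounded via Proposition \ref{tools}\ref{alphabound}, sharpened by an explicit construction in GAP, or by restricting $V$ to a subgroup containing a Sylow $r_0$-subgroup and applying Proposition \ref{compfactors}). Then I would apply Proposition \ref{tools}\ref{qsgood} or \ref{eigsp1}: for each module this produces a polynomial inequality in $r$ which I expect to fail whenever $|V| > |G|$, establishing a regular orbit. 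For the genuinely small modules — most likely the $18$-dimensional modules over $\mathbb{F}_r$ with $r \in \{3, 5\}$ or so, where $|V| < |G|$ — I would instead construct the module directly in GAP or Magma from the generators in \cite{onlineATLAS} and compute the base size, recording any pair $(G,V)$ with $b(G) = \lceil \log|G|/\log|V|\rceil + 1$ in Table \ref{allbad}. (Note $|\mathrm{PSp}_4(4)| = 979200 < 3^{18}$, so for $r = 3$ there may well be a regular orbit anyway; the truly problematic case, if any, is $r = 3$ with a small extra central or field factor.)

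The main obstacle I anticipate is that $\mathrm{PSp}_4(4)$ has relatively few classes of large-eigenspace elements but a fairly large automorphism group (including field and graph-field automorphisms of orders $2$ and $4$, hence up to $\mathrm{Aut} = \mathrm{Sp}_4(4).4$), so the crude bound may not immediately kill the $18$- and $19$-dimensional modules; I would need the sharper Proposition \ref{tools}\ref{qsgood} with carefully computed transvection and involution eigenspace dimensions. A secondary subtlety is handling the graph-field automorphisms of order $2$ (which swap the two Weil-type $18$-dimensional modules and may not preserve a given module, restricting which $G$ need be considered over each $\mathbb{F}_r$) — one must check the Brauer character tables to see which modules extend to which overgroups of $E(G)$, exactly as in the asterisked cases elsewhere in the paper. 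Once the inequality has been pushed through for $d \geq 19$ and the handful of $18$-dimensional cases resolved computationally, the proof is complete, and together with the preceding propositions this finishes the proof of Theorem \ref{mainsymp}.
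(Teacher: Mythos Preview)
Your proposal is correct and follows essentially the same route as the paper: the crude bound, using the exact count of $299199$ prime-order elements in $\mathrm{Sp}_4(4).4$, reduces to the modules $V_{50}(3)$, $V_{34}(3)$, $V_{33}(5)$, $V_{18}(3^k)$ ($k\le 3$), $V_{18}(5^k)$ ($k\le 2$), $V_{18}(7)$, and Brauer-character eigenspace bounds together with Proposition \ref{tools}\ref{qsgood}/\ref{eigsp1} dispose of all of these. The only wrinkle you do not quite anticipate is $V_{18}(3)$, where the paper first computes in GAP that $\mathrm{PSp}_4(4)$ has $14$ regular orbits and then uses a short counting argument over class $2D$ to extend the regular-orbit conclusion to $\mathrm{PSp}_4(4).4$; your plan of direct base-size computation would work equally well there, and no cases end up in Table \ref{allbad}.
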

\begin{proof}
The number of prime order elements in $\mathrm{PSp}_{4}(4).4$ is 299199. Therefore, if $G$ has no regular orbit on the irreducible module $V=V_d(r)$,
\[
r^d\leq 2\times 299199r^{\floor{4d/5}}
\]
and this is false for all choices of $d$, $r$ except for those listed in Table \ref{s44-case-analysis}. In this table, we also provide a set of upper bounds for $\emax$ for elements of prime order. As before, the bold entries are obtained from the corresponding Brauer characters, the italicised entries are obtained from a construction in GAP, and the remainder originate from Proposition \ref{tools}\ref{alphabound}. Applying Proposition \ref{tools}\ref{qsgood} or \ref{eigsp1} in each case, we deduce the existence of a regular orbit for all choices of $G$ preserving each module $V$. The only exception is $V = V_{18}(3)$. Here we compute in GAP that $\mathrm{PSp}_4(4)$ has 14 regular orbits on $V$. If $\mathrm{PSp}_4(4).4$ has no regular orbit on $V$,
	then each vector in a regular orbit of $\mathrm{PSp}_4(4)$ must lie in the fixed point spaces of elements in class 2D. Therefore,  $14|\mathrm{PSp}_4(4)| \leq |2D|(3^9)$. This is false, so $\mathrm{PSp}_4(4).4$ has a regular orbit on $V$. 
\begin{table}[]
\begin{tabular}{@{}ccccccc@{}}
\toprule
 & 2A/2B & 2C & 2D & $o(x)=3$ & $o(x)=5$ & $o(x)=17$ \\ \midrule
\# elements & 255 (each) & 3825 & 1360 & 10880 & 52224 & 230400 \\
$V_{50}(3)$ & \textbf{30} & \textbf{30} & \textbf{30} & 37 & \textbf{30} & \textbf{30} \\
$V_{34}(3)$ & \textbf{22} & \textbf{22} & \textbf{22} & 25 & \textbf{10} & \textbf{10} \\
$V_{33}(5)$ & \textbf{21} & \textbf{21} & \textbf{21} & \textbf{13} & \textit{9} & \textbf{13} \\
$V_{18}(3^k)$, $k\leq 3$ & \textbf{12} & \textbf{10} & \textbf{9} & \textit{6} & \textbf{6} & \textbf{6} \\
$V_{18}(5^k)$, $k=1,2$ & \textbf{12} & \textbf{10} & \textbf{9} & \textbf{6} & \textit{6} & \textbf{6} \\
$V_{18}(7)$ & \textbf{12} & \textbf{10} & \textbf{9} & \textbf{6} & \textbf{6} & \textit{2} \\ \bottomrule
\end{tabular}
\caption{Bounds on $\emax$ for prime order elements in $\mathrm{PSp}_4(4)$.}
\label{s44-case-analysis}
\end{table}

\end{proof}

\section{Proof of Theorem \ref{mainthm}: Orthogonal groups}

\subsection{Odd dimensional orthogonal groups}

\begin{theorem}
\label{odddimoprop}
Suppose $G$ is an almost quasisimple group with $E(G)/Z(E(G))\cong \mathrm{P}\Omega_{2m+1}(q)$, with $m\geq 3$ and $q$ odd. Let $V=V_d(r)$, $(r,q)=1$ be a module for $G$, with absolutely irreducible restriction to $E(G)$. Also suppose that $(r, |G|)>1$. Then $G$ has a regular orbit on $V$.
\end{theorem}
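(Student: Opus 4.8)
\textbf{Proof plan for Theorem \ref{odddimoprop}.}

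The plan is to follow exactly the template described at the end of Section \ref{techniques}: obtain a good lower bound on $d_1(E(G))$ and an upper bound on $\alpha(G)$, feed them into Proposition \ref{tools}\ref{crude}, and show that the resulting inequality fails outright, so that the only surviving cases are a short explicit list which can be handled by hand. First I would record that by Proposition \ref{mindegprop}, for $E(G)/Z(E(G)) \cong \mathrm{P}\Omega_{2m+1}(q)$ with $m \geq 3$ and $q$ odd we have $d_1(E(G)) \geq \frac{q^{2m}-1}{q^2-1} - 2$ if $q > 3$, and $d_1(E(G)) \geq \frac{(q^m-1)(q^m-q)}{q^2-1}$ if $q = 3$, with the single exception $(m,q) = (3,3)$ where $d_1 = 27$. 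On the group-theoretic side, Proposition \ref{gsorthog} gives $\alpha(x) \leq m+3$ for all $1 \neq x \in \mathrm{Aut}(E(G)/Z(E(G)))$, except that $q$ odd reflections have $\alpha(x) = 2m+1 = n$. The number of reflections in $G/F(G)$ is bounded by a small polynomial in $q$ (of degree roughly $\dim \bar G - \frac12|\Phi|$, obtainable from \cite[Table B.7]{bg} or directly), and the total number of projective-prime-order elements is at most $|\mathrm{P}\Gamma\mathrm{O}_{2m+1}(q)| < q^{2m^2+m}\cdot e\log_p q$ or similar; similarly $i_2, i_3$ are bounded via Proposition \ref{invols}.

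Then I would substitute $d = d_1(E(G))$ and these bounds into
\[
r^{d} \leq 2\sum_{x \in \mathcal{P}} |\bar x^H| \, r^{\lfloor (1 - 1/\alpha(x))d\rfloor},
\]
splitting the sum into (a) reflections, where $1 - 1/\alpha(x) = 1 - 1/n$, (b) all other elements, where $1 - 1/\alpha(x) \leq 1 - 1/(m+3) \leq 1 - 1/(m+3)$, noting $m+3 < n$ once $m \geq 3$. Since $d_1(E(G))$ grows like $q^{2m-2}$ while $\alpha(G)$ grows only linearly in $m$ and the number of elements grows like $q^{O(m^2)}$, the left side dominates for all but finitely many $(m,q)$; by the monotonicity clause of Proposition \ref{tools}\ref{crude} it suffices to check a bounded range. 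I expect this knocks out everything except a handful of small cases — plausibly only $\mathrm{P}\Omega_7(3)$ (with its exceptional $27$-dimensional module from Proposition \ref{mindegprop} and possibly $V_{78}(r)$, etc.), and perhaps $\mathrm{P}\Omega_7(5)$ in characteristic $2$ or $3$ — the exact list to be read off from \cite{HM}.

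For each surviving pair $(G,V)$ I would argue exactly as in the symplectic sections: if $|G| < |V|$ one applies Proposition \ref{tools}\ref{eigsp1} or \ref{qsgood}, using eigenspace bounds from the Brauer character in \cite{modatlas} for semisimple elements, from Proposition \ref{tools}\ref{alphabound} combined with refined $\alpha$-values computed in GAP for unipotent elements, and possibly from restriction to a geometric subgroup via Proposition \ref{compfactors} (e.g.\ for $\mathrm{P}\Omega_7(3) < \mathrm{P}\Omega_8^+(3)$ or a reductive subgroup containing a Sylow subgroup). In the rare case that the counting inequality still does not close, I would construct the module explicitly in GAP or Magma and exhibit a regular orbit vector directly. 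The main obstacle I anticipate is the $\mathrm{P}\Omega_7(3)$ case and its small-dimensional ($27$-, $78$-dimensional) modules in characteristic $2$: here $d_1$ is small relative to $|G|$, the spinor-type reflections have large fixed spaces, and one may need the subgroup-restriction trick or a direct computation rather than a clean application of the counting bound — this is analogous to the $\mathrm{PSp}_6(3)$ and $\mathrm{PSp}_8(3)$ analyses carried out above.
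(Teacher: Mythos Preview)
Your proposal is correct and follows essentially the same approach as the paper: apply Proposition \ref{tools}\ref{crude} with the $d_1$ bounds from Proposition \ref{mindegprop} and the $\alpha$-bounds from Proposition \ref{gsorthog} (splitting off reflections) to reduce to $\mathrm{P}\Omega_7(3)$ only, then handle the surviving modules there (dimensions $27$, $78$, $90$, $104$ over small fields, as in Table \ref{po73table}) via Brauer characters and GAP constructions feeding into Proposition \ref{tools}\ref{qsgood}. Your caution about $\mathrm{P}\Omega_7(5)$ and about needing subgroup restriction for $\mathrm{P}\Omega_7(3)$ turns out to be unnecessary --- the crude bound already disposes of $q>3$, and straightforward eigenspace bounds from the character table and direct module constructions close every $\mathrm{P}\Omega_7(3)$ case without recourse to Proposition \ref{compfactors}.
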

\begin{proof}
Let $V$ be a $d$-dimensional irreducible $\mathbb{F}_rG$-module.
If  $q>3$, then combining Proposition \ref{tools}\ref{crude} with the bounds $|G/F(G)| <|\mathrm{Aut}(\mathrm{P}\Omega_{2m+1}(q))|<q^{2m^2+m+2}$, $\alpha(G) \leq 2m+1$ from Proposition \ref{gsorthog}, and $d_1(E(G)) \geq (q^{2m}-1)/(q^2-1)-2 $ from Proposition \ref{mindegprop}, we see that $G$ always has a regular orbit on $V$. Similarly Proposition \ref{tools}\ref{crude} also shows that if $q=3$ and $m>3$ with the same bounds on $|G|$ and $\alpha(G)$, and $d_1(E(G)) \geq (q^m-1)(q^m-q)/(q^2-1)$, then $G$ has a regular orbit on $V$.
	
	Finally, suppose $G$ has $\mathrm{soc}(G/F(G))  =\mathrm{P} \Omega_7(3)$. By Proposition \ref{gsorthog}, if $G$ has no regular orbit on $V$, then 
	\begin{equation}
	\label{o73}
	r^d \leq 2\times |\pom_7 (3).2|r^{\floor{\frac{5d}{6}}} +2 i_2(G/F(G))r^{\floor{\frac{6d}{7}}}.
	\end{equation}
	Using \cite{HM} we deduce that $G$ has a regular orbit on $V$ except possibly in one of the cases in Table \ref{po73table}, or if $E(G) = 3.\mathrm{P}\Omega_7(3)$ acting on $V_{27}(4^k)$, which we handle separately.

For conciseness, we also summarise the analysis of all cases except the action of $3.\mathrm{P}\Omega_7(3)$ on $V_{27}(4^k)$ in Table \ref{po73table}. In the table, the bold entries indicate eigenspace bounds computed using the corresponding Brauer character, the italicised entries were computed from a construction in GAP, and the remaining entries originate from determining $\alpha(g)$ for $g$ in the collection using GAP \cite{GAP4}, and subsequently applying Proposition \ref{tools}\ref{alphabound}. Applying Proposition \ref{tools}\ref{qsgood} with the values given in Table \ref{po73table} shows that $G$ has a regular orbit on $V$ in all cases.
{\small
\begin{table}[h!]
\begin{tabular}{ccccccccccc}
\toprule
 & \multicolumn{10}{c}{Projective class}\\
 \midrule
 & 2A & 2B & 2C & 2D & 2E & 2F & $o(x)=3$ & $o(x)=5$ & $o(x)=7$ & $o(x)=13$ \\ 
\midrule
\# elements & 351 & 22113 & 331695 & 378 & 44226 & 265356 & 5307848 & 38211264 & 327525120 & 705438729 \\ 
 $V_{104}(2)$ & 89 & 78 & 78 & 89 & 78 & 78 & \textbf{50} & \textbf{21} &\textbf{ 15 }& \textbf{8} \\ 
$V_{90}(r)$, $r=2,4$ & 77 & 67 & 67 & 77 & 67 & 67 & \textbf{42} & \textbf{18} & \textbf{13} & \textbf{7} \\ 
$V_{78}(r)$, $r=2,4$ & \textit{56} & \textit{56} & \textit{56} & \textit{56} & \textit{56} & \textit{56} & \textbf{36} & \textbf{18} & \textbf{12} & \textbf{6} \\ 
$V_{78}(5)$  & \textbf{56} & \textbf{56} & \textbf{56 }& \textbf{56} & \textbf{56} & \textbf{56} & \textbf{36} & 65 & \textbf{36 }& \textbf{36} \\ 
 $V_{27}(13^k)$  & \textbf{21} & \textbf{17} & \textbf{17}& --&-- & --  & \textbf{15} & \textbf{15} & \textbf{15} & 13 \\ 
$V_{27}(7^k)$  & \textbf{21} & \textbf{17} & \textbf{17 }& --&-- & -- & \textbf{15} & \textbf{15} & 13 & \textbf{15} \\ 
$V_{27}(25^k)$ & \textbf{21} & \textbf{17} & \textbf{17 }& --&-- & -- & \textbf{15} & 18 & \textbf{15 }& \textbf{15} \\ 
\bottomrule
\end{tabular} 
\caption{Sizes and $\emax$ upper bounds of elements of collections of projective prime order elements in $\mathrm{P}\Omega_7(3).2$. \label{po73table}}
\end{table}
}

 Finally, suppose that $V = V_{27}(4^k)$ and recall that $E(G)=3.\mathrm{P}\Omega_7(3)$. Set $H=G/F(G)$. When we construct $V$ in GAP, we find that involutions in classes 2A, 2B and 2C have 21-, 17- and 5-dimensional fixed point spaces respectively. This, along with information from the corresponding Brauer character implies that if $G$ has no regular orbit on $V$,
\begin{align*}
r^{27} \leq & |2A|r^{21}+|2B|r^{17}+|2C|r^{15}+i_3(G)(3r^9)+|3B|(r^{15}+2r^6)+i_5(H)(r^7+4r^5)\\
&+i_7(H)(r^3+6r^4)+\tfrac{13}{12} i_{13}(H)r^3,
\end{align*}
and this is false for $r\geq 4$, so $G$ has a regular orbit on $V$.
\end{proof}
\subsection{Even dimensional orthogonal groups }

\begin{theorem}
\label{evendimoprop}
Suppose $G$ is an almost quasisimple group with $E(G)/Z(E(G))\cong \mathrm{P}\Omega^\epsilon_{2m}(q)$ with $m\geq 4$. Let $V=V_d(r)$, $(r,q)=1$ be an irreducible module for $G$, with absolutely irreducible restriction to $E(G)$. Also suppose that $(r, |G|)>1$. Then one of the following holds.
\begin{enumerate}
\item $b(G)  = \lceil \log |G| / \log |V| \rceil$, or 
\item  $E(G)/Z(E(G)) \cong \pom_8^+(2)$, $d=8$, $r \in \{3,5,25,27\}$ and $b(G)  = \lceil \log |G| / \log |V| \rceil+1$.
\end{enumerate}
\end{theorem}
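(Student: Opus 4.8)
\textbf{Proof proposal for Theorem \ref{evendimoprop}.}
The plan is to follow the now-familiar template: bound $\alpha(G)$ and $d_1(E(G))$, feed these into Proposition \ref{tools}\ref{crude} to reduce to a finite list of pairs $(G,V)$, and then dispatch the remaining cases one at a time. For the generic bound, Proposition \ref{gsorthog} gives $\alpha(x)\le m+3$ for all $x$ except reflections/transvections, which have $\alpha(x)=2m$; the number of these exceptional elements is at most $q^{2m}$ or so by \cite[Table B.7]{bg} (or by direct count), so the transvection term in \eqref{crudeeqn} is governed by $r^{\lfloor(1-1/2m)d\rfloor}$ while the dominant non-transvection term is $r^{\lfloor(1-1/(m+3))d\rfloor}$. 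Using Proposition \ref{mindegprop}, $d_1(E(G))$ is on the order of $q^{2m-2}$ for $\mathrm{P}\Omega^+_{2m}(q)$ ($q>3$) and somewhat larger otherwise, with the small exceptional value $d_1=8$ for $\mathrm{P}\Omega^+_8(2)$. Substituting $d=d_1(E(G))$ into \eqref{crudeeqn} with $|G/F(G)|\le|\mathrm{Aut}(\mathrm{P}\Omega^\epsilon_{2m}(q))|<q^{2m^2}$ (and the extra factor for triality when $\epsilon=+$, $m=4$), one checks the inequality fails except for a short explicit list: essentially $q=2$ with $m$ small, $q=3$ with $m=4$, and a handful of cross-characteristic modules for $\mathrm{P}\Omega^\pm_8(2),\mathrm{P}\Omega^\pm_8(3),\mathrm{P}\Omega^+_{10}(2)$, together with $\mathrm{P}\Omega^-_8(2)\cong$ (a group already treatable) if relevant.

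Next I would split the surviving cases. The genuinely hard case is $E(G)/Z(E(G))\cong\pom_8^+(2)$ with $d=8$ and $r$ cross-characteristic, since this is the only family where a regular orbit genuinely fails: the $8$-dimensional modules here are the three spin/natural modules permuted by triality, and $2.\pom_8^+(2).2$ is (an isoclinic variant of) the Weyl group $W(E_8)$, which is notorious for having no regular orbit on the reflection representation mod small primes. For these I would combine the Brauer character data in \cite{modatlas} (and the isoclinism subtleties flagged in Remark \ref{gcd}(iii)) with explicit computation in GAP/Magma to determine, for each $r\in\{3,5,25,27\}$ and each admissible central extension and outer extension, whether $b(G)=\lceil\log|G|/\log|V|\rceil$ or $\lceil\log|G|/\log|V|\rceil+1$; the precise boundary cases are exactly those already tabulated in Tables \ref{allbad} and \ref{noros}. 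For $r\in\{7,9,11,13,17,19,23\}$ etc.\ one still has $|V|$ large enough that Proposition \ref{tools}\ref{eigsp1} or \ref{qsgood} applies after computing eigenspace dimensions of projective-prime-order elements from the Brauer character; this shows $b(G)=\lceil\log|G|/\log|V|\rceil$ there (these rows appear in Table \ref{noros} with $b(G)=2$). The $8$-dimensional modules in defining characteristic are excluded since $(r,|G|)>1$ forces $p\nmid r$ here anyway; and the $r$ coprime to $|G|$ cases are covered by Remark \ref{gcd}(ii).

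For the remaining groups — $\mathrm{P}\Omega^+_{10}(2)$, $\mathrm{P}\Omega^\pm_8(3)$, $\mathrm{P}\Omega^-_8(2)$, and any stray small module of $\mathrm{P}\Omega^+_{12}(2)$ that survives the crude bound — I would argue that $G$ always has a regular orbit, following the pattern of Theorem \ref{odddimoprop}. Consult \cite{HM} to list the few absolutely irreducible modules below the relevant degree threshold; for each, compute (or bound via Proposition \ref{tools}\ref{alphabound} together with the refined $\alpha$-values from GAP, or via restriction to a suitable subgroup and Proposition \ref{compfactors}) the maximal eigenspace dimensions $\emax(g)$ for elements of projective prime order; then verify that the resulting instance of Proposition \ref{tools}\ref{qsgood} or \ref{eigsp1} fails, yielding a regular orbit. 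Where $|G|>|V|$ there is no regular orbit but one checks directly (or via Lemma \ref{fieldext}) that $b(G)$ still equals $\lceil\log|G|/\log|V|\rceil$, so these land in conclusion (i). The main obstacle, as noted, is the $\pom_8^+(2)$ case: the combination of triality-related module ambiguity, four isoclinic variants of the relevant extensions, and genuinely borderline base sizes means the bookkeeping must be done very carefully and checked computationally, which is presumably why it is singled out in the statement and why the companion paper \cite{lubeck2021orbits} is invoked for the coprime sub-case.
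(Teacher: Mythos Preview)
Your proposal is correct and follows essentially the same approach as the paper: an initial crude bound via Proposition~\ref{tools}\ref{crude} with $\alpha$-bounds from Proposition~\ref{gsorthog} and $d_1$ from Proposition~\ref{mindegprop} reduces to $\pom^\pm_{10}(2)$, $\pom^\pm_8(3)$, $\pom^\pm_8(2)$, and these are then dispatched individually using Brauer characters, refined $\alpha$-values, and explicit computation for the $8$-dimensional $\pom_8^+(2)$ modules. One small correction: for $d=8$ and $r\in\{7,9\}$ you have $|V|<|G|$, so Proposition~\ref{tools}\ref{eigsp1}/\ref{qsgood} cannot give a regular orbit there---these fall into your ``$|G|>|V|$'' clause and are handled by direct computation or Lemma~\ref{fieldext}, exactly as the paper does.
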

By Proposition \ref{mindegprop}, if $(\ep,m,q) \neq  (+,4,2)$, then we have $d_1(E(G)) \geq \frac{(q^m-1)(q^{m-1}-q)}{q^2-1}-1$.

Therefore, if $G$ has no regular orbit on $V$ then by Propositions \ref{alphas} and \ref{tools},
\[
r^{\frac{(q^m-1)(q^{m-1}-q)}{q^2-1}-1} \leq 2|G/F(G)|r^{\floor{\frac{2m-1}{2m}d}}
\]
This inequality is false for all valid choices of $(m,q,r, \ep)$ except for $(5,2 ,r_1,\pm)$, $(4,3, r_2,\pm)$, and $(4,2,r_3,-)$, where $r_1\leq 7$, $r_2 =2$ and $r_3 \leq 167$.

We will consider each pair $(m,q)$ individually.
\begin{proposition}
Theorem \ref{evendimoprop} holds if  $E(G)/Z(E(G)) \cong \mathrm{P} \Omega^\ep_{10}(2)$.
\end{proposition}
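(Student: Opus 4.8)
The plan is to follow exactly the template established for the previous orthogonal cases, namely to first cut the problem down to a small finite list of modules using Proposition \ref{tools}\ref{crude}, and then dispatch the survivors one at a time using the sharper inequalities \ref{qsgood} or \ref{eigsp1}. Here $E(G)/Z(E(G)) \cong \mathrm{P}\Omega^\pm_{10}(2)$, so $m=5$, $q=2$, and from the discussion immediately preceding the proposition we know that only $r \le 7$ can possibly fail to admit a regular orbit. First I would gather, using GAP, the numbers $i_{r_0}(H)$ of elements of each prime order $r_0$ in $H = \mathrm{Aut}(\mathrm{P}\Omega^\pm_{10}(2)) = \mathrm{O}^\pm_{10}(2)$, together with refined upper bounds on $\alpha(x)$ for each class of $x$ of projective prime order; by Proposition \ref{gsorthog} we have $\alpha(x) \le m+3 = 8$ in general, with $\alpha(x) = 10$ only for a transvection (in the $q$ even case), and the number of transvections is at most $q^{2m}-1 = 2^{10}-1$ by the count used for the symplectic groups. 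Substituting these into Proposition \ref{tools}\ref{crude}, together with $d_1(E(G)) \ge \frac{(q^5-1)(q^4-2)}{3}-1$ from Proposition \ref{mindegprop} (and the exceptional value for $\mathrm{P}\Omega^+_{10}(2)$ if applicable), should leave only a handful of modules, which I would read off from \cite{HM}, taking account of any Brauer character irrationalities that exclude certain fields $r$.

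For each surviving pair $(G,V)$ I would then bound $\emax(g)$ for every class of projective prime order element. For semisimple elements $g_{2'}$ these bounds come directly from the Brauer character in \cite{modatlas} where available, or from an explicit construction of $V$ in GAP; for the remaining semisimple elements of large prime order one can use Proposition \ref{tools}\ref{alphabound} with a GAP-refined value of $\alpha(g)$, and for unipotent elements $g_2$ one either constructs $V$ in GAP directly or restricts $V$ to a suitable subgroup containing a Sylow $2$-subgroup and applies Proposition \ref{compfactors}. With these eigenspace bounds in hand, Proposition \ref{tools}\ref{qsgood} gives an explicit numerical inequality that must hold if $G$ has no regular orbit on $V$; I expect this to fail for $r \ge 2$ in every remaining case, so that $G$ in fact always has a regular orbit. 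In the (few, if any) cases where the inequality cannot be made to fail — most likely only when $|V| < |G|$ so that no regular orbit can exist — I would instead construct $V$ computationally and determine $b(G)$ directly, checking that the outcome is consistent with conclusion (i) of Theorem \ref{evendimoprop} (no new entries for Table \ref{allbad} are expected here, since those exceptions are confined to $\mathrm{P}\Omega^+_8(2)$).

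The main obstacle I anticipate is the sheer size of the groups: $|\mathrm{O}^\pm_{10}(2)|$ is large, so the very smallest modules (the spin-type or $\tfrac{(2^5\mp1)(2^4-2)}{3}-1$-dimensional ones over small fields) may have $|V|$ only marginally larger than $|G|$, forcing the crude counting to be close to tight. In those borderline cases I would need genuinely good eigenspace bounds — in particular a careful treatment of unipotent classes of order $2$, where $\alpha$ can be as large as $8$ and the naive bound $\lfloor d(1-1/8)\rfloor = \lfloor 7d/8\rfloor$ is too weak — so I expect to restrict $V$ to a parabolic or to $\mathrm{Sp}_8(2)$ (respectively a smaller orthogonal group) and invoke Proposition \ref{compfactors}, exactly as was done for $U_{10}(2)$ and $\mathrm{PSp}_{12}(3)$ above. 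If even that is insufficient, the fallback is a direct GAP/Magma orbit computation on $V$, which is feasible since the only remaining modules have small dimension and field size.
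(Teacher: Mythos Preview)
Your approach is correct and essentially matches the paper's proof: apply the crude inequality from Proposition \ref{tools}\ref{crude} with $\alpha(x)\le 8$ (and $\alpha(x)=10$ for transvections) from Proposition \ref{gsorthog}, then handle the survivors via Brauer-character eigenspace bounds and a second application of Proposition \ref{tools}.

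One remark worth making: you are over-anticipating difficulty. The initial cut actually eliminates everything except $r=3$, and from \cite{HM} the only surviving modules for each sign $\epsilon$ have dimensions roughly $154\pm 1$ and $185$ or $186$ --- there are no ``spin-type'' small modules to worry about here, since $d_1(E(G))$ is already in the $150$s. With $d$ this large, a single Brauer-character bound on $\emax(g_{3'})$ together with the generic $\alpha(g_3)\le 8$ bound is already enough; the paper never needs subgroup restriction, Proposition \ref{compfactors}, or direct orbit computation. So while your fallback plan is sound, none of it is invoked.

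(Minor slip: your formula $\tfrac{(q^5-1)(q^4-2)}{3}-1$ for $d_1$ is garbled --- the correct values from Table \ref{mindeg} are $155$ for $\epsilon=+$ and $153$ for $\epsilon=-$.)
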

\begin{proof}
We first determine the range of $d$ to consider.  The number of prime order elements $i_P(\pom_{10}^\epsilon (2).2)$ in $\pom_{10}^\epsilon (2).2$ is 5642583264255 if $\ep=+$ and 4547966108159 if $\ep = -$. The number of reflections $i_R(\pom_{10}^\epsilon (2).2)$ is 496 and 528 for $\ep = +, -$ respectively. If $G$ has no regular orbit on $V$ then by Proposition \ref{gsorthog},
\[
r^d \leq 2i_P(\pom_{10}^\epsilon (2).2) r^{\floor{7d/8}}+ 2i_R(\pom_{10}^\epsilon (2).2)r^{\floor{9d/10}}
\]
So we only need consider $d\leq 216$ for $r=3$, and $G$ has a regular orbit on the remaining modules. From \cite{HM} we find that we only need to consider two modules for each $\epsilon$, one each of degrees $154+\epsilon$ and $186-(1+\ep)/2$. Examining the Brauer characters, we determine that $\emax(g_{3'}) \leq 118+(1-\ep)$ and $136-(1+\ep)/2$ respectively. Moreover, by Proposition \ref{gsorthog}, all elements of order 3 have $\alpha(x)\leq 8$. Therefore, if $G$ has no regular orbit on the 153-dimensional module when $\epsilon=-$  then
\[
r^{153} \leq 2 i_P(G/F(G)) r^{120} + i_3(G/F(G)) r^{\floor{7\times 153/8}}
\]
which is false for $r=3$. Similar calculations give the desired result for the other three modules.

\end{proof}
\begin{proposition}
Theorem \ref{evendimoprop} holds if $E(G)/Z(E(G)) \cong \mathrm{P} \Omega^\ep_{8}(3)$.
\end{proposition}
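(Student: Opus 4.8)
The strategy follows the now-familiar template used throughout this section. Write $G_0 = \pom_8^+(3)$. First I would assemble the numerical input: by Proposition \ref{mindegprop} the minimal cross-characteristic degree is $d_1(G_0) = \frac{(3^4-1)(3^3-3)}{3^2-1} - 1 = \frac{80 \cdot 24}{8} - 1 = 239$, and by Proposition \ref{gsorthog} every element $x \in \mathrm{Aut}(G_0)$ of projective prime order satisfies $\alpha(x) \leq m+3 = 7$, except reflections, which have $\alpha(x) = 8$ (here $q=3$ is odd, so the exceptional case is reflections rather than transvections). The outer automorphism group of $\pom_8^+(3)$ is $S_4$, so one must be careful to work with $\mathrm{Aut}(G_0) = G_0{:}S_4$ and to account for triality; I would record $|\mathrm{Aut}(G_0)|$ and, using GAP or \cite{bg}, the count of prime-order elements and the count of reflections in $\mathrm{Aut}(G_0)$. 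Plugging $d = 239$ and $\alpha(G) \leq 8$ into Proposition \ref{tools}\ref{crude},
\[
r^{239} \leq 2\,|\mathrm{Aut}(G_0)|\, r^{\lfloor 6 \cdot 239 / 7\rfloor} + 2\,i_R\, r^{\lfloor 7 \cdot 239/8 \rfloor},
\]
and since the left side grows faster in $d$, this inequality already fails for all $r \geq 2$ once $d \geq 239$. Hence by Proposition \ref{tools}\ref{crude} a regular orbit exists unless $d < 239$ is small, which by the discussion preceding the proposition happens only for $r$ in a small range (I would expect $r = 2$, and perhaps $r=4,5$, to be the only survivors) with $d$ bounded by a modest number.

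Next I would consult \cite{HM} to pin down the finite list of remaining pairs $(E(G), V)$: these will be the low-dimensional cross-characteristic modules of $\pom_8^+(3)$ and its covers $2.\pom_8^+(3)$, together with any modules of the exceptional $2^2$-cover $2^2.\pom_8^+(3)$ coming from the full Schur multiplier, and we must track which Brauer-character irrationalities (in particular $z_3$-type values) restrict the admissible fields $r$. The relevant small modules are likely the 28-dimensional module (and, for the spin-type covers, 8-dimensional-ish modules — though for $\pom_8^+(3)$ in defining characteristic these do not arise since $3 \nmid r$), along with whatever appears up to the cutoff dimension. For each such $(E(G), V)$ I would first check whether $|G| > |V|$: if so there is no regular orbit and $b(G)$ is computed directly in GAP or via Lemma \ref{fieldext}; otherwise I would bound $\emax(g)$ for each class of projective-prime-order elements — using the Brauer character in \cite{modatlas} for semisimple classes, refined $\alpha(x)$ bounds in GAP plus Proposition \ref{tools}\ref{alphabound} for unipotent classes, or explicit constructions of $V$ in GAP/Magma where needed — and then apply Proposition \ref{tools}\ref{eigsp1} or \ref{qsgood} to force a regular orbit. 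Triality classes of outer involutions and order-$3$ outer elements will need individual attention, since their fixed-space dimensions are not read off the ordinary character in the obvious way.

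The presentation would mirror the $\pom_8^+(2)$ and $U_4(3)$ cases: a table listing the remaining $(E(G), (d,r))$ together with eigenspace-dimension bounds for each prime-order class, followed by the remark that Proposition \ref{tools}\ref{eigsp1} yields a regular orbit in every row, with any genuinely exceptional $(G,V)$ — if they exist at all for this group, which I do not expect, given that Theorem \ref{evendimoprop}(ii) only flags $\pom_8^+(2)$ — handled by direct base-size computation. I would close by noting the conclusion: $b(G) = \lceil \log|G|/\log|V| \rceil$ in all cases, i.e. alternative (i) of Theorem \ref{evendimoprop} holds for $\pom_8^+(3)$.

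\textbf{Main obstacle.} The principal difficulty is computational rather than conceptual: $\pom_8^+(3)$ is a large group with outer automorphism group $S_4$, so constructing the relevant modules (some of dimension in the tens or low hundreds over small fields) and computing eigenspace dimensions of all projective-prime-order classes — especially the triality-type outer classes — is delicate and memory-intensive, and the acknowledgement in the techniques section (thanking O'Brien and Müller for help with $\pom_8^+(2)$ among others) suggests the $\pom_8^+(3)$ computations are of comparable or greater cost. A secondary subtlety is bookkeeping the $2^2.\pom_8^+(3)$ covers and ensuring every admissible field $r$ (accounting for irrationalities) is covered, so that the case division is genuinely exhaustive.
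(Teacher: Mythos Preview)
Your plan has the right architecture but misses the one refinement that makes the paper's proof a paragraph rather than a page, and contains several factual slips that would derail the execution.

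First, you treat only $\ep=+$; the statement covers both signs, and the paper handles them together. Second, your value $d_1=239$ comes from the $q>3$ row of Table~\ref{mindeg}; for $q=3$ the correct formulae give $d_1(\pom_8^+(3))=\tfrac{(3^4-1)(3^3-1)}{3^2-1}=260$ and $d_1(\pom_8^-(3))=\tfrac{(3^4+1)(3^3-3)}{3^2-1}-1=245$. More importantly, your assertion that the crude inequality with $\alpha\le 7$ (plus $\alpha=8$ for reflections) ``already fails for all $r\ge 2$ once $d\ge 239$'' is wrong: at $r=2$ one needs roughly $d/7>\log_2(2|\mathrm{Aut}(G_0)|)\approx 58$, i.e.\ $d\gtrsim 400$, so your version of the inequality leaves a range $245\le d\lesssim 400$ to analyse module by module. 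Your sketch of that analysis then refers to $8$- and $28$-dimensional cross-characteristic modules of $\pom_8^+(3)$; no such modules exist (you are thinking of $\pom_8^+(2)$), so the proposed case list is built on a misconception.

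The paper avoids all of this with a single extra GAP computation: every prime-order element $x\in G/F(G)$ with $o(x)\ge 5$ has $\alpha(x)=2$. Feeding this into Proposition~\ref{tools}\ref{crude} gives
\[
\tfrac12 r^d \le (i_P-i_2-i_3)\,r^{\lfloor d/2\rfloor}+(i_2+i_3)\,r^{\lfloor 6d/7\rfloor}+i_R\,r^{\lfloor 7d/8\rfloor},
\]
with $i_P<2^{41.56}$ and $i_R\le 6480$ (resp.\ $2214$) for $\ep=+$ (resp.\ $-$). Since the preamble to the proposition has already reduced to $r=2$, this inequality is checked to fail for all $d\ge 245$, and one is done: no Brauer characters, no tables, no module constructions, no worries about triality or the $2^2$-cover. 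The lesson is that for large-rank groups over small fields, a sharp $\alpha$-bound on the abundant high-order semisimple classes is often the decisive input.
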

\begin{proof}
Recall that we only need to consider $r=2$. By Proposition \ref{mindegprop}, $d_1(E(G))\geq 248$. Using GAP, we compute that every element $x \in G/F(G)$ of prime order at least 5 has $\alpha(x) =2$. We also determine that the number $i_R(G/F(G))$ of reflections in $G/F(G)$ is at most 2214 and 6480 for $\epsilon=-,+$ respectively. Denote the number of prime order elements in $H=G/F(G)$ by $i_P(H)$. We compute that $i_P(H)< 2^{41.56}$.
If $G$ has no regular orbit on $V$ then 
\[
\frac{1}{2}r^d  \leq (i_P(H)-i_2(H)-i_3(H))r^{\floor{d/2}}+ (i_2(H)+i_3(H))r^{\floor{6d/7}} + i_R(H)r^{\floor{7 d/8}}
\]
This is false for all $d \geq 245$ (cf. Proposition \ref{mindegprop}) with $r=2$.
\end{proof}
\begin{proposition}
Theorem \ref{evendimoprop} holds if $E(G)/Z(E(G)) \cong \mathrm{P} \Omega^\ep_{8}(2)$.
\end{proposition}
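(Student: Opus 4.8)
The plan is to treat the two types $\epsilon = +$ and $\epsilon = -$ separately, since the reduction preceding Theorem \ref{evendimoprop} excluded $\pom_8^+(2)$ precisely because it is the exceptional $(4,2)$ entry of Table \ref{mindeg} with the anomalously small minimal degree $d_1 = 8$; this $8$-dimensional module is the source of the exceptional family (ii). For $\pom_8^-(2)$ the general reduction does apply, leaving $r \leq 167$ and $d_1 \geq 33$. In both cases $m = 4$, so Proposition \ref{gsorthog} gives $\alpha(x) \leq m + 3 = 7$ for all nontrivial $x \in G/F(G)$, with the single exception of transvections (which occur because $q = 2$ is even), for which $\alpha(x) = 8$; I would extract the numbers of prime-order elements and of transvections in $\pom_8^\epsilon(2).\mathrm{Out}$ from \cite{bg} and GAP, noting that $\mathrm{Out}(\pom_8^+(2)) \cong S_3$ (triality) while $\mathrm{Out}(\pom_8^-(2))$ has order $2$.

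For $\epsilon = -$, the key simplification is that any admissible module has $d \geq 33$ and $r \geq 3$, so $|V| = r^d \geq 3^{33}$ dwarfs $|G|$; hence $\lceil \log|G|/\log|V|\rceil = 1$ and it suffices to exhibit a regular orbit. First I would sharpen the crude bound $r \leq 167$: counting prime-order elements of $\pom_8^-(2).2$ in GAP, isolating the few hundred transvections and observing that the overwhelming majority of the remaining elements are semisimple with $\alpha(x) = 2$ and hence eigenspace dimension at most $d/2$, then feeding these into Proposition \ref{tools}\ref{qsgood}, cuts the surviving $(d,r)$ down to the handful of smallest faithful modules listed in \cite{HM}. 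For each I would bound $\emax$ of projective-prime-order elements from the Brauer character in \cite{modatlas} (or a GAP construction) for semisimple classes, and from Proposition \ref{tools}\ref{alphabound} with $\alpha$-values refined in GAP for unipotent classes, and apply Proposition \ref{tools}\ref{eigsp1} to force a regular orbit. This establishes conclusion (i) throughout the minus case, with no entries in Tables \ref{allbad} or \ref{noros}.

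For $\epsilon = +$, I would rerun the reduction from scratch with $d_1 = 8$. Inserting $\alpha(x) \leq 7$ (and $8$ for transvections) together with the prime-order counts for $H = \pom_8^+(2).S_3$ into Proposition \ref{tools}\ref{crude} shows that a regular orbit exists for all modules except possibly the $8$-dimensional ones once $r$ is moderately large; a second pass using the Brauer characters of the three triality-conjugate $8$-dimensional modules of the double cover $2.\pom_8^+(2)$ and Proposition \ref{tools}\ref{qsgood} leaves only small odd $r$ to examine. I would then construct each module $V_8(r)$ explicitly in GAP or Magma and compute $b(G)$ directly for every admissible overgroup $2.\pom_8^+(2) \leq G \leq \F_r^\times \circ (2.\pom_8^+(2).2)$, distinguishing the isoclinic variants $2.\pom_8^+(2).2$ and $2.\pom_8^+(2).2i$ (Remark \ref{gcd}(iii)) and using Lemma \ref{fieldext} to relate the fields $r = 5, 25$ and $r = 3, 27$. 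This reproduces exactly the $\pom_8^+(2)$ entries of Tables \ref{allbad} and \ref{noros}: one finds $b(G) = \lceil \log|G|/\log|V|\rceil + 1$ precisely for $r \in \{3,5,25,27\}$ (conclusion (ii)), and $b(G) = \lceil \log|G|/\log|V|\rceil$ for every other odd $r$ (conclusion (i)).

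The hard part will be this plus-type $d = 8$ analysis. At $d = 8$ the inequalities in Proposition \ref{tools} are simply too weak for the small fields $r \in \{3,5,7,9,25,27\}$, so these genuinely demand explicit orbit computations rather than eigenspace counting; the triality action forces care about which of the three modules extends to which subgroup of $\pom_8^+(2).S_3$ and how the scalar group $F(G)$ interacts. The two most delicate points are certifying that the base size exceeds the logarithmic lower bound by exactly one for $r \in \{3,5,25,27\}$ but not for the neighbouring fields, and verifying that the two isoclinic variants of $2.\pom_8^+(2).2$ can have different base sizes on the same module --- the unique place in the paper where isoclinism changes the answer (Remark \ref{gcd}(iii)). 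By contrast, once its module list is pinned down the minus-type case is routine.
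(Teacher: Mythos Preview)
Your approach matches the paper's: separate $\epsilon=\pm$, apply Proposition~\ref{tools}\ref{crude} with $\alpha\leq 7$ (transvections $\alpha=8$) to reduce to a finite list, then handle the survivors via Brauer-character eigenspace bounds, GAP constructions, and finally direct base-size computations for the $8$-dimensional modules of $2.\pom_8^+(2)$.

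There is, however, a genuine omission in your plus-type outline. You claim that the crude inequality leaves \emph{only} the $8$-dimensional modules once $r$ is moderately large, and then immediately pass to analysing $V_8(r)$. This is false: with the prime-order count for $\pom_8^+(2).S_3$ ($28815119$ elements) the inequality $r^d \leq 2\cdot 28815119\, r^{\lfloor 7d/8\rfloor}$ also fails to rule out modules of dimensions $28$, $35$, $48$, $50$, $56$, $83$, $104$ at various small $r\leq 96$ (the paper records these in Table~\ref{o8+2cases}). Each of these requires exactly the kind of class-by-class eigenspace analysis you sketched for $\epsilon=-$; the paper carries it out in a substantial table (Table~\ref{o82case-analysis}) before turning to $d=8$. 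Your methods would handle them, but the proposal as written skips this non-trivial block of work.

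A minor remark on the $d=8$ case: for the fields with $(r,|G|)>1$ treated here (characteristics $3,5,7$), the two isoclinic variants $2.\pom_8^+(2).2$ and $2.\pom_8^+(2).2i$ in fact yield the same base size; the genuine discrepancies flagged in Remark~\ref{gcd}(iii) occur at coprime primes such as $r=13,17,29$, which fall under \cite{MR1829482} rather than the present argument. So you need not worry about isoclinism complicating the direct computations for $r\in\{3,5,7,9,25,27\}$.
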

\begin{proof}
First suppose that $E(G)/Z(E(G)) \cong \pom ^-_8(2)$.
	The number of prime order elements in $\pom_{8}^-(2).2$ is 57548943; of these, 112591 are involutions, including 136 transvections. If $G$ has no regular orbit on $V$ then
	\[
	r^d \leq 2\times 57548943 r^{\floor{6d/7}}+ 2\times 136r^{\floor{7d/8}}.
	\]
	Thus $G$ has a regular orbit on $V$, or $V$ lies in Table \ref{o82-case-analysis}.

We also summarise our analysis of the remaining cases in Table \ref{o82-case-analysis}. In all cases, the bounds on $\emax(x)$ are obtained from the Brauer character if $o(x) \nmid r$, and from constructing the corresponding module in GAP otherwise.

\begin{table}[h!]
		\centering \begin{tabular}{c@{\hskip 0.5cm}cccccc}
			\toprule
			& \multicolumn{6}{c}{Projective prime order}\\
			\midrule
			& $o(x) = 2$ & Transvections & $o(x)=3$ & $o(x)=5$ & $o(x)=7$ & $o(x)=17$ \\ 
			\midrule
			\# elements & 112455 & 136 & 490688 & 1096704 & 9400320 & 46448640 \\ 
			$V_{51}(5)$ & 36 & 36 & 21 & 11 & 21 & 21 \\ 
			$V_{51}(7)$ & 36 & 36 & 21 & 11 & 9 & 21 \\ 
			$V_{50}(3^k)$, $r\in\{1,2\}$ & 36 & 36 & 22 & 10 & 10 & 10 \\ 
			$V_{34}(3^k)$, $k\in\{1,2,3\}$ & 22 & 27 & 20 & 10 & 10 & 10 \\ 
			$V_{34}(5^k)$, $k\in\{1,2\}$ & 22 & 27 & 20 & 10 & 20 & 20 \\ 
			$V_{33}(7)$ & 21 & 26 & 19 & 19 & 5 & 19 \\ 
			\bottomrule 
		\end{tabular} 
		\caption{Upper bounds on maximum eigenspace dimensions on absolutely irreducible modules of $\mathrm{P}\Omega^-_8(2).2$.\label{o82-case-analysis}}
	\end{table}
Using the information in Table \ref{o82-case-analysis} to apply Proposition \ref{tools}\ref{eigsp2}, we find that $G$ has a regular orbit on $V$ in all cases.

Now suppose that $E(G)/Z(E(G)) \cong \pom ^+_8(2)$.
The number of prime order elements in $\pom_8^+(2).S_3$ is 28815119 and so if $G$ has no regular orbit on $V$, then
\[
r^d \leq 2\times 28815119r^{\floor{7d/8}}.
\]
Using \cite{HM}, we give a complete list of the modules we need to consider in Table \ref{o8+2cases}. In Table \ref{o8+2poe} we also give information on the number of elements of various prime orders in $\mathrm{Aut}(\pom_8^+(2))$.
\begin{table}[h!]
\noindent\begin{minipage}{.6\textwidth}
\centering
\begin{tabular}{cccc}
\toprule
$d$ & $E(G)$ & Characteristic & $r\leq$ \\ 
\midrule
104 & $2.\pom_8^+(2)$ & 3,5 & 3 \\
83 & $\pom_8^+(2)$ & 5 & 5 \\ 
56 & $2.\pom_8^+(2)$ & $\neq 2$ & 13 \\  
50 & $\pom_8^+(2)$ & $\neq 2,3$ & 13 \\  
48 & $\pom_8^+(2)$ & 3 & 9 \\ 
35 & $\pom_8^+(2)$ & $\neq 2$ & 31 \\ 
28 & $\pom_8^+(2)$ & $\neq 2$ & 96 \\ 
8 & $2.\pom_8^+(2)$ & $\neq 2$ & 86445357 \\ 
\bottomrule
\end{tabular} 
 \captionof{table}{Cases left to consider when $E(G)/Z(E(G)) \cong \pom ^+_8(2)$. \label{o8+2cases}}
    \end{minipage}%
    \begin{minipage}{.4\textwidth}
\centering
\begin{tabular}{cc}
\toprule
Order & \# elements \\ 
\midrule
2 & 183375 \\ 
3 & 2006720 \\ 
5 & 1741824 \\ 
7 & 24883200 \\ 
\midrule
Total & 28815119 \\ 
\bottomrule
\end{tabular} 
  \captionof{table}{Summary of prime order elements in  $\pom_8^+(2).S_3$. \label{o8+2poe}}
    \end{minipage} 
\end{table}
Define $\epsilon_i$ to be equal to 1 if $(i,r)>1$ and 0 otherwise.
We summarise our analysis of the modules listed in Table \ref{o8+2cases} in Table \ref{o82case-analysis}. The bold entries were obtained from the corresponding Brauer characters of the absolutely irreducible modules. The italicised entries were obtained from an explicit construction of the module in GAP. The entries that are both bold and italicised were obtained using the former method when $(o(x),r)=1$, and the latter otherwise. 
The remaining entries were computed using Proposition \ref{tools}\ref{alphabound}. Applying Proposition \ref{tools}\ref{qsgood} to each of the modules in the table shows that $G$ has a regular orbit on $V$ for all valid choices of $G$.

	\begin{table}[h!]
		
		\centering \begin{tabular}{lcccccccccm{1.5cm}}
			\toprule
			Class & Size     & $V_{104}(3)$ & $V_{83}(5)$ & $V_{56}(r)$          & $V_{50}(r)$ & $V_{48}(r)$ & $V_{35}(r)$          & $V_{28}(3^k)$ & $V_{28}(5^k)$ or\\ 
		 & &  &  &  & && & & $V_{28}(7^k)$\\ 
			\midrule
			2A    & 1575     & \textbf{77}  & 72          & \textbf{35}          & \textbf{35} & \textbf{34} & \textbf{20}          & \textbf{16}   & \textbf{16}                    \\
			2B    & 3780     & \textbf{77}  & 72          & \textbf{35}          & \textbf{35} & \textbf{34} & \textbf{23}          & \textbf{16}   & \textbf{16}                    \\
			2C    & 3780     & \textbf{77}  & 72          & \textbf{35}          & \textbf{35} & \textbf{34} & \textbf{20}          & \textbf{16}   & \textbf{16}                    \\
			2D    & 3780     & \textbf{77}  & 72          & \textbf{35}          & \textbf{35} & \textbf{34} & \textbf{20}          & \textbf{16}   & \textbf{16}                    \\
			2E    & 56700    & \textbf{77}  & 72          & \textbf{35}          & \textbf{35} & \textbf{34} & \textbf{20}          & \textbf{16}   & \textbf{16}                    \\
			3A    & 2240     & 89           & 71          & $\bm{35-7\ep_3}$ & \textbf{35} & \textit{26} & \textit{\textbf{21}} & \textit{16}   & \textbf{16}                    \\
			3B    & 2240     & 89           & 71          & $\bm{35-7\ep_3}$ & \textbf{35} & \textit{26} & \textit{\textbf{15}} & \textit{16}   & \textbf{16}                    \\
			3C    & 2240     & 89           & 71          & $\bm{35-7\ep_3}$ & \textbf{35} & \textit{26} & \textit{\textbf{15}} & \textit{16}   & \textbf{16}                    \\
			3D    & 89600    & 89           & 71          & $\bm{35-7\ep_3}$ & \textbf{35} & \textit{26} & \textit{\textbf{15}} & \textit{12}   & \textbf{10}                    \\
			3E    & 268800   & 89           & 71          & $\bm{35-7\ep_3}$ & \textbf{35} & \textit{26} & \textit{\textbf{15}} & \textit{12}   & \textbf{10}                    \\
			5A    & 580608   & \textbf{77}  & 71          & $\bm{35-7\ep_5}$          & 25          & \textbf{10} & \textit{\textbf{11}} & \textbf{8}    & \textit{\textbf{8}}            \\
			5B    & 580608   & \textbf{77}  & 71          & \textbf{$\bm{35-7\ep_5}$} & 25          & \textbf{10} & \textit{\textbf{11}} & \textbf{8}    & \textit{\textbf{8}}            \\
			5C    & 580608   & \textbf{77}  & 71          & \textbf{$\bm{35-7\ep_5}$} & 25          & \textbf{10} & \textit{\textbf{11}} & \textbf{8}    & \textit{\textbf{8}}            \\
			7A    & 24883200 & \textbf{77}  & 71          & $\bm{35-7\ep_7}$ & 25          & \textbf{10} & \textit{\textbf{5}}  & \textbf{7}    & \textit{\textbf{4}}            \\
			2F    & 120      & \textbf{77}  & 72          & \textbf{35}          & 35          & \textbf{34} & \textbf{28}          & \textbf{21}   & \textbf{21}                    \\
			2F'    & 120      & \textbf{77}  & 72          & \textbf{35}          & 35          & \textbf{34} & \textbf{28}          & \textbf{21}   & \textbf{21}                    \\
			2F''    & 120      & \textbf{77}  & 72          & \textbf{35}          & 35          & \textbf{34} & \textbf{28}          & \textbf{21}   & \textbf{21}                    \\			
			2G    & 37800    & \textbf{77}  & 72          & \textbf{35}          & 35          & \textbf{34} & \textbf{20}          & \textbf{15}   & \textbf{15}                    \\
			2G'    & 37800    & \textbf{77}  & 72          & \textbf{35}          & 35          & \textbf{34} & \textbf{20}          & \textbf{15}   & \textbf{15}                    \\
			2G''    & 37800    & \textbf{77}  & 72          & \textbf{35}          & 35          & \textbf{34} & \textbf{20}          & \textbf{15}   & \textbf{15}                    \\
			3F    & 14400    & --           & --          & --                   & 35          & \textit{26} & --                   & \textit{14}    & \textbf{14}                    \\
			3F'   & 14400    & --           & --          & --                   & 35          & \textit{26} & --                   & \textit{14}     & \textbf{14}                    \\
			3G    & 806400   & --           & --          & --                   & 35          & \textit{26} & --                   & \textit{10 }     & \textbf{10}\\
			3G'    & 806400   & --           & --          & --                   & 35          & \textit{26} & --                   & \textit{10}      & \textbf{10}\\
			\bottomrule
		\end{tabular}
		\caption{Sizes and upper bounds on $\emax$ for conjugacy classes of elements of prime order in $\pom^+_8(2).S_3$.}
		\label{o82case-analysis}
	\end{table}

	Finally, suppose that $V$ is the $8$-dimensional irreducible module of $2.\pom_8^+(2)$, and note that according to the Brauer character tables given in \cite[p. 233-241]{modatlas}, $G/F(G)$ contains no graph automorphisms of order 3 (that is, triality automorphisms). 
If $r\leq 9$, then $|V|<|G|$ and there is no regular orbit. If $r=3$ then also $|V|^2 <|G|$, so $b(G)\geq 3$.
If $G$ has no regular orbit on $V$ in characteristics 5 or 7 then
\begin{align*}
r^8 \leq & ( |2A| + |2C|+|2D| + |2E|) (2 r^4) + |2B| (r^6 + r^2) + |3A|(r^6 + 2 r) + (|3B|+|3C|)(2 r^4)\\
   &+|3D| (r^2 + 2 r^3) + 
  |3E| (r^4 + 2 r^2) + |5A| (r^4 + 4 r) + (|5B|+|5C|)(4 r^2) + 
  |7A| (r^2 + 6 r) \\
  &+ |2F|(r^7 + r) + |2G|(r^5 + r^3).
\end{align*}
This is false for $r>125$. When $r=5$, we construct the module in Magma and find that $b(G)=3$, and from Lemma \ref{fieldext} that $b(G)=2$ when $r=25$. We also use the same techniques to compute that there is a regular orbit of $G$ on $V=V_8(r)$ when $r=49,125$. 

If instead $V$ is in characteristic 3, and $G$ has no regular orbit on $V$, then
\begin{align*}
r^8 \leq &( |2A| + |2C|+|2D| + |2E|) (2 r^4) + |2B| (r^6 + r^2)  + \frac{1}{2}i_3(G)r^4 + |5A| (r^4 + 4 r) + (|5B|+|5C|)(4 r^2) \\
&+ |7A| (r^2 + 6 r) + |2F|(r^7 + r) + |2G|(r^5 + r^3).
\end{align*}
This is false for $r\geq 243$. We compute that $b(G)=4,2,2,1$ when $r=3,9,27,81$ using a combination of GAP, Magma and Lemma \ref{fieldext}.
\end{proof}
\section{Proof of Theorem \ref{mainthm}: Exceptional groups}
The main result in this section is as follows.
\begin{theorem}
\label{exprop}
Suppose $G$ is an almost quasisimple group with $E(G)/Z(E(G))$ an exceptional simple group of Lie type. Let $V=V_d(r)$, $(r,q)=1$ be a module for $G$, with absolutely irreducible restriction to $E(G)$. Also suppose that $(r, |G|)>1$. Then one of the following holds:
\begin{enumerate}
\item $b(G) = \lceil \log |G|/ \log |V| \rceil$,
\item $ E(G)=2.\,^2{}B_2(8) $, $(d,r)=(8,5)$ and $b(G) = 2$,
\item $G=6\circ(2.G_2(4))$ or $6\circ(2.G_2(4).2)$ , $(d,r) = (12,7)$ and $b(G)=2$.
\end{enumerate}
\end{theorem}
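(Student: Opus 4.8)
The plan is to proceed exactly as in the previous sections: for each family of exceptional groups $E(G)/Z(E(G))$, combine the bound $\alpha(G) \leq l+3$ (respectively $\leq 8$ for $F_4(q)$, and $\leq 3$ for $^2G_2(q),{}^2B_2(q)$) from Proposition \ref{alphas}, the lower bound $d_1(E(G))$ from Proposition \ref{mindegprop}, and a crude count of prime order elements in $\mathrm{Aut}(E(G)/Z(E(G)))$, and feed these into Proposition \ref{tools}\ref{crude}. Since the left-hand side of the resulting inequality grows with $d$, substituting $d_1(E(G))$ will settle all but finitely many pairs $(E(G),q)$, and for the surviving ones it will cut down to a finite (small) list of modules $V=V_d(r)$, read off from the Hiss--Malle tables \cite{HM} together with the degree lists in \cite{modatlas}. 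The large-rank exceptional groups $E_6,{}^2E_6,E_7,E_8$ and generic $F_4(q), G_2(q), {}^3D_4(q), {}^2B_2(q), {}^2G_2(q), {}^2F_4(q)$ will all be eliminated at this first stage because $d_1$ is polynomial of large degree in $q$ while $|\mathrm{Aut}|$ is polynomial of fixed degree; the surviving cases are the very small ones: $G_2(3)$, $G_2(4)$, ${}^3D_4(2)$, ${}^2B_2(8)$, ${}^2F_4(2)'$, and a handful of small-field $F_4$ and $G_2$ instances where $d_1$ happens to be tiny (e.g.\ the 12-dimensional module of $G_2(4)$, the 14-dimensional module of $G_2(3)$, the 26-dimensional module of ${}^2F_4(2)'$, etc.).

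For each surviving pair $(G,V)$ I would first check whether $|G|>|V|$; if so, there is no regular orbit and $b(G)$ must be computed directly, either in GAP/Magma or via Lemma \ref{fieldext} (passing to a field extension where a base is known). This is what produces items (ii) and (iii): for $2.{}^2B_2(8)$ acting on $V_8(5)$ we have $|V|=5^8<|2.{}^2B_2(8)|$ so no regular orbit exists, and a direct computation gives $b(G)=2$; similarly for $6\circ(2.G_2(4))$ and $6\circ(2.G_2(4).2)$ on $V_{12}(7)$, where the scalars of order $6$ push $|G|$ just above $7^{12}$ — again $b(G)=2$ by direct computation, with Lemma \ref{fieldext} used to bound the answer. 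When $|G|<|V|$, I would apply one of the refined inequalities in Proposition \ref{tools}: for semisimple classes I would read eigenspace dimensions off the Brauer characters in \cite{modatlas} (or infer them from ordinary characters where the reduction is known), for unipotent classes I would use Proposition \ref{tools}\ref{alphabound} with $\alpha(x)$ sharpened in GAP, or restrict $V$ to a suitable subgroup and use Proposition \ref{compfactors}. In each such case this should show $G$ has a regular orbit on $V$, contributing to item (i). The acknowledgement at the end of Section \ref{techniques} already signals that $G_2(4)$ and ${}^3D_4(2)$ require computational assistance, so I would lean on explicit module constructions there.

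The main obstacle, as with the other families, will be the handful of genuinely computational cases — principally $2.{}^3D_4(2)$ and $2.G_2(4)$ and their covers/extensions, where the modules have dimensions in the tens and the groups have many conjugacy classes, so the sums in Proposition \ref{tools}\ref{eigsp1}--\ref{qsgood} must be assembled carefully from the Brauer character data, and where in the borderline cases one has to actually construct $V$ and either compute eigenspace dimensions precisely or search for a regular orbit directly. A secondary subtlety is bookkeeping the central product notation and the isoclinic variants correctly (as flagged in Remark \ref{gcd}), and making sure that for each module $V$ one only records $\lceil \log|G|/\log|V|\rceil$ for the groups $G$ that actually admit that module — exactly the intricacy noted in the $L_2$ analysis. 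I do not anticipate any new theoretical difficulty beyond what has already appeared in the linear, unitary, symplectic and orthogonal sections; the exceptional groups are if anything easier because $d_1$ is so large relative to $|\mathrm{Aut}|$ that the first inequality clears the field almost completely.
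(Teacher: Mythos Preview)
Your overall strategy matches the paper's: an initial application of Proposition~\ref{tools}\ref{crude} with the $\alpha$-bounds of Proposition~\ref{alphas} and the degree bounds of Proposition~\ref{mindegprop} reduces to a short list of small groups, which are then dealt with individually via Brauer characters, refined $\alpha$-values, and explicit construction. The paper's surviving list is ${}^2B_2(8)$, ${}^2F_4(2)'$, ${}^3D_4(2)$, ${}^3D_4(3)$, $G_2(3)$, $G_2(4)$, $G_2(5)$, $F_4(2)$; your sketch omits ${}^3D_4(3)$ and $G_2(5)$ explicitly, though these fall easily.

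There is, however, a concrete error in your account of why the exceptions (ii) and (iii) arise. You claim that $|V|<|G|$ in both cases, but this is false: $5^8=390625$ exceeds $|2.{}^2B_2(8)|=58240$ (and any scalar extension with $c\mid 4$), and $7^{12}\approx 1.38\times 10^{10}$ exceeds $|6\circ(2.G_2(4).2)|\approx 3.0\times 10^9$. So your stated plan --- check $|G|>|V|$, and if $|G|<|V|$ then apply the inequalities of Proposition~\ref{tools} expecting to find a regular orbit --- would not flag these cases for the reason you give. What actually happens is that the inequalities of Proposition~\ref{tools}\ref{eigsp1}--\ref{qsgood} fail to rule them out, and one must resort to explicit orbit enumeration (as the paper does, using Magma for $G_2(4)$ and the \cite{onlineATLAS} generators for ${}^2B_2(8)$) to discover that there is no regular orbit despite $|G|<|V|$. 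This is precisely the situation where $b(G)=\lceil\log|G|/\log|V|\rceil+1=2$, and it cannot be detected by a crude size comparison. (Minor point: there is no double cover $2.{}^3D_4(2)$, as ${}^3D_4(2)$ has trivial Schur multiplier.)
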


We now proceed with the proof of Theorem \ref{exprop}.
If $G$ has no regular orbit on $V=V_d(r)$, then by Proposition \ref{tools}\ref{alphabound} and \ref{crude},
\[
r^d \leq 2 |G/F(G)|r^{\lfloor (1-1/\alpha(G))d\rfloor}.
\]
From Propositions \ref{mindegprop} and \ref{alphas} we deduce that either $G$ has a regular orbit on $V$, or $E(G)/Z(E(G))$ is one of ${}^2B_2(8)$, ${}^2F_4(2)'$, ${}^3D_4(2)$,${}^3D_4(3)$, $G_2(3)$, $G_2(4)$, $G_2(5)$ or $F_4(2)$.

We examine the remaining cases in a series of propositions.
%
%
\begin{proposition}
Theorem \ref{exprop} holds if $E(G)/Z(E(G)) \cong  {}^2B_2(8)$.
\end{proposition}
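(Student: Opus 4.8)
\textbf{Proof plan for the case $E(G)/Z(E(G))\cong {}^2B_2(8)$.}

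The plan is to proceed exactly as in the general outline of Section~\ref{techniques}. First I would pin down the arithmetic data: the Suzuki group ${}^2B_2(8)$ has order $29120 = 2^6\cdot 5\cdot 7\cdot 13$, outer automorphism group of order $3$ (the field automorphisms), and Schur multiplier of order $4$, so $Z(E(G))$ can have order $1$, $2$ or $4$. By Proposition~\ref{alphas} we have $\alpha(x)\le 3$ for every $1\ne x\in G/F(G)$. By Proposition~\ref{mindegprop}, $d_1({}^2B_2(8))=(q-1)\sqrt{q/2}=7\cdot 2 = 14$. Since ${}^2B_2(8)\cong {}^2G_2(3)'$ is excluded from that row as an exception with $d_1=8$, I would consult \cite{HM} to list all cross-characteristic absolutely irreducible representations of the relevant covers, recording the admissible characteristics and the field irrationalities (the $b_5, b_7, b_{13}$-type numbers arising from the Weil-like $8$- and $14$-dimensional modules of $2.{}^2B_2(8)$).

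Next I would compute, or bound, the number of prime-order elements in $\mathrm{Aut}({}^2B_2(8))$ in GAP, and apply Proposition~\ref{tools}\ref{crude}: if $G$ has no regular orbit on $V=V_d(r)$ then
\[
r^d \le 2\,|{}^2B_2(8).3|\, r^{\lfloor 2d/3\rfloor}.
\]
Substituting $d\ge 8$ (the smallest admissible dimension, for the $8$-dimensional modules of $2.{}^2B_2(8)$) shows this fails for all but finitely many pairs $(d,r)$; I expect only the $8$-dimensional modules in small characteristic and possibly one or two $14$-dimensional modules to survive, for $r$ below a modest bound. For each surviving pair I would first check whether $|G|>|V|$: when $E(G)=2.{}^2B_2(8)$ and $(d,r)=(8,5)$ we have $|V|=5^8<|2.{}^2B_2(8)|$ on the relevant covers with nontrivial $F(G)$, forcing $b(G)\ge 2$, and then a direct construction in GAP (or Magma) of the module, using the generators in \cite{onlineATLAS}, gives $b(G)=2$ precisely, matching conclusion (ii). For the remaining pairs, where $|G|<|V|$, I would refine the eigenspace bounds: read off $\emax$ for semisimple classes from the Brauer character in \cite{modatlas}, bound $\emax$ for any $2$-elements via an explicit construction, and feed these into Proposition~\ref{tools}\ref{eigsp1} or \ref{qsgood} to conclude a regular orbit exists, so $b(G)=1=\lceil\log|G|/\log|V|\rceil$.

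The main obstacle will be the bookkeeping around the central cover and the isoclinic/field-automorphism variants: one has to be careful which of the $8$-dimensional modules extend to ${}^2B_2(8).3$, which require $Z(E(G))$ of order $2$ or $4$, and over which fields they are realisable given the irrationalities — this is exactly the kind of subtlety flagged in Remark~\ref{gcd}. Provided the small list of exceptional modules is handled correctly by explicit computation, the only pair with $b(G)>1$ is $(2.{}^2B_2(8),V_8(5))$, and the statement follows.
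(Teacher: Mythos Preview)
Your overall strategy matches the paper's: reduce via \cite{HM} and the crude inequality from Proposition~\ref{tools}\ref{crude} to a short list of modules, then settle those by explicit construction. The paper is in fact much terser than your outline --- it simply cites \cite{HM} to get down to $(d,r)\in\{(8,5),(14,5)\}$, builds both modules from the generators in \cite{onlineATLAS}, and reads off $b(G)=2$ and $b(G)=1$ respectively.

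There are, however, two concrete errors in your write-up. First, the isomorphism ``${}^2B_2(8)\cong {}^2G_2(3)'$'' is false; it is $L_2(8)$ that is isomorphic to ${}^2G_2(3)'$, and the Suzuki group ${}^2B_2(8)$ is not isomorphic to any Ree-type group. The exception $d_1=8$ for $q=8$ in Table~\ref{mindeg} is listed directly under the ${}^2B_2(q)$ row and has nothing to do with any exceptional isomorphism.

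Second, and more seriously, your proposed justification for $b(G)\ge 2$ at $(d,r)=(8,5)$ fails: you claim $|V|=5^8<|2.{}^2B_2(8)|$, but $5^8=390625$ while $|2.{}^2B_2(8)|=58240$, and even the largest relevant group $\mathbb{F}_5^\times\circ(2.{}^2B_2(8))$ has order well below $|V|$. So the size argument does not apply, and indeed $\lceil\log|G|/\log|V|\rceil=1$ here --- this is precisely why the case appears in Table~\ref{allbad}. The non-existence of a regular orbit at $(8,5)$ must be established by the direct orbit computation, exactly as the paper does; there is no shortcut.
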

\begin{proof}
By \cite{HM}, we see that we only need to consider modules with $(d,r) = (8,5)$ or $(14,5)$. The generators for both modules are given in \cite{onlineATLAS} and we find that there is no regular orbit of $G$ on $V$ for $(d,r)=(8,5)$, but there is a base of size 2. If instead $(d,r)=(14,5)$, we find that $b(G)=1$.
\end{proof}
\begin{proposition}
Theorem \ref{exprop} holds if $E(G)/Z(E(G)) \cong  {}^2F_4(2)'$.
\end{proposition}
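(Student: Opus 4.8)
The plan is to mimic the structure of the preceding small-exceptional-group arguments, namely reduce to a short list of modules via the crude counting bound and then resolve each one directly. First I would invoke Proposition \ref{tools}\ref{crude} together with the bounds $\alpha(G)\le 3$ (from the last clause of Proposition \ref{alphas}, since $l=2$ for $^2F_4(2)'$ and more precisely the exceptional-rank bound applies) and $d_1(E(G))=(q^5-q^4)\sqrt{q/2}$ with the listed exception $d_1=26$ for $q=2$ (Proposition \ref{mindegprop}, last row of Table \ref{mindeg}). Counting the prime-order elements in $\mathrm{Aut}({}^2F_4(2)')={}^2F_4(2)'.2$ in GAP and substituting $d=26$ into
\[
r^d\le 2\,i_P(G/F(G))\,r^{\lfloor(1-1/\alpha(G))d\rfloor},
\]
should show the inequality fails for all but finitely many $(d,r)$, and by the monotonicity clause in Proposition \ref{tools}\ref{crude} it suffices to check $d$ up to an explicit bound for each small $r$.

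Next I would consult \cite{HM} and the Brauer character tables in \cite{modatlas} to list exactly which absolutely irreducible cross-characteristic modules $V=V_d(r)$ of $E(G)$ survive this bound (recalling $(r,|G|)>1$), taking account of any field irrationalities that restrict the fields of realisation. For ${}^2F_4(2)'$ the relevant primes dividing $|G|$ are $2,3,5,13$, so we only consider $r$ a power of $3$, $5$, or $13$, and the smallest faithful modules have dimension $26$ and $27$; I expect only a handful of $(d,r)$ pairs (dimensions $26,27$ over small fields) to remain. For each surviving module I would refine the eigenspace bounds $\emax(g)$ for projective prime order $g$: use the Brauer character via Proposition \ref{intchar} for semisimple classes, and either a sharper $\alpha(x)$ computed in GAP with Proposition \ref{tools}\ref{alphabound}, or an explicit matrix construction from \cite{onlineATLAS}, for unipotent classes. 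Then applying the stronger inequalities \ref{eigsp1}, \ref{eigsp2}, or \ref{qsgood} of Proposition \ref{tools} should rule out all remaining cases, i.e. force a regular orbit, so that $b(G)=1=\lceil\log|G|/\log|V|\rceil$; any module where this fails would be checked computationally for its exact base size.

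The main obstacle I anticipate is that the $26$- and $27$-dimensional modules are reasonably large and the group ${}^2F_4(2)'$ is moderately sized, so a brute-force regular-orbit search over a field like $\mathbb{F}_3$ or $\mathbb{F}_5$ could be the computationally heaviest part; the counting bound may leave exactly those cases where $|V|$ is only modestly larger than $|G|$, making the margin in Proposition \ref{tools}\ref{qsgood} tight. If the eigenspace estimates alone do not close a case, the fallback is an explicit construction in GAP or Magma (as was used throughout the exceptional-group section, e.g. for ${}^2B_2(8)$ and $G_2(4)$), and I would expect — by analogy with the pattern in Table \ref{noros} where ${}^2F_4(2)'$ does not appear — that in fact a regular orbit always exists, so conclusion (i) of Theorem \ref{exprop} holds for every such $(G,V)$ with no new exceptional entries.
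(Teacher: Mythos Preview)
Your overall plan matches the paper's: reduce via the crude bound of Proposition~\ref{tools}\ref{crude}, then handle the surviving small modules by sharpening eigenspace estimates from Brauer characters and refined $\alpha$-values. One correction: Proposition~\ref{alphas} does \emph{not} give $\alpha(G)\le 3$ for ${}^2F_4(2)'$ --- the final clause there applies only to ${}^2G_2(q)$ and ${}^2B_2(q)$, and the rank-plus-three bound (with the $F_4$ exception) is weaker than you claim. The paper instead invokes the proof of \cite[Proposition~5.5]{gs} to obtain $\alpha(x)\le 4$ for involutions and $\alpha(x)\le 3$ for elements of odd prime order, and additionally computes $\alpha(g_{13})=2$ in GAP. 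With these split bounds (together with the explicit counts $i_2\le 13455$, $i_{13}\le 2764800$, $i_P=3304079$) the crude inequality already fails unless $r=3$ and $d\le 39$, so only the $26$-dimensional modules over $\mathbb{F}_3$ survive --- the $27$-dimensional and characteristic $5,13$ cases you anticipated do not arise. For that single remaining case the paper reads off $\emax(g_{\{2,3\}'})\le 6$ and $\emax(g_2)\le 16$ from the Brauer character and computes $\alpha(g_3)=2$, after which Proposition~\ref{tools}\ref{qsgood} finishes the job with no direct orbit computation needed.
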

\begin{proof}
%
The number of prime order elements in $^2F_4(2)'.2$ is 3304079. By the proof of \cite[Proposition 5.5]{gs}, $\alpha(x) \leq 4$ for involutions and $\alpha(x)\leq 3$ for odd prime order elements, and we use GAP to further deduce that $\alpha(g_{13})=2$. Moreover, $d\geq 26$ by Proposition \ref{mindegprop}, and we compute that $i_2(G/F(G))\leq 13455$ and $i_{13}(G/F(G))\leq 2764800$ . Therefore, if $G$ has no regular orbit on $V$, then by Proposition \ref{tools}\ref{crude},
\[
r^d\leq 2\times (3304079-2764800-13455)r^{\floor{2d/3}}+ 2\times 2764800r^{\floor{d/2}}+ 2\times 13455 r^{\floor{3d/4}}
\]
which is false unless $r=3$ and $d\leq 39 $. Consulting \cite{HM}, we find that we have only 26-dimensional modules of $^2F_4(2)'$ to consider.

%
 Examining the Brauer character table in characteristic 3, we deduce that in each case, $\emax(g_{\{2,3\}'}) \leq 6$ and $\emax(g_2) \leq  16$. We further compute in GAP that $\alpha(x) = 2$ for $x \in G$ of order 3. Therefore, since $i_2({}^2F_4(2)) =13455$ and   $i_3({}^2F_4(2))=166400 $, if $G$ has no regular orbit on $V$ then
\[
r^{26} \leq 2\times 3304079r^{6}+166400r^{13} + 13455(r^{16}+r^{10})
\]
which is false for $r=3$.
\end{proof}
\begin{proposition}
Theorem \ref{exprop} holds if $E(G)/Z(E(G)) \cong  {}^3D_4(q)$, $q\in \{2,3\}$.
\end{proposition}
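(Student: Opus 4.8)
\textbf{Proof proposal for the case $E(G)/Z(E(G)) \cong {}^3D_4(q)$, $q \in \{2,3\}$.}
The plan is to follow the same template used throughout the exceptional-groups section: first use the crude inequality of Proposition~\ref{tools}\ref{crude} together with the degree bound $d_1(E(G)) = q^5 - q^3 + q - 1$ from Proposition~\ref{mindegprop} and the bound $\alpha(G) \le l+3 = 5$ for the Steinberg triality groups from Proposition~\ref{alphas} to reduce to a short finite list of modules, and then deal with the survivors individually. For ${}^3D_4(2)$ we have $d_1 = 25$ (in fact the relevant modules start at slightly higher dimension since the $26$-dimensional module is the defining-characteristic one; in cross-characteristic the Hiss--Malle list \cite{HM} gives the precise options), and $|\mathrm{Aut}({}^3D_4(2))| = 6 \cdot |{}^3D_4(2)|$ is small, so $r^d \le 2|G/F(G)|r^{\lfloor 4d/5\rfloor}$ already forces $r$ and $d$ into a narrow range; for ${}^3D_4(3)$ the degree $d_1 = 3^5 - 3^3 + 3 - 1 = 218$ is large enough that the same inequality, substituting $d = 218$, should eliminate all $r \ge 2$ outright, leaving nothing to check.

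First I would compute (in GAP) the number of prime-order elements of $\mathrm{Aut}({}^3D_4(2)) = {}^3D_4(2).3$, and for each prime $r_0$ dividing the group order the number of $r_0$-elements and the relevant $\alpha$-bounds, refining $\alpha(x)$ below $5$ wherever possible (in particular semisimple elements of order equal to a primitive prime divisor of $q^6-1$ or $q^{12}-1$ will have $\alpha(x) \le 3$ by the proof of \cite[Theorem 4.3]{gs} or its analogue, hence only two eigenspaces after projective scaling). Plugging these into Proposition~\ref{tools}\ref{crude} narrows the candidate modules to those listed in \cite{HM} of small dimension — I expect the $26$-dimensional cross-characteristic module (which exists for various $r_0 \nmid q$) and perhaps a handful of others up to dimension around $50$. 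For each survivor I would read the eigenvalue multiplicities of semisimple elements off the Brauer character table in \cite{modatlas} (or infer them from the ordinary character where the Brauer table coincides), bound $\emax(g_{r_0})$ for the remaining prime $r_0 = \mathrm{char}(\mathbb{F}_r)$ either via Proposition~\ref{tools}\ref{alphabound} with a GAP-refined $\alpha$ or by an explicit construction of the module, and then apply Proposition~\ref{tools}\ref{qsgood} or \ref{eigsp1}. In every case the resulting inequality should fail, yielding a regular orbit; if some small module has $|V| < |G|$ one falls back to a direct base-size computation in GAP or Magma, but for ${}^3D_4$ I do not expect this to occur since the smallest modules are already of dimension $\ge 25$ over fields of size $\ge 3$.

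The main obstacle, as elsewhere in this section, is purely computational rather than conceptual: constructing the relevant cross-characteristic modules for ${}^3D_4(2)$ explicitly (using generators from the online ATLAS \cite{onlineATLAS}) and determining fixed-point-space dimensions of unipotent elements, together with assembling accurate counts of prime-order elements and sharp $\alpha$-bounds in $\mathrm{Aut}({}^3D_4(2))$. Once those data are in hand, the inequality of Proposition~\ref{tools} does all the work and no module survives. I would present the ${}^3D_4(2)$ analysis in a short table (dimension, field, eigenspace-dimension bounds for each class of prime-order elements, class sizes) in the same style as Tables~\ref{s62-case-analysis} and \ref{po73table}, and simply remark that the ${}^3D_4(3)$ case is settled by the crude inequality with $d_1 = 218$.
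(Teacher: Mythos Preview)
Your plan for ${}^3D_4(2)$ is essentially the paper's plan: reduce via Proposition~\ref{tools}\ref{crude} to the short Hiss--Malle list (the paper gets $d \in \{25,26,52\}$), then finish each survivor character-by-character with Brauer tables plus a GAP/Magma construction for the unipotent contributions. One minor correction: $|\mathrm{Out}({}^3D_4(2))|=3$, not $6$.

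There are, however, two genuine errors that affect the ${}^3D_4(3)$ part of your argument.

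First, your $\alpha$-bound is wrong. The untwisted Lie rank of ${}^3D_4(q)$ is $l=4$ (the rank of $D_4$), so Proposition~\ref{alphas} gives $\alpha(x)\le 7$, not $5$; this is what the paper uses. The value $2$ you have in mind is the twisted (BN-pair) rank, which is not what is quoted in Proposition~\ref{alphas}.

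Second, and more seriously, even with your optimistic bound $\alpha\le 5$ the crude inequality does \emph{not} dispose of ${}^3D_4(3)$. With $d=d_1=218$ one would need $r^{218-\lfloor 4\cdot 218/5\rfloor}=r^{44}>2|\mathrm{Aut}({}^3D_4(3))|$, but $|{}^3D_4(3).3|\approx 6\times 10^{13}>2^{43}$, so $r=2$ survives. (With the correct $\alpha\le 7$ one only gets $r^{32}$ on the left, which is even worse.) The paper therefore has to treat $V=V_{218}(2)$ separately: it constructs the module in GAP to establish $\emax(g_{\{2,3\}'})\le 32$, counts the elements of order $2$ and $3$ in $G/F(G)$ (the latter via Magma), and then applies the refined inequality
\[
r^{218}\le 2\bigl(i_2(G/F(G))+i_3(G/F(G))\bigr)r^{\lfloor 6\cdot 218/7\rfloor}+2\cdot|{}^3D_4(3).S_3|\,r^{32},
\]
which fails at $r=2$. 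Your remark that ``the ${}^3D_4(3)$ case is settled by the crude inequality with $d_1=218$'' is therefore incorrect; you need this extra step.
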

\begin{proof}
We have $|G| \leq 3q^{12}(q^8+q^4+1)(q^6-1)(q^2-1)\log_pq $, $\alpha(x) \leq 7$  and $d_1(E(G))\geq q^3(q^2-1)+ q-1$. 
Suppose that $q=3$. By Proposition \ref{tools}\ref{crude}, we are left to examine the cases where $r=2$, $d\leq 315$. According to \cite[A.15]{MR2303194}, we only need consider the 218-dimensional irreducible module of $G$. Using the construction of $V$ in GAP, we deduce that $\emax(g_{\{2,3\}'}) \leq 32$. We compute that the number of prime order elements in $^3D_4(3)$ is 6101289775853. 
 Using Magma, we compute that the number of elements of projective prime order 3 in $G$ is at most 7448676326. Therefore, if $G$ has no regular orbit on $V$, then 
	\[
	r^{218} \leq 2(i_2(G/F(G))+i_3(G/F(G)))r^{\floor{\frac{6\times 218}{7}}} + 2\times 6101289775853r^{32},
	\]
	which is false for $r=2$.

Now suppose that $q=2$.
Applying Proposition \ref{tools}\ref{crude} and inspecting \cite{HM}, we find that we must consider $d=25,26,52$ for $r\leq 102, 102$ and 9 respectively. The numbers of prime order elements in $^3D_4(2).3$ are given in Table \ref{poe3d423}.
\begin{table}[h!]
\begin{tabular}{cccccc}
\toprule
Prime $r_0$ & 2 & 3 & 7 & 13 & Total \\ 
\midrule
Number of elements & 69615 & 2457728 & 4852224 & 48771072 & 56150639 \\ 
\bottomrule
\end{tabular} 
\caption{Prime order elements in $^3D_4(2).3$.\label{poe3d423}}
\end{table}
We proceed according to characteristic. In characteristic 13, we only need consider $V= V_{26}(13)$. From the Brauer character we determine that $\emax(g_{13'}) \leq  16$. Moreover, in GAP, we find that elements $g\in G$ of order 13 have $\dim C_V(g) = 2$. Therefore, if $G$ has no regular orbit on $V$,
\[
r^{26} \leq 2\times 56150639r^{16}+48771072r^2.
\]
This inequality is false for $r=13$. 
We now turn our attention to modules in characteristic 7. We need to consider $d=26$ for $r=7,49$ and $d=52$ for $r=7$. Using the Brauer character and a construction of the module in GAP, we find that when $d=26$, we have $\emax(g_7) \leq 8$ and $\emax(g_{7'}) \leq 16$. On the other hand, when $d=52$, then by Proposition \ref{tools}\ref{alphabound}, $\emax(g_7) \leq 44$ and from the Brauer character we determine that $\emax(g_{7'}) \leq 36$.
Therefore, if $G$ has no regular orbit on one of these modules, then
\[
r^{d} \leq 2\times 56150639r^{\emax(g_{7'}) }+4852224r^{\emax(g_7) }.
\]
This is false in both cases for $r\geq 7$.
Now suppose $V$ is an irreducible module in characteristic 3. Here we must consider $d=52$ for $r = 3,9$, and $d=25$ for $r\leq 81$.
The classes of elements of order 3 in $^3D_4(2).3$ and their sizes are given in Table \ref{3elts3d423}.
\begin{table}[h!]
\begin{tabular}{ccccccc}
\toprule
Class & 3A & 3B & 3C & 3C' & 3D & 3D' \\ 
\midrule
Size & 139776 & 326144 & 17472 & 17472 & 978432 & 978432 \\ 
\bottomrule
\end{tabular} 
\caption{Orders of conjugacy classes of elements of order 3 in $^3D_4(2).3$.\label{3elts3d423}}
\end{table}

	Let $V$ be the 52-dimensional module. From the Brauer character we see that $\emax(g_2) \leq 36$, and $\emax(g_{\{2,3\}'}) \leq 10$. We also construct $G\leq \gl(V)$ in GAP and determine the dimensions of fixed point spaces of elements of order 3. In this way, we deduce that if $G$ has no regular orbit on $V$, then
	\[
	r^{52} \leq 2\times 56150639r^{10}+2\times 69615r^{36}+ (2\times 978432+ 326144)r^{18}+(139776+2\times 17472)r^{22}.
	\]
	This inequality is false for $r\geq 3$, so $G$ has a regular orbit on $V$. 
	
	Finally, suppose that $d=25$. From the Brauer character we determine that $\emax(g_2) \leq 16$, and that $\emax(g_{7,13}) \leq 7$. We construct $G\leq \gl(V)$ in Magma and determine that $\dim C_V(g_3)=9$, unless $g_3$ is in class 3C or 3C', in which case $\dim C_V(g_3) = 13$.
	Computing the conjugacy classes of $G$ in GAP, we find that every element in class 3C' or 3D' is the square of an element in class 3C or 3D respectively. Since $C_V(x) \subseteq C_V(x^2)$ for all $x \in G$, we can exclude elements in classes 3C' and 3D' from the right hand side of the inequality below.
	Therefore, if $G$ has no regular orbit on $V$, then
	\[
	r^{25} \leq 2\times 56150639r^{7}+2\times 69615r^{16}+ (17472)r^{13}+(139776+326144+978432)r^9.
	\]
	This is false for $r\geq 9$.
	If $r=3$, then we construct the module in Magma and find an explicit regular orbit representative of $G$ on $V$.
\end{proof}
\begin{proposition}
Theorem \ref{exprop} holds for almost quasisimple $G$ with $E(G)/Z(E(G)) \cong G_2(5)$.
\end{proposition}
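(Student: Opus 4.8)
The plan is to follow the same template used throughout this section: combine the crude bound of Proposition~\ref{tools}\ref{crude} with the minimal degree estimate from Proposition~\ref{mindegprop} and the $\alpha$-bound from Proposition~\ref{alphas} to reduce to a short finite list of modules, and then dispatch each survivor individually. First I would record that by Proposition~\ref{alphas} every $1\neq x\in G$ has $\alpha(x)\leq l+3 = 5$ since $G_2(5)$ has untwisted Lie rank $2$, and that by Proposition~\ref{mindegprop}, $d_1(E(G)) = q^3 = 125$ (as $5\equiv 2\pmod 3$, actually $q\equiv 0\pmod 5$ is irrelevant here — $G_2(q)$ with $q\equiv 2\pmod 3$ gives $d_1 = q^3-1 = 124$; I would double-check which congruence class $5$ falls in and use $d_1(E(G))\geq 124$). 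Then $|G/F(G)|\leq |\mathrm{Aut}(G_2(5))| = |G_2(5)|\cdot 1$ (the outer automorphism group is trivial here, or at most small), which I would bound crudely by $5^{14+}$ using $|G_2(q)| = q^6(q^6-1)(q^2-1)$. Feeding these into
\[
r^d \leq 2|G/F(G)|\,r^{\lfloor(1-1/\alpha(G))d\rfloor} = 2|G/F(G)|\,r^{\lfloor 4d/5\rfloor}
\]
with $d\geq 124$ should already force a regular orbit unless $r$ is very small — I expect only $r=2$ (and possibly $r=3$) to survive, and only for modules of dimension at most a couple hundred.

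Next I would consult \cite{HM} (degrees $\leq 250$ suffice here since the Weil-type degree $124$ is small) together with the Brauer character data in \cite{modatlas} or the online ATLAS \cite{onlineATLAS} to list the surviving modules: these will be the small faithful cross-characteristic modules of $G_2(5)$, plausibly the $124$-dimensional module over $\mathbb{F}_2$ and perhaps a second module of dimension near $124$ over $\mathbb{F}_3$. For each such $(d,r)$ I would sharpen the eigenspace bounds: use the Brauer character to bound $\emax(g)$ for $g$ of projective prime order coprime to $r_0$, and refine $\alpha(g)$ in GAP (or construct the module explicitly) for unipotent $g$, exactly as done in the $\mathrm{P}\Omega_7(3)$ and $G_2(4)$ cases above. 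Then apply Proposition~\ref{tools}\ref{qsgood} with the counts $i_{r_0}(G_2(5))$ computed in GAP; I expect the resulting inequality to fail (i.e.\ to establish a regular orbit) in every surviving case, since $124$ is large relative to the number of small-order elements.

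If a handful of modules resist the inequality — most likely the $124$-dimensional module over $\mathbb{F}_2$, where $|V| = 2^{124}$ comfortably exceeds $|G_2(5)|\approx 5^{14}\approx 2^{33}$, so a regular orbit is overwhelmingly likely but the crude inequality might be tight — I would fall back on an explicit computation: construct $G\leq\mathrm{GL}(V)$ in GAP or Magma from the ATLAS generators and exhibit a regular orbit representative directly. The main obstacle I anticipate is not conceptual but computational: $G_2(5)$ has order roughly $5.8\times 10^9$, so brute-force orbit enumeration on $2^{124}$ vectors is infeasible, and instead one must run the Proposition~\ref{tools} machinery carefully — the delicate part is getting tight enough upper bounds on $\emax$ for the $2$-elements (several involution classes) and for unipotent elements of order $5$, since these have the largest fixed spaces and dominate the right-hand side. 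Once those bounds are pinned down, the arithmetic is routine and the statement follows: $G$ always has a regular orbit, so $b(G) = 1 = \lceil\log|G|/\log|V|\rceil$, placing this case under conclusion (i) of Theorem~\ref{exprop}.
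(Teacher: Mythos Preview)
Your approach is essentially the same as the paper's: apply Proposition~\ref{tools}\ref{crude} with $\alpha(G)\leq 5$ and $d_1(E(G))=124$, reduce to the single module $V_{124}(2)$ via \cite{HM}, then sharpen using the Brauer character. Two small corrections: first, in characteristic $2$ the elements of order $5$ are semisimple, not unipotent (the unipotent elements here are precisely the involutions), so your remark about ``unipotent elements of order $5$'' is misphrased; second, no computational fallback is needed --- the Brauer character gives $\emax(g_{2'})\leq 48$, and combining this with the involution count $i_2(G_2(5))=406875$ and the crude bound $r^{\lfloor 4d/5\rfloor}$ for involutions yields an inequality that already fails at $r=2$, so the regular orbit follows directly from Proposition~\ref{tools}\ref{qsgood} without any explicit orbit search.
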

\begin{proof}
The number of elements of prime order in $G_2(5)$ is 1242340499, and so applying Proposition \ref{tools}\ref{alphabound} and \ref{crude}, and examining \cite{HM}, we see that we only need to consider the irreducible 124-dimensional module for $G$ over $\mathbb{F}_2$. Examining the Brauer character, we find that $\emax(g_{2'}) \leq 48$. Therefore, along with our observation that there are 406875 involutions in $G_2(5)$, we find that if $G$ has no regular orbit on this module then 
\[
2^{124} \leq 2\times 1242340499\times 2^{48}+406875\times 2^{\floor{\frac{4}{5} \times 124}}
\]
which is false. Therefore, $G$ has a regular orbit on $V$.
\end{proof}
\begin{proposition}
Theorem \ref{exprop} holds for almost quasisimple $G$ with $E(G)/Z(E(G)) \cong G_2(4)$.
\end{proposition}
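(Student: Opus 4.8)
The plan is to follow the same template used throughout the exceptional-group section for the other cases. First I would invoke Proposition \ref{tools}\ref{alphabound} together with the crude bound Proposition \ref{tools}\ref{crude}, using $\alpha(G_2(4))\le 5$ from Proposition \ref{alphas} and the degree bound $d_1(G_2(4))\ge 12$ from Proposition \ref{mindegprop} (Table \ref{mindeg}), to cut down to a finite list of modules $V=V_d(r)$. Together with a count of the prime order elements in $\mathrm{Aut}(G_2(4))=G_2(4).2$ (computed in GAP), this should leave only the genuinely small modules: the $12$-dimensional modules over small fields (in characteristics $3,5,7$, as dictated by $z_3$-type irrationalities and the condition $(r,|G|)>1$), the $65$-dimensional module over $\mathbb{F}_2$, and perhaps one or two others coming from \cite{HM}. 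I would tabulate exactly which $(d,r)$ survive.

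Next, for each surviving module I would bound $\emax(g)$ for $g$ of projective prime order. For semisimple elements ($g_{r_0'}$) these bounds come from the Brauer character in \cite{modatlas} (or constructed in GAP), giving small values; for unipotent elements ($g_{r_0}$) I would refine $\alpha(g_{r_0})$ in GAP and apply Proposition \ref{tools}\ref{alphabound}, or compute fixed spaces directly from an explicit construction. Then Proposition \ref{tools}\ref{qsgood} (or \ref{eigsp1}) should rule out the absence of a regular orbit for the larger of the surviving modules, e.g. the $65$-dimensional one over $\mathbb{F}_2$, where $i_2(G_2(4).2)$ is small relative to $r^{65}$. For the $12$-dimensional modules the inequality may fail, so these require individual attention.

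The delicate case is expected to be $G=6\circ(2.G_2(4))$ or $6\circ(2.G_2(4).2)$ acting on $V_{12}(7)$, which is the exceptional line in the statement. Here $|V|=7^{12}$ is comparable to (indeed smaller than) $|6\circ(2.G_2(4).2)|$, so no regular orbit exists and I must show $b(G)=2$ exactly. I would construct the $12$-dimensional module over $\mathbb{F}_7$ in GAP (generators from \cite{onlineATLAS}) or Magma and verify computationally that there is a base of size $2$; the lower bound $b(G)\ge 2$ follows from $|G|>|V|$, and for the intermediate groups between $6\circ(2.G_2(4))$ and $6\circ(2.G_2(4).2)$ one gets $b(G)=2$ via Lemma \ref{fieldext} from the value over a smaller field, or directly. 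For the $12$-dimensional modules over $\mathbb{F}_3$ and $\mathbb{F}_5$ (and smaller subgroups over $\mathbb{F}_7$), a similar direct computation should show a regular orbit exists, placing those cases under conclusion (i). The main obstacle is organising the computational bookkeeping so that every group $G$ with the given layer and every algebraically conjugate module is accounted for, and correctly reconciling $\lceil\log|G|/\log|V|\rceil$ with the computed $b(G)$; this is where the acknowledged assistance of Eamonn O'Brien and Jürgen Müller for $G_2(4)$ presumably enters.
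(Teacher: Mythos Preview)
Your overall strategy matches the paper's approach, but several of your factual expectations are wrong and would derail the proof if followed.

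First, $G_2(4)$ has defining characteristic $2$, so by hypothesis you only consider modules in characteristic $\ne 2$. There is no $65$-dimensional module over $\mathbb{F}_2$ to deal with; the modules that survive the crude inequality (with $55534959$ prime-order elements in $G_2(4).2$) are in fact $d=12,64,78$ over $\mathbb{F}_3$, and $d=12$ for $5\le r\le 484$. The $64$- and $78$-dimensional cases over $\mathbb{F}_3$ are disposed of by reading $\emax(g_{3'})\le 36$ and $46$ respectively from the Brauer character and applying the usual inequality.

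Second, your expectation for the $12$-dimensional modules over $\mathbb{F}_3$ and $\mathbb{F}_5$ is backwards. For these fields $|G|>|V|$, so there is certainly \emph{no} regular orbit; the paper computes $b(G)=2$ (via GAP for $r=3$, and via Lemma~\ref{fieldext} from the $r=25$ result for $r=5$). These cases then fall under conclusion (i) because $\lceil\log|G|/\log|V|\rceil=2=b(G)$, not because $b(G)=1$.

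Third, your lower-bound argument for the exceptional case $(d,r)=(12,7)$ is wrong: here $7^{12}\approx 1.4\times 10^{10}$ exceeds $|6\circ(2.G_2(4).2)|\approx 3\times 10^9$, so $|V|>|G|$ and $\lceil\log|G|/\log|V|\rceil=1$. The absence of a regular orbit for $6\circ(2.G_2(4))$ and $6\circ(2.G_2(4).2)$ is established by explicit orbit enumeration in Magma, not by a cardinality comparison; the same computation shows that $2.G_2(4)$ and $2.G_2(4).2$ \emph{do} have regular orbits on $V_{12}(7)$. The upper bound $b(G)\le 2$ then comes from Lemma~\ref{fieldext} applied to the regular orbit already established at $r=49$.
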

\begin{proof}
The number of elements of prime order in $G_2(4).2$ is 55534959. Applying Proposition \ref{tools}\ref{crude} and examining \cite{HM}, we see that we only need consider modules of dimension $12, 64,78$ if $r=3$, and $d=12$ for $5\leq r\leq 484$. If $V$ is the 78-dimensional irreducible module of $G_2(4).2$ over $\F_3$, we deduce that $\emax(g_{3'}) \leq 46$ from the Brauer character. Therefore, since the number of elements of order 3 in $G_2(4).2$ is 1401920,  if $G$ has no regular orbit on $V$, then 
\[
3^{78} \leq 2\times 55534959\times 3^{46} + 1401920\times 3^{\floor{4\times 78/5}}
\]
which is false. Similarly, if $d=64$, then $\emax(g_{3'}) \leq 36$, and an analogous calculation shows that $G$ has a regular orbit on $V$ here. 
Finally, we consider the 12-dimensional module of $2.G_2(4)$ for $r\leq 484$. This extends to a module of $2.G_2(4).2$. If $r=3,5$ then $|G|>|V|$ and $G$ has no regular orbit on $V$. When $r=3$, constructing the module in GAP we find that there is a base of size 2. For the remaining cases, we will summarise our analysis in Table \ref{2g24-analysis}. The entries in the third column were obtained from the corresponding Brauer tables, while the entries in the fourth were derived from constructions in GAP.
Applying Proposition \ref{tools}\ref{qsgood} with the entries in the table, we see that $G$ has a regular orbit on $V$ for $r\geq 9$ and a base of size two for $r=5$. When $r=7$, we determine using orbit enumeration techniques in Magma that $2.G_2(4)$ and $2.G_2(4).2$ have regular orbits on $V$, while $6\circ (2.G_2(4))$ and $6\circ(2.G_2(4).2)$ do not, and so $b(G)=2$ for these groups by Proposition \ref{fieldext}.

\begin{table}[h!]
\begin{tabular}{cccc}
\toprule
 &  & \multicolumn{2}{c}{Eigenspace Dimensions}\\
 \cmidrule{3-4}
 Proj. Class & Size & $o(x) \nmid r$ & $o(x) \mid r$ \\ 
 \midrule
2A & 4095 & (8,4) & -- \\ 
2B & 65520 & $(6^2)$ & -- \\ 
3A & 4160 & $(6^2)$ & 6 \\ 
3B & 1397760 & $(4^3)$ & 4 \\ 
5A & 838656 & $(4,2^4)$ & 4 \\ 
5B & 838656 & $(4,2^4)$ & 4 \\ 
5C & 838656 & $(3^4)$ & 3 \\ 
5D & 838656 & $(3^4)$ & 3 \\ 
7A & 11980800 & $(2^6)$ & 2 \\ 
13A & 19353600 & $(1^{12})$ & 1 \\ 
13B & 19353600 & $(1^{12})$ & 1 \\ 
\bottomrule
\end{tabular} 
\caption{Eigenspace dimensions of projective prime order elements in $2.G_2(4).2$ acting on $V_{12}(r)$. \label{2g24-analysis}}
\end{table}
\end{proof}
\begin{proposition}
Theorem \ref{exprop} holds for almost quasisimple $G$ with $E(G)/Z(E(G)) \cong G_2(3)$.
\end{proposition}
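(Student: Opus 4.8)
The strategy follows the same template used throughout this section: bound the number of prime-order elements in $\mathrm{Aut}(G_2(3))$, bound the relevant maximal eigenspace dimensions, feed these into Proposition \ref{tools}, and then clean up the finitely many surviving small modules by computer. First I would record $d_1(E(G))\geq 14$ from Proposition \ref{mindegprop} (the exceptional entry for $G_2(q)$ with $q\equiv 0\ (3)$), and $\alpha(x)\leq l+3 = 5$ from Proposition \ref{alphas} since $G_2$ has untwisted Lie rank $2$. Counting prime-order elements in $G_2(3).2$ in GAP gives an explicit constant $N$, and Proposition \ref{tools}\ref{crude} then yields that if $G$ has no regular orbit on $V=V_d(r)$ then $r^d\leq 2N r^{\lfloor 4d/5\rfloor}$. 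Substituting $d=14$ shows this fails except for small $r$ and a bounded range of $d$; consulting \cite{HM} leaves only a short list of modules to examine — I expect these to be the $14$-dimensional module over $\mathbb{F}_2$ and possibly one or two others of small degree in characteristics $2$ and $7$ (noting $G_2(3)$ also has the exceptional isomorphism $G_2(2)'\not\cong G_2(3)$, so there is no low-characteristic coincidence to worry about here; characteristic $3$ is defining and excluded by hypothesis).

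For each surviving module I would extract eigenspace-dimension bounds: for semisimple elements $g_{r_0'}$ from the Brauer character tables in \cite{modatlas}, and for unipotent elements $g_{r_0}$ either from Proposition \ref{tools}\ref{alphabound} with a refined $\alpha$ computed in GAP, or by constructing the module explicitly in GAP and reading off $\dim C_V(g)$. Plugging these refined bounds into Proposition \ref{tools}\ref{qsgood} (or \ref{eigsp1}) should dispatch every case, showing $G$ has a regular orbit. For the one or two modules where $|V|<|G|$ — most likely $V_{14}(2)$, since $|G_2(3)|\approx 4.2\times 10^6 > 2^{14}$ — there can be no regular orbit, and I would instead compute the base size directly from a construction in GAP or Magma, obtaining $b(G)=\lceil \log|G|/\log|V|\rceil$ (the entry $G_2(3)\leq G\leq G_2(3).2$, $(d,r)=(14,2)$ already appears in Table \ref{noros} with $b(G)=3$), so the outcome matches case (i) of Theorem \ref{exprop}.

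The main obstacle is not conceptual but computational bookkeeping: the $14$-dimensional module over $\mathbb{F}_2$ is small enough that a direct orbit computation or base-size computation is entirely feasible, but one must be careful to handle all groups $G$ with the given layer — i.e. both $G_2(3)$ and $G_2(3).2$, and any scalar extensions permitted by $(r,|G|)>1$ — and to check that the claimed value of $\lceil\log|G|/\log|V|\rceil$ is attained for each. Since $G_2(3)$ has trivial Schur multiplier and outer automorphism group of order $2$, the list of relevant $G$ is short, and no delicate irrationality or isoclinism issues arise. I would close by remarking that this completes the proof of Theorem \ref{exprop}, and hence, together with the preceding sections, the proof of Theorem \ref{mainthm}.
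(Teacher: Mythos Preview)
Your approach is essentially the same template the paper uses, and it is sound in outline. However, there are a few concrete inaccuracies that need correcting.

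First, you substantially underestimate the list of surviving modules. With the crude bound $r^d\leq 2N r^{\lfloor 4d/5\rfloor}$ and $N\approx 1.33\times 10^6$, one is left not with ``one or two others'' but with $d\in\{64,78,90\}$ at $r=2$, $d=27$ at $r\in\{2,4,7\}$, and $d=14$ for all $r\leq 128$. The paper disposes of the $d=90$ case only by invoking the sharper $\alpha$-bounds from \cite[Proposition 5.6]{gs} ($\alpha\leq 3$ for inner involutions, $\alpha\leq 4$ for other non-order-3 elements, $\alpha\leq 5$ for order 3); with the uniform $\alpha\leq 5$ bound alone, $r=2$, $d=90$ does not fall. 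The $d=78$ and $d=27$ cases need Brauer-character input, and the $d=64$ module is dismissed via a $b_{27}$ irrationality (similarly $d=27$ at $r=2$ via $z_3$). For $d=14$, the paper handles $r=2$ and $r=4$ by direct GAP computation and the remaining $r\geq 7$ by an eigenspace table and Proposition \ref{tools}\ref{qsgood}. Your plan to ``refine $\alpha$ in GAP or construct the module'' would eventually reach the same conclusions, but the bookkeeping is heavier than you suggest.

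Second, you have the base size wrong for $V_{14}(2)$: the paper (and Table \ref{noros}) records $b(G)=2$, not $3$, and indeed $\lceil \log|G_2(3).2|/\log 2^{14}\rceil = 2$, so this lands in case (i) as required. Finally, your closing sentence is premature: the proposition for $G_2(3)$ does not by itself complete Theorem \ref{exprop}, as $F_4(2)$ still remains in the paper's sequence.
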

\begin{proof}
The number of prime order elements in $G_2(3).2$ is 1329587. Using analogous calculations to those above and inspecting \cite{HM}, we deduce that $G$ has a regular orbit on $V$ unless possibly if $r=2$ and $d=64,78,90$, $r=2,4,7$ and $d=27$, or $d=14$ and $r\leq 128$. By the proof of \cite[Proposition 5.6]{gs}, $\alpha(x)\leq 3$ if $x$ is an inner involution, $\alpha(x)\leq 5$ if $x$ is of order 3, and $\alpha(x)\leq 4$ otherwise. We have $i_3(G_2(3).2)=59696$, and $i_2(G_2(3).2) =10179$, of which 7371 are inner involutions.
Therefore, if $G$ has no regular orbit on the 90-dimensional modules of $G_2(3)$, then  
\[
r^{90} \leq 2\times 1329587r^{\floor{3\times 90/4}}+ 2 \times 59696r^{\floor{4\times 90/5}}
\]
which is false for $r=2$. 
If $V$ is the 78-dimensional $\mathbb{F}_2G$-module, then examining the Brauer character we find that $\emax(g_{2'}) \leq 30$. So if $G$ has no regular orbit on $V$, 
\[
r^{78} \leq 2\times 1329587r^{30}+10179r^{\floor{3\times 78/4}},
\]
which again is false for $r=2$.
The 64-dimensional modules of $G_2(3)$ in characteristic 2 are not realised over $\mathbb{F}_2$ due to a $b_{27}$ irrationality. Similarly, the 27-dimensional module of $3.G_2(3)$ is not realised over $\mathbb{F}_2$ because of a $z_3$ irrationality.  If $d=27$ and $r=4$ or 7, then from the Brauer character we determine that $\emax(g_{\{2,p\}'}) \leq 9$, and using GAP we determine that $\alpha(g_7)=2$, so if $G$ has no regular orbit on $V$,
\[
r^{27} \leq 2\times1329587r^9+ 2i_2(G/F(G))r^{\floor{3\times 27/4}}+ 2\times i_7(G/F(G))r^{\floor{ 27/2}}
\]
which is false for $r=4,7$.
It therefore remains to consider the 14-dimensional module of $G_2(3).2$ for $r\leq 128$.
If $r=2$, then $|V|<|G|$ and we compute in GAP that $b(G)=2$. We also compute that if $r=4$, then $G$ has a regular orbit on $V$. 
\begin{table}[h!]
\begin{tabular}{ccccc}
\toprule
 & $o(x) = 2$ & $o(x) = 3$ & $o(x) = 7$ & $o(x) = 13$ \\ 
\midrule
Number of elements & 10179 & 59696 & 606528 & 653184 \\ 
Largest eigsp. in char. 2 & 8 & 8 & 2 & 2 \\ 
Largest eigsp. in char. 7 & 8 & 8 & 2 & 2 \\ 
Largest eigsp. in char. 13 & 8 & 8 & 2 & 2 \\ 
\bottomrule 
\end{tabular} 
\caption{Information about projectively prime order elements acting on an absolutely irreducible 14-dimensional module for $G_2(3).2$.\label{g23tab}}
\end{table}
Otherwise, if $G$ has no regular orbit on $V$ then by Proposition \ref{tools}\ref{qsgood} and the information in Table \ref{g23tab},
\[
r^{14} \leq 2 (10179 r^8 + 606528 r^2 + 653184 r^2)+ 59696 (r^8 + 2 r^3)
\]
which is false for $r\geq 7$.
\end{proof}

\begin{proposition}
Theorem \ref{exprop} holds for almost quasisimple $G$ with $E(G)/Z(E(G)) \cong F_4(2)$.
\end{proposition}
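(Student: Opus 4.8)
The plan is to follow the template used throughout this section: first apply Proposition \ref{tools}\ref{crude} with the generic bounds $\alpha(F_4(2)) \leq 8$ from Proposition \ref{alphas} and $d_1(E(G)) \geq \tfrac{1}{2}(q^3-1)(q^8-q^7) = \tfrac{1}{2}\cdot 7 \cdot 128 = 448$ from Proposition \ref{mindegprop} (the $F_4$, $q$ even row, exception $q=2$ giving $d_1 = 52$). Since $|F_4(2).2| < 2^{53}$ or so, the crude inequality $r^d \leq 2|G/F(G)| r^{\lfloor 7d/8 \rfloor}$ will rule out all but a short list of small modules and small fields. Consulting \cite{HM}, the survivors should be the $26$-dimensional module (which, being a composition factor issue in characteristic $3$, actually appears as dimension $25$ or $26$), the $52$-dimensional adjoint-type module, and possibly one or two others, for $r$ ranging up to some modest bound.

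Next I would treat each surviving pair $(d,r)$ individually. For each, the strategy is to improve the eigenspace bounds beyond the crude $\lfloor(1-1/\alpha)d\rfloor$: for semisimple elements of prime order coprime to $r$, read off $\emax(g_{r'})$ from the Brauer character in \cite{modatlas} (or construct the character table in Magma when it is not tabulated); for unipotent elements, either refine $\alpha(g_{r_0})$ using GAP, or restrict $V$ to a suitable subgroup containing a Sylow $r_0$-subgroup and apply Proposition \ref{compfactors}. One also needs the counts $i_{r_0}(F_4(2).2)$ of prime order elements, obtained from GAP or from Proposition \ref{invols}; note $i_2$ and $i_3$ are bounded by $2(q^{N_2}+q^{N_2-1})$ and $2(q^{N_3}+q^{N_3-1})$ with $N_2 = 52 - 24 = 28$ and $N_3 = 52 - 16 = 36$ for $F_4$. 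Feeding these into Proposition \ref{tools}\ref{qsgood} (or \ref{eigsp1}) should show the left side of the inequality exceeds the right for all surviving $(d,r)$, yielding a regular orbit in every case, so that conclusion (i) of Theorem \ref{exprop} holds and no entry for $F_4(2)$ appears in Table \ref{allbad} or Table \ref{noros}.

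The main obstacle I anticipate is the $52$-dimensional module in characteristic $3$ (and perhaps characteristic $5,7$): here $d_1 = 52$ is already the minimal degree, the field can be as small as $3$, and $F_4(2)$ is large enough that one cannot afford the crude unipotent bound $\lfloor 7\cdot 52/8\rfloor = 45$ — one genuinely needs a sharp bound on $\dim C_V(g_3)$, likely via restriction to a subgroup such as $\mathrm{Sp}_8(2)$ or $\Omega_9(2) \cong \mathrm{Sp}_8(2)$ or a parabolic containing a Sylow $3$-subgroup, combined with Proposition \ref{compfactors}. There is also the subtlety that the $52$-dimensional module over $\mathbb{F}_2$ may not be realisable (a Brauer character irrationality), which would eliminate it outright; this needs checking against \cite{HM} and \cite{modatlas}. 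If even the refined inequalities fail for the smallest fields, the fallback is direct computation: construct $V$ from generators in \cite{onlineATLAS} in GAP or Magma and either exhibit a regular orbit representative or compute $b(G)$ outright, exactly as done for $G_2(3)$ on $V_{14}(2)$ and $^3D_4(2)$ on $V_{25}(3)$ above.

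Finally, I would double-check that the exceptional isomorphisms and low-rank coincidences do not force extra work: $F_4(2)$ has no relevant exceptional isomorphism to an alternating or classical group (unlike $^2F_4(2)'$, already handled, or $G_2(2)' \cong U_3(3)$), so no cross-characteristic re-examination in a ``wrong'' characteristic is needed beyond the primes dividing $|F_4(2)|$, namely $2, 3, 5, 7, 13, 17$. With all surviving cases dispatched, combining with the generic argument completes the proof of Theorem \ref{exprop} for $E(G)/Z(E(G)) \cong F_4(2)$, and hence — together with the preceding propositions — the proof of Theorem \ref{exprop} in full.
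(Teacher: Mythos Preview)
Your approach is essentially the same as the paper's: apply the crude inequality from Proposition \ref{tools}\ref{crude} with $\alpha \leq 8$, consult \cite{HM} for survivors, then handle each remaining module by reading semisimple eigenspace dimensions from the Brauer character and bounding unipotent fixed spaces separately, finishing with Proposition \ref{tools}\ref{qsgood}.

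Two points where your expectations diverge from what actually happens. First, there is no $26$-dimensional cross-characteristic module for $F_4(2)$; the minimal degree is $52$ (the exception in Table \ref{mindeg}), and you may be thinking of $^2F_4(2)'$. After the crude bound and \cite{HM}, the \emph{only} surviving module is the $52$-dimensional one for $2.F_4(2)$. Second, you over-anticipate the difficulty of the unipotent bound in characteristic $3$: the paper simply constructs the module in GAP and reads off $\dim C_V(g_3) \leq 22$ directly, with no need for subgroup restriction or Proposition \ref{compfactors}. Likewise for characteristic $5$ (GAP gives $\leq 12$), while for characteristics $7$, $13$, $17$ one computes $\alpha(g_{r_0}) = 2$ in GAP, yielding $\dim C_V(g_{r_0}) \leq 26$. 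With the semisimple eigenspace dimensions read from the Brauer character (the largest being $36$ for class $2A$), Proposition \ref{tools}\ref{qsgood} then disposes of all characteristics at once.
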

\begin{proof}
%
The number of prime order elements in $F_4(2).2$ is 650870214978815. So if $G$ has no regular orbit on $V$ then
\[
r^d \leq 2\times 650870214978815r^{\floor{7d/8}}
\]
which is false unless $d\leq 248$ and $r\leq 125$. Examining \cite{HM}, we find that we only need to consider the 52-dimensional module of $2.F_4(2)$.
 We record the classes of elements of projective prime order in $2.F_4(2).2$ and their eigenspace dimensions in cases where they are semisimple in Table \ref{F42dim52}. For each characteristic dividing the order of $2.F_4(2).2$, we also compute a bound of the fixed point space dimensions on unipotent elements and these are also recorded in Table \ref{F42dim52}.
\begin{table}[h!]
\begin{tabular}{cc}
\toprule
Class in $F_4(2).2$ & Eigensp. dims \\ 
\midrule
2A & (36,16) \\ 
2B & (32,20) \\ 
2C & (28,24) \\ 
3A & $(22,15^2)$ \\ 
3B & $(22,15^2)$ \\ 
3C & $(16,18^2)$ \\ 
5A &$ (12, 10^4)$ \\ 
7A & $(10,7^6)$ \\ 
13A &$ (4^{13}) $\\ 
17A & $(4,3^{16}) $\\ 
2D & $(26^2)$ \\ 
\bottomrule
\end{tabular} 
\begin{tabular}{ccc}
\toprule
$char(V)=r_0$ &  $\dim C_V(g_{r_0}) \leq $ & Method \\ 
\midrule
3 & 22 & Construction in GAP \\ 
5 & 12 & Construction in GAP \\ 
7 & 26 &  Compute $\alpha(g_{7})=2$ \\ 
13 & 26 &  Compute $\alpha(g_{13})=2$ \\ 
17 & 26 & Compute $\alpha(g_{17})=2$ \\ 
\bottomrule
\end{tabular} 
\caption{Eigenspace dimensions of projectively prime order elements on an absolutely irreducible 52-dimensional module for $2.F_4(2)$.\label{F42dim52}}
\end{table}
Applying Proposition \ref{tools}\ref{qsgood}, we deduce that $G$ has a regular orbit on $V$ in each case.
\end{proof}

%
%
This completes the proof of Theorem \ref{exprop}, and hence Theorem \ref{mainthm}.
\section*{Acknowledgements}
This paper represents part of the PhD work of the author under the supervision of Professor Martin W. Liebeck, and the author would like to thank Professor Liebeck for his guidance. The author acknowledges the support of an EPSRC International Doctoral Scholarship at Imperial College London.
The author would also like to thank Professors Eamonn O'Brien and J\"urgen M\"uller for aiding with a number of computationally intensive cases included in this paper, and Dr. Tim Burness for his careful reading of the content of this paper included in the author's PhD thesis. The author is also grateful to the referee for many helpful suggestions and in particular for pointing out an omitted case in \cite[Proposition 2.2]{MR1829482} which affected Theorem \ref{mainthm} here.  She wishes to thank Professor Frank L\"ubeck for providing an argument for dealing with the omitted case in \cite{lubeck2021orbits}.
\bibliographystyle{plain}
\bibliography{writeup.bib}
\end{document}